\def\ddefloop#1{\ifx\ddefloop#1\else\ddef{#1}\expandafter\ddefloop\fi}
\def\ddef#1{\expandafter\def\csname bb#1\endcsname{\ensuremath{\mathbb{#1}}}}
\def\ddef#1{\expandafter\def\csname ff#1\endcsname{\ensuremath{\mathfrak{#1}}}}
\def\ddef#1{\expandafter\def\csname cc#1\endcsname{\ensuremath{\mathcal{#1}}}}
\newcommand{\mrk}{\mathrm{mrk}}
\newcommand{\Diag}{\mathrm{Diag}}
\newcommand{\Gr}{\mathrm{Gr}}
\newcommand{\pre}{\mathrm{pre}}
\newcommand{\Spec}{\mathrm{Spec}\,}
\newcommand{\Symm}{\mathrm{Sym}\,}
\newcommand{\Tot}{\mathrm{Tot}}
\newcommand{\Sect}{\mathrm{Sect}}
\newcommand{\Fun}{\mathrm{Fun}}
\newcommand{\Sch}{\mathrm{Sch}}
\newcommand{\Set}{\mathrm{Set}}
\newcommand{\op}{\mathrm{op}}
\newcommand{\ord}{\mathrm{ord}\,}
\newcommand{\rk}{\mathrm{rk}}
\newcommand{\Bl}{\mathrm{Bl}}
\newcommand{\Bld}{\mathrm{Bl}^{HL}}
\newcommand{\id}{\mathrm{id}}
\newcommand{\tf}{\mathrm{tf}}
\newcommand{\sm}{\mathrm{sm}}
\newcommand{\Coker}{\mathrm{Coker}}
\newcommand{\vir}{\mathrm{vir}}
\newcommand{\wPic}{\widetilde{\mathfrak{Pic}}}
\DeclareMathOperator{\Hom}{Hom}
\newcommand{\cl}{\mathrm{cl}}
\newcommand{\ev}{\mathrm{ev}}
\newcommand{\tor}{\mathrm{tor}}
\newcommand{\norm}[1]{\llbracket #1 \rrbracket}
\newcommand{\ver}{\mathrm{Ver}(\gamma)}
\newcommand{\tver}{\mathrm{Ver}(\gamma)^t}
\newcommand{\vers}{\mathrm{Ver}(\gamma)^*}
\newcommand{\Mon}{\mathrm{Mon}} 
\newcommand{\Proj}{\mathrm{Proj}}
\newcommand{\supp}{\mathrm{supp}\,}
\newcommand{\Ann}{\mathrm{Ann}}
\newcommand{\GIT}{/\!\!/}
\newcommand{\pd}{\mathrm{pd}}
\newcommand{\scl}{\mathrm{cl}^\mathrm{sch}}
\newcommand{\opi}{\overline{\pi}}
\newcommand{\tpi}{\widetilde{\pi}}
\newcommand{\wpi}{\widehat{\pi}}
\newcommand{\Pic}{\mathfrak{Pic}}
\newcommand{\Bun}{\mathfrak{Bun}}
\newcommand{\wBun}{\widetilde{\mathfrak{Bun}}}
\newcommand{\uC}{\underline{\ffC}}
\newcommand{\Gw}{\overline{\ccM}_{g,n}(\bbP^r,d)}
\newcommand{\Gwo}{\ccM_{g,n}(\bbP^r,d)}
\newcommand{\Gwone}{\overline{\ccM}_{1,n}(\bbP^r,d)}
\newcommand{\tGwone}{\widetilde{\ccM}_{1,n}(\bbP^r,d)}
\newcommand{\tGwmone}{\widetilde{\ccM}_{1,n}^\circ(\bbP^r,d)}
\newcommand{\mGw}{\overline{\ccM}^\circ_{g,n}(\bbP^r,d)}
\newcommand{\tGw}{\widetilde{\ccM}_{g,n}(\bbP^r,d)}
\newcommand{\tmGw}{\widetilde{\ccM}^\circ_{g,n}(\bbP^r,d)}
\newcommand{\tmQ}{\widetilde{\ccQ}^\circ_{g,n}(X,\beta)}
\newcommand{\Gwx}{\overline{\ccM}_{g,n}(X,d)}
\newcommand{\mGwx}{\overline{\ccM}^\circ_{g,n}(X,d)}
\newcommand{\tGwx}{\widetilde{\ccM}_{g,n}(X,d)}
\newcommand{\tmGwx}{\widetilde{\ccM}^\circ_{g,n}(X,d)}
\newcommand{\tGwi}{\widetilde{\ccM}^\circ_{g_i,n}(X,d)}
\newcommand{\Qx}{\overline{\ccQ}^{\epsilon}_{g,n}(X,\beta)}
\newcommand{\mQx}{\overline{\ccQ}^\circ_{g,n}(X,\beta)}
\newcommand{\tQx}{\widetilde{\ccQ}_{g,n}(X,\beta)}
\newcommand{\ts}{\widetilde{s}}
\newcommand{\tta}{\widetilde{t}}
\newcommand{\tm}{\widetilde{m}}
\newcommand{\cwt}{\ffM_1^{wt}} 
\newcommand{\blcwt}{\widetilde{\ffM}_1^{wt}} 
\newcommand{\bld}{\Bl_{\ccF}^{HL} X} 
\newcommand{\bldY}{\Bl_{f^*\ccF}^{HL} Y} 
\newcommand{\blHL}{\Bl^{HL}} 
\newcommand{\q}{{q}} 
\newtheorem{theorem}{Theorem}[subsection]
\newtheorem{corollary}[theorem]{Corollary}
\newtheorem{conjecture}[theorem]{Conjecture}
\newtheorem{lemma}[theorem]{Lemma}
\newtheorem{proposition}[theorem]{Proposition}
\newtheorem{main_result}[theorem]{Main result}
\theoremstyle{definition}
\newtheorem{definition}[theorem]{Definition}
\newtheorem{remark}[theorem]{Remark}
\newtheorem{assumption}[theorem]{Assumption}
\newtheorem{construction}[theorem]{Construction}
\newtheorem{notation}[theorem]{Notation}
\theoremstyle{remark}
\newtheorem{example}[theorem]{Example}
\title[Higher genus reduced GWI via desingularizations of sheaves]{ Higher genus reduced Gromov--Witten invariants via desingularizations of sheaves}
\author[A.Cobos Rabano, E. Mann, C. Manolache, R. Picciotto]{Alberto Cobos Rabano, Etienne Mann, Cristina Manolache, Renata~Picciotto}
\address{Alberto Cobos Rabano, KU Leuven, Department of Mathematics, Celestijnenlaan 200B box 2400, BE-3001 Leuven, Belgium}
\email{alberto.cobosrabano@kuleuven.be}
\address{\'Etienne Mann, Univ Angers, CNRS, LAREMA, SFR MATHSTIC, F-49000 Angers, France}
\email{etienne.mann@univ-angers.fr}
\address{Cristina Manolache, Univ Sheffield, Hicks building, Sheffield, UK}
\email{c.manolache@sheffield.ac.uk}
\address{Renata Picciotto, University of Cambridge, Centre for Mathematical Sciences, Wilberforce Road, Cambridge, UK}
\email{rp779@cam.ac.uk}
\date{}
\begin{document}

\maketitle
\begin{abstract} 
The main result is an all-genus construction of reduced
Gromov--Witten invariants for a large class of GIT quotients. This extends earlier results, limited to genus one and two. 

The main tool involves blowings-up of sheaves. More precisely, given $\ffF$ a coherent sheaf on a Noetherian integral algebraic stack $\ffP$, we give two constructions of stacks $\widetilde{\ffP}$, equipped with birational morphisms $p:\widetilde{\ffP}\to \ffP$ such that $p^*\ffF$ is simpler: 
   \begin{enumerate}
 \item \label{Rone} in the Rossi construction, the torsion free part of $p^*\ffF$  is locally free; 
\item in the Hu--Li diagonalization construction, $p^*\ffF$ is a union of locally free sheaves. 
\end{enumerate}
The construction in \cref{Rone} above is an extension of the Nash blow-up and certain flattenings of sheaves to Noetherian integral Artin stacks. We show that reduced Gromov--Witten invariants obtained from each of the constructions above coincide.
\end{abstract}
\tableofcontents

\section{Introduction}

\paragraph{\bf Overview of the problem}
Let $X$ be a smooth projective variety. We denote by $\Gwx$ the moduli space of genus $g$, degree $d \in H_2(X;\bbZ)$ stable maps to $X$ (see ~\cite{k}). By \cite{li1998virtual, Behrend-Fantechi}, the stack $\Gwx$ has a virtual fundamental class $[\Gwx]^{\vir}\in A_{*}(\Gwx)$. Gromov--Witten invariants of $X$ are defined as intersection numbers against this class. They are related to counts of curves in $X$ of genus $g$ and class $d$, but they often encode contributions from degenerate maps with reducible domains. In order to define invariants which discard such contributions, we need to define a virtual fundamental class on a smaller proper stack containing $\ccM_{g,n}(X,d)$. It is not possible to do this directly, we need to replace this stack by a birational one which admits a virtual fundamental class.

In genus one and two, there are several such constructions \cite{Zinger-genus-one-pseudo-holomorphic,Vakil-Zinger-desingu-main-compo-, Hu-Li-GEnus-one-local-VZ-Math-ann-2010,ranganathan2019moduli, Hu-Li-genus-two, battistella2020smooth, niu1, niu2}. The resulting numbers are called \emph{reduced} Gromov--Witten invariants.

\begin{main_result}\label{main_result_intro}
    We define reduced Gromov--Witten invariants in any genus for:
    \begin{enumerate}
        \item hypersurfaces in projective spaces when $d > 2g-2$ (see \Cref{def:reduced_GW_invariants} and Assumption \ref{assum:d,big,2g,minus2}); 
        \item  more generally, certain GIT quotients of vector spaces (see Definition \ref{def:reduced GW general}, Assumption \ref{assum:general} and \Cref{recalling quasimaps}).
    \end{enumerate}
\end{main_result}

 In genus one and two, our reduced Gromov--Witten invariants of complete intersections in a projective spaces agree with the reduced invariants defined previously. The main tool for our construction is the general flattening result of \Cref{main_theorem_blowup_stacks}.

\vspace{0.2cm}
As noted above, reduced invariants should correspond to the contribution of a class supported on the \textit{main component} of the moduli space, that is, the closure of the locus where the source curve is smooth. While the geometry of $\Gwx$ is in general inaccessible, there are ways to study it when the target is $\bbP^r$. Indeed, under the degree assumptions of \Cref{main_result_intro}, the main component $\mGw$ of stable maps to $\bbP^r$ is actually an irreducible component of the correct dimension (see \Cref{prop:main_component_Pr_open_in_tors_free}). 
For $X$ a hypersurface in $\bbP^r$, the main component $\mGwx$ is defined as the zero locus of a section of a \emph{sheaf} on $\mGw$. To endow $\mGwx$ with a virtual fundamental class, we give a modification of $\mGw$ which makes this sheaf locally-free. In the following, we explain how we can achieve this by improving the singularities of $\Gw$ and how this leads to a definition of reduced Gromov--Witten invariants for $X$.

A key ingredient in understanding $\Gw$ is the description of maps to $\bbP^r$ as the data of a genus $g$ curve $C$, a line bundle $L$, and $(r+1)$ sections \cite{marian2011moduli,C-FK}. If $\ffP$ is the Artin stack parametrizing nodal curves and line bundles of degree $d$ (see \eqref{eq:F_sheaf} and \eqref{eq:cone,moduli}), then $\Gw$ is an open subset of  $\Spec\Symm(\ffF)$ -- the total space of a coherent sheaf $\ffF$ over $\ffP$. The fiber of $\ffF$ at $(C,L)$ is the dual of $H^0(C,L)^{\oplus r+1}$, so we have

\[\begin{tikzcd} \Gw \arrow[rd]\arrow[hookrightarrow,"\text{open}"]{r} & \Spec\Symm(\ffF)\arrow[d] & H^0(C,L)^{\oplus r+1}\arrow[l,hook']\arrow[d] \arrow[dl, phantom,"{\rotatebox{-90}{$\ulcorner$}}", very near start]\\ & \ffP&(C, L)\arrow[l,hook']\text{.} \end{tikzcd}
\]

Under the degree assumption in \ref{assum:general}, the main component $\mGw$ is an open subset of the total space of the torsion-free quotient $\ffF^{\tf} \coloneqq \ffF/\tor(\ffF)$. The torsion of $\ffF$ gives rise to components of $\Gw$ supported on the locus of those $(C,L)$ such that $H^1(C,L)$ is nontrivial. This occurs when $C$ has components of high genus and the degree of $L$ on these components is low.

To improve the singularities of $\Gw$, we give a birational modification $p:\widetilde{\ffP}\to\ffP$ such that the torsion-free part of $p^*\ffF$ is locally free. This allows us to define a birational transformation $\tmGw\to \mGw$ with $\tmGw$ open in $\Spec\Symm(p^*\ffF)^{\tf}$, thus smooth over $\widetilde{\ffP}$. This is summarized by the following diagram
\[
\begin{tikzcd} \tmGw \arrow[hookrightarrow, "\text{open}"]{r} & \Spec\Symm(p^*\ffF)^\tf\arrow[d, "\text{smooth}", swap]\arrow[r]& \Spec\Symm(\ffF)\arrow[d]\\ & \widetilde{\ffP}\arrow[r,"p"]&\ffP\text{.} \end{tikzcd}
\]

The base change $\tmGwx$ of $\mGwx$ is a proper stack with a (relative) perfect obstruction theory. Indeed, after base change by $p$, $\tmGwx$ becomes the zero locus of a section of a vector bundle on $\tmGw$ (\Cref{prop: no k}). Since $\tmGw$ is also smooth, this gives a perfect obstruction theory of $\tmGwx$ relative to $\widetilde{\ffP}$. While $\widetilde{\ffP}$ is not necessarily smooth, it is still pure dimensional, so the perfect obstruction theory induces a virtual fundamental class. We define reduced Gromov--Witten invariants for $X$ in \Cref{subsec: def reduced GW} as intersection numbers against this class.

The process above desingularizes the main component of $\mGw$, but it leaves the boundary components of $\Gw$ unchanged. We use a second type of birational modification of $\ffP$ to also make the torsion of $\ffF$ into vector bundles (of possibly different ranks) over closed substacks of a new $\widetilde{\ffP}$. 
Conveniently, this new birational modification also flattens the obstruction sheaves. In \Cref{subsec:maps_with_fields}, we combine this with the $p$-fields approach of Chang--Li \cite{Chang-Li-maps-with-fields} to obtain a decomposition of the virtual fundamental class of $\Gwx$ into cycles supported on irreducible components of the moduli space (see the discussion around \Cref{pfield_decomp}), and in particular a different construction of reduced invariants.

\vspace{0.2cm}
\paragraph{\bf Main constructions}
\Cref{main_theorem_blowup_stacks} below provides two different birational modifications for $\ffP$ which make $\ffF^{\tf}$ locally-free, each of which can be used to give a definition of reduced Gromov--Witten invariants, as explained above. The first is minimal, in the sense it satisfies the universal property in \Cref{nontechmain}, part (1), the second also allows us to associate classes to other boundary components. In \Cref{invariance-red} we show that reduced Gromov--Witten invariants are independent of the birational modification of $\ffP$. In particular, both constructions give the same reduced Gromov--Witten invariants.

\begin{theorem}[See \Cref{thm:construction-blowup-stacks,Rossi univ,theorem:universal_property_diag_stacks}]\label{main_theorem_blowup_stacks}\label{nontechmain}
    Let $\ffP$ be an integral Noetherian Artin stack with affine stabilizers admitting an integral presentation and $\ffF$ be a coherent sheaf on $\ffP$. 
    
    There exist integral Noetherian Artin stacks $\Bl_{\ffF}\ffP$ and  $\blHL_{\ffF}\ffP$ together with representable proper birational morphisms $\pi\colon\Bl_{\ffF}\ffP\to\ffP$ and $\rho\colon \blHL_{\ffF}\ffP\to\ffP$ satisfying the following universal properties:
    \begin{enumerate}
    \item  For any morphism of stacks $p:\ffY\to\ffP$ such that $(p^*\ffF)^\tf$ is locally free of the same generic rank as $\ffF$, there is a unique morphism $p'$, which makes the following diagram 2-commutative
    \[
    \begin{tikzcd}
        \ffY\ar[r,"\exists ! p'", dashed]\ar[dr,"p" '] & \Bl_{\ffF}\ffP\ar[d,"\pi"]\\
         & \ffP
    \end{tikzcd}
    \]
    
    \item For any morphism of stacks $f:\ffY\to \ffP$ such that $f^*\ffF$ is diagonal of the same generic rank as $\ffF$, there is a unique morphism $f'$, which makes the following diagram 2-commutative:
    \[
    \begin{tikzcd}
        \ffY\ar[r,"\exists ! f'", dashed]\ar[dr,"f" '] & \blHL_{\ffF}\ffP\ar[d,"\rho"]\\
         & \ffP
    \end{tikzcd}
    \]
    \end{enumerate}
\end{theorem}

We call the stack $\Bl_\ffF\ffP$ introduced above the Rossi blow-up. This is a stacky version of the Raynaud-Gruson flattening \cite{Raynaud-Gruson-71}. This construction does not change torsion sheaves.

 We call $\Bl_\ffF^{HL}\ffP$ the Hu--Li blow-up. The Hu--Li construction also produces a sheaf $p^*\ffF$ whose torsion-free part is locally free. In addition to this, $p^*\ffF$ also has a well-behaved torsion in the sense of \Cref{def:diagonalizable}. For schemes, this is achieved by a construction of Hu and Li \cite{Hu-Li-diagonalization} and of Grivaux \cite[Proposition 12]{Grivaux}. 

The first construction is a minimal birational modification of $\ffP$. This is sufficient to define reduced Gromov--Witten invariants. However, in view of existing proofs of \Cref{conj gv} it is convenient to work with the Hu--Li blow-up. In general, the Rossi construction is different from the Hu--Li construction (see \Cref{example:Hu-Li_Rossi_different}). By \Cref{example: charts [ab[cd]]}, the Rossi construction gives a new moduli space which is different from the Vakil--Zinger blow-up from \cite{Vakil-Zinger-desingu-main-compo-}. More explanations on this can be found in \Cref{subsec:maps_with_fields}.

 \vspace{0.3cm}
\paragraph{\bf Relation to previous approaches} The structure of this paper is different from the ones in \cite{Vakil-Zinger-desingu-main-compo-, Hu-Li-GEnus-one-local-VZ-Math-ann-2010, Hu-Li-genus-two, niu1, niu2}. In the mentioned papers, the authors have a three-step strategy to constructing the stack $\tmGw$ which compactifies the locus of maps with smooth domain: 
\begin{enumerate}
\item they find equations of local embeddings of charts $W_i\to \Gw$ in smooth spaces $V_i$; 
\item they blow up $W_i$ to obtain $\widetilde{W_i}$; 
\item they show that $\widetilde{W_i}$ glue to a (smooth) stack $\tmGw$. 
\end{enumerate}
The first step becomes involved already in genus two, due to the rather complicated geometry of the moduli space of stable maps. Steps 2 and 3 are done by constructing an explicit blow-up of $\Pic$ (or $\ffM_{g,n}$). Finding a candidate for this blow-up is the hardest part of the constructions in \cite{Vakil-Zinger-desingu-main-compo-} and \cite{Hu-Li-genus-two}. 

In this work, we omit Step 1 completely. For us, Step 2 is minimal in a suitable sense -- it is given by a universal property. The main ingredient in Step 3 is that the (local) constructions proposed in \Cref{sec:desing_sheaf} and in \Cref{sec:diagonalization} commute with flat pullbacks and this allows us to glue them. Explicit equations of the charts $W_i$ are thus not necessary to construct $\tmGw$. The advantage of this approach is that the gluing is conceptual and straightforward. This is similar to what Hu and Li do in \cite{Hu-Li-diagonalization} -- our construction heavily relies on their ideas.

 \vspace{0.3cm}
\paragraph{\bf Relation to Gromov--Witten theory.}  Genus one reduced invariants for varietes of any dimension are related to Gromov--Witten invariants \cite{Zinger-standard-vs-reduced}. For threefolds the relation is conjecturally much simpler. Let $X$ be a threefold under the assumptions in \Cref{recalling quasimaps} and let $\gamma\in H^*(X)^{\oplus n}$ be a collection of cohomology classes of $X$. Let $N_\beta^g(\gamma)$ be the genus $g$ and degree $\beta$ Gromov--Witten invariants of $X$ with insertions given by $\gamma$, and let $r_\beta^g(\gamma)$ be the corresponding reduced invariants. In particular, for a Calabi-Yau threefold $X$, the moduli space has virtual dimension 0 and one defines the Gromov--Witten invariants and reduced Gromov--Witten invariants withouth insertions as $N_{\beta}^g$ and $r_{\beta}^g$.
  
  \begin{conjecture}\label{conj one}\cite{Zinger-reduced-g1-CY}, \cite[Conjecture 1.1]{Hu-Li-diagonalization}
 Let $X$ be a Calabi--Yau threefold. Then, there are universal constants $C_h(g)\in\bbQ$, such that for ${\rm deg}(\beta)>2g-2$, we have
 \[N_\beta^g=\sum_{0\leq h\leq g} C_h(g)r_\beta^h.\]
 \end{conjecture}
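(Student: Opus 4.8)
Since this is an open conjecture, I describe the line of attack that the constructions developed above are designed to enable, rather than a complete argument. The plan is to express both sides as integrals over one moduli space and to match them stratum by stratum on a desingularization. Realize $X$ as a Calabi--Yau complete intersection of multidegree $(l_1,\dots,l_k)$ in $\bbP^r$ (or as a GIT quotient as in \Cref{recalling quasimaps}). Then $N_\beta^g$ is computed on $\Gw$ by capping $[\Gw]^{\vir}$ against a virtual Euler class of $\bigoplus_i R\pi_* f^*\ccO(l_i)$, and on the locus where the domain carries a smooth genus $g$ component of positive degree this reduces to the top Chern class of the locally free sheaf $\ccF:=\bigoplus_i\pi_* f^*\ccO(l_i)$; while $r_\beta^g$ is, by Definition \ref{def:reduced GW general}, the analogous integral over the Hu--Li desingularization $p:\tGw\to\Gw$, on which $p^*\ccF$ is a union of locally free sheaves, restricted to the component dominating the locus of maps with smooth domain. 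The first step is to pull the Gromov--Witten integral back to $\tGw$, where $[\Gw]^{\vir}$ refines and $p^*\ccF$ decomposes, splitting $N_\beta^g$ into a sum of contributions indexed by the weighted dual graph recording how the arithmetic genus is distributed between the distinguished positive-degree subcurve and the nodal configuration of contracted rational components.

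The second step is to evaluate each summand. On the open stratum one recovers exactly $r_\beta^g$, with coefficient $1$. On a stratum where a genus $h$ positive-degree subcurve of class $\beta$ is glued to a genus $g-h$ curve built from contracted components, an excess-intersection computation should factor the contribution as $r_\beta^h$ times a Hodge-type integral over $\overline{\ccM}_{g-h,\bullet}$ of a tautological class assembled from $\psi$-classes and $\lambda$-classes --- the class measuring the locally free pieces of $p^*\ccF$ that are ``lost'' along that stratum, the ghost part contributing a copy of the dual Hodge bundle since $f^*\ccO(l_i)$ is trivial on contracted components. Because $X$ is Calabi--Yau, $c_1(\ccF)$ pulls back from a fixed divisor independent of $\beta$, and the hypothesis $\deg\beta>2g-2$ forces $H^1$ of each $\ccO(l_i)$ to vanish on every positive-degree subcurve, pinning down the rank of $p^*\ccF$ on each stratum and killing the strata in which the positive-degree part carries ``too much'' genus; thus only $h\le g$ survive, and the strata with the degree split among several positive-degree components must be shown to reorganize into (or vanish against) the $r_\beta^h$ terms of full class $\beta$. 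The coefficient of $r_\beta^h$ is then the stated tautological integral over moduli of curves, visibly independent of $X$ and of $\beta$; one defines $C_h(g)$ to be this number, with $C_g(g)=1$. Specializing to $g=1,2$ and checking agreement with the formulas of Zinger, Li--Zinger and Hu--Li would both validate the method and determine $C_0(1),C_1(1)$ and $C_0(2),C_1(2),C_2(2)$ explicitly.

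The crux is the second step, and more precisely proving that the local excess bundle along each intermediate stratum depends only on the combinatorial type, and not on $\bbP^r$, the $l_i$, or $d$. This is exactly what is carried out by hand in the genus one and two blow-up analyses of Vakil--Zinger and Hu--Li; the new input required in higher genus is an inductive, functorial description of how $p:\tGw\to\Gw$ interacts with the normalization-at-a-node and gluing morphisms $\overline{\ccM}_{h,\bullet}(\bbP^r,d)\times\overline{\ccM}_{g-h,\bullet}\to\Gw$, so that the contribution of a stratum genuinely factors into a ``reduced invariant of the positive part'' times a ``universal curves integral''. Establishing this compatibility --- a functoriality of the Hu--Li and Rossi constructions with respect to the relevant boundary maps --- is the principal obstacle, and it is precisely the kind of statement for which the stack-level constructions above, as opposed to ad hoc blow-ups tailored to a fixed genus, are meant to be the right framework; the remaining bookkeeping over weighted dual graphs is then combinatorial, and parity or Mumford-type relations among the resulting Hodge integrals may account for the vanishing of many $C_h(g)$.
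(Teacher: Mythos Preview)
This statement is a conjecture, and the paper does not prove it; it only sketches a line of attack in the Introduction and in \Cref{subsec:maps_with_fields}. Your proposal should therefore be compared against that sketch, not against a completed proof.

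The paper's proposed route differs from yours in one essential respect: it goes through maps with $p$-fields and cosection localization rather than through a direct quantum-Lefschetz / excess-intersection computation. Concretely, the paper invokes \Cref{fields to hypersurface} to replace $[\Gwx]^{\vir}$ by the cosection-localized class $[\Gw^p]^{\vir}_\sigma$, pulls this back along the Hu--Li blow-up of $\Pic$ (not of $\Gw$), and then proposes to split the intrinsic normal cone $\ffC_{\tGw^p/\wPic_k}$ into irreducible components $\ffC_i$, each contributing a localized class $[\tGw^p]^{\vir}_i$. The main component recovers $r_\beta^g$ by \Cref{red inv fields}; the remaining components are then conjectured (end of \Cref{subsec:maps_with_fields}) to contribute constants times lower-genus reduced invariants. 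This is precisely the mechanism by which Chang--Li, Lee--Oh and Lee--Li--Oh established the genus one and two cases, and the paper's point is that blowing up $\Pic$ rather than $\Gw$ makes these ingredients available in all genera.

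Your starting point --- computing $N_\beta^g$ on $\Gw$ by capping $[\Gw]^{\vir}$ against a virtual Euler class of $\bigoplus_i R\pi_* f^*\ccO(l_i)$ --- is exactly the step that fails naively in higher genus and that $p$-fields were introduced to repair; it is not clear that pulling back to the desingularization alone restores such an identity, and the paper's detour through cosection localization is what makes the splitting tractable without any control on the geometry of $\Gwx$. Your stratum-by-stratum excess analysis and the identification of ghost contributions with Hodge integrals is closer in spirit to Zinger's original symplectic arguments than to the algebraic route the paper advocates. Both strategies ultimately require the same missing ingredient --- showing each boundary contribution depends only on combinatorial data and not on $(X,\beta)$ --- but the paper's framework reduces this to a statement about components of a normal cone over $\wPic_k$ rather than to functoriality of the blow-up under gluing maps.
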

 When $X$ is the quintic threefold, the above formula in genus one is the formula in \cite{Zinger-reduced-g1-CY, li2009genus}
 \begin{equation}\label{quintic g one}N_d^1=\frac{1}{12}N_d^0+ r^1_d.
 \end{equation}
 
  If $X$ is a Fano threefold, the reduced invariants are expected to be equal to Gopakumar--Vafa invariants. Indeed, the Gopakumar--Vafa invariants are by definition related to Gromov--Witten invariants by a formula which takes into account degenerate lower genus and lower degree boundary contributions. For Fano varieties, there are no lower degree contributions. Boundary contributions were computed by Pandharipande in \cite{pandharipande1999hodge}. The conjectural equality between reduced Gromov--Witten invariants and Gopakumar--Vafa invariants (see \cite[Section 0.3]{pandharipande1999hodge}) for Fano threefolds gives the following.
 \begin{conjecture}\label{conj gv}\cite{Zinger-reduced-g1-CY, zinger2011comparison}
 Let $X$ be a Fano threefold and let $C_{h,\beta}^X(g)$ be defined by the formula
 \[\sum_{g\geq 0}C_{h,\beta}^X(g)t^{2g}=\left(\frac{\sin (t/2)}{t/2}\right)^{2h-2-K_X\cdot\beta}.\] 
 Then, we have the following
 \[
 N_\beta^g(\gamma)=\sum_{h=0}^g C^X_{h,\beta}(g-h)r_\beta^h(\gamma).
 \]
 \end{conjecture}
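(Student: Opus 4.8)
\emph{Proof proposal.} This is a hard conjecture, but the natural line of attack is to deduce the identity by localising the ordinary virtual class of $\overline{\mathcal{M}}_{g,n}(X,\beta)$ along the stratification by the \emph{core} of a stable map. For a stable map $f\colon C\to X$ with nodal domain, let $C_0\subseteq C$ be the smallest connected subcurve on which $f$ is non-constant; it has some arithmetic genus $h\le g$, and $C\setminus C_0$ is a disjoint union of contracted subtrees of total genus $g-h$ attached to $C_0$. Because $X$ is Fano, a contracted component carries no curve class and there are no contributions from classes of lower degree, so the whole class $\beta$ is supported on $C_0$; consequently the strata of $\overline{\mathcal{M}}_{g,n}(X,\beta)$ are indexed, up to the combinatorics of how the ghost trees and the $n$ markings are distributed, purely by the core genus $0\le h\le g$, and the recursion will stay within the fixed class $\beta$. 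I would then set up a compatibility of virtual classes for this stratification --- a splitting/gluing formalism applied to the ``core versus ghost'' decomposition rather than to a single node --- and reduce the computation of $N^g_\beta(\gamma)$ to, for each $h$, the product of a \emph{core contribution} and a universal \emph{ghost contribution}.

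For the core contribution I would replace the closure of the locus of genus-$h$ maps with smooth domain, equipped with the virtual class restricted from $\overline{\mathcal{M}}_{g,n}(X,\beta)$, by the desingularisation $\widetilde{\mathcal{M}}^\circ_{h,n}(X,\beta)$ furnished by the Rossi and Hu--Li constructions of this paper, and identify the resulting number with the reduced invariant $r^h_\beta(\gamma)$ of Definition~\ref{def:reduced GW general}. The ghost contribution is a universal rational number $C^X_{h,\beta}(g-h)$: the obstruction carried by a contracted genus-$a$ subtree mapping to a point $p\in X$ is built from $H^1(-,\mathcal{O})\otimes T_pX$ together with the node-smoothing parameters, so after summing over all ways of shaping and attaching the ghost trees of total genus $g-h$ --- a combinatorial step that can be organised using the string and dilaton relations on moduli of contracted curves --- one is left with a Hodge integral depending only on $h$, $g-h$ and the Euler characteristic $\chi(C_0,f^*T_X)=-K_X\cdot\beta+3(1-h)$ of the core. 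By Mumford's Grothendieck--Riemann--Roch description of the Chern character of the Hodge bundle and the Faber--Pandharipande evaluations of $\lambda$--$\psi$ integrals --- equivalently, by Pandharipande's computation of degenerate contributions in \cite{pandharipande1999hodge} --- the generating series $\sum_{k\ge0}C^X_{h,\beta}(k)\,t^{2k}$ should, after the expected shift, be a power of $\dfrac{\sin(t/2)}{t/2}$ with exponent exactly $2h-2-K_X\cdot\beta$. Specialising to $g=1$ should recover the established genus-one comparison, with the coefficient $\tfrac1{12}$ appearing as the $t^2$-coefficient of $\bigl(\tfrac{\sin(t/2)}{t/2}\bigr)^{-2}$ in the Calabi--Yau case.

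The main obstacle is the core-contribution step. The reduced virtual class is defined through a nontrivial modification --- a blow-up along the sheaf $\ffF$ governing the quasimap obstruction theory --- so the obstruction theory of the genus-$h$ locus inside $\overline{\mathcal{M}}_{g,n}(X,\beta)$ is not simply the pullback of the reduced one; one needs a comparison of the two perfect obstruction theories over $\widetilde{\mathcal{M}}^\circ_{h,n}(X,\beta)$ and a blow-up formula for virtual classes, i.e.\ the higher-genus analogue of the Vakil--Zinger and Hu--Li arguments in genus one and two, which for GIT quotients should be within reach of the quasimap presentation and the sheaf desingularisations of this paper. A second delicate point, responsible for the precise exponent, is the interaction at the attaching nodes between the ghost obstruction bundle and $f^*T_X|_{C_0}$: producing the exponent $2h-2-K_X\cdot\beta$ requires a cancellation between the rank-three Hodge contribution and the terms coming from smoothing the attaching nodes and from the core's own deformations, a cancellation special to threefolds and reflecting the splitting of $T_X$ along the image into a tangent line bundle and a rank-two normal bundle. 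Tracking this uniformly in $h$ and $\beta$ through the Hodge-integral computation is where the real work lies; an alternative route --- a degeneration formula, or a quasimap wall-crossing directly relating the $\epsilon=\infty$ and reduced theories --- might bypass the explicit obstruction-theory comparison at the cost of more intricate bookkeeping of boundary terms.
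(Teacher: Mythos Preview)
The statement you are addressing is a \emph{conjecture} in the paper, not a theorem; the paper contains no proof of it. The authors explicitly state that the conjecture is known only in genus one and two, and that they ``will address this problem in future work.'' So there is no proof in the paper for your proposal to be compared against.

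That said, your sketch is a reasonable outline of the expected mechanism, and you are honest that the core-contribution step is the obstacle. It is worth noting that the approach the paper itself advertises for future work is somewhat different from yours. You propose a direct stratification of $\overline{\mathcal{M}}_{g,n}(X,\beta)$ by core genus together with a Hodge-integral evaluation of the ghost contributions in the spirit of \cite{pandharipande1999hodge}. The paper instead emphasises the $p$-fields route: one passes to the moduli of stable maps with fields, pulls everything back along the Hu--Li diagonalisation $\wPic_k\to\Pic$, and then uses cosection localisation to split the localised virtual class $[\tGw^p]^{\vir}_{\widetilde{\sigma}}$ along the irreducible components of the abelian cone, each of which is a vector bundle over its support by \Cref{theorem:cone_as_a_union}. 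This is the method of Chang--Li, Lee--Oh, and Lee--Li--Oh in low genus, and the paper's point is that the diagonalisation of $\ffF$ on $\Pic$ (rather than on $\Gw$) supplies the missing ingredients to run it in arbitrary genus. Your Hodge-integral picture would then emerge \emph{a posteriori}, once the virtual class has been split and the contribution of each non-main component is identified with a lower-genus reduced class times a universal constant.

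The genuine gap in your proposal is the one you name yourself: you do not have a mechanism for comparing the restriction of the ambient obstruction theory to the genus-$h$ stratum with the reduced obstruction theory on $\widetilde{\mathcal{M}}^\circ_{h,n}(X,\beta)$. Without that comparison --- which in the paper's framework is precisely the splitting of the intrinsic normal cone of $\tGw^p$ over $\wPic_k$ --- the argument does not close, and this is exactly why the statement remains a conjecture.
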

The above should also hold in the Calabi--Yau case, where $C_{h,\beta}^X(g)$ do not depend on $X$ and $\beta$.

The above conjectures have been proved in genus one and in genus two for quasimaps \cite{Lee-Li-Oh-quantum-lefschetz}. These are the only cases in which a suitable definition of reduced invariants existed prior to this work.

The conjecture in low genus was proved with symplectic methods \cite{li2007gromov} \cite{li2007gromov} and algebraic methods \cite{Chang-Li-hyperplane-property}, \cite{Lee-Li-Oh-quantum-lefschetz}. The main idea of the algebraic proof is to use an additional space of maps with fields, whose geometry is similar to the geometry of the moduli space of stable maps to a projective space. After a sequence of blow-ups, the moduli space of maps with fields becomes a union of components which are well understood: they are smooth of dimension greater or equal to the expected dimension. This allows an expression of the virtual fundamental class on the moduli space of maps with fields as a sum of classes on the components of the moduli space. An analysis of these classes then gives the relation between standard and reduced Gromov--Witten invariants. The Hu--Li blow-up of $\Pic$ provides such an auxiliary moduli space to tackle the conjecture in all genera.

\vspace{0.3cm}
\paragraph{\bf History and related works: Gromov--Witten Theory}
Reduced genus 1 invariants are the output of a long and impressive project. Reduced invariants in genus one were defined using symplectic methods and compared to Gromov--Witten invariants by Zinger \cite{Zinger-standard-vs-reduced, Zinger-natural-cones, Zinger-reduced-g1-GW, Zinger-reduced-g1-CY}. Li--Zinger showed \cite{li2007gromov, li2009genus} that reduced genus one Gromov--Witten invariants are the integral of the top Chern class of a sheaf over the main component of $\Gw$. This is an analogue, for reduced genus 1 invariants, of the quantum Lefschetz hyperplane property \cite{li2007gromov, li2009genus}. In view of \cite{Zinger-reduced-g1-GW}, this also gives a proof of the formula \eqref{quintic g one}. The algebraic definition requires a blow-up construction for the moduli space of stable maps to projective space, due to Vakil and Zinger \cite{Vakil-Zinger-desingu-main-compo-,Vakil-Zinger-compactification-elliptic-curves}. See~\cite{zinger2020some} for a survey from the symplectic perspective.

Explicit local equations for the Vakil--Zinger blow-up in genus one are given in \cite{Zinger-genus-one-pseudo-holomorphic,Hu-Li-GEnus-one-local-VZ-Math-ann-2010} and in genus two in \cite{Hu-Li-genus-two}. It is expected that the methods used in low genus could provide local equations for general moduli spaces of stable maps to projective spaces, but the combinatorics are likely to be quite involved. 

In \cite{Hu-Li-GEnus-one-local-VZ-Math-ann-2010, Hu-Li-genus-two, niu1, niu2} the authors give a modular interpretation of reduced invariants in terms of graphs of degenerate maps. A modular interpretation via log maps has been given by Ranganathan, Santos-Parker and Wise \cite{ranganathan2019moduli, ranganathan2019algebra}. 

Hu and Li introduce the diagonalization construction in \cite{Hu-Li-diagonalization}. They use this construction to define an Euler class on the moduli space of stable maps to projective spaces. This gives a non-intrinsic definition of reduced invariants of complete intersections. \Cref{conj gv} is hard to approach with this definition. In this paper, we rework their construction.

In a different direction, instead of replacing the moduli space of maps with a space which dominates $\tmGwx$, one can construct a space dominated by $\tmGwx$. This has been done by moduli spaces of maps from more singular curves, such as in \cite{battistella2020reduced, battistella2020smooth}. A modular interpretation comes for free with this approach, which makes these constructions particularly beautiful. A relationship between reduced invariants and invariants from maps with cusps was established in~\cite{battistella2020reduced}. Battistella and Carocci introduce a compactification of genus two maps to projective spaces \cite{battistella2020smooth}. An example of this compactification is given in \cite{battistella2022geographical}.

More recently, reduced invariants for the quintic threefold have been compared to Gromov--Witten invariants using algebro-geometric methods by Chang and Li \cite{Chang-Li-hyperplane-property}. Chang--Li \emph{define} reduced invariants as the integral against the top Chern class of a sheaf but, as discussed above, this gives the same reduced invariants as \cite{Zinger-reduced-g1-GW}. The algebraic comparison relies on the construction of maps with fields due to Chang and Li \cite{Chang-Li-maps-with-fields}, and on Kiem--Li's cosection localised class \cite{kiem2013localizing}. This method has been employed in \cite{Lee-Oh-reduced-complete-intersections-2,Lee-Oh-reduced-complete-intersections} to extend the genus one relation between absolute and reduced Gromov--Witten invariants of complete intersections. In genus two, a similar work is done in \cite{Lee-Li-Oh-quantum-lefschetz}.

Zinger has computed reduced genus one invariants of projective hypersurfaces via localisation~\cite{Zinger-reduced-g1-CY}. The computations in \cite{Zinger-genus-one-pseudo-holomorphic} and \cite{Zinger-standard-vs-reduced} have been extended to complete intersections by Popa \cite{popa2013}.

Shortly after our work was completed, Nesterov defined Gopakumar--Vafa invariants \cite{nesterov2024unramified} by proving a relation between Gromov--Witten invariants and invariants defined using the moduli space of unramified maps \cite{kim2014compactification}. In view of this, reduced invarants of Fano threefold conjecturally agree with unramified maps invariants. We expect these invariants to be different for varieties which are not Fano.

\vspace{0.3cm}
\paragraph{\bf History and related work: Flattening and Nash transformations}

Given a scheme (or a stack) $\ffP$ and $\mathcal{\ffF}$ a coherent sheaf on it, in this paper we loosely refer to a birational map $\widetilde{\ffP}\to \ffP$ such that the pull-back of $\mathcal{\ffF}$ is better behaved as the blow-up of $\ffP$ at $\mathcal{\ffF}$. This is compatible to the notion of blow-up of a ring at a module in \cite{Villamayor}. We do not define general blow-ups of schemes (or stacks) at a sheaf, but we use the notation $\Bl_{\mathcal{\ffF}}\ffP$ for certain more specific birational transformations.

\emph{Blow-ups of sheaves} have an interesting history: they appear in different contexts and they seem to have been re-discovered several times. As a consequence, the same constructions appear in the literature with different names. Given their scattered appearance, we decided to review some of these constructions in detail and to discuss relations between them.

The first construction that we are aware of is a blow-up of a sheaf in the analytic category, introduced by Rossi. It is a short geometric construction in the little known paper \cite{Rossi} from 1968. The main focus of Rossi's paper is different and his blow-up is just a technical tool. A more general, relative version was proved later by Hironaka \cite{Hironaka-75}. Perhaps the most influential flattening theorem was proved by Raynaud--Gruson \cite{Raynaud-Gruson-71} in great generality. Their result is relative to a possibly non-Noetherian base scheme and their proof is involved. The Noetherian case is treated also in \cite{Raynaud1972}. On an integral Noetherian scheme, Rossi's construction is also known as the \emph{Nash transformation} of a sheaf \cite{Oneto-Zatini-1991}. 
In the particular case when the sheaf is the cotangent of the given scheme, it is called the \emph{Nash blow-up} \cite{Nobile}. In addition to these, Rossi's geometric construction was recently re-introduced in \cite{curto2013threefold} by Curto and Morrison. They call it a \emph{Grassmann blow-up}. None of these authors seemed to be aware of Rossi's paper\footnote{In line with previous authors, but less excusably in the age of the internet, we started this project unaware of the various constructions in the literature. Jarod Alper, Ananyo Dan, Evgeny Shinder and Michael Wemyss kindly brought several references to our attention.}. 

The most established term in the literature appears to be \emph{flattening} (or occasionally flatification). However, we consider the construction by Rossi, Nash, and Curto--Morrison a significant special case of the Raynaud--Gruson and Hironaka flattening, deserving its own name: it stands out as simple and geometric. While we acknowledge the contributions of each of the authors above, we have decided to refer to it as the \emph{Rossi construction} or Rossi blow-up.

Of particular interest to us are blowings-up which enjoy a universal property. A universal property is already mentioned in \cite{Raynaud-Gruson-71,Raynaud1972}, where they also established the connection with Fitting ideals under additional assumptions. This connection became more explicit in \cite{Oneto-Zatini-1991,Villamayor}.

A \textit{diagonalization construction} was introduced in \cite{Grivaux} for analytic manifolds and in \cite{Hu-Li-diagonalization} for algebraic stacks.
This has similarities with the Rossi construction, although its flavour is more algebraic. One of its advantages is that it is quite simple. Our approach is greatly influenced by their construction.

Nash blow-ups appear naturally in singularity theory. We mention \cite{Nobile,Spivakovsky-Sandwiched, Nash_blowup_fails} from a vast literature. The history of this problem is explained in \cite{Spivakovsky}. The Raynaud--Gruson flattening was also used in birational geometry. For example, Curto and Morrison conjectured that the Grassmann blow-up gives a theoretical way of constructing all smooth 3-fold flops. This was proved in \cite{gustavsen2018deformations}. In \cite{romano2023blow} the authors discuss singularities of certain Raynaud--Gruson blow-ups of surfaces.

The result of Raynaud--Gruson received further interest very recently: it has been reproved by Guignard \cite{guignard2021new} and we have been informed by Rydh that he is working on generalisations to stacks \cite{rydh, Rydh_equi-flattening}. After our paper was made available, a generalisation to stacks was also proved by McQuillan \cite{mcquillan2024flatteningalgebrisation}. 

Our result is a stacky construction of a particular case of the Raynaud--Gruson flattening. More precisely, in notation as in \cite[Theorem I.5.2.2]{Raynaud-Gruson-71} we prove the result for $X=S$ and $X$ an integral Noetherian Artin stack. Rydh's and McQuillan's proofs are more general and very different from ours. To our knowledge, our result is the first stacky instance of a Raynaud--Gruson flattening.

\vspace{0.3cm}
\paragraph {\bf Outline of the paper.} 
In the following we give an outline of the paper and we highlight the main results.

In \Cref{sec: background} we fix notation and briefly recall the background notions used, such as Fitting ideals and abelian cones.

In \Cref{sec:desing_sheaf} we introduce the desingularization of a sheaf on a stack (\Cref{def:desingularization}) and we review the minimal desingularization, due to Rossi (\Cref{ss:local_construction}). In \Cref{subsec:Villamayor} we give an algebraic desingularization in terms of Fitting ideals in the affine case, due to Oneto--Zatini and Villamayor. We show in \Cref{subsec: properties univ desing} that the Rossi and Villamayor constructions agree. We show several properties of the minimal desingularization of a sheaf in \Cref{subsec: properties univ desing}.

In \Cref{subsec: diagonal sheaves and morphsims}, we introduce the notion of diagonal sheaf (see \Cref{defi:diag_sheaf}). In \Cref{subsec: construction Hu Li blowup} we recall \Cref{constr: diagonalization_blowup}, due to Hu and Li, which gives the minimal diagonalization of a sheaf. This is formalized in \Cref{subsec: univ property HL blowup} via a universal property (\Cref{thm:universal_property_diagonalization}). In \Cref{subsec:properties_HL}, we collect properties of the Hu--Li blow-up, such as the existence of a morphism from the Hu--Li blow-up to the Rossi blow-up in \Cref{prop:morphism-HL-Rossi}.  The two blow-ups are not isomorphic in general, as shown in \Cref{example:Hu-Li_Rossi_different}. In \Cref{subsec: filtration diagonal sheaf} we construct a filtration of a diagonal sheaf under certain conditions (see \Cref{thm:cone of diagonalizable sheaf}). This filtration will then be used in \Cref{sec:components_abelian_cones} to describe the irreducible components of the abelian cone of a diagonal sheaf. Finally, in \Cref{subsec: remarks on minimality} we collect some remarks and examples regarding the minimality of the Hu-Li blow-up.

In \Cref{sec:desing_stacks} we generalize the Hu--Li and Rossi blow-ups to Artin stacks. These are constructed in \Cref{subsec:constr_bl_stacks}, by first applying the Rossi and Hu--Li constructions for schemes to an atlas and then gluing. This works because the Hu--Li and Rossi constructions are local, they have a universal property, and they commute with flat base-change by \Cref{prop:map_between_blowups,prop:map_between_blowups_diagonal}. We extend to stacks the results on the schematic versions of these blow-ups, such as the universal properties of desingularization in \Cref{Rossi univ} and of diagonalization in \Cref{theorem:universal_property_diag_stacks}.

\Cref{sec:components_abelian_cones} is devoted to the study the irreducible components of the abelian cone $C(\ccF)$ of a diagonal sheaf. Given a coherent sheaf $\ccF$ on an integral Noetherian scheme $X$, we introduce the main component (\Cref{def: main component abelian cone}) $C(\ccF^\tf)$ of the abelian cone $C(\ccF)$ in \Cref{subsec: main component abelian cone}. General cones need not have a main component (\Cref{ex: tf cone reducible}). Furthermore, if $\ccF^\tf$ is locally free, then $C(\ccF)$ is a pushout of its main component, which is a vector bundle (see \Cref{prop: decomposition cone}). 
The remaining components are studied in \Cref{subsec: decomposing torsion}. The best result is obtained for $\ccF$ a diagonal sheaf: each component of $C(\ccF)$ is a vector bundle over its support (\Cref{theorem:cone_as_a_union}).

In \Cref{section:application-stable-maps} we use the notion of desingularization of a sheaf on a stack to define reduced Gromov Witten invariants in all genera: see \Cref{def:reduced_GW_invariants}. In \Cref{subsec:stable_maps} we recall how $\Gw$ can be naturally embedded as an open substack in an abelian cone over $\Pic$ following \cite{Chang-Li-maps-with-fields}. In \Cref{def:main moduli stable maps} we introduce the main component of $\Gw$, which is compatible with the open embedding in an abelian cone by \Cref{prop:main_component_Pr_open_in_tors_free}. In \Cref{subsec: blowups stable maps Pr} we consider a desingularization $\wPic \to \Pic$ and use it to base change $\Gw$ to a new space $\tGw$. These space is used in \Cref{subsec: def reduced GW} to define reduced Gromov-Witten invariants in any genus for a hypersurface in projective space (\Cref{def:reduced_GW_invariants}). We also show independence of the chosen desingularization in \Cref{invariance-red}. Finally, in \Cref{subsec:maps_with_fields} we recall maps with fields and consider the analogue of $\tGw$ for $p$-fields. These spaces can be used to compute reduced Gromov-Witten invariants (\Cref{red inv fields}). In \Cref{th-blow-up-maps} we describe the irreducible components of the blown-up moduli spaces.

In \Cref{sec:generalization} we extend the definition of reduced invariants to a large class of GIT quotients (see \Cref{recalling quasimaps}) and to quasimaps. In particular, we define reduced invariants for complete intersections, toric varieties and Grassmannians. For convenience, we review quasimaps to these GIT quotients in the first part of \Cref{sec:generalization}.

In \Cref{sec:comparisons} we compare the moduli spaces obtained from the Rossi desingularization and the Vakil--Zinger blow-up. While reduced invariants are independent of the birational model of $\Pic$, the induced moduli spaces can be different. We study charts of $\tGw$ and we show that the Rossi construction in genus one is different from the Vakil--Zinger blow-up.

\vspace{0.3cm}
\paragraph{\bf How to read this paper} 
Sections \ref{sec: background}--\ref{sec:components_abelian_cones} are self-contained and of independent interest. The schematic version of the results in \Cref{sec:desing_stacks} are explained in Sections \ref{sec:desing_sheaf}--\ref{sec:diagonalization}. The reader interested in reduced Gromov--Witten invariants can take the results in \Cref{sec:desing_stacks} for granted and read \Cref{section:application-stable-maps} to \Cref{sec:comparisons} directly. 

\vspace{0.3cm}
\paragraph{\bf Further work}

Our desingularizations do not come with a modular interpretation. It would be nice to have a modular interpretation of the resulting stack $\tGw$, either in the spirit of \cite{Hu-Li-GEnus-one-local-VZ-Math-ann-2010, Hu-Li-genus-two, niu1, niu2}, or a log interpretation as in \cite{ranganathan2019moduli} or \cite{kim2014compactification}.  It would be perhaps better to have a space of maps with more singular domains, as in \cite{battistella2020reduced, battistella2020smooth}. 

While a modular interpretation would be very interesting from a theoretical point of view, higher genus computations as done by Zinger in~\cite{Zinger-reduced-g1-CY} are likely to be hard. The genus two blow-up $\tGw$ already involves several rounds of blow-ups, and a localisation computation would inherit the complexity of the blow-up. We hope that our construction sheds new light on this beautiful problem and will encourage more mathematicians to work on it.

On the positive side, we hope this construction to be useful for proving \Cref{conj gv}. The main difference with \cite{Hu-Li-diagonalization} is that we blow up $\Pic$, instead of blowing up $\Gw$. The advantage of blowing up $\Pic$ is that now we have the ingredients used by Chang--Li, Lee--Oh and Lee--Li--Oh to prove \Cref{conj gv} (and therefore \Cref{conj one}) in genus one and two. More precisely, we have fairly simple moduli spaces of maps with fields over $\widetilde{\Pic}$, and these can be used to split the virtual fundamental class on $\tGw$. We hope to be able to prove \Cref{conj gv} without having explicit equations of $\Gw$, or a modular interpretation of $\widetilde{\Pic}$. We will address this problem in future work. 
\vspace{0.3cm}

\paragraph{\bf Acknowledgements} 
We are very grateful to Evgeny Shinder for useful discussions: the project started after Evgeny Shinder and Ananyo Dan pointed out to us the paper of Rossi \cite{Rossi}.  We also thank Jarod Alper and Michael Wemyss for discussions and for pointing out useful references.

We thank Aleksey Zinger for very useful correspondence and for detailed explanations on \cite{zinger2011comparison} and Francesca Carocci, J\'anos Koll\'ar, Dhruv Ranganathan, Robert Rogers, Evgeny Shinder and the anonymous referee for suggestions which contributed to an improved exposition.

We also wish to express our gratitude to David Rydh for his close reading of this draft, and for his many useful comments,  references and also   \Cref{thm:univ_flatification}.

ACR was supported by Fonds Wetenschappelijk Onderzoek (FWO) with reference G0B3123N.
CM was supported by a Dorothy Hodgkin fellowship.
EM and RP benefit from the support of the French government “Investissements d’Avenir” program integrated to France 2030, bearing the following reference ANR-11-LABX-0020-01 and the PRC ANR-17-CE40-0014. RP was supported by ERC Advanced Grant MSAG.
\section{Background}\label{sec: background}
In this section we recall several basic constructions and fix the notation used throughout the paper.
\subsection{The relative Grassmannian}\label{gras}
Let $X$ be a scheme with a fixed quasi-coherent sheaf $\ccE$. The Grassmannian functor $\underline{\Gr}_X^r(\ccE):((\Sch)/X)^{\op}\to (\Set)$ is given on objects by
\begin{equation}\label{eq:functor_of_points_Gr}
T\mapsto\{\ccE_T\twoheadrightarrow\ccQ\ | \ccQ\mbox{ is locally free of rank r }\}
\end{equation}
with $\ccE_T\coloneqq\ccE\otimes_{\ccO_X}\ccO_T$.

This functor is represented by a scheme $\Gr_X^r(\ccE)$ over $X$, which is projective if $\ccE$ is finitely generated. Moreover, the Grassmannian functor is compatible with base-change. In particular
\[\Gr_X^r(\ccO_X^{\oplus n})\cong\Gr(n,r)\times X\]
where $\Gr(n,r)$ is the usual Grassmannian of $(n-r)$-dimensional subspaces of $\bbC^n$ relative to a point.
Since it represents a functor, the relative Grassmannian $\Gr_X^r(\ccE)$ comes with a universal sheaf and a universal quotient sheaf, which is locally free of rank $r$:
\[
\ccE_{\Gr_X^r(\ccE)}\twoheadrightarrow\ccQ_{\Gr_X^r(\ccE)}.
\]

The relative Grassmannian admits a  The Pl\"{u}cker embedding:
\[
\lambda_{n,r}: \Gr_X^r(\ccO^{\oplus n})\to \Gr_X^1\left(\bigwedge^r \ccO^{\oplus n}\right)\cong \bbP_X^{m-1},
\]
with $m= {n\choose r}$. For the last isomorphism, consider an $X$-scheme $T$. A point of $\Gr_X^1(\ccO^{\oplus m})(T)$ is a surjection
\[
\ccO_T^{\oplus m}\twoheadrightarrow \ccL
\]
with $\ccL$ a line bundle on $T$. This is a pair of a line bundle and an $m$-tuple of generating sections, which is an object of $\bbP_X^{m-1}(T)$.

\subsection{Fitting ideals} \label{subsec:Fitting_ideals}
\begin{definition}
Let $M$ be a finitely presented $R$-module. Let $F\xrightarrow{\varphi} G\to M\to 0$ be a presentation with $F$ and $G$ free modules and $\rk(G)=r$. Given $-1\leq i < \infty$, the $i$-th Fitting ideal $F_i(M)$ of $M$ is the ideal generated by all $(r-i)\times (r-i)$-minors of the matrix associated to $\varphi$ after fixing basis of $F$ and $G$. We use the convention that $F_i(M) = R$ if $r-i\leq 0$ and $F_{-1}(M)=0$.
\end{definition}

Intrinsically, $F_i(M)$ is the image of the map $\bigwedge^{r-i} F\otimes \bigwedge^{r-i}G^*\to R$ induced by $\bigwedge^{r-i}\varphi\colon \bigwedge^{r-i} F \to \bigwedge^{r-i} G$. The $i$-th Fitting ideal is well-defined in that it does not depend on the chosen presentation. Since determinants can be computed expanding by rows and columns, it follows that there are inclusions
\[
   0=F_{-1}(M)\subset F_0(M) \subset F_1(M) \subset \ldots \subset F_k(M) \subset F_{k+1}(M) \subset \ldots
\]
It follows from the definition and right-exactness of tensor product that Fitting ideals commute with base change. That is, given $R\to S$ ring homomorphism and $M$ a finitely presented $R$-module, then
\[
    F_i(M\otimes_R S) = F_i(M)\cdot S.
\]
Similarly, for a scheme $X$ and $\ccF$ a quasi-coherent $\ccO_X$-module of finite presentation, we have ideal sheaves
\[
0=F_{-1}(\ccF)\subset F_0(\ccF)\subset\dots\subset F_n(\ccF)\subset\dots\subset \ccO_X
\]
which can be defined locally as described above. For $f:Y\to X$ a morphism of schemes, we have
\[
f^{-1}F_i(\ccF)\cdot\ccO_Y = F_i(f^*\ccF).
\]
Fitting ideals describe the locus on $X$ where the sheaf $\ccF$ is locally free of some rank. More precisely, we recall the following standard result (see for example  \cite[\href{https://stacks.math.columbia.edu/tag/05P8}{Tag 05P8}]{stacks-project}).
\begin{proposition}\label{prop:Fitting_ideals_rank_connection}
For any $n$, the sheaf $\ccF$ is locally free of rank $n$ on the locally closed subscheme $V(F_{n-1}(\ccF))\setminus V(F_n(\ccF))$ of $X$.
\end{proposition}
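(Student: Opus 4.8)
The statement is local on $X$, so I may assume $X=\Spec R$ and $\ccF=\widetilde M$ for a finitely presented $R$-module $M$; write $I=F_{n-1}(M)$ and $J=F_n(M)$, so that $Y:=V(I)\setminus V(J)$. A point $y\in Y$ corresponds to a prime $\mathfrak p\subset R$ with $I\subseteq\mathfrak p$ and $J\not\subseteq\mathfrak p$, and the restriction $\ccF|_Y$ has stalk at $y$ the finitely presented $\ccO_{Y,y}$-module $N:=M_{\mathfrak p}/I_{\mathfrak p}M_{\mathfrak p}$. Since Fitting ideals commute with base change (recalled just above), $F_{n-1}(N)=I\cdot\ccO_{Y,y}=0$ while $F_n(N)=J\cdot\ccO_{Y,y}=\ccO_{Y,y}$, the latter because $J_{\mathfrak p}=R_{\mathfrak p}$. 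Hence it suffices to prove: if $(S,\mathfrak m)$ is local and $N$ is a finitely presented $S$-module with $F_{n-1}(N)=0$ and $F_n(N)=S$, then $N\cong S^{\oplus n}$.

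\textbf{The argument.} First I compute the fiber dimension: base-changing along $S\to k:=S/\mathfrak m$ and using base-change compatibility again gives $F_{n-1}(N\otimes_S k)=0$ and $F_n(N\otimes_S k)=k$; but for a finite-dimensional $k$-vector space $V$ the presentation $0\to V\to V\to 0$ has zero presenting matrix, so directly from the definition $F_i(V)=0$ for $i<\dim_k V$ and $F_i(V)=k$ for $i\ge\dim_k V$. Comparing, $\dim_k(N\otimes_S k)=n$. Next, Nakayama's lemma lets me lift a $k$-basis of $N\otimes_S k$ to a surjection $\pi\colon S^{\oplus n}\twoheadrightarrow N$; as $N$ is finitely presented and $S^{\oplus n}$ finitely generated, $\Ker\pi$ is finitely generated, and choosing $S^{\oplus m}\twoheadrightarrow\Ker\pi$ and composing yields a presentation $S^{\oplus m}\xrightarrow{A}S^{\oplus n}\xrightarrow{\pi}N\to 0$ whose middle free module has rank exactly $n$. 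By the definition of Fitting ideals applied to this presentation, $F_{n-1}(N)$ is the ideal generated by the $1\times 1$ minors of $A$, i.e. by the entries of $A$. Since $F_{n-1}(N)=0$, the matrix $A$ vanishes, so $\Ker\pi=\operatorname{im}(A)=0$ and $\pi$ is an isomorphism. Unwinding the reduction, $\ccF|_Y$ is locally free of rank $n$.

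\textbf{Main difficulty.} There is no real obstacle — this is why the paper flags it as standard (cf.\ \cite[\href{https://stacks.math.columbia.edu/tag/05P8}{Tag 05P8}]{stacks-project}); the two points that need care are (i) the reduction to a local ring, where one must check that the relevant restriction really has $F_{n-1}$ vanishing and $F_n$ equal to the unit ideal, which is precisely the base-change statement recalled before the proposition, and (ii) keeping the indexing convention straight, by arranging a presentation whose target free module has rank exactly $n$ so that $F_{n-1}(N)$ is literally read off as the ideal of entries of the presenting matrix. If one prefers not to localize, Steps analogous to the above can instead be carried out after inverting a single element of $J$ and passing to the quotient by $I$, at the cost of slightly heavier notation.
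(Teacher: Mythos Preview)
Your proof is correct. The paper does not give its own argument here but simply records the result as standard with a reference to \cite[\href{https://stacks.math.columbia.edu/tag/05P8}{Tag 05P8}]{stacks-project}; your reduction to a local ring via base-change of Fitting ideals, followed by Nakayama and reading $F_{n-1}$ off a minimal presentation, is exactly the standard proof one finds there.
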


We also have the following result \cite[Lemma 0F7M]{stacks-project} (cf \cite[Lemma 1]{Lipman})
 relating Fitting ideals to the projective dimension of a module and the local freeness of its torsion-free quotient.

\begin{proposition} \label{prop:lipman F_r=f}

Let $R$ be a ring and $M$ be a finitely presented module over $R$. Let $r\geq 0$ be such that $F_r(M)=(f)$ for some non zero divisor $f\in R$ and $F_{r-1}=0$. Then
\begin{enumerate}
    \item $M$ has projective dimension $\leq 1$.
    \item $M(f)=\ker(M\xrightarrow{\cdot} f M)$ has projective dimension $\leq 1$.
    \item $M/M(f)$ is locally free of rank $r$.
    \item $M=M/M(f)\oplus M(f)$.
    
\end{enumerate}
\end{proposition}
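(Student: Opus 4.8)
The plan is to reduce everything to a statement about a single free presentation and then apply the theory of Fitting ideals together with the structure of modules of projective dimension $\leq 1$. First I would fix a finite free presentation $F \xrightarrow{\varphi} G \to M \to 0$ with $\rk G = r$ chosen so that $F_r(M) = (f)$ is exactly the ideal generated by the $0\times 0$ minors... wait, one must be careful: with $\rk G = r$, the ideal $F_r(M)$ is generated by $(r-r)\times(r-r)$ minors, which by convention is $R$. So instead the presentation should be arranged so that the relevant Fitting ideal appears at the right index; concretely, after a harmless stabilization one may assume $\rk G = r + 1$ — or more simply, keep the hypothesis as stated and work with a presentation where $G$ has rank equal to the rank of the generic stalk of $M$, so that $F_{r-1}(M) = 0$ forces the matrix of $\varphi$ to have generically full rank, hence $M$ has generic rank exactly $r$, and $F_r(M) = (f)$ with $f$ a non-zerodivisor cuts out the locus where $M$ fails to be free of rank $r$.

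Next I would establish (1): since $F_{r-1}(M) = 0$ and $F_r(M) = (f)$ is principal generated by a non-zerodivisor, Proposition~\ref{prop:Fitting_ideals_rank_connection} (more precisely the standard Auslander–Buchsbaum / Lipman-type criterion) shows $M$ has a free resolution of length one; the point is that localizing at any prime $\mathfrak p$ either $f \notin \mathfrak p$, in which case $M_{\mathfrak p}$ is free of rank $r$ by \Cref{prop:Fitting_ideals_rank_connection}, or $f \in \mathfrak p$, in which case one checks directly from the matrix that $M_{\mathfrak p}$ is presented by a matrix whose maximal minors generate a principal ideal in $R_{\mathfrak p}$, and Lipman's lemma gives $\pd_{R_{\mathfrak p}} M_{\mathfrak p} \leq 1$. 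Then I would define $M(f) = \ker(M \xrightarrow{\cdot f} fM)$, which is the $f$-torsion of $M$ (using that $f$ is a non-zerodivisor on $R$), and observe that on the open set $\{f \neq 0\}$ the module $M(f)$ vanishes, so $M(f)$ is supported on $V(f)$.

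For (3) and (4) I would argue as follows: consider the exact sequence $0 \to M(f) \to M \to M/M(f) \to 0$. The quotient $M/M(f)$ is $f$-torsion-free; I claim it is in fact torsion-free and locally free of rank $r$. Locally at $\mathfrak p \notin V(f)$ this is clear since $M(f)_{\mathfrak p} = 0$. Locally at $\mathfrak p \in V(f)$, I would invoke the part (1) conclusion that $M_{\mathfrak p}$ has $\pd \leq 1$, write a presentation $0 \to R^a \to R^b \to M_{\mathfrak p} \to 0$, and analyze the Fitting ideal condition to show that the torsion submodule splits off — this is exactly the content of Lipman's lemma in the local ring, which gives the decomposition $M_{\mathfrak p} = (M/M(f))_{\mathfrak p} \oplus M(f)_{\mathfrak p}$ with the first summand free of rank $r$. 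Because the rank $r$ is constant and $M/M(f)$ is finitely presented and flat (being locally free at every point), it is locally free of rank $r$ globally, giving (3). Finally, (4): the short exact sequence $0 \to M(f) \to M \to M/M(f) \to 0$ splits because $\Ext^1_R(M/M(f), M(f))$ can be checked to vanish — it vanishes locally at every prime by the local splitting just established, and since $M/M(f)$ is finitely presented, $\Ext^1$ commutes with localization, so the global $\Ext^1$ vanishes and the sequence splits, yielding $M = M/M(f) \oplus M(f)$; claim (2), $\pd M(f) \leq 1$, then follows since $M(f)$ is a direct summand of $M$ and $\pd M \leq 1$ by (1).

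The main obstacle I expect is the local splitting step, i.e. showing that at a point of $V(f)$ the torsion submodule $M(f)_{\mathfrak p}$ is a direct summand of $M_{\mathfrak p}$ with complement free of rank $r$. This is genuinely where Lipman's lemma does its work, and a self-contained proof would require carefully manipulating the presentation matrix: using $F_{r-1} = 0$ and $F_r = (f)$ principal to perform row/column operations (over the local ring, using that a principal ideal generated by a non-zerodivisor behaves well) bringing $\varphi$ into a block form that exhibits both the free rank-$r$ part and the torsion part. Since the statement is quoted from \cite[\href{https://stacks.math.columbia.edu/tag/0F7M}{Lemma 0F7M}]{stacks-project} and \cite[Lemma 1]{Lipman}, I would simply cite that argument rather than reproduce it, and spend the remaining effort on the globalization (constancy of rank $\Rightarrow$ global local-freeness, and local splitting $\Rightarrow$ global splitting via $\Ext^1$ localization), which is routine.
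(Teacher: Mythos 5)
The paper does not prove this proposition; it is quoted verbatim from the Stacks Project (Lemma 0F7M) and Lipman, and then used as a black box in the sequel (e.g.\ in the proofs of \Cref{prop:Lipman_converse} and \Cref{thm: Lipman}). Your final instinct --- cite the reference and not reproduce the matrix manipulation --- is therefore exactly what the paper does, so your proposal matches the paper's approach.

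A few remarks on the sketch itself, since you went to the trouble of outlining one. The opening worry about ``$\rk G = r$'' is a red herring: the integer $r$ in the hypothesis is not tied to the rank of the free module $G$ in a presentation --- $F_r(M)$ is computed from any presentation, and the convention $F_i(M)=R$ for $i\geq \rk G$ only says that the interesting case is $r < \rk G$. You do not need to stabilize the presentation; just fix any finite free presentation and read off minors. For item (4), your argument via vanishing of $\Ext^1_R(M/M(f),M(f))$ and localization of $\Ext$ is correct but roundabout: once you know $M/M(f)$ is finitely presented and locally free (equivalently, finitely generated projective), the sequence $0\to M(f)\to M\to M/M(f)\to 0$ splits immediately because the quotient is projective. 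And once you have the splitting, (2) follows at once from (1), as you say. The genuine content, as you correctly identify, is the local analysis at primes containing $f$, which is Lipman's lemma; citing it is the right call.
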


The result in \cite[Lemma 1]{Lipman} also gives the following useful partial converse.

\begin{notation} Let $R$ be an integral domain and
let $M$ be an $R$ module. We denote the the torsion free part of $M$ by $M^\tf\coloneq M/\tor(M)$. 
\end{notation}

\begin{proposition}\label{prop:Lipman_converse}
If $R$ is a local ring and $M$ is a finitely presented module of projective dimension $\leq 1$ with $M/\tor(M)$ locally free of rank $r$, then $F_r(M)$ is invertible and $F_{r-1}(M)=0$.
\end{proposition}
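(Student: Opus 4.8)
The plan is to reduce the whole statement to a short computation with the torsion submodule $T:=\tor M$, using the projective‑dimension hypothesis to control its syzygies and the torsion hypothesis to force a determinant to be a nonzerodivisor. The degenerate case $T=0$ I would dispose of at once: then $M\cong R^{\oplus r}$, and $F_r(M)=R$, $F_{r-1}(M)=0$ follow directly from the conventions on Fitting ideals. So assume $T\neq 0$ from now on.

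First I would note that, since $R$ is local, $\Spec R$ has a unique closed point whose only open neighbourhood is all of $\Spec R$; hence every locally free sheaf on $\Spec R$ is free, so the hypothesis that $M/T$ is locally free of rank $r$ gives $M/T\cong R^{\oplus r}$. The exact sequence $0\to T\to M\to M/T\to 0$ therefore splits, $M\cong R^{\oplus r}\oplus T$, and as a direct summand of the finitely presented module $M$ the module $T$ is finitely presented. Moreover $\mathrm{Ext}^2_R(M,-)\cong\mathrm{Ext}^2_R(R^{\oplus r},-)\oplus\mathrm{Ext}^2_R(T,-)$, and the left‑hand side vanishes because $\pd_R M\leq 1$, so $\pd_R T\leq 1$. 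Thus $T$ admits a finite free resolution $0\to R^{\oplus s}\to R^{\oplus t}\to T\to 0$; denote by $\psi$ the $t\times s$ matrix of the left‑hand map.

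Next I would read off the Fitting ideals of $M$. Adding the trivial presentation of $R^{\oplus r}$ to the above resolution of $T$ gives a presentation $R^{\oplus s}\to R^{\oplus r}\oplus R^{\oplus t}\to M\to 0$ whose matrix is the $(r+t)\times s$ block matrix $\bigl(\begin{smallmatrix}0\\ \psi\end{smallmatrix}\bigr)$, with $0$ the $r\times s$ zero block. (Equivalently one may use $F_k(A\oplus B)=\sum_{i+j=k}F_i(A)F_j(B)$ together with $F_i(R^{\oplus r})=0$ for $i<r$ and $F_i(R^{\oplus r})=R$ for $i\geq r$.) Since the top $r$ rows of this matrix vanish, every $(t+1)\times(t+1)$ minor uses a zero row, hence vanishes, so $F_{r-1}(M)=0$; and the only $t\times t$ minors that can be nonzero are precisely the $t\times t$ minors of $\psi$, so $F_r(M)=F_0(T)$.

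The remaining point — and the main obstacle — is that $F_0(T)$ is invertible, and here both hypotheses genuinely enter. Let $S\subseteq R$ be the multiplicative set of nonzerodivisors and $Q(R)=S^{-1}R$ the total ring of fractions, so $R\hookrightarrow Q(R)$ and $Q(R)\neq 0$. Since $T$ is torsion, $S^{-1}T=0$, so applying $S^{-1}(-)$ to the resolution shows $S^{-1}\psi\colon Q(R)^{\oplus s}\to Q(R)^{\oplus t}$ is an isomorphism; as $Q(R)\neq 0$ has invariant basis number this forces $s=t$, so $\psi$ is square, and then $S^{-1}(\det\psi)=\det(S^{-1}\psi)$ is a unit of $Q(R)$, which together with the injectivity of $R\hookrightarrow Q(R)$ shows $\det\psi$ is a nonzerodivisor of $R$. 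Hence $F_r(M)=F_0(T)=(\det\psi)$ is an invertible ideal while $F_{r-1}(M)=0$, as claimed. The crux is exactly this last paragraph: without $\pd_R T\leq 1$ the first syzygy module of $T$ need not be free, so $F_0(T)$ need not even be principal, and it is the torsion hypothesis that makes the presentation matrix of $T$ square with nonzerodivisor determinant.
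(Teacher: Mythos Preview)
Your proof is correct. The paper does not supply its own argument for this proposition; it simply attributes the statement to \cite[Lemma~1]{Lipman}. Your route---splitting off the free summand $R^{\oplus r}$, deducing $\pd_R T\le 1$ so that $T$ has a finite free resolution by a square matrix, and then localizing at nonzerodivisors to force that matrix to have nonzerodivisor determinant---is the standard direct verification, and every step is justified (in particular, finitely generated projective over local is free, direct summands of finitely presented modules are finitely presented, and commutative nonzero rings have invariant basis number). There is nothing to compare against in the paper itself beyond the citation.
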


In fact, we shall make use of the following result.
\begin{proposition}\label{thm: Lipman}
If $R$ is an integral domain and $M$ is a finitely presented module of projective dimension $\leq 1$ with $M/\tor(M)$ locally free of rank $r$, then $F_r(M)$ is locally free and $F_{r-1}(M)=0$.
\end{proposition}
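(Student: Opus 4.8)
The plan is to reduce the global statement to the local one already proven in \Cref{prop:Lipman_converse}. The point is that \Cref{prop:Lipman_converse} handles the case of a local ring, where ``locally free'' means ``free''; here $R$ is merely an integral domain, so $F_r(M)$ need not be globally principal, only locally so. First I would observe that projective dimension, local freeness of $M/\tor M$, and the formation of Fitting ideals are all local conditions: for any prime $\ffp\subset R$, the localization $M_\ffp$ is a finitely presented $R_\ffp$-module of projective dimension $\leq 1$, and $(M/\tor M)_\ffp = M_\ffp/\tor{(M_\ffp)}$ is free of rank $r$ (using that localization is flat, so it commutes with torsion and with taking the torsion-free quotient, and that a locally free sheaf of rank $r$ localizes to a free module of rank $r$). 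Also $F_i(M)_\ffp = F_i(M_\ffp)$ since Fitting ideals commute with base change.

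Next I would apply \Cref{prop:Lipman_converse} to the local ring $R_\ffp$ and the module $M_\ffp$. This yields that $F_r(M_\ffp) = F_r(M)_\ffp$ is an invertible $R_\ffp$-module and $F_{r-1}(M_\ffp) = F_{r-1}(M)_\ffp = 0$. Since the latter holds for every prime $\ffp$, we get $F_{r-1}(M) = 0$. For the former: $F_r(M)$ is a finitely generated ideal (being a Fitting ideal of a finitely presented module, hence generated by finitely many minors) which is locally principal and generated by a non-zero-divisor at every prime — here one must note that in the integral domain $R$ the ideal $F_r(M)$ is nonzero, so any local generator is automatically a non-zero-divisor. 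A finitely generated ideal that is locally free of rank $1$ at every prime is an invertible module, i.e.\ locally free of rank $1$. Hence $F_r(M)$ is locally free, as claimed.

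The only genuinely delicate point is establishing that $F_r(M)$ is nonzero — or equivalently, that $r$ is the ``correct'' rank, so that $F_r(M)_\ffp \neq 0$ and \Cref{prop:Lipman_converse} applies in the form stated (it concludes invertibility of $F_r(M_\ffp)$, which presupposes it is nonzero, but this follows because over a domain $M/\tor M$ being locally free of rank exactly $r$ forces $F_r$ of $M$, hence of every localization, to be a nonzero fractional-ideal-like object; in particular an invertible ideal of $R$ is automatically nonzero). I would spell this out: the generic rank of $M$ equals the rank of $M/\tor M$, which is $r$, so $F_r(M)$ restricted to the generic point is all of the fraction field, hence $F_r(M)\neq 0$. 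Everything else is a routine passage from stalks to a global sheaf statement, using that $F_r(M)$ is coherent and that ``locally free of rank $1$'' can be checked on stalks for a coherent sheaf on a Noetherian (indeed here just an integral) scheme. I do not anticipate any serious obstacle beyond bookkeeping; the substance is entirely contained in \Cref{prop:Lipman_converse}, and this proposition is essentially its sheafification over an integral base.
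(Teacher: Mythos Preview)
Your proposal is correct and follows essentially the same approach as the paper: both reduce to the local statement \Cref{prop:Lipman_converse} by verifying that the hypotheses (projective dimension $\leq 1$, local freeness of the torsion-free quotient) and the formation of Fitting ideals are preserved under localization, using flatness of $R_\ffp$ over the integral domain $R$. The paper localizes only at maximal ideals and is terser about the passage back from stalks to the global conclusion, whereas you localize at all primes and spell out why $F_r(M)\neq 0$ and why a locally principal nonzero ideal is invertible; these are cosmetic differences, not substantive ones.
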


\begin{proof}
    By \Cref{prop:Lipman_converse}, all we need to do is to check that the assumptions are preserved by localization at maximal ideals of $R$.
    
    By \cite[Proposition 11.154]{rotman}, $1\geq \pd(M)\geq \pd(M\otimes_R R_m)$ for every maximal ideal $m$ in $R$.
    
    Let $m$ be a maximal ideal in $R$. Then $R_m$ is a flat $R$-module by \cite[Theorem 11.28]{rotman} and $\tor(M\otimes_R R_m) = \tor(M)\otimes_R R_m $ because $R$ is integral. It follows that $(M\otimes_R R_m)^\tf = (M)^\tf\otimes_R R_m$ is locally free.
\end{proof}

\subsection{Abelian cones}\label{def ab cones}

Let $X$ be a Noetherian scheme. We recall the notions of cone and abelian cone over $X$ and collect some basic properties.

\begin{definition}\label{def: cone}
    Let $\ccA = \bigoplus_{d\geq 0} \ccA_d$ be a graded sheaf of $\ccO_X$-algebras such that the canonical map $\ccO_X\to \ccA_0$ is an isomorphism and such that $\ccA$ is locally generated by $\ccA_1$ as an $\ccO_X$-algebra. The \textit{cone of $\ccA$} is the scheme $\Spec_X(\ccA)$ equipped with the natural projection 
    \[
      \Spec_X(\ccA)\to X.
    \]
    The cone of $\ccA$ is \textit{abelian} if the natural morphism $\Symm(\ccA_1) \to \ccA$ is an isomorphism of $\ccO_X$-algebras. A morphism of cones is a morphism over $X$ induced by a graded morphism of sheaves of $\ccO_X$-algebras.
\end{definition}

\begin{definition}\label{def: abelian cone}

    Let $\ccF$ be a coherent sheaf on $X$. The \textit{abelian cone associated to $\ccF$} is
    \[
        C_X(\ccF) = \Spec_X(\Symm(\ccF))
    \]
    equipped with the natural projection to $X$.
\end{definition}
We will omit the subscript $X$ in the formation of relative spectra and cones whenever it is possible to do so without introducing ambiguity.

\Cref{def: abelian cone} is related to the total space of a locally free sheaf. If $\ccE$ is a locally free sheaf, then $C(\ccE)$ is a vector bundle, but some authors (e.g. \cite[B.5.5]{fulton_intersection}) prefer to define the total space of $\ccE$ as 
\[
    \Tot(\ccE) \coloneqq C(\ccE^*) = \Spec(\Symm(\ccE^*)),
\]
so that the sheaf of sections of $\Tot(\ccE)$ over $X$ is $(\ccE^*)^* \simeq \ccE$ by \Cref{lem: sections and functionals}. When working with sheaves that may not be locally free, it is advisable to use $C(\ccF)$ instead of $C(\ccF^*)$, see \Cref{lem: sections and functionals} and \Cref{ex: sections and functionals}.

To an abelian cone $\pi\colon C(\ccF)\to X$ we can associate two natural sheaves of $\ccO_X$-modules. The sheaf of sections $\Sect(C(\ccF))$ of the projection $\pi$ is given by
\[
    U\mapsto \Hom_{U}(U,C(\ccF)\mid_U).
\]
The sheaf of functionals $\Fun(C(\ccF))$ is given by 
\[
    U\mapsto \Hom_{\operatorname{Ab}}(C(\ccF)\mid_U,U\times \bbA^1),
\]
where $\Hom_{\operatorname{Ab}}$ denotes morphisms of abelian cones.

\begin{lemma}\label{lem: sections and functionals}
    Given a coherent sheaf $\ccF$ in a Noetherian scheme $X$, there are natural isomorphisms of $\ccO_X$-modules 
    \begin{enumerate}
        \item\label{item: sections} $\Sect(C(\ccF)) \simeq \ccF^*$ and
        \item\label{item: functionals} $\Fun(C(\ccF))\simeq \ccF$.
    \end{enumerate}
\end{lemma}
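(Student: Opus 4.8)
The plan is to prove both isomorphisms by working locally and then using universal properties of $\Symm$ and of relative $\Spec$ to see that the local identifications glue.

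First I would address \eqref{item: functionals}. A functional on $C(\ccF)$ over an open $U$ is a morphism of abelian cones $C(\ccF)|_U \to U \times \bbA^1 = C(\ccO_U)$. By \Cref{def: cone}, morphisms of cones over $U$ correspond to graded $\ccO_U$-algebra homomorphisms $\Symm(\ccO_U) \to \Symm(\ccF|_U)$; since such a homomorphism is determined by where it sends the degree-one part, this is the same as an $\ccO_U$-module map $\ccO_U \to \ccF|_U$, i.e. a section of $\ccF$ over $U$. This bijection is natural in $U$ and $\ccO_X$-linear (addition and scalar multiplication of functionals match the module structure on $\ccF$), giving $\Fun(C(\ccF)) \simeq \ccF$ as $\ccO_X$-modules. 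No Noetherian or coherence hypothesis is really needed here; it is the adjunction between $\Symm$ and the forgetful functor.

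Next, for \eqref{item: sections}, a section over $U$ is a morphism $s\colon U \to C(\ccF)|_U = \Spec_U(\Symm(\ccF|_U))$ splitting the projection. By the universal property of relative $\Spec$, morphisms $U \to \Spec_U(\ccA)$ over $U$ are in bijection with $\ccO_U$-algebra homomorphisms $\ccA \to \ccO_U$; for $\ccA = \Symm(\ccF|_U)$ such a homomorphism is again determined by its restriction to degree one, hence corresponds to an $\ccO_U$-module map $\ccF|_U \to \ccO_U$, that is, a section of $\ccF^\vee$ over $U$. Again this is natural and $\ccO_X$-linear, so $\Sect(C(\ccF)) \simeq \ccF^\vee$. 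Both identifications are canonical, so the presheaf isomorphisms are automatically sheaf isomorphisms; the only mild point is checking that the $\ccO_X$-module structure on the left-hand sheaves (defined via the cone structure/fiberwise addition and scaling) really does correspond to the usual module structure on $\ccF$ and $\ccF^\vee$, which one verifies on an affine chart where everything is explicit in terms of the symmetric algebra.

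The main obstacle, such as it is, is bookkeeping rather than mathematics: one must make sure the two candidate $\ccO_X$-module structures agree and that the constructions are functorial in $U$ so that one genuinely obtains morphisms of sheaves (not just objectwise bijections). I would handle this by spelling out, over an affine $U = \Spec R$ with $\ccF|_U = \widetilde{M}$, that $\Fun$ and $\Sect$ become $\Hom_R(M,R)$ in both cases (for $\Sect$) respectively $\Hom_R(M,R)$ and the module itself (careful: for $\Fun$ it is $\Hom_{R\text{-alg,graded}}(\Symm_R M, R[t]) = \Hom_R(M, R\cdot t) \cong M$), matching addition and $R$-scaling on the nose, and then invoking the fact that these identifications are compatible with localization. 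This also supplies the content of \Cref{ex: sections and functionals} showing why one should not confuse $C(\ccF)$ with $C(\ccF^\vee)$ when $\ccF$ is not locally free.
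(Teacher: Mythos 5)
Your proposal is correct and follows essentially the same route as the paper: reduce to the affine case, identify sections with $\ccO$-algebra maps out of $\Symm\ccF$ via the universal property of relative $\Spec$, identify functionals with graded $\ccO$-algebra maps $\Symm\ccO \to \Symm\ccF$, and in both cases use that such maps are determined by their degree-one component. The extra remarks on naturality, gluing, and the dispensability of the Noetherian/coherence hypotheses are sound but not essential to the argument.
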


\begin{proof}
    It is enough to prove the statements in the affine case and for global sections. Let $X=\Spec R$ and $\ccF=\widetilde{M}$ for a Noetherian ring $R$ and a coherent $R$-module $M$.

    For \ref{item: sections}, we see that 
    \begin{align*}
        \Hom_{X}(X,C(\ccF)) &= 
        \Hom_{X}(X,\Spec\Symm \ccF) \simeq 
        \Hom_{R-\operatorname{alg}}(\Symm M,R)\\ 
        & \simeq
        \Hom_{R-\operatorname{mod}}(M,R) \simeq M^*.
    \end{align*}

    For \ref{item: functionals} we have that
    \[
        \Hom_{\operatorname{Ab}}(C(\ccF),X\times \bbA^1) = 
        \Hom_{\operatorname{Gr} R-\operatorname{mod}}(\Symm R,\Symm M) \simeq
        \Hom_{R/\operatorname{mod}}(R,M) \simeq
        M,
    \]
    where $\Hom_{\operatorname{Gr} R-\operatorname{mod}}$ denotes morphisms of graded $R$-modules and where we used that $X\times \bbA^1 \simeq C_X(\ccO_X)$.
\end{proof}

\begin{example}\label{ex: sections and functionals}
    Let $X=\Spec(R)$ with $R=\bbC[x]$ and let $I=(x)$ be the ideal of the origin $0$ and let $M=R/I$ viewed as an $R$-module, that is, $M$ is the skyscrapper sheaf supported at $0$. Then 
    \[
        C(M) \simeq \Spec(\bbC[x,y]/(xy)) \subseteq X\times \bbA^1.
    \]
    According to \Cref{lem: sections and functionals}, $C(M)$ has no non-zero sections because $M^* = 0$. On the other hand, $C(M)$ has non-zero functionals
    \[
        C(M)\to X\times \bbA^1\colon (x,y)\mapsto (x,\lambda y)
    \]
    for any $\lambda\in\bbC$. 
    
    The cone of the dual is trivial, in fact $M^*=0$, so $C(M^*)=X$, which has no non-zero sections or functionals.
    
\end{example}

\begin{lemma}\label{lem: cone nonsing iff  vb}
    For a cone $\pi\colon C = \Spec(\ccA)\to X$, the following are equivalent:
    \begin{enumerate}
        \item\label{item: C vb} $C$ is a vector bundle over $X$,
        \item\label{item: C abelian and lf} $C$ is abelian and $\ccA_1$ is locally free over $X$,
       
        \item\label{item: smooth cone} $\pi$ is smooth.
    \end{enumerate}
\end{lemma}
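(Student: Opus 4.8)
The plan is to prove the cycle of implications $\ref{item: C vb} \Rightarrow \ref{item: smooth cone} \Rightarrow \ref{item: C abelian and lf} \Rightarrow \ref{item: C vb}$, working locally on $X$ since all three conditions are local on the base. The implication $\ref{item: C vb} \Rightarrow \ref{item: smooth cone}$ is immediate: a vector bundle is Zariski-locally $\bbA^n_X \to X$, which is smooth. The real content is in the other two implications.

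For $\ref{item: smooth cone} \Rightarrow \ref{item: C abelian and lf}$, I would first reduce to the affine case $X = \Spec R$, $C = \Spec(\ccA)$ with $\ccA = \bigoplus_{d \geq 0} \ccA_d$ graded, $\ccA_0 = R$, and $\ccA$ generated in degree $1$. Smoothness of $\pi$ is flatness plus geometric regularity of the fibers; I would use the zero section $\sigma\colon X \to C$ (coming from the augmentation $\ccA \to \ccA_0 = R$) to pull smoothness down to a statement at points of $\sigma(X)$. Along the zero section the fiber is a cone in the classical sense (invariant under the $\bbG_m$-scaling action induced by the grading), and a regular local ring that is a graded cone over a field must have its maximal ideal generated by the degree-one part, which forces $\ccA_1$ to be locally free of finite rank over $R$ and the surjection $\Symm(\ccA_1) \to \ccA$ to be an isomorphism. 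Concretely: pick a point $x \in X$, let $k = \kappa(x)$; the fiber $C_x = \Spec(\ccA \otimes_R k)$ is smooth over $k$, hence regular, and carries a $\bbG_m$-action with a unique fixed point (the origin, i.e. the image of $\sigma$); a regular local $k$-algebra that is graded with $(\ccA\otimes k)_0 = k$ must be a polynomial ring, so $(\ccA \otimes_R k)_1$ is a finite-dimensional $k$-vector space and $\ccA \otimes_R k = \Symm_k((\ccA\otimes_R k)_1)$. Then flatness of $\ccA_1$ (a consequence of flatness of $\ccA$ together with the grading, since $\ccA_1$ is a graded summand) plus the fiberwise dimension being locally constant gives local freeness of $\ccA_1$ by the usual fiberwise criterion for flatness and freeness, and Nakayama upgrades the fiberwise isomorphism $\Symm \ccA_1 \otimes k \xrightarrow{\sim} \ccA \otimes k$ to an isomorphism $\Symm(\ccA_1) \xrightarrow{\sim} \ccA$ after noting both sides are flat.

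For $\ref{item: C abelian and lf} \Rightarrow \ref{item: C vb}$: if $C$ is abelian, then $\ccA = \Symm_{\ccO_X}(\ccA_1)$, and if moreover $\ccA_1$ is locally free of finite rank, then Zariski-locally $\ccA_1 \cong \ccO_X^{\oplus n}$, so $C \cong \Spec_X(\Symm \ccO_X^{\oplus n}) = \bbA^n_X$ locally, which is by definition a vector bundle over $X$; the transition functions are linear because they come from $\ccO_X$-module automorphisms of $\ccA_1$.

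The main obstacle I anticipate is the step $\ref{item: smooth cone} \Rightarrow \ref{item: C abelian and lf}$, specifically making rigorous the passage from "smooth cone" to "the degree-one part controls everything." The subtlety is that a priori $\ccA_1$ could fail to be finitely generated or flat, and one needs the $\bbG_m$-action (equivalently, the grading) in an essential way to rule out pathologies — smoothness alone of an arbitrary $X$-scheme says nothing about gradings, so the argument must genuinely use that $C$ is a \emph{cone}. I would handle the finite-generation point by noting $\ccA$ is a quotient of $\Symm(\ccA_1)$ and using that smoothness implies $\ccA$ is of finite type over $R$ (at least locally), hence $\ccA_1$ is a finitely generated $R$-module. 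The flatness of $\ccA_1$ over $R$ and the "regular graded $\Rightarrow$ polynomial" claim over the residue field are the two technical hinges, and I would cite the standard structure theorem for graded regular local rings (or prove it directly: a graded Noetherian local $k$-algebra with residue field $k$ that is regular has its maximal ideal, which is the irrelevant ideal, generated by a regular sequence of degree-$1$ elements).
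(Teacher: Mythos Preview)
Your proof is correct and essentially unpacks what the paper outsources. The paper's proof is three lines: it declares $\ref{item: C vb}\Rightarrow\ref{item: smooth cone}$ clear, the equivalence $\ref{item: C vb}\Leftrightarrow\ref{item: C abelian and lf}$ ``standard,'' and for the substantive implication $\ref{item: smooth cone}\Rightarrow\ref{item: C vb}$ simply cites \cite[Lemma~1.1]{Behrend-Fantechi}. Your argument for $\ref{item: smooth cone}\Rightarrow\ref{item: C abelian and lf}$ --- passing to a fibre, using that a graded regular local $k$-algebra generated in degree one is a polynomial ring, then exploiting that each graded piece $\ccA_d$ is a flat $R$-summand of $\ccA$ to run Nakayama degree by degree --- is exactly the kind of proof one gives for the cited lemma in Behrend--Fantechi, so there is no genuine divergence in strategy. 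What your version buys is self-containment; what the paper's version buys is brevity. One small point worth tightening in your write-up: to conclude $\ccA_1$ is finitely generated over $R$, the cleanest route is to note that $\ccA$ is Noetherian (finite type over Noetherian $R$), so the augmentation ideal $\ccA_+$ is finitely generated by homogeneous elements, and the degree-one generators already span $\ccA_1$ as an $R$-module.
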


\begin{proof}
    The implication \ref{item: C vb} $\Rightarrow$ \ref{item: smooth cone} is clear and the equivalence \ref{item: C vb} $\iff$ \ref{item: C abelian and lf} is standard. The implication \ref{item: smooth cone} $\Rightarrow$ \ref{item: C vb} can be found in \cite[Lemma 1.1]{Behrend-Fantechi}.
\end{proof}

\section{Desingularizations of coherent sheaves}\label{sec:desing_sheaf}
In this section we introduce several constructions which ``desingularize" a coherent sheaf $\ccF$ on a base scheme $X$. 

\subsection{Definition of desingularizations on stacks}\label{def:desingularization}
We define our notion of desingularization and prove that it behaves well with composition. This part can be formulated directly for algebraic stacks instead of schemes, which will be useful later.
\begin{definition}\label{def: desingularization on stacks}
Let $\ffF$ be a coherent sheaf on an integral algebraic stack $\ffX$. A \textit{desingularization} of $\ffF$ is a morphism $p:\widetilde{\ffX}\to \ffX$ such that
\begin{enumerate}
    \item $\widetilde{\ffX}$ is integral,
    \item $p$ is birational and proper,
    \item\label{item: tf locally free} $(p^*\ffF)^\tf$ is a locally free sheaf.
\end{enumerate}
\end{definition}

Let us explain why a morphism as in \Cref{def: desingularization on stacks} deserves to be called a desingularization. The surjection $\ffF\to \ffF^\tf$ induces a closed embedding $C(\ffF^\tf)\hookrightarrow C(\ffF)$ of abelian cones over $\ffX$. By \Cref{lem: cone nonsing iff  vb}, the morphism $C(\ffF^\tf)\to \ffX$ is smooth if and only if $\ffF^\tf$ is locally free. Therefore, \Cref{item: tf locally free} in \Cref{def: desingularization on stacks} is equivalent to saying that the morphism $C_{\widetilde{\ffX}}((p^\ast\ffF)^\tf)\to \widetilde{\ffX}$ is smooth. 

\begin{remark}
    If $\ffX$ is a scheme and $\ffF$ is a non-zero coherent ideal sheaf, the usual blow-up of $\ffX$ at the closed subscheme defined by $\ffF$ is a desingularization of $\ffF$.
\end{remark}

\begin{lemma}\label{pull back desing}
Let $\ffX$ be an integral algebraic stack and $\ffF$ a coherent sheaf on $\ffX$. Let $p:\widetilde{\ffX}\to \ffX$ be a desingularization of $\ffF$ and let $q:\ffY\to \widetilde{\ffX}$ be a proper birational morphism. Then, the composition $p\circ q:\ffY\to \ffX$ is a desingularization of $\ffF$.
\end{lemma}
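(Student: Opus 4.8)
The statement asserts that desingularizations compose with proper birational morphisms on the source. The plan is to verify the three conditions of \Cref{def: desingularization on stacks} for $p\circ q$ one at a time. Conditions (1) and (2) are immediate: $\ffY$ is integral by hypothesis on $q$, and the composition of proper birational morphisms is proper and birational (properness is stable under composition; birationality follows since a composition of maps that are isomorphisms over dense opens is again such a map, or alternatively since both induce isomorphisms on function fields). So the content is entirely in condition (3): I must show $((p\circ q)^*\ffF)^\tf = (q^*p^*\ffF)^\tf$ is locally free on $\ffY$.

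The key point is the following: if $\ffG$ is a coherent sheaf on an integral stack $\widetilde{\ffX}$ whose torsion-free part $\ffG^\tf$ is locally free, and $q\colon \ffY \to \widetilde{\ffX}$ is a \emph{dominant} (e.g. birational) morphism of integral stacks, then $(q^*\ffG)^\tf \cong q^*(\ffG^\tf)$, which is visibly locally free. To see this, start from the exact sequence $0 \to \tor(\ffG) \to \ffG \to \ffG^\tf \to 0$ and apply $q^*$. Since $\ffG^\tf$ is locally free, the sequence $0 \to q^*\tor(\ffG) \to q^*\ffG \to q^*(\ffG^\tf) \to 0$ remains exact (no $\mathrm{Tor}_1$ term). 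Now $q^*(\ffG^\tf)$ is locally free, hence torsion-free since $\ffY$ is integral; and $q^*\tor(\ffG)$ is supported on the preimage of a proper closed substack of $\widetilde{\ffX}$, which — because $q$ is dominant with $\ffY$ irreducible — is a proper closed substack of $\ffY$, so $q^*\tor(\ffG)$ is a torsion sheaf. Thus the displayed sequence exhibits $q^*\ffG$ as an extension of a locally free (hence torsion-free) sheaf by a torsion sheaf, which forces $\tor(q^*\ffG) = q^*\tor(\ffG)$ and therefore $(q^*\ffG)^\tf \cong q^*(\ffG^\tf)$. Applying this with $\ffG = p^*\ffF$, which has locally free torsion-free part by assumption that $p$ is a desingularization, gives that $(q^*p^*\ffF)^\tf$ is locally free, completing condition (3).

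All of the above can be checked after passing to a smooth atlas, reducing to the case of schemes, where torsion subsheaves and the support-dimension argument are standard; alternatively one works directly on the stack using that torsion and torsion-free parts are defined étale-locally. I expect no serious obstacle here — the only mild subtlety is making sure that ``the preimage under a dominant morphism of a proper closed subset is a proper closed subset'' is applied correctly, which uses that $\ffY$ is irreducible and $q$ is dominant (both guaranteed since $q$ is birational between integral stacks), so $q^{-1}$ of a proper closed substack cannot be all of $\ffY$. This is the step most worth stating carefully, but it is routine.
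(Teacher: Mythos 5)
Your proof is correct, and it takes a genuinely different route from the paper's. The paper does not invoke flatness of $(p^*\ffF)^\tf$; instead it works with two rows whose middle terms are both $q^*p^*\ffF$ (with kernels $\ffK_1$, $\ffK_2$ defined directly as kernels, sidestepping any question of whether $q^*$ preserves exactness of the torsion short exact sequence), produces the dashed vertical maps by observing that the composite $\ffK_1\to (q^*p^*\ffF)^\tf$ is a generically zero map into a torsion-free sheaf on an irreducible stack, applies the Snake Lemma to obtain $0\to\ffK_3\to q^*((p^*\ffF)^\tf)\to (r^*\ffF)^\tf\to 0$, and concludes $\ffK_3=0$ as a torsion subsheaf of a locally free sheaf. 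You instead use the hypothesis that $(p^*\ffF)^\tf$ is locally free at the outset, via flatness, to get that $0 \to q^*\tor(p^*\ffF) \to q^*p^*\ffF \to q^*((p^*\ffF)^\tf) \to 0$ stays exact, and then identify the two outer terms as the torsion and torsion-free parts of the middle one. Both routes establish the same intermediate isomorphism $(q^*p^*\ffF)^\tf \cong q^*((p^*\ffF)^\tf)$; yours is shorter and uses the local-freeness hypothesis structurally (it kills the $\mathrm{Tor}_1$ obstruction), while the paper's diagram argument is more robust in that the Snake Lemma step would still make sense even without local freeness, reserving that hypothesis only for the final vanishing of $\ffK_3$. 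Your parenthetical care about $q^{-1}(Z)$ being a \emph{proper} closed substack, using that $q$ is dominant with irreducible source, is exactly the right point to flag; the paper uses the same fact implicitly in claiming $\ffK_1$, $\ffK_2$ are torsion.
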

\begin{proof} The composition $r:=p\circ q$ is birational and proper, so all we need to prove is that $(r^*\ffF)^\tf$ is locally free. In the following we show that \[(r^*\ffF)^\tf\simeq q^*((p^*\ffF)^\tf).
\]
We have a commutative diagram of sheaves on $\ffY$
\[
\xymatrix{
0\ar[r]&\ffK_1\ar[r]\ar@{-->}[d]&q^*p^*\ffF\ar[r]\ar[d]&q^*((p^*\ffF)^\tf)\ar[r]\ar@{-->}[d]&0\\
0\ar[r]&\ffK_2\ar[r]&q^*p^*\ffF\ar[r]&(q^*p^*\ffF)^\tf\ar[r]&0
}
\]
where the map $q^*p^*\ffF\to q^*((p^*\ffF)^\tf)$ is the pull-back of the surjective map 
\[p^*\ffF\to (p^*\ffF)^\tf,\]
$\ffK_1$ and $\ffK_2$ are the corresponding kernels and the solid vertical map is the identity. Since the image of the composition $\ffK_1\to (q^*p^*\ffF)^\tf$ is generically zero and $\ffX$ is irreducible, we have that the morphism $\ffK_1\to (q^*p^*\ffF)^\tf$ is zero. By the universal property of cokernels this map factors through $q^*((p^*\ffF)^\tf)$, which gives the right vertical map in the diagram. The universal property of kernels gives the left vertical map in the diagram. We have that $\ffK_1$ and $\ffK_2$ are torsion sheaves. Let $\ffK_3$ be the cokernel of $\ffK_1\to \ffK_2$. By the Snake Lemma, we have an exact sequence 
\[0\to \ffK_3\to q^*((p^*\ffF)^\tf)\to (r^*\ffF)^\tf\to 0.\]
By assumption $(p^*\ffF)^\tf$ is locally free, so $q^*((p^*\ffF)^\tf)$ is locally free. Since $\ffX$ is irreducible and $\ffK_3$ is a torsion sheaf, we get that $\ffK_3=0$. This proves the claim. 
\end{proof}

\subsection{Rossi's construction}\label{ss:local_construction}

In the following we describe the desingularization construction proposed by Rossi in the analytic setup in \cite{Rossi}, which is very geometric in nature. The same construction was studied by Oneto and Zatini in the algebraic setup in \cite{Oneto-Zatini-1991}, under the name of Nash transformation. It gives a way of desingularizing coherent sheaves on Noetherian schemes taking the closure of a graph into a Grassmannian. We present Rossi's construction for Noetherian schemes; for stacks it will be presented in \Cref{sec:desing_stacks}.

\remark{We thank David Rydh for pointing out this more concise version of the construction.}

\begin{theorem}\label{thm:univ_flatification}
Let $X$ be a reduced Noetherian scheme, $\ccF$ a coherent $\mathcal{O}_X$-module, and 
$U \subset X$ {a schematically dense open} subset where $\ccF$ is locally free of constant rank $r$.  
Then there exists a projective morphism $f:\widetilde{X} \to X$ such that 
$\widetilde{X}_U \cong U$ and $(f^*\ccF)^\tf$
is locally free of rank $r$. 
Moreover, $\widetilde{X} \to X$ is universal for the following property:  
if $g: X' \to X$ is any morphism of schemes such that $(g^*\ccF)^\tf$ is locally free of rank $r$, then there exists a unique morphism 
$X' \to \widetilde{X}$ over $X$.
\end{theorem}

\begin{proof}
Consider the relative Grassmannian
\[
\pi \colon \Gr^r_X(\ccF) \longrightarrow X,
\]
which parametrizes rank-$r$ locally free quotients of $\ccF$.  
This scheme is projective over $X$.

On the open subset $U \subseteq X$, the sheaf $\ccF|_U$ is locally free of rank $r$, so it determines a section
\[
s \colon U \longrightarrow \Gr^r_X(\ccF).
\]
Let $\widetilde{X}$ be the scheme-theoretic closure of $s(U)$ inside $\Gr_X^r(\ccF)$.  
By construction, $f: \widetilde{X} \to X$ is projective and restricts to an isomorphism 
over $U$.

Over $\widetilde{X}$, the universal quotient on the Grassmannian restricts to a 
quotient
\[
\varphi: f^*\ccF \twoheadrightarrow G,
\]
with $G$ locally free of rank $r$ and $f^*\ccF|_U\cong G|_U$.  
Since $G$ is torsion-free and $U$ is schematically dense, $\varphi$ descends to $\overline{\varphi}:(f^*\ccF)^\tf\to G$. On the other hand, the kernel of $\varphi$ is supported away from the schematically dense $U$, so it is torsion. Hence $G = (f^*\ccF)^\tf$.

For the universal property,  let $g: X' \to X$ be a morphism such that $(g^*\ccF)^\tf$ is locally free of rank $r$.  
Then the quotient $g^*\ccF \twoheadrightarrow (g^*\ccF)^\tf$ determines a unique morphism 
$s': X' \to \Gr^r_X(\ccF)$ by the universal property of the Grassmannian. To see that $s'$ factors through $\tilde{X}$ we take $V$ the inverse of the maximum open subset of $X$ such that $\ccF$ is locally free. We claim that $V$ is schematically dense in $X'$. If $V$ is not schematically dense, there exists $Z$ a subscheme of $X'$ which does not intersect $V$ and which is mapped in the complement of $U$. By replacing $Z$ with the reduced structure, we may assume that $Z$ is reduced. Then, by the maximality of $U$, $(g^{*}\ccF)|_Z$ has higher rank. This contradicts $(g^{*}\ccF)^\tf$ is locally free. Thus $g^{-1}(U)$ is schematically dense. Then particular, $s'$ and $s\circ g$ agree on $g^{-1}(U)$, so this determines the morphism and we see that it factors uniquely 
through $\widetilde{X}$, the closure of $s(U)$.  
This establishes the claimed universal property.

\end{proof}

    If $X$ is an integral Noetherian scheme, a coherent sheaf $\ccF$ is locally free over the unique generic point $\xi\in X$. We define the \textit{generic rank} of $\ccF$:
\[
\rk(\ccF)\coloneqq \rk({\ccF|_{\xi}})=r.
\] Taking this as $U$ in \Cref{thm:univ_flatification} above gives a construction desingularizing $\ccF$. In particular, the morphism $\tilde{X}\to X$ is proper and birational.

\begin{definition}[The Rossi blow-up]\label{definition:Rossi-blowup}\label{definition:Rossi_blowup}
Let $X$ be an integral Noetherian scheme and $\ccF$ a coherent sheaf. \Cref{thm:univ_flatification} applied to $U$ the generic point of $X$ defines the Rossi blow-up $\Bl_{\ccF}X\to X$, a proper birational morphism. We call it the \textit{blow-up of $X$ at the coherent sheaf $\ccF$}.
\end{definition}

    A classical example of this construction is the Nash blow-up of $X$, which is the particular case $\ccF=\Omega_X^1$. This case is related to resolution of singularities, see \cite{Spivakovsky}. The general case appeared in \cite{Oneto-Zatini-1991} under the name of Nash transform.

   If $\ccF$ is a non-zero coherent ideal sheaf on $X$, then $\Bl_\ccF(X)$ is the usual blow-up and the above recovers its construction as a relative Proj.

The Rossi blow-up thus defined is a desingularization of $\ccF$ in the sense of \Cref{def: desingularization on stacks}. In fact, it is the minimal desingularization of $\ccF$ in the sense of the universal property in \Cref{thm:univ_flatification}.

\subsection{Local descriptions}
\label{subsec:Villamayor}

The geometric construction of \Cref{ss:local_construction} has some useful affine-local characterizations, which we shall use in \Cref{subsec: properties univ desing} to establish some properties of this construction. That these are local descriptions of the Rossi blow-up presented in the previous section follows from comparing the universal property in \cite{Villamayor} to that of \Cref{thm:univ_flatification}.
The first characterization, for affine integral Noetherian schemes follows closely the the original construction by Rossi \cite{Rossi}. Let $X=\Spec R$, and $\ccF=\widetilde{M}$ be the coherent sheaf associated to a coherent $R$-module $M$.  We can find for some $n$ a surjective morphism of sheaves
\begin{equation}\label{eq:surjection_to_F}
f: \ccO_X^{\oplus n}\twoheadrightarrow \ccF.
\end{equation}
Restricted to a non-empty open subscheme $U$ where $\ccF$ is locally free, $f$ gives a $U$-point of $\Gr_U(\ccO_U^{\oplus n},r)$, which is a morphism
\[
\Gamma_f : U\to \Gr_U(\ccO_U^{\oplus n},r).
\]
Then $\Bl_{\ccF}X$ is the closure of $\Gamma_f(U)$ in $\Gr_X(\ccO_X^{\oplus n},r)$ with the reduced induced structure, equipped with the morphism 
\[
    p\colon \Bl_{\ccF}(X)=\overline{\Gamma_f(U)} \to X
\]
obtained by restricting the natural projection $\Gr_X(\ccO_X^{\oplus n},r) \to X$.
This is a Zariski-local description of \Cref{definition:Rossi_blowup}, and can be seen to be independent of the choice of presentation of $\ccF$, see \cite{Rossi}. It is sometimes useful to work with this more explicit affine construction, as we do in the proof of \Cref{prop: isomorphic-blowups-tensor-line-bundle}.

Another construction of the blow-up $\Bl_\ccF X$ can be given in terms of Fitting ideals of the sheaf $\ccF$.
This construction applies to an affine integral Noetherian scheme $X$. Most of this ideas first appeared in \cite{Oneto-Zatini-1991}. We follow the exposition in \cite{Villamayor}. 

Fitting ideals (see \Cref{subsec:Fitting_ideals}) are related to ranks of modules and flatness. Indeed, the local rank of $M$ at a prime ideal $P$ of $R$ is $r$ if and only if $F_{-1}(M)\subseteq F_0(M) \subseteq \ldots \subseteq F_{r-1}(M) \subseteq P$ but $F_r (M) \not\subset P$. As a corollary, if $R$ is a domain then the generic rank of $M$ is $r$ if and only if $F_r(M)$ is the first non-zero Fitting ideal, and moreover $M$ is flat if and only if it is free, if and only if $F_r(M)=R$ and $F_{r-1}(M)=0$. 

The relationship between Fitting ideals and local freeness of the torsion-free part comes from Lipman's theorem (\Cref{thm: Lipman}).

In particular, the proposition shows that blowing up the first non-trivial Fitting ideal of $M$ will make $(p^*M)^{\tf}$ locally free (with $p$ the blow-up morphism). However, it is possible that $M^\tf$ is already locally free on $\Spec R$ even though its first non-trivial Fitting ideal is not principal. See \Cref{rmk:first_Fitting_not_minimal} for an example. Only if the projective dimension of $M$ is at most one, is the blow-up of the first non-trivial Fitting ideal the minimal desingularization of the corresponding sheaf. In order to find a minimal transformation of $\Spec(R)$ on which $M^\tf$ is locally free, Villamayor proposes in \cite{Villamayor} the following construction.

Given a finitely presented module $M$ of generic rank $r$ over a domain $R$ with fraction field $K$, define its norm to be the fractional ideal
\begin{equation}\label{def: norm}
    \norm{M} = \mathrm{Im}\left(\bigwedge^r M\to K\simeq\bigwedge^r M\otimes_R K\right).
\end{equation}

\begin{definition}[Villamayor's blow-up]\label{def:Villamayor_blowup}
    Let $R$ be a Noetherian integral ring, let $X=\Spec R$  and let $M$ be a finitely presented $R$-module of generic rank $r$. The \textit{blow-up of $X$ along $M$} is
        \[
        p: \Bl_MR\coloneqq \Bl_{\norm{M}}R\to R,
        \]
    where $\norm{M}$ is the norm of $M$ from \eqref{def: norm}.
\end{definition}

\begin{remark}\label{rmk:first_Fitting_not_minimal}
    In general, $\Bl_M X$ is not obtained by blowing up the first non-zero Fitting ideal of $M$. Indeed, let $I\subset R$ be an ideal and $M=R/I$. On the one hand, $M^\tf = 0$ is locally free of rank 0, which is the rank of $M$, so $\Bl_M X = X$. On the other hand, the first non-zero Fitting ideal of $M$ is $F_0(M) = I$, so $\Bl_{F_0(M)} X= \Bl_M X$ if and only if $I$ is principal.
    
    However, if $M$ has generic rank $r$ and projective dimension $\leq 1$, then $\Bl_M X=\Bl_{F_r(M)}X$.
\end{remark}
Note that any two ideals $I, J$ of $R$ which are isomorphic to $\norm{M}$ as fractional ideals define the same blow-up, up to \textbf{unique} isomorphism.

The same ideas apply in \cite{Villamayor} to any (Noetherian) ring $R$ if we restrict to finitely presented $R$-modules $M$ such that $M\otimes_R K(R)$ is a free $K(R)$-module, where $K(R)$ is the total quotient ring of $R$. However, for our purposes, we shall not need that generality. 

Below, we explain the connection between this definition and the theory of Fitting ideals.

\begin{construction}[{\cite[Remark 2.1]{Villamayor}}]\label{constr: replace_by_M1}
    Let $R$ be a domain and let $M$ be a finitely presented $R$-module of rank $r$. Choose generators $m_1,\ldots, m_N$ for $M$. Then there is a short exact sequence
    \[
        0\to P \to R^N\to M\to 0.
    \]
    Since $M$ has rank $r$, there are elements $p_1,\ldots, p_{N-r}$ in $P$ which induce a morphism $R^{N-r}\to R^N$ of rank $N-r$. Let $P_1\simeq R^{N-r}$ be the free module generated by $p_1,\ldots, p_{N-r}$ and let $M_1 = R^N/P_1$, that is, the following is exact
    \[
        0\to P_1 \to R^N\to M_1\to 0.
    \]
    Then $M_1$ has projective dimension at most 1, $\rk(M_1) = \rk(M)$, there is a natural surjection $M_1\to M$ and $M_1/\tor(M_1) = M/\tor(M)$.
\end{construction}

\begin{lemma}\label{rmk: Rossi blow-up with norm}
    Under the assumptions of \Cref{def:Villamayor_blowup}, let $M_1$ be the $R$-module associated to $M$ in \Cref{constr: replace_by_M1}. Then $F_r(M_1)$ and $\norm{M}$ are isomorphic as fractional ideals over $R$. In particular,
    
    \[
        \Bl_{\norm{M}} X = \Bl_{F_r(M_1)} X.
    \]
\end{lemma}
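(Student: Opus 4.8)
The plan is to split the statement into two independent claims which are then combined: \textbf{(A)} $\norm{M} = \norm{M_1}$, and \textbf{(B)} $F_r(M_1)$ and $\norm{M_1}$ are isomorphic as fractional ideals. Claim (A) is soft; Claim (B) is the heart of the matter and exploits the explicit free presentation of $M_1$ coming from Construction~\ref{constr: replace_by_M1}.

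For (A), I would note that the norm of a finitely generated $R$-module $N$ of generic rank $r$ depends only on its torsion-free quotient. Indeed, a surjection $N' \twoheadrightarrow N$ of finitely generated $R$-modules of the same generic rank $r$ induces a surjection $\bigwedge^r N' \twoheadrightarrow \bigwedge^r N$ which becomes an isomorphism of one-dimensional vector spaces after $-\otimes_R K$, $K = \operatorname{Frac}(R)$; since $\norm{-}$ is by definition the image of $\bigwedge^r(-) \to \bigwedge^r(-)\otimes_R K$ and this formation is natural, it is transported along that isomorphism, so $\norm{N'}$ and $\norm{N}$ agree as fractional ideals. Applying this to the natural surjection $M_1 \twoheadrightarrow M$ of Construction~\ref{constr: replace_by_M1} gives $\norm{M_1} = \norm{M}$. (This is essentially \cite[Remark~2.1]{Villamayor}.)

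For (B), I would use the free presentation $0 \to P_1 \xrightarrow{\varphi} R^N \to M_1 \to 0$ from Construction~\ref{constr: replace_by_M1}, where $P_1$ is free on the elements $p_j = \varphi(e_j)$, with $e_1,\dots,e_{N-r}$ a basis. By definition, $F_r(M_1)$ is the ideal of $(N-r)\times(N-r)$ minors of $\varphi$, equivalently the ideal generated by the coordinates of $p_1 \wedge \cdots \wedge p_{N-r} \in \bigwedge^{N-r} R^N$ in the standard basis $f_1,\dots,f_N$ of $R^N$. The key step is to construct a pairing $\mu\colon \bigwedge^{N-r} P_1 \otimes_R \bigwedge^{r} M_1 \to \bigwedge^{N} R^N \cong R$ sending $\omega\otimes\bar\eta$ to $i(\omega)\wedge\eta$, where $i\colon \bigwedge^{N-r}P_1 \to \bigwedge^{N-r}R^N$ is the natural map and $\eta\in\bigwedge^{r}R^N$ is any lift of $\bar\eta$ along the surjection $\bigwedge^{r}R^N \twoheadrightarrow \bigwedge^{r}M_1$. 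This is well defined because $\bigwedge^{N-r+1}P_1 = 0$ forces $i(\omega)\wedge p = 0$ in $\bigwedge^{N-r+1}R^N$ for all $p\in P_1$, so the choice of lift $\eta$ is irrelevant. Trivializing $\bigwedge^{N-r}P_1\cong R$ via $e_1\wedge\cdots\wedge e_{N-r}$ and running over the obvious generators of $\bigwedge^{r}M_1$, one reads off that the image of the induced map $\bigwedge^{r}M_1\to R$ is generated by the elements $p_1\wedge\cdots\wedge p_{N-r}\wedge f_{j_1}\wedge\cdots\wedge f_{j_r}$, $\{j_1<\cdots<j_r\}\subseteq\{1,\dots,N\}$, which are, up to sign, exactly the $(N-r)$-minors of $\varphi$; hence $\operatorname{Im}(\bigwedge^{r}M_1\to R) = F_r(M_1)$. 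Finally, $\mu\otimes_R K$ is the standard determinant isomorphism $\det(P_1\otimes K)\otimes\det(M_1\otimes K)\xrightarrow{\sim}\det(R^N\otimes K)\cong K$ attached to the exact sequence $0\to P_1\otimes K\to K^N\to M_1\otimes K\to 0$, so $\mu$ is an isomorphism on the generic fibre $\bigwedge^{r}M_1\otimes_R K$. Since $F_r(M_1) = \operatorname{Im}(\bigwedge^{r}M_1\to R)$ and $\norm{M_1} = \operatorname{Im}(\bigwedge^{r}M_1\to\bigwedge^{r}M_1\otimes_R K)$ are images of the same module under compatible maps into $R$ and into its generic fibre, the isomorphism $\mu\otimes_R K$ identifies them: $F_r(M_1)\cong\norm{M_1}$ as fractional ideals.

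Combining (A) and (B) gives $F_r(M_1)\cong\norm{M}$, and the final assertion $\Bl_{\norm{M}}X = \Bl_{F_r(M_1)}X$ is then immediate from the remark after Definition~\ref{def:Villamayor_blowup}: $F_r(M_1)$ is an honest ideal of $R$ isomorphic to the fractional ideal $\norm{M}$, and isomorphic (fractional) ideals define the same blow-up up to unique isomorphism. I expect the main obstacle to be the well-definedness and image computation for $\mu$ in Claim (B) — the exterior-algebra bookkeeping showing that the naive pairing descends to $\bigwedge^{r}M_1$ and that its image is precisely the maximal-minor ideal; once that is set up, recognizing $\mu\otimes_R K$ as the classical $\det V'\otimes\det V''\cong\det V$ isomorphism of a short exact sequence is routine.
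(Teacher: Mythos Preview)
Your argument is correct. The paper's own proof is simply a citation to \cite[Proposition~2.5]{Villamayor}, so you are supplying exactly the details that the paper defers to that reference: the reduction $\norm{M}=\norm{M_1}$ via the surjection $M_1\twoheadrightarrow M$ (your claim (A), which is indeed Villamayor's Remark~2.1), and the identification of $F_r(M_1)$ with $\norm{M_1}$ via the exterior-algebra pairing (your claim (B)), which is the standard argument behind Villamayor's Proposition~2.5. The well-definedness of $\mu$, the minor computation, and the recognition of $\mu\otimes_R K$ as the determinant isomorphism are all handled correctly.
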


\begin{proof}
    See \cite[Proposition 2.5]{Villamayor}.
\end{proof}
\begin{remark}\label{rmk:rossi=villa}
    It is straight-forward to check that the algebraic definition of \Cref{def:Villamayor_blowup} is equivalent to the local version of the Rossi construction by taking the Pl\"{u}cker embedding of the relative Grassmannian. We omit the details, which are present in a previous draft.
\end{remark}

\subsection{Properties of the Rossi blow-up}\label{subsec: properties univ desing}

In light of \Cref{rmk:rossi=villa}, from now on we will identify the Rossi and Villamayor blow-ups of a coherent sheaf $\ccF$ on an affine integral Noetherian scheme $X$, both of which are local descriptions of \Cref{definition:Rossi_blowup}. In this section, we collect some properties of $\Bl_\ccF X$.

\begin{proposition}[Blow-up commutes with flat pullbacks]\label{prop:map_between_blowups}\label{cor:map_between_blowups}
    Let $f:Y\to X$ be a morphism of Noetherian integral schemes and let $\ccF$ be a coherent sheaf on $X$ of generic rank $r$.    
    If $f^*\ccF$ has generic rank $r$ then there is a unique morphism
    \[
    \begin{tikzcd}
    \Bl_{f^*{\ccF}}(Y)\ar[r,"\exists ! \widetilde{f}",dashed]\ar[d]{p_Y}& \Bl_{\ccF}(X)\ar[d]{p_X}\\
    Y\ar[r,"f"] & X
    \end{tikzcd}
    \]
    making the diagram commute. If, moreover, $f$ is flat, then the  square is Cartesian.   
\end{proposition}

\begin{proof}
A unique morphism $\widetilde{f}$ making the diagram commute exists by the universal property of $\Bl_\ccF(X)$, which is \Cref{thm:univ_flatification}. To show that the diagram is Cartesian, we work locally with Villamayor's description. We use \cite[Lemma 0805]{stacks-project}, which is the analogous result for blow-ups along ideal sheaves. This requires checking that $f^{-1}\norm{\ccF} \cdot \ccO_Y = \norm{f^*\ccF}$, which holds since $\ccF$ and $f^*\ccF$ have the same rank and the norm $\norm{\cdot}$ is a determinantal ideal. 

Indeed, we can work locally. Then we have $X=\Spec(A)$, $Y=\Spec(B)$, a ring homomorphism $f^\#\colon A\to B$ and $\ccF = \widetilde{M}$ for some finitely presented $A$-module $M$. To compute $\norm{\ccF}$, we take a presentation
\[
    \begin{tikzcd}
        A^{m} \arrow{r}{\Gamma}& A^n \arrow{r} & M\arrow{r}& 0,
    \end{tikzcd}
\]
we choose a submatrix $\Gamma'$ of $\Gamma$ consisting of $n-r$ columns of $\Gamma$ and then $\norm{\ccF}$ is represented by the ideal generated by all the minors $\Delta_i(\Gamma')$ of size $(n-r)\times(n-r)$ of $\Gamma'$. The choice of $\Gamma'$ must be so that this ideal is non-zero and such a choice exists because $\rk(\ccF) = r$. Then $f^{-1}\norm{\ccF}\cdot B$ is the ideal in $B$ generated by $f^\#(\Delta_i(\Gamma'))$ for all $i$. On the other hand, tensoring by $\otimes_A B$ we get a presentation
\[
    \begin{tikzcd}
        B^{m} \arrow{r}{f^*\Gamma}& B^n \arrow{r} & M\otimes_A B\arrow{r}& 0.
    \end{tikzcd}
\]
Since $f^* \ccF = \widetilde{M\otimes_A B}$ and since $\rk(f^* \ccF) = r$, we can compute $\norm{f^* \ccF}$ in the same manner, i.e., taking all the minors of size $(n-r)\times(n-r)$ of a submatrix $(f^*\Gamma)''$ consisting of $n-r$ columns of $f^*\Gamma$. This means that $\norm{f^* \ccF}$ is generated by $\Delta_i((f^\# \Gamma)'')$. We can actually choose $\Gamma'$ and $(f^*\Gamma)''$ so that they consist of the same columns, and in that case we are done because $f^\#$ is a ring homomorphism.
\end{proof}

\begin{proposition} \label{prop: isomorphic-blowups-tensor-line-bundle} Let $X$ be an integral Noetherian scheme and $\ccL$ a line bundle on $X$. Then we have a unique isomorphism 
\[\xymatrix{\Bl_{\ccF}(X)\ar[rr]^{\exists !\widetilde{\phi}}\ar[rd]_p&&\Bl_{\ccF\otimes \ccL}(X)\ar[ld]^q\\
&X
}\]
which makes the diagram commute.
\end{proposition}

\begin{proof} Work locally for affine $X$. Let $f: \ccO_X^{\oplus n}\to \ccF$ be a surjective morphism, let $S$ denote the kernel of $f$ and let $U$ an open subset of $X$ such that $S$ is a vector bundle. We thus obtain a short exact sequence
\[
0\to S\otimes \ccL|_U\to \ccO_U^{\oplus n}\otimes \ccL|_U\stackrel{f\otimes id}{\rightarrow} \ccF\otimes \ccL|_U\to 0.
\]
By possibly shrinking $U$ we may assume we have an isomorphism $g:\ccL|_U\simeq\ccO_U$. We thus obtain a commutative diagram
\[\xymatrix{\Gamma_f\ar[r]\ar[d] &\Gr_X(\ccO_X^{\oplus n},r)\ar[d]\\
\Gamma_{f\otimes id}\ar[r]&\Gr_X(\ccO_X^{\oplus n},r)
}
\]
where the right vertical arrow is induced by $g$. This gives a morphism $\widetilde{\phi}:\overline{\Gamma}_f\to \overline{\Gamma}_{f\otimes id}$ and proves the claim.
\end{proof}

\begin{proposition}\label{prop:functoriality,sheaves,injective,exact,sequence,Villamaoyr}
    Let $X$ be a Noetherian integral scheme and $\ccE$, $\ccF, \ccG$ be coherent $\ccO_X$-modules. Assume that we have an exact sequence $0\to \ccE\to \ccF\to \ccG \to 0$. 
    \begin{enumerate}
        \item If the sequence is locally split and $\ccE$ is locally free, then there is an isomorphism $\Bl_{\ccF}X \simeq \Bl_{\ccG} X$.
        \item If $\ccG$ is locally free, then there is an isomorphism $\Bl_{\ccF}X \simeq \Bl_{\ccE} X$.
    \end{enumerate}
  
  \end{proposition}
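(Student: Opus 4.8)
The plan is to reduce both statements to the norm-based description of the blow-up (Definition \ref{def:Villamayor_blowup}) and show that in each case the norms $\norm{\ccF}$ and $\norm{\ccG}$ (resp. $\norm{\ccF}$ and $\norm{\ccE}$) agree as fractional ideals, up to the usual unique isomorphism, so that the corresponding blow-ups coincide by Lemma \ref{rmk: Rossi blow-up with norm} and Proposition \ref{prop: equivalence Rossi-Villamayor}. Since the statement is about an isomorphism over $X$, and norms/blow-ups are compatible with the (flat) localizations that define the gluing, it suffices to argue affine-locally: write $X = \Spec R$ with $R$ a Noetherian domain, $\ccE = \widetilde{E}$, $\ccF = \widetilde{M}$, $\ccG = \widetilde{G}$, and let $K = \operatorname{Frac}(R)$.

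For part (1): if the sequence $0\to E\to M\to G\to 0$ is locally split with $E$ locally free of rank $s$, then locally $M \simeq E \oplus G$, so if $G$ has generic rank $t$ then $M$ has generic rank $r = s+t$ and there is a natural isomorphism $\bigwedge^r M \simeq \bigwedge^s E \otimes \bigwedge^t G$. Tensoring with $K$ and taking images in $K$, we get $\norm{M} = \norm{\bigwedge^s E}\cdot\norm{G}$; but $\bigwedge^s E$ is a line bundle, so $\norm{\bigwedge^s E}$ is an invertible fractional ideal, and multiplying a fractional ideal by an invertible one does not change the associated blow-up (this is Proposition \ref{prop: isomorphic-blowups-tensor-line-bundle} at the level of the rank-one reduction, or directly \cite[Lemma 0805]{stacks-project}). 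Hence $\Bl_{\norm{M}}X \simeq \Bl_{\norm{G}}X$. To globalize, one checks that the line bundle $\bigwedge^s\ccE$ is the same on all charts, so the local isomorphisms glue; alternatively, invoke the universal property (Proposition \ref{cor:univ_prop}): a morphism $Y\to X$ makes $(f^*\ccF)^\tf$ locally free of rank $r$ iff it makes $(f^*\ccG)^\tf$ locally free of rank $t$, because pulling back the locally split sequence keeps it locally split with $f^*\ccE$ locally free, so $(f^*\ccF)^\tf \simeq f^*\ccE \oplus (f^*\ccG)^\tf$ locally; the two blow-ups then represent the same functor.

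For part (2): if $G$ is locally free of rank $t$, the sequence $0\to E\to M\to G\to 0$ is automatically locally split (every surjection onto a locally free sheaf splits locally), so the same computation gives $\bigwedge^r M \simeq \bigwedge^t G \otimes \bigwedge^{r-t} E$ with $\bigwedge^t G$ a line bundle, whence $\norm{M}$ and $\norm{E}$ differ by the invertible ideal $\norm{\bigwedge^t G}$ and the blow-ups agree; again one can instead run the universal-property argument, since $(f^*\ccF)^\tf \simeq f^*\ccG \oplus (f^*\ccE)^\tf$ locally, so $(f^*\ccF)^\tf$ is locally free of rank $r$ iff $(f^*\ccE)^\tf$ is locally free of rank $r-t$. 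The main point to be careful about — and the only real obstacle — is the globalization: $\bigwedge^s\ccE$ (resp.\ $\bigwedge^t\ccG$) is only invertible, not trivial, so the local isomorphisms $\Bl_\ccF U \simeq \Bl_\ccG U$ must be checked to be compatible on overlaps. This is cleanest to handle via the universal property in Proposition \ref{cor:univ_prop}, which makes the isomorphism canonical and hence automatically global; I would therefore present the universal-property argument as the main proof and mention the explicit norm computation as motivation.
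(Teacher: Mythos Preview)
Your proposal is correct and follows essentially the same approach as the paper: reduce to the locally split situation, use the top-exterior-power decomposition $\bigwedge^{\mathrm{top}}\ccF \simeq \bigwedge^{\mathrm{top}}\ccE \otimes \bigwedge^{\mathrm{top}}\ccG$, and then invoke invariance of the blow-up under tensoring by a line bundle (Proposition~\ref{prop: isomorphic-blowups-tensor-line-bundle}). The paper handles the globalization exactly as you suggest, via the universal property: it observes that $\Bl_\ccF X \simeq \Bl_\ccG X$ iff $(p_\ccF^*\ccG)^\tf$ and $(p_\ccG^*\ccF)^\tf$ are locally free, which is a local condition, so one may assume the splitting from the start.
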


\begin{proof}

   It is enough to prove the statements locally. Indeed, if $p_\ccF\colon \Bl_\ccF X \to X$ and $p_\ccG\colon \Bl_\ccG X \to X$ are the natural projections, then $\Bl_\ccF X \simeq \Bl_\ccG X$ if and only if $(p_\ccF^*\ccG)^\tf$ and $(p_\ccG^*\ccF)^\tf$ are locally free, and these are local statements. 
   
   Therefore, we may assume that we have $\ccF\simeq\ccE\oplus\ccG$. With this, we have that 
    \begin{equation}\label{wedge-ex-seq}
         \wedge^{\rm top}\ccF = \wedge^{\rm top}\ccE \otimes \wedge^{\rm top}\ccG.
    \end{equation}

   From the alternative description of \Cref{def:Villamayor_blowup}, or from composing with a Pl\"{u}cker embedding, it's clear that $\Bl_{\ccG}X\simeq \Bl_{\wedge^{\rm top}\ccG}X$ and $\Bl_{\ccF}X\simeq \Bl_{\wedge^{\rm top}\ccF}X \simeq \Bl_{\wedge^{\rm top}\ccE \otimes\wedge^{\rm top}\ccG}X$ using \Cref{wedge-ex-seq}.  Suppose now that $\ccE$ is locally free, then $\wedge^{\rm top}\ccE$ is a line bundle and we conclude that $\Bl_{\ccF}X \simeq \Bl_{\ccG} X$ by \Cref{prop: isomorphic-blowups-tensor-line-bundle}.
    
    If $\ccG$ is locally free, the sequence is locally split and a similar argument to the one above shows that $\Bl_{\ccF}X \simeq \Bl_{\ccE} X$.
\end{proof}

In the following we discuss a more general situation, when we have an open $U\subset X$ such that $0\to \ccE_U\to \ccF_U\to \ccG_U \to 0$, with $\ccE_U$ locally free.

\begin{proposition}\label{prop: one implies k}
     Let $\ccF$ and $\ccG$ sheaves of ranks $r+a$ and $r$ on an integral scheme $X$ and $\ccO^{n+a}\to\ccF$ and $\ccO^{n}\to\ccG$ surjective morphisms. Suppose there exists $U\subset X$ an open subset and a commutative diagram 
 \[
 \begin{tikzcd}
     0\ar[d]&0\ar[d]\\
 \ccO_U^{n}\ar[d,"i"]\ar[r]& \ccG|_U\ar[r]\ar[d,"f"]&0\\
\ccO_U^{n+a}\ar[r]\ar[d]& \ccF|_U\ar[r]\ar[d]&0\\
\ccO_U^a\ar[r,"h"]\ar[d]&\ccE_U\ar[d]\\
0&0
 \end{tikzcd}
 \]
 with the columns split short exact sequences and $h$ an isomorphism. Then, we get an induced morphism $$\Bl_{\ccG}X\to \Bl_{\ccF}X.$$ In particular, $\Bl_{\ccG}X$ is a desingularisation of $\ccF$.
  \end{proposition}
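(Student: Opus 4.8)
I would deduce the morphism from the universal property of $\Bl_\ccF X$. Write $q\colon \Bl_\ccG X\to X$ for the structure morphism. By \Cref{cor:univ_prop} it is enough to show that $(q^*\ccF)^\tf$ is locally free of rank $r+a$ on $\Bl_\ccG X$; the map $\Bl_\ccG X\to\Bl_\ccF X$ is then the unique one classifying $q$. The ``in particular'' follows immediately from \Cref{pull back desing}: the induced morphism $\Bl_\ccG X\to\Bl_\ccF X$ is proper (since $\Bl_\ccG X\to X$ is proper and $\Bl_\ccF X\to X$ is separated) and birational (it is an isomorphism over the dense open locus where both $\ccF$ and $\ccG$ are locally free of the respective generic ranks), so composing with the desingularization $\Bl_\ccF X\to X$ of $\ccF$ shows that $q$ is itself a desingularization of $\ccF$.

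\textbf{Verifying that $(q^*\ccF)^\tf$ is locally free.} First I would shrink $U$ so that $\ccF|_U$ and $\ccG|_U$ are already locally free of ranks $r+a$ and $r$; this preserves the hypotheses, because the restriction of a split short exact sequence stays split and the restriction of $h$ stays an isomorphism, and it also arranges that $q$ restricts to an isomorphism over $U$. Over $q^{-1}(U)$ the split sequence $0\to\ccG|_U\to\ccF|_U\to\ccE_U\to 0$ together with $h\colon\ccO_U^a\xrightarrow{\sim}\ccE_U$ gives $q^*\ccF\cong q^*\ccG\oplus\ccO^{a}$, hence $(q^*\ccF)^\tf$ is locally free of rank $r+a$ on the dense open $q^{-1}(U)$. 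For the rest I would use commutativity of the diagram: since $f$ is injective and $h$ is an isomorphism, chasing the bottom square forces $\ker(\phi|_U)=i(\ker(\psi|_U))$, where $\phi,\psi$ denote the two given surjections. Using the Rossi presentation of $\Bl_\ccG X$ — on which $\ccO^n$ surjects onto the universal quotient $(q^*\ccG)^\tf$ with kernel the universal sub-bundle — the splitting of the first column produces a morphism $\Gr_U(\ccO^{n},r)\to\Gr_U(\ccO^{n+a},r+a)$ sending $[\ccO^{n}\twoheadrightarrow Q]$ to $[\ccO^{n+a}\twoheadrightarrow Q\oplus\ccO^{a}]$, and the equality of kernels above says precisely that this morphism carries the graph $\Gamma_\psi(U)$ onto $\Gamma_\phi(U)$. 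Passing to the rank-one descriptions $\Bl_\ccG X\cong\Bl_{\wedge^r\ccG}X$ and $\Bl_\ccF X\cong\Bl_{\wedge^{r+a}\ccF}X$ of \Cref{lemma:reduce_to_rank_1}, and noting that $\wedge^a\ccE_U\cong\ccO_U$ forces $\norm{\wedge^{r+a}\ccF}$ and $\norm{\wedge^{r}\ccG}$ to agree up to a unit over $U$, the claim reduces to showing that $\norm{\wedge^{r+a}\ccF}\cdot\ccO_{\Bl_\ccG X}$ is invertible, which by the universal property of blow-ups along ideals gives the morphism.

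\textbf{Main obstacle.} The delicate point is exactly the last step: controlling the two rank-one sheaves $\wedge^{r+a}\ccF$ and $\wedge^{r}\ccG$ (equivalently, the kernels of the two surjections) over $\Bl_\ccG X$ \emph{away from} $q^{-1}(U)$, because the comparison over $U$ a priori involves the entries of $i$ and of the splitting, which are sections over $U$ that need not extend to $X$. To get around this I would argue locally on $X$ and reduce to the case of a genuine global direct sum decomposition after a flat localization, so that the relation between the two blow-ups becomes a direct consequence of the functoriality results already in hand, namely \Cref{prop:functoriality,sheaves,injective,exact,sequence,Villamaoyr}, \Cref{prop: isomorphic-blowups-tensor-line-bundle}, and the compatibility of the Rossi blow-up with flat base change from \Cref{prop:map_between_blowups}.
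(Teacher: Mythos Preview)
Your proposal contains the right core observation but wraps it in an unnecessary and ultimately incomplete detour. You correctly identify the Grassmannian morphism $[\ccO^{n}\twoheadrightarrow Q]\mapsto[\ccO^{n+a}\twoheadrightarrow Q\oplus\ccO^{a}]$ and note that it carries $\Gamma_\psi(U)$ onto $\Gamma_\phi(U)$. That observation \emph{is} the entire proof in the paper --- but you write this map only as $\Gr_U(\ccO^{n},r)\to\Gr_U(\ccO^{n+a},r+a)$ and then abandon it for a rank-one/norm argument. The point you miss is that this embedding of Grassmannians uses only the global splitting $\ccO_X^{n+a}=\ccO_X^{n}\oplus\ccO_X^{a}$, so it is defined as a closed embedding $X\times\Gr(n,r)\hookrightarrow X\times\Gr(n+a,r+a)$ over all of $X$, not just over $U$. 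Since closed embeddings commute with taking closures, $\overline{\Gamma_\psi(U)}\to\overline{\Gamma_\phi(U)}$ is immediate, and that is precisely the morphism $\Bl_\ccG X\to\Bl_\ccF X$.

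Your alternative route through the universal property runs into a genuine obstacle that you yourself flag: verifying that $(q^*\ccF)^\tf$ is locally free on $\Bl_\ccG X$ requires global control of $\ccF$, whereas the hypothesis relates $\ccF$ and $\ccG$ only over $U$. Your proposed fix --- ``reduce to a genuine global direct sum decomposition after a flat localization'' --- does not work: no flat cover of $X$ will manufacture a splitting that was only assumed on the dense open $U$, and \Cref{prop:functoriality,sheaves,injective,exact,sequence,Villamaoyr} genuinely needs the short exact sequence on all of $X$. The Rossi/closure description sidesteps this entirely because it only needs the graphs to agree over the dense open, which is exactly what the hypothesis provides.
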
    

  \begin{proof}
   It suffices to prove this for $X$ affine. The diagram in the hypothesis gives a commutative diagram  
   \[
 \xymatrix{U\ar[r]\ar[d]&X\times \Gr(n,r)\ar[d]\\
 U\ar[r]&X\times \Gr(n+a,r+a)
 }
 \]
 where the vertical map $\Gr(n,r)\to\Gr(n+a,r+a)$ is $$(V\twoheadrightarrow Q)\mapsto (V\oplus W\twoheadrightarrow Q\oplus h(W)).$$
 Here we denoted the fiber of the vector bundle $\ccO_U^a$ by $W$ and (by a slight abuse of notation) the induced map by $h$.
 This gives a morphism between the closures $\Bl_{\ccG}X\to \Bl_{\ccF}X$.

 \end{proof}
 
 We record here a fairly immediate corollary of the above proposition, which will be useful as stated in \Cref{section:application-stable-maps}.
 
 \begin{corollary}\label{cor: one implies k}
     Let $\ccF$ and $\ccG$ sheaves of ranks $r+a$ and $r$ on an integral scheme $X$ and $\ccO^{n}\to\ccF$ and $\ccO^{n+a}\to\ccG$ surjective morphisms. Suppose there exists $U\subset X$ an open subset and a commutative diagram 
 \[
 \begin{tikzcd}
    &0\ar[d]&0\ar[d]\\
 0\ar[r]&\ccG^*|_U \ar[d,"g"]\ar[r]&\ccO_U^{n} \ar[d,"q"]\\
0\ar[r]&\ccF^*|_U\ar[r]\ar[u, shift left]\ar[d]& \ccO_U^{n+a}\ar[d]\ar[u, shift left]\\
&\ccE_U^*\ar[r,"h"]\ar[d]\ar[u, shift left]&\ccO_U^a\ar[d]\ar[u, shift left]\\
&0 &0
  \end{tikzcd}
 \]
 with the columns split short exact sequences and $h$ an isomorphism. Then, we get an induced morphism $$\Bl_{\ccG}X\to \Bl_{\ccF}X.$$ In particular, $\Bl_{\ccG}X$ is a desingularisation of $\ccF$.
  \end{corollary}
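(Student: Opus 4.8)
The plan is to dualize the diagram in the hypothesis and then invoke \Cref{prop: one implies k}. I would first replace $U$ by a smaller nonempty open, if needed, on which $\ccF$, $\ccG$ and $\ccE$ are locally free; this changes nothing, since $\Bl_\ccF X$ and $\Bl_\ccG X$ depend only on the restrictions of $\ccF$ and $\ccG$ to the (dense) opens where these sheaves are locally free, and not on the auxiliary $U$. On such a $U$ the canonical maps $\ccF|_U\to(\ccF^\vee)^\vee|_U$, $\ccG|_U\to(\ccG^\vee)^\vee|_U$ and $\ccE|_U\to(\ccE^\vee)^\vee|_U$ are isomorphisms, and the horizontal injections in the diagram have locally free cokernel on $U$, since over the locally free locus they are the duals of surjections onto $\ccG$ and $\ccF$.

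Next I would apply $(-)^\vee=\ccH om_{\ccO_U}(-,\ccO_U)$ to the whole diagram. An additive functor carries a split short exact sequence to a split short exact sequence, so the two columns become split short exact sequences among $\ccO_U^n$, $\ccO_U^{n+a}$, $\ccO_U^a$ and $\ccG|_U$, $\ccF|_U$, $\ccE|_U$ (using the identifications above); the isomorphism $h$ dualizes to an isomorphism $h^\vee\colon\ccO_U^a\to\ccE|_U$; and the horizontal injections, having locally free cokernel, dualize to surjections $\ccO_U^n\twoheadrightarrow\ccG|_U$ and $\ccO_U^{n+a}\twoheadrightarrow\ccF|_U$. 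Since the Rossi blow-up does not depend on the chosen presentation (\Cref{prop:Rossi}), I may take the presentations of $\ccG$ and $\ccF$ to restrict over $U$ to these surjections. The diagram so obtained is exactly of the form required in \Cref{prop: one implies k} for the pair $(\ccG,\ccF)$ (the ranks match: $\ccG$ has rank $r$, $\ccF$ has rank $r+a$, and $\ccE$ has rank $a$), so that proposition provides the morphism $\Bl_\ccG X\to\Bl_\ccF X$ and the assertion that $\Bl_\ccG X$ is a desingularization of $\ccF$.

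If one prefers, the last assertion can also be obtained directly from the morphism $\Bl_\ccG X\to\Bl_\ccF X$: this morphism is proper, because $\Bl_\ccG X\to X$ is projective by \Cref{prop:Rossi} and $\Bl_\ccF X\to X$ is separated, and it is birational, because both blow-ups are isomorphic to $X$ over the common dense open where $\ccF$ and $\ccG$ are locally free, compatibly with the maps to $X$; since $\Bl_\ccF X\to X$ is a desingularization of $\ccF$ by \Cref{cor:univ_prop}, \Cref{pull back desing} gives the same for the composition $\Bl_\ccG X\to X$. The one point that requires attention is the passage from injections to surjections under dualization, equivalently that the horizontal maps in the hypothesis have locally free cokernel on $U$; this is automatic because those maps arise by dualizing presentations of $\ccF$ and $\ccG$. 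Everything else is the formal bookkeeping of split exact sequences and ranks.
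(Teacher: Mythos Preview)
Your proposal is correct and follows the same approach as the paper: dualize the diagram and invoke \Cref{prop: one implies k}. The paper's proof is a one-liner (``This follows by dualising the statement in \Cref{prop: one implies k}''), whereas you spell out the details---shrinking $U$ to the locally free locus, using reflexivity, and checking that split exact sequences and the isomorphism $h$ survive dualization. The paper also records an alternative route you did not take: rather than dualizing the sheaves, one can rerun the Grassmannian argument of \Cref{prop: one implies k} directly, using the subbundle description $\Gr(r,n)\to\Gr(r+a,n+a)$, $(S\hookrightarrow V)\mapsto(g(S)\oplus W\hookrightarrow V\oplus h(W))$, and notes that the vertical map $g$ in the hypothesis is the dual of the \emph{inverse} of $f$ from \Cref{prop: one implies k}, not of $f$ itself.
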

  \begin{proof}
   This follows by dualising the statement in \Cref{prop: one implies k}. Note that $g$ is not the dual of $f$, but the dual of a splitting of $f$.
   
   Alternatively, one can copy the proof above. The only difference is that a map $\Gr(r, n)\to\Gr(r+a,n+a)$ is \[(S\hookrightarrow V)\mapsto (g(S)\oplus W\hookrightarrow V\oplus h(W)).\qedhere\]
 \end{proof}

\begin{remark}
    In general, $\Bl_{\ccF}X$ is not isomorphic to $\Bl_{\ccF^*}X$. For example let $X$ be a normal scheme and $\ccF$ an ideal sheaf. Then $\ccF^{*}$ is reflexive and it has rank one, so it is an invertible sheaf. This shows that $\Bl_{\ccF^{*}}X\simeq X$. If $\ccF$ is not locally free (see e.g. \Cref{point in plane}), then we have $\Bl_{\ccF}X\neq X$.
\end{remark}

\section{Diagonalization}\label{sec:diagonalization}
This section may be skipped for a first reading. The Rossi construction suffices to prove the main result \Cref{main_result_intro}. We add this new blowup because we believe that this is  a first step for proving \Cref{conj gv}.

The diagonalization process for certain coherent sheaves is introduced by Hu and Li in \cite{Hu-Li-diagonalization} and by Grivaux in \cite{Grivaux}. A coherent sheaf $\ccF$ on an integral Noetherian scheme $X$ can locally be written as the cokernel of a morphism of locally free sheaves $\varphi\colon \ccE^{-1}\to \ccE^0$. Blowing up all the Fitting ideals of $\ccF$ desingularizes both $\ccF$ and the kernel of $\varphi$, and makes the morphism $\varphi$ diagonalizable (see \Cref{def:diagonalizable}). We summarize the construction for schemes and its universal property and explore the possibility of finding a minimal blow-up which also desingularizes all the components of the abelian cone associated to $\ccF$. Applied to the moduli space of maps, this construction will be used in \Cref{section:application-stable-maps} to desingularize all the components of $\Gw$. All schemes are assumed to be Noetherian and integral in this section.

\subsection{Diagonalizable morphisms and diagonal sheaves}\label{subsec: diagonal sheaves and morphsims}
We recall the notion of diagonalizable morphism of locally free sheaves from \cite{Hu-Li-diagonalization} and introduce the notion of diagonal sheaf. We show that these two notions are equivalent in \Cref{prop:diagonalizable_definitions_equivalent}, in the sense that a morphism is diagonalizable if and only if its cokernel is diagonal.

\begin{definition}[Diagonalizable morphism {\cite[Definition 3.2]{Hu-Li-diagonalization}}]\label{def:diagonalizable}
    Let $X$ be a scheme. A morphism $\varphi\colon \ccO_X^{\oplus p}\to \ccO_X^{\oplus q}$ is \textit{diagonalizable} if there are direct sum decompositions by free sheaves
    \begin{equation}\label{eq:diagonalizable_decomposition}
            \ccO_X^{\oplus p} = G_0 \oplus \bigoplus_{i=1}^\ell G_i \quad \text{and} \quad \ccO_X^{\oplus q} = H_0 \oplus \bigoplus_{i=1}^\ell H_i
    \end{equation}
    with $\varphi(G_i)\subseteq H_i$ for $0\leq i \leq \ell$ such that
    \begin{enumerate}
        \item $\varphi\mid_{G_0} = 0$;
        \item for every $1\leq i\leq \ell$, there is an isomorphism $I_i: G_i\to H_i$; 
        \item the morphism $\varphi\mid_{G_i} : G_i \to H_i$ is given by $f_i I_i$ for some $0\neq f_i \in\Gamma(\ccO_X)$;
        \item $(f_{i+1}) \subsetneqq (f_i)$.
    \end{enumerate}
    More generally, a morphism $\varphi\colon E^{-1} \to E^0$ of locally free sheaves on $X$ is \textit{locally diagonalizable} if $X$ admits an open cover which trivializes $E^{-1}$ and $E^0$ simultaneously and on which $\varphi$ is diagonalizable.
\end{definition}

\begin{example}
    If $X=\Spec(R)$ for a principal ideal domain $R$, then every morphism $\varphi\colon R^p\to R^q$ is diagonalizable in the sense of \Cref{def:diagonalizable} and the diagonal form associated to $\varphi$ is called the Smith normal form of $\varphi$.
\end{example}

We will be interested in the coherent sheaves arising as kernels and cokernels of such diagonalizable morphisms. 

\begin{proposition}\label{ker-diag}
    Let $X$ be a Noetherian integral scheme and let $\varphi\colon E^{-1}\to E^0$ be a locally diagonalizable morphism between locally free sheaves on $X$. Then $\ker(\varphi)$ is locally free.
\end{proposition}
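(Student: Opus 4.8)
The plan is to reduce immediately to the local diagonalizable case and then compute the kernel explicitly. Since being locally free is a local property and $\varphi$ is \emph{locally} diagonalizable, we may work on an open cover on which $E^{-1}$ and $E^0$ are trivial, say $E^{-1}\simeq\ccO_X^{\oplus p}$ and $E^0\simeq\ccO_X^{\oplus q}$, and on which $\varphi$ is diagonalizable in the sense of \Cref{def:diagonalizable}. So fix such a chart and the decompositions $\ccO_X^{\oplus p}=G_0\oplus\bigoplus_{i=1}^{\ell}G_i$, $\ccO_X^{\oplus q}=H_0\oplus\bigoplus_{i=1}^{\ell}H_i$ with $\varphi|_{G_0}=0$ and $\varphi|_{G_i}=f_i I_i$ for isomorphisms $I_i\colon G_i\to H_i$ and nonzero $f_i\in\Gamma(\ccO_X)$.

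First I would observe that $\varphi$ respects the direct sum decomposition, so $\ker(\varphi)=\ker(\varphi|_{G_0})\oplus\bigoplus_{i=1}^{\ell}\ker(\varphi|_{G_i})$. The summand $\ker(\varphi|_{G_0})=G_0$ is free. For each $i\geq 1$, since $I_i$ is an isomorphism, $\ker(\varphi|_{G_i})=\ker(f_i\cdot\id_{G_i})$, i.e. the $f_i$-torsion submodule of the free sheaf $G_i$. Here is the key point: $X$ is integral, so $\Gamma(\ccO_X)$ (locally) is a domain and $f_i\neq 0$ is a non-zero-divisor; hence multiplication by $f_i$ on the torsion-free sheaf $G_i$ is injective, and $\ker(\varphi|_{G_i})=0$. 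Therefore $\ker(\varphi)\simeq G_0$ on this chart, which is locally free of rank equal to $\rk G_0=p-\sum_i\rk G_i$.

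The last thing to check is that these local identifications are consistent enough to conclude $\ker(\varphi)$ is locally free on all of $X$: but local freeness is checked on a cover, so having shown $\ker(\varphi)$ is free on each member of an open cover is exactly what is needed — no gluing argument is required. I expect no real obstacle here; the only subtlety worth stating carefully is the use of integrality of $X$ to guarantee the $f_i$ are non-zero-divisors, which is what kills the would-be torsion contributions from the $G_i$ with $i\geq 1$. (If one wanted, one could also note that $\varphi$ has locally constant rank $\sum_{i\geq 1}\rk G_i$ on the integral scheme $X$, so $\ker\varphi$ is a kernel of a constant-rank map of bundles and hence a subbundle; but the explicit computation above is cleaner and self-contained.)
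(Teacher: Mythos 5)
Your proof is correct and takes essentially the same approach as the paper, which also reduces to a trivializing chart and identifies $\ker(\varphi)=G_0$; the paper leaves implicit the step you spell out, namely that integrality of $X$ makes the $f_i$ non-zero-divisors so the summands $G_i$ ($i\geq 1$) contribute no kernel.
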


\begin{proof}
    The question is local, so we can assume that $E^{-1} = \ccO_X^{\oplus p}$ and $E^{0} = \ccO_X^{\oplus q}$, and that they admit decompositions as in \Cref{eq:diagonalizable_decomposition}. Then $\ker(\varphi) = G_0$ is free.
    
\end{proof}

\begin{definition}
\label{defi:diag_sheaf}
\label{defi:diag_morphism_sheaves}
We say  that a coherent sheaf $\ccF$ on a scheme $X$ is \textit{diagonal} if all the Fitting ideal sheaves $F_i(\ccF)$ are locally principal.
\end{definition}

\begin{remark}\label{rmk: diagonal implies tf lc and tor dim}
    By \Cref{thm: Lipman}, if $\ccF$ is diagonal then $\ccF^\tf = \ccF/\tor(\ccF)$ is locally free and $\ccF$ has tor-dimension at most 1.
\end{remark}

\begin{proposition}\label{prop:diagonalizable_definitions_equivalent} A morphism $\varphi:E^{-1}\to E^0$ of locally free sheaves is locally diagonalizable if and only if the coherent sheaf $\Coker(\varphi)$ is diagonal.
\end{proposition}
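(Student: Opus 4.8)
The statement is local on $X$, so we may assume $X=\Spec(R)$ for a Noetherian integral domain $R$ and that $E^{-1}=\ccO_X^{\oplus p}$, $E^0=\ccO_X^{\oplus q}$ are free. Write $\ccF=\Coker(\varphi)$. The plan is to prove the two implications separately.

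For the direction ``locally diagonalizable $\Rightarrow$ diagonal'': after possibly shrinking, take a decomposition as in \eqref{eq:diagonalizable_decomposition}, so that in suitable bases $\varphi$ is represented by a $q\times p$ matrix which is block-diagonal with a zero block $H_0$ of size $q_0 = \rk H_0$ and scalar blocks $f_i\cdot\mathrm{Id}$ ($1\le i\le \ell$). Then $\ccF\simeq \ccO_X^{\oplus q_0}\oplus \bigoplus_{i=1}^\ell \ccO_X/(f_i)$. I would then compute the Fitting ideals of this explicit module directly: $F_j(\ccF)$ is generated by the $(q-j)\times(q-j)$ minors of this diagonal matrix, and each such minor is (up to unit) a product of a subset of the $f_i$'s — in fact, using that $(f_{i+1})\subsetneq(f_i)$, the smallest such product of a given size is obtained by taking the largest indices, so $F_j(\ccF)=(f_{q_0+1}\cdots f_{s})$ for the appropriate $s$ depending on $j$ (and $F_j(\ccF)=R$ once $j\ge q-q_0$, and $F_j(\ccF)=0$ for $j$ too small if $p<q$... more precisely one tracks the number of columns). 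In any case each $F_j(\ccF)$ is principal, hence $\ccF$ is diagonal. This direction is essentially a bookkeeping computation with minors of a diagonal matrix.

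For the converse ``diagonal $\Rightarrow$ locally diagonalizable'': suppose all $F_i(\ccF)$ are locally principal. By \Cref{rmk: diagonal implies tf lc and tor dim}, $\ccF^\tf$ is locally free, say of rank $r$, and $\ccF$ has tor-dimension $\le 1$; in particular $\varphi$ can be taken to be a presentation with $\ker\varphi$ locally free. After localizing at a prime (or working locally), I would argue by induction on the number of generators, or equivalently on the size of the matrix. Localizing at the generic point kills the torsion; over a local ring $R_{\mathfrak p}$, the first non-zero Fitting ideal $F_r(\ccF)$ is principal, generated by a nonzerodivisor $f_1$ (the generic rank being $r$), while $F_{r-1}(\ccF)=0$. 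One then uses a standard matrix argument: since $F_r(\ccF)=(f_1)$ is generated by the gcd-type element and is principal, one can perform row and column operations (invertible over $R_{\mathfrak p}$, or over a suitable localization/open neighborhood) to bring $\varphi$ into a form where one diagonal entry is $f_1$ and $f_1$ divides every other entry; splitting off this rank-one piece $G_\ell\to H_\ell$ given by $f_1\cdot I_\ell$ leaves a residual morphism $\varphi'$ whose cokernel $\ccF'$ still has all Fitting ideals principal (they are quotients of those of $\ccF$ by powers of $f_1$, or one checks $F_i(\ccF')$ directly), and whose matrix is strictly smaller. By induction $\varphi'$ is diagonalizable, with scalars $f_1',\dots$ all divisible by further scalars; one then renames/reorders so that condition (4), $(f_{i+1})\subsetneq(f_i)$, holds — consolidating equal entries into a single isomorphism block $G_i\to H_i$ of higher rank. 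Finally the zero block $G_0=\ker\varphi$ is free by \Cref{ker-diag}-type reasoning (or directly, it's the part killed by $\varphi$).

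The main obstacle I anticipate is the converse direction, and specifically the claim that over a local (or suitably small open) neighborhood one can simultaneously diagonalize — i.e. that the matrix operations needed really are realizable by automorphisms of the free modules after localization, and that at each inductive step the divisibility $f_1\mid(\text{all other entries})$ can be arranged from principality of $F_r(\ccF)$ alone. The subtlety is that principality of a determinantal ideal does not by itself give an entry equal to its generator; one needs the non-zerodivisor property (from integrality) plus the vanishing $F_{r-1}=0$ to rule out a non-principal ``defect''. I would lean on Lipman's theorem (\Cref{thm: Lipman}, \Cref{prop:Lipman_converse}) to control the torsion-free quotient and on a Smith-normal-form style argument over the local ring (valid because the ring is local and the relevant ideals are principal, so one can clear denominators and do elementary operations), then note that finitely many such operations and the finitely many $f_i$ involved are all defined over a single affine open, giving the local cover required by \Cref{def:diagonalizable}.
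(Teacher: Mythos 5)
Your forward direction matches the paper's: in a chart where $\varphi$ is in the diagonal form of \Cref{def:diagonalizable}, each Fitting ideal of $\Coker(\varphi)$ is generated by products of the $f_i$'s and hence is locally principal.

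For the converse, however, your inductive step starts from the wrong end of the Fitting-ideal chain. You take $f_1$ to generate the \emph{smallest} non-zero Fitting ideal $F_r(\ccF)$ (where $r=\rk\ccF$) and claim row and column operations bring $\varphi$ to a form where ``one diagonal entry is $f_1$ and $f_1$ divides every other entry.'' But in the diagonal normal form $\Diag(g_1,\dots,g_{q-r},0,\dots,0)$ with $g_1\mid g_2\mid\cdots$, the generator of $F_r(\ccF)$ is the \emph{product} $g_1\cdots g_{q-r}$ of all elementary divisors; it is not a single entry, and it is divisible by every entry rather than the other way round. The element that divides every entry is $g_1$, which generates the \emph{largest} non-trivial Fitting ideal $F_{q-1}(\ccF)=\Delta_{1\times 1}(\varphi)$ (for instance, with $\varphi=\Diag(x,x,xy,xyz)$ over $\bbC[x,y,z]$ one has $F_0=(x^4y^2z)$ while $F_3=(x)$). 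The paper accordingly runs the induction top-down, following \cite[Proposition 3.13]{Hu-Li-diagonalization}: since $F_{q-1}(\ccF)$ is the ideal of $1\times1$ minors, its principality yields, after further localization and using that the local ring is a domain, a single entry $a_{i_0,j_0}$ dividing all the others; one splits off that $1\times1$ block, observes that the Fitting ideals of the residual matrix $\Gamma'$ differ from those of $\Gamma$ by the principal factor $(a_{i_0,j_0})$ and so remain locally principal, and recurses. Your bottom-up induction from $F_r$ would instead require extracting the \emph{largest} elementary divisor, which is not visible from principality of $F_r$ alone; and the appeal to Lipman's theorem (\Cref{thm: Lipman}, \Cref{prop:Lipman_converse}) only recovers that $\ccF^\tf$ is locally free, which is already \Cref{rmk: diagonal implies tf lc and tor dim}, without producing the block normal form.
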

\begin{proof}
    This result is contained in the proof of \cite[Proposition 3.13]{Hu-Li-diagonalization}. Observe that the Fitting ideals $F_i(\ccF)$ are just the determinantal ideals $\Delta_{(q-i)\times(q-i)}(\varphi)$, where $q=\rk(E^0)$. If $\varphi$ is locally diagonalizable, take an open where it is of the form \eqref{eq:diagonalizable_decomposition}. Then the Fitting ideals of $\ccF$ are generated by products of the $f_i$'s, so are principal in this open.

    On the other hand, if $\ccF$ is diagonal, we can cover $X$ by affine opens where all $F_i(\ccF)$ are principal and where the $E^i$'s are simultaneously trivialized. We quickly sketch how \cite[Proposition 3.13]{Hu-Li-diagonalization} produces a decomposition as in \eqref{eq:diagonalizable_decomposition}, by possibly further restricting. In the affine open $\Spec(R)\subset X$, the morphism $\varphi$ is given by 
    \[
    \Gamma = (a_{i,j})
    \]
    $i\in\{1,\dots,p\}$, $j\in\{1,\dots ,q\}$, $a_{i,j}\in R$. The Fitting ideal $F_{q-1}(\ccF)=\Delta_{1\times 1}(\Gamma)$ is principal if and only if, after further localization, there is an entry $a_{i_0,j_0}$ which divides every other entry $a_{i,j}$. In that case, one can perform row and column operations to put $\Gamma$ in the following form
    \begin{equation*}
    \left(
        \begin{array}{c|ccc}
            a_{i_0,j_0} &  0 & \ldots & 0\\
            \hline
            0  &  & \\
            \vdots & & \Gamma' \\
            0  
        \end{array}
        \right)
    \end{equation*}
    with $\Gamma'$ a matrix of smaller size. The same argument works recursively since the remaining Fitting ideals of $\ccF$ and those of $\Gamma'$ differ by the principal ideal $(a_{i_0,j_0})$.
\end{proof}

\begin{example}
    Any smooth curve $X$ can be covered by affine open subschemes of the form $\Spec(R)$ with $R$ a principal ideal domain. Therefore every coherent sheaf on $X$ is locally diagonal and every morphism of locally free sheaves on $X$ is locally diagonalizable.  
\end{example}

We are interested in morphisms that transform a given coherent sheaf in a diagonal sheaf.

\begin{definition}
\begin{enumerate}
    \item \label{defi:diag,2} Given a scheme $X$ and a coherent sheaf $\ccF$, a \textit{diagonalization} of $\ccF$ is a morphism $f:\widetilde{X}\to X$ such that $f^*\ccF$ is diagonal. 
    \item\label{defi:diag,3} Given a scheme $X$ and a morphism of locally-free sheaves $\varphi: E^{-1}\to E^0$, a \textit{diagonalization} of $\varphi$ is a morphism $f:\widetilde{X}\to X$ such that $f^*\varphi$ is locally diagonalizable and $\rk(\Coker(\varphi))=\rk(f^*\Coker(\varphi))$.
\end{enumerate}
\end{definition}

\begin{remark}\label{remark:diagonal_sheaf_diagonalizing_presentation}
From \Cref{prop:diagonalizable_definitions_equivalent}, we see that diagonalizing a coherent sheaf $\ccF$ is equivalent to diagonalizing any local presentation $E^{-1}\to E^0\twoheadrightarrow\ccF$ by locally free sheaves. 
\end{remark}

\begin{remark}\label{remark:diagonalization_base_changes}
If $\varphi:E^{-1}\to E^0$ is a locally diagonalizable morphism on a scheme $X$, and $f:Y\to X$ is any morphism of Noetherian schemes, $f^*\varphi$ is locally diagonalizable.

Similarly, if $\ccF$ is diagonal, $f^*\ccF$ is diagonal.

Note that the generic ranks of $\ccF$ and $f^*\ccF$ will be different in general for non-dominant morphisms.
\end{remark}

\subsection{Construction of the Hu--Li blow-up}\label{subsec: construction Hu Li blowup}

We recall the construction of the minimal diagonalization of a sheaf, introduced in \cite{Hu-Li-diagonalization}, which we call the Hu-Li blow-up of a scheme along a sheaf.

\begin{definition}[Maximal rank]\label{def:maximal rank}
    Let $X$ be an integral Noetherian scheme and $\ccF$ a coherent sheaf of generic rank $r$. The maximal rank $\mrk(\ccF)$ is 
   \[ \mrk(\ccF)=\max_{p\in X}\{\rk(\ccF|_p)\}\geq r
   \] which is the maximum rank of $\ccF$ when restricted to a closed point of $p\in X$. Equivalently, $\mrk(\ccF)$ is such that the Fitting ideals $F_{\mrk(\ccF)}(\ccF)$ is $\ccO_X$ and $F_{\mrk(\ccF)-1}(\ccF)\neq \ccO_X$, with the convention that $F_{-1}(\ccF)=0$. 
    \end{definition}
    \begin{remark}
    The above $\mrk(\ccF)$ is finite. Indeed, the ascending chain condition on the Fitting ideals 
    \[F_{-1}(\ccF)\subset F_0(\ccF)\subset\dots\subset F_n(\ccF)\]
    guarantees that there is some $\mrk(\ccF)$ such that $F_{\mrk(\ccF)}(\ccF)=F_{\mrk(\ccF)+1}(\ccF)=\dots$. 
    
    Moreover, for any affine open $U\subset X$, the ascending chain of Fitting ideals stabilizes at $\ccO_U$, since $\ccF|_U=\widetilde{M}$ for a finitely generated module $M$. So the chain above must stabilize at $\ccO_X$.
    \end{remark}

    \begin{remark}
    There is a closed point $q\in X$ such that $\rk(\ccF|_q)=\mrk(\ccF)$, and such that we have a resolution
    \[
    \ccO_q^{\oplus p}\to\ccO_q^{\oplus \mrk(\ccF)}\to\ccF|_q\to 0.
    \]
    However, $\ccF$ may not be generated globally by $\mrk(\ccF)$ sections. Indeed, it may not be globally generated at all!
    \end{remark}

\begin{construction}[Hu--Li blow-up]\label{constr: diagonalization_blowup}
Let $X$ be an integral Noetherian scheme and $\ccF$ a coherent sheaf of general rank $r$ and maximal rank $r_2$.
    Recall from \Cref{subsec:Fitting_ideals} that the Fitting ideals of $\ccF$ satisfy a chain of inclusions $F_{-1}(\ccF) \subseteq F_0(\ccF)\subseteq \ldots$, that $F_{r_2}(\ccF) = \ccO_X$ and that $F_0(\ccF) = \ldots = F_{r-1}(\ccF) = 0$ because $\ccF$ has rank $r$. 

    Let 
    \[
        p:\bld = \Bl_{F_{r}(\ccF) \cdot \ldots \cdot F_{r_2-1}(\ccF)} X \to X
    \]
    By \cite[Lemma 080A]{stacks-project}, $\bld$ can also be constructed by successively blowing up $X$ along (the total transforms of) the Fitting ideals of $\ccF$, that is 
    
    \[  
        \begin{tikzcd}
            \bld = X_r \arrow{r}{p_r} & \ldots\arrow{r} & X_{r_2-2}\arrow{r}{p_{r_2-2}} & X_{r_2-1}\arrow{r}{p_{r_2 - 1}} & X
        \end{tikzcd}
    \]
    where  
    \begin{itemize}
         \item $X_{r_2-1} = \Bl_{F_{r_2-1}(\ccF)} X$,
        \item $X_{r_2-2} = \Bl_{F_{r_2-2}(p_{r_2-1}^\ast\ccF)} X_{r_2-1} = \Bl_{p_{r_2-1}^{-1} F_{r_2-2}(\ccF) \ccO_{X_{r_2-1}}} X_{r_2-1}$ and
        \item $X_i = \Bl_{F_{i}(p_i^* p_{i+1}^* \ldots p_{r_2-1}^*\ccF)} X_{i+1} $ for all $i$ with $r\leq i \leq r_2-2$.
    \end{itemize}
    Each $p_i$ is the natural morphism coming from the blow-up construction and we denote by $p$ the composition $p_{r_2-1}\circ \ldots \circ p_r$.
\end{construction}

\subsection{The universal property of the Hu--Li blow-up}\label{subsec: univ property HL blowup}

We are now ready to state the minimality properties for \Cref{constr: diagonalization_blowup}. We can formulate a universal property for the morphism $\varphi$ or, in light of \Cref{remark:diagonal_sheaf_diagonalizing_presentation}, we can formulate it to only depend on the cokernel sheaf $\ccF$.

\begin{theorem}[Universal property of $\bld$ \cite{Hu-Li-diagonalization}]\label{thm:univ_flatificationerty_HL_principal_Fitting_ideals}
    Let $X$ be a Noetherian integral scheme and $\ccF$ a coherent sheaf on $X$ of generic rank $r$. The natural projection $p\colon \bld\to X$ satisfies that 
    \begin{enumerate}
        \item the sheaf $p^*\ccF$ has generic rank $r$ and
        \item the Fitting ideal $F_i(p^*\ccF)$ is locally principal for all $i$.
    \end{enumerate}  
    Moreover, $p\colon \bld\to X$ satisfies the following universal property: for any morphism $f\colon Y\to X$ of Noetherian integral schemes such that 
    \begin{enumerate}
        \item the sheaf $f^*\ccF$ has generic rank $r$ and
        \item the Fitting ideal $F_i(f^*\ccF)$ is locally principal for all $i$,
    \end{enumerate}  
    there is a unique morphism $f'\colon Y\to \bld$ factoring $f$.
    \[
    \begin{tikzcd}
    Y \ar[dr,"f"']\ar[r,"{\exists !f'}", dashed]&\bld\ar[d,"p"]\\
    &X
    \end{tikzcd}
\]
\end{theorem}

\begin{proof}
    Let $r_2$ denote the maximal rank of $\ccF$. Then $r=r_2$ if and only if $\ccF$ is locally free, in which case $\bld= X$ clearly has this property. 
    
    Otherwise, we must have $r_2\geq r$, so $\bld$ is defined by the sequence of blow-ups in \Cref{constr: diagonalization_blowup}.  By construction, $p$ is dominant and each Fitting ideal of $p^\ast \ccF$ is locally principal. The universality follows from the universal property of the usual blow-up as in \cite[Proposition 7.14]{Hartshorne_AG}, using that $F_i(f^*\ccF)$ is non-zero for all $r\leq i \leq r_2$, which holds by the assumption that $f^*\ccF$ and $\ccF$ have the same generic rank.
\end{proof}

\begin{theorem}[Universal property of diagonalization \cite{Hu-Li-diagonalization}]\label{thm:universal_property_diagonalization}
    Let $\varphi:E^{-1}\to E^{0}$ be a morphism of locally-free sheaves on a Noetherian integral scheme $X$. Let $\ccF=\Coker(\varphi)$ and let $\bld$ as in  
    \Cref{constr: diagonalization_blowup}.  Then, the natural projection $p\colon \bld\to X$ is a diagonalization of $\varphi$. Moreover, $p$ satisfies the following universal property: for any morphism $f: Y\to X$ such that $f^*\varphi$ is locally diagonalizable and $\rk(f^*\ccF) = \rk(\ccF)$, there is a unique morphism $f':Y\to\bld$ factoring $f$.
    \[
    \begin{tikzcd}
    Y \ar[dr,"f"']\ar[r,"{\exists !f'}", dashed]&\bld\ar[d,"p"]\\
    &X
    \end{tikzcd}
\]
\end{theorem}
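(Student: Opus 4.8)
The statement reduces, via \Cref{prop:diagonalizable_definitions_equivalent} and \Cref{remark:diagonal_sheaf_diagonalizing_presentation}, to the statement already established in \Cref{cor:univ_property_HL_principal_Fitting_ideals}, so the plan is essentially a dictionary translation between the two formulations of the universal property. First I would check that $p\colon \bld\to X$ is indeed a diagonalization of $\varphi$ in the sense of \Cref{defi:diag,3}: by \Cref{cor:univ_property_HL_principal_Fitting_ideals}(1) the generic rank of $p^*\ccF$ equals that of $\ccF$, which is the condition $\rk(\Coker(\varphi))=\rk(p^*\Coker(\varphi))$ since $p^*$ is right exact and $p^*\varphi$ has cokernel $p^*\ccF$; and by \Cref{cor:univ_property_HL_principal_Fitting_ideals}(2) all the Fitting ideals $F_i(p^*\ccF)$ are locally principal, which by \Cref{prop:diagonalizable_definitions_equivalent} (applied to the pulled-back presentation $p^*\varphi\colon p^*E^{-1}\to p^*E^0$, whose cokernel is $p^*\ccF$) says precisely that $p^*\varphi$ is locally diagonalizable.

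For the universal property, suppose $f\colon Y\to X$ is a morphism of Noetherian integral schemes with $f^*\varphi$ locally diagonalizable and $\rk(f^*\ccF)=\rk(\ccF)$. Again by \Cref{prop:diagonalizable_definitions_equivalent}, local diagonalizability of $f^*\varphi$ is equivalent to $\Coker(f^*\varphi)=f^*\ccF$ being diagonal, i.e.\ to all $F_i(f^*\ccF)$ being locally principal; and the rank condition is condition (1) of \Cref{cor:univ_property_HL_principal_Fitting_ideals}. Thus $f$ satisfies both hypotheses of the universal property in \Cref{cor:univ_property_HL_principal_Fitting_ideals}, so there is a unique $f'\colon Y\to\bld$ with $p\circ f'=f$, as required.

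I expect the only genuine point to verify — and the place where one should be slightly careful — is the compatibility of cokernels and Fitting ideals with the base change $f$: that $\Coker(f^*\varphi)\cong f^*\Coker(\varphi)=f^*\ccF$ (right-exactness of $f^*$) and that $F_i(f^*\ccF)=f^{-1}F_i(\ccF)\cdot\ccO_Y$ (compatibility of Fitting ideals with base change, recalled in \Cref{subsec:Fitting_ideals}). These are routine, so there is no real obstacle; the content of the theorem is already contained in \Cref{cor:univ_property_HL_principal_Fitting_ideals} together with \Cref{prop:diagonalizable_definitions_equivalent}, and the proof is the observation that the two sets of hypotheses match up under that equivalence.
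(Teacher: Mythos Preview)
Your proposal is correct and matches the paper's own proof, which is the one-line observation that the theorem follows immediately from \Cref{cor:univ_property_HL_principal_Fitting_ideals} and \Cref{prop:diagonalizable_definitions_equivalent}. You have simply spelled out the dictionary translation in more detail than the paper does; the additional remarks on right-exactness of pullback and base-change compatibility of Fitting ideals are accurate and harmless, though the paper leaves them implicit.
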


\begin{proof}
    Follows immediately from \Cref{thm:univ_flatificationerty_HL_principal_Fitting_ideals} and \Cref{prop:diagonalizable_definitions_equivalent}.
\end{proof}

\subsection{Properties of the Hu--Li blow up}\label{subsec:properties_HL}

We collect properties of $\bld$.

\begin{proposition}\label{prop:map_between_blowups_diagonal}\label{cor:map_between_blowups_diagonal}
    Let $f:Y\to X$ be a morphism of Noetherian integral schemes and let $\ccF$ be a coherent sheaf on $X$ of generic rank $r$. 
    If $f^*\ccF$ has generic rank $r$ then there is a unique morphism
    \[
    \begin{tikzcd}
    \bldY\ar[r,"\exists !\widetilde{ f}",dashed]\ar[d]& \bld\ar[d]\\
    Y\ar[r,"f"] & X
    \end{tikzcd}
    \]
    making the diagram commute. 
   
    If, moreover, $f$ is flat, then the square is Cartesian.
  
\end{proposition}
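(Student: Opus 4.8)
The statement is the exact analogue for the Hu--Li blow-up of \Cref{prop:map_between_blowups}, and the plan is to mimic that proof. First I would establish the existence and uniqueness of $\widetilde{f}$ purely formally: since $f^*\ccF$ has generic rank $r$, the composition $\bldY \to Y \xrightarrow{f} X$ is a morphism from a Noetherian integral scheme along which the pullback of $\ccF$ has generic rank $r$ and all Fitting ideals $F_i$ are locally principal (by \Cref{cor:univ_property_HL_principal_Fitting_ideals} applied to $\bldY \to Y$ together with \Cref{remark:diagonalization_base_changes} and the base-change compatibility of Fitting ideals). Hence the universal property of $\bld$ in \Cref{cor:univ_property_HL_principal_Fitting_ideals} produces the unique $\widetilde{f}$ making the square commute.

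For the Cartesian claim when $f$ is flat, I would use the description of $\bld$ as the iterated blow-up in \Cref{constr: diagonalization_blowup} and the corresponding fact for ordinary blow-ups along ideal sheaves: a flat base change commutes with blowing up along an ideal sheaf provided the ideal pulls back to the ideal of the preimage, i.e. $f^{-1}F_i(\ccF)\cdot\ccO_Y = F_i(f^*\ccF)$, which is precisely the base-change property of Fitting ideals recalled in \Cref{subsec:Fitting_ideals}. Concretely, $\bld = \Bl_{J} X$ with $J = F_r(\ccF)\cdots F_{r_2-1}(\ccF)$; since Fitting ideals commute with arbitrary base change and the product of ideals pulls back to the product of the pullbacks, we get $f^{-1}J\cdot \ccO_Y = F_r(f^*\ccF)\cdots F_{r_2-1}(f^*\ccF)$. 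The only subtlety here is the range of indices: the equality $\rk(f^*\ccF)=r$ guarantees $F_i(f^*\ccF)=0$ for $i<r$ and $F_{r_2}(f^*\ccF)$ might become $\ccO_Y$ earlier than $F_{r_2}$, but the extra factors $F_j(f^*\ccF)$ for $j\ge r_2$ that one would add are all equal to $\ccO_Y$, so they do not change the blow-up. Thus the pulled-back ideal defines the Hu--Li blow-up of $Y$ along $f^*\ccF$, and one concludes by \cite[Lemma 0805]{stacks-project} (flat base change commutes with blow-ups), exactly as in the proof of \Cref{prop:map_between_blowups}.

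I would also remark that, for the iterated description, one can alternatively argue step by step: flatness of $f$ is preserved under the base changes $X_{i+1} \to X$ (since these are obtained from $Y$ by the blow-ups defining $\bldY$, which one checks inductively are the flat base changes of the $X_i$), and each individual blow-up square is Cartesian by \cite[Lemma 0805]{stacks-project} together with $f^{-1}F_i(p_{i+1}^*\cdots p_{r_2-1}^*\ccF)\cdot\ccO = F_i$ of the corresponding pullback. Composing the Cartesian squares gives the result.

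The main obstacle I anticipate is purely bookkeeping: being careful that the indices in the product of Fitting ideals defining $\bld$ versus $\bldY$ match up once one passes from $\ccF$ to $f^*\ccF$, given that $\ccF$ and $f^*\ccF$ may have different \emph{maximal} ranks even though they share the same generic rank. Since the "extra" Fitting ideals are the whole structure sheaf, they contribute trivially to the blow-up, so this is a non-issue once spelled out, but it is the one place where a careless argument could go wrong. Everything else is a direct transcription of the proof of \Cref{prop:map_between_blowups}.
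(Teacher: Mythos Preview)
Your proposal is correct and matches the paper's own proof essentially line for line: existence and uniqueness of $\widetilde{f}$ from the universal property of $\bld$, and the Cartesian claim from \cite[Lemma 0805]{stacks-project} applied to the blow-ups defining $\bld$ via the base-change compatibility of Fitting ideals. Your extra care about the maximal rank possibly dropping under pullback is a valid bookkeeping point that the paper leaves implicit.
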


\begin{proof}
    A unique morphism $\widetilde{f}$ making the diagram commute exists by the universal property \Cref{thm:universal_property_diagonalization}. To see that diagram is Cartesian we apply \cite[Lemma 0805]{stacks-project} to each of the blow-ups defining $\bld$, using that the formation of Fitting ideals is compatible with pullbacks.
\end{proof}

\begin{proposition}\label{prop:tensor,line,HL,blowup}
    Let $X$ be a Noetherian integral scheme, let $\ccF$ a coherent sheaf on $X$ and let $\ccL$ be a line bundle on $X$. Then there is a unique isomorphism 
    \[\xymatrix{\blHL_{\ccF}(X)\ar[rr]^{\exists !\widetilde{\phi}}\ar[rd]_p&&\blHL_{\ccF\otimes \ccL}(X)\ar[ld]^q\\
    &X
    }\]
    which makes the diagram commute.
\end{proposition}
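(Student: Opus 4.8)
The plan is to reduce the statement to the single observation that tensoring a coherent sheaf by a line bundle leaves unchanged its generic rank, its maximal rank, and each of its Fitting ideal sheaves. Granting this, by \Cref{constr: diagonalization_blowup} the scheme $\blHL_{\ccF}(X)$ is the blow-up of $X$ along the ideal sheaf $F_r(\ccF)\cdots F_{r_2-1}(\ccF)$, where $r=\rk(\ccF)$ and $r_2=\mrk(\ccF)$, and this ideal sheaf coincides with the one defining $\blHL_{\ccF\otimes\ccL}(X)$; hence the two blow-ups are identified over $X$.

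First I would check the invariance of ranks. For any point $x\in X$, the fibre $(\ccF\otimes\ccL)|_x$ is isomorphic to $\ccF|_x\otimes_{\kappa(x)}\ccL|_x$, which has the same $\kappa(x)$-dimension as $\ccF|_x$ since $\ccL|_x$ is one-dimensional. Taking $x$ the generic point gives $\rk(\ccF\otimes\ccL)=\rk(\ccF)=r$, and taking the maximum over closed points gives $\mrk(\ccF\otimes\ccL)=\mrk(\ccF)=r_2$ by \Cref{def:maximal rank}. Next I would prove $F_i(\ccF\otimes\ccL)=F_i(\ccF)$ as ideal sheaves of $\ccO_X$, for all $i$: this is local, and on an affine open $U$ over which $\ccL$ is trivial a choice of trivialization induces an isomorphism $(\ccF\otimes\ccL)|_U\cong\ccF|_U$ of $\ccO_U$-modules, so the two Fitting ideals agree on $U$ because Fitting ideals are intrinsic and commute with restriction to opens (see \Cref{subsec:Fitting_ideals}); as such $U$ cover $X$, the two ideal sheaves agree globally. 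Alternatively, from a local presentation $\ccO_U^{\oplus m}\to\ccO_U^{\oplus n}\to\ccF|_U\to0$ one obtains $(\ccL|_U)^{\oplus m}\to(\ccL|_U)^{\oplus n}\to(\ccF\otimes\ccL)|_U\to0$, and in the description of $F_i$ as the image of $\bigwedge^{n-i}(\ccL|_U)^{\oplus m}\otimes\bigwedge^{n-i}\bigl((\ccL|_U)^{\oplus n}\bigr)^\vee\to\ccO_U$ the twists $\ccL^{\otimes(n-i)}$ and $\ccL^{\otimes-(n-i)}$ cancel.

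With these facts the defining ideals agree, so $\blHL_{\ccF}(X)=\blHL_{\ccF\otimes\ccL}(X)$ as blow-ups of $X$, which gives the existence of $\widetilde{\phi}$. For uniqueness: if $r=r_2$ then $\ccF$ is locally free, both blow-ups equal $X$, and there is nothing to prove; otherwise the ideal $I:=F_r(\ccF)\cdots F_{r_2-1}(\ccF)$ is nonzero (since $F_r(\ccF)\neq0$), so $\blHL_{\ccF}(X)$ is integral and the projection $p$ restricts to an isomorphism over the dense open $U_0=X\setminus V(I)$. Any isomorphism $\widetilde{\phi}$ with $q\circ\widetilde{\phi}=p$ must then coincide with the canonical identification over $p^{-1}(U_0)$, hence everywhere, since two $X$-morphisms from a reduced scheme to a scheme separated over $X$ that agree on a dense open subscheme are equal. (Equivalently, uniqueness follows from the universal property \Cref{cor:univ_property_HL_principal_Fitting_ideals}, noting that $q^{\ast}\ccF=q^{\ast}(\ccF\otimes\ccL)\otimes q^{\ast}\ccL^{-1}$ still has generic rank $r$ and locally principal Fitting ideals, by the same argument.) I do not expect any serious obstacle; the only point demanding care is that the Fitting ideal sheaves are \emph{literally equal}, not merely isomorphic as fractional ideals or equal up to a line-bundle twist, which is precisely what makes the two blow-ups identical rather than just abstractly isomorphic.
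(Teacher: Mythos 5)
Your proof is correct. You take a slightly more direct route than the paper: you establish the literal equality $F_i(\ccF\otimes\ccL)=F_i(\ccF)$ of ideal sheaves on $X$ itself (noting correctly that Fitting ideals are intrinsic to the module, so the local identification $(\ccF\otimes\ccL)|_U\cong\ccF|_U$ forces equality of ideals regardless of the chosen trivialization), and then conclude that $\blHL_\ccF X$ and $\blHL_{\ccF\otimes\ccL}X$ are the very same blow-up. The paper instead works after pulling back to $\blHL_\ccF X$: it invokes the universal property \Cref{cor:univ_property_HL_principal_Fitting_ideals}, checks that $F_i(p^*(\ccF\otimes\ccL))$ is locally principal by the same trivializing-cover observation, obtains $\widetilde{\phi}$, and then gets the inverse by repeating the argument with the roles of $\ccF$ and $\ccF\otimes\ccL$ swapped. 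The two approaches hinge on the same elementary fact, but yours isolates it as an identity on $X$ (making the blow-ups equal, not merely isomorphic), while the paper's defers everything to the universal property. Your parenthetical remark about equality versus abstract isomorphism of Fitting ideals is exactly the right thing to flag; it is what makes the equality of schemes (rather than a non-canonical isomorphism) available. Both uniqueness arguments you offer (density-plus-separatedness, or the universal property directly) are valid; the paper uses the latter implicitly.
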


\begin{proof}
    By \Cref{thm:univ_flatificationerty_HL_principal_Fitting_ideals}, a unique factorization $\widetilde{\phi}$ of $p$ through $q$ exists if and only if $F_i(p^*(\ccF\otimes \ccL))$ is locally principal for all $i$, and this holds because $F_i(p^*\ccF)$ is locally free for all $i$. Indeed, choose an open cover of $X$ trivializing $\ccL$. The preimage by $p$ of this cover induces a cover of $\Bl_\ccF (X)$ where $p^*(\ccF\otimes \ccL) \simeq p^*\ccF$. This shows that $F_i(p^*(\ccF\otimes \ccL)) \simeq F_i(p^*\ccF)$ locally, so $\widetilde{\phi}$ exists. The same argument shows there is a unique factorization of $q$ through $p$, which must be the inverse of $\widetilde{\phi}$ by uniqueness.
\end{proof}

\begin{proposition}\label{prop:functoriality_sheaves_injective_diagonal_version}
    Let $X$ be a Noetherian integral scheme and $\ccE$, $\ccF, \ccG$ be coherent $\ccO_X$-modules. Assume that we have an exact sequence $0\to \ccE\to \ccF\to \ccG \to 0$. 
    \begin{enumerate}
        \item If the sequence is locally split and $\ccE$ is locally free, then there is an isomorphism $\blHL_{\ccF}X \simeq \blHL_{\ccG} X$.
        \item If $\ccG$ is locally free, then there is an isomorphism $\blHL_{\ccF}X \simeq \blHL_{\ccE} X$.
    \end{enumerate}

\end{proposition}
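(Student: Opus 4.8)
The plan is to follow the pattern of the analogue for the Rossi blow-up proved above, adapted to the fact that $\blHL_\ccF X$ records \emph{all} the Fitting ideals of $\ccF$ (through \Cref{constr: diagonalization_blowup}), not merely its top exterior power. As in the proof of Proposition~\ref{prop:tensor,line,HL,blowup}, I would deduce the isomorphism from the universal property \Cref{cor:univ_property_HL_principal_Fitting_ideals}. Concretely, it suffices to show that the projection $p_\ccF\colon\blHL_\ccF X\to X$ satisfies the two conditions characterizing $\blHL_\ccG X$ --- that $p_\ccF^{*}\ccG$ has generic rank $\rk(\ccG)$ and that $F_i(p_\ccF^{*}\ccG)$ is locally principal for every $i$ --- and, symmetrically, that $p_\ccG\colon\blHL_\ccG X\to X$ satisfies the corresponding conditions for $\ccF$. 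Granting this, the universal property produces morphisms in both directions over $X$, and its uniqueness clause forces them to be mutually inverse. Here one uses that $\blHL_\ccF X$ and $\blHL_\ccG X$ are Noetherian integral schemes, being compositions of blow-ups of the integral scheme $X$ along nonzero ideal sheaves, and that $p_\ccF$ and $p_\ccG$ are birational; in particular the generic-rank conditions hold automatically.

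What remains is local principality of $F_i(p_\ccF^{*}\ccG)$ and of $F_i(p_\ccG^{*}\ccF)$, which can be checked on any open cover. So I would cover $X$ by opens $U$ small enough that the sequence $0\to\ccE|_U\to\ccF|_U\to\ccG|_U\to0$ splits and, in case (1), $\ccE|_U$ is free of rank $a$. Since Fitting ideals commute with pullback and, by \Cref{prop:map_between_blowups_diagonal}, the Hu--Li blow-up commutes with the flat restriction along $U\hookrightarrow X$, we have $p_\ccF^{-1}(U)=\blHL_{\ccF|_U}U$, and similarly for $\ccG$. Writing $\ccF|_U\simeq\ccO_U^{\oplus a}\oplus\ccG|_U$ and bordering a presentation of $\ccG|_U$ by $a$ zero rows to obtain one of $\ccF|_U$, expansion of minors along the zero rows gives the index shift
\[
F_k(\ccF|_U)=F_{k-a}(\ccG|_U)\qquad\text{for all }k
\]
(with the convention $F_j=0$ for $j<0$). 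Hence $\rk(\ccF|_U)=\rk(\ccG|_U)+a$ and $\mrk(\ccF|_U)=\mrk(\ccG|_U)+a$, so the product of Fitting ideals defining $\blHL_{\ccF|_U}U$ in \Cref{constr: diagonalization_blowup} coincides with the one defining $\blHL_{\ccG|_U}U$; thus $p_\ccF^{-1}(U)=\blHL_{\ccF|_U}U=\blHL_{\ccG|_U}U=p_\ccG^{-1}(U)$ as schemes over $U$. Then \Cref{cor:univ_property_HL_principal_Fitting_ideals}, which says that the Fitting ideals of a sheaf pulled back to its Hu--Li blow-up are locally principal, gives that $F_i(p_\ccF^{*}\ccG)$ is locally principal on $p_\ccF^{-1}(U)=\blHL_{\ccG|_U}U$, and likewise that $F_i(p_\ccG^{*}\ccF)$ is locally principal on $p_\ccG^{-1}(U)=\blHL_{\ccF|_U}U$. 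Since these opens cover the respective blow-ups, case (1) follows.

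For case (2): if $\ccG$ is locally free, the sequence is automatically locally split, so locally $\ccF|_U\simeq\ccE|_U\oplus\ccO_U^{\oplus b}$ with $b=\rk(\ccG)$; the same minor computation gives $F_k(\ccF|_U)=F_{k-b}(\ccE|_U)$, and the argument of the previous paragraph applies verbatim with $\ccE$ playing the role of $\ccG$. I do not expect a real obstacle here: the substance is already packaged in the universal property \Cref{cor:univ_property_HL_principal_Fitting_ideals} and in base-change compatibility \Cref{prop:map_between_blowups_diagonal}, and the only computation is the elementary minor expansion. The single point requiring care is to run the whole argument through the universal property rather than trying to glue the local identifications $\blHL_{\ccF|_U}U=\blHL_{\ccG|_U}U$ by hand: the universal property is exactly what guarantees that they assemble into a canonical global isomorphism.
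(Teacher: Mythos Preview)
Your proposal is correct and follows essentially the same approach as the paper: both reduce to a local splitting $\ccF\simeq\ccE\oplus\ccG$ and both rest on the computation that the Fitting ideals of a direct sum with a free summand are just the Fitting ideals of the other summand, shifted by the rank. The paper obtains this shift from the direct-sum formula $F_\ell(\ccE\oplus\ccG)=\sum_{k+k'=\ell}F_k(\ccE)F_{k'}(\ccG)$ and then concludes directly that the two Hu--Li blow-ups are literally the same scheme (same product of Fitting ideals), whereas you obtain the shift by the equivalent zero-row minor expansion and route the global conclusion through the universal property; this is a stylistic difference only.
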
  
    
\begin{proof}
   It is enough to prove the statement locally, so we may assume that we have $\ccF\simeq\ccE\oplus\ccG$. With this, we have that 
    \begin{equation}\label{eq: Fitting ideals direct sum}
         F_\ell(\ccE \oplus \ccG) = \sum_{k+k'=\ell} F_k(\ccE) F_{k'}(\ccG)
    \end{equation}
    by \cite[Lemma 07ZA]{stacks-project}. If $\ccG$ is locally free, the sequence is locally split, therefore by symmetry it is enough to show one of the statements. Without loss of generality, suppose that $\ccG$ is locally free, therefore $F_{k'}(\ccG) = 0$ for all $k'<\rk(\ccG)$ and $F_{k'}(\ccG) = \ccO_X$ for all $k'\geq \rk(\ccG)$ by \cite[Lemma 07ZD]{stacks-project}. Combining this fact with the chain of inclusions $F_0(\ccE)\subseteq F_1(\ccE)\subseteq \ldots$, it follows that 
    \[
        F_\ell(\ccF) = F_\ell (\ccE \oplus \ccG) = \begin{cases}
            0 &  \text{if } \ell < \rk(\ccG)\\
            F_{\ell-\rk(\ccG)}(\ccE) &\text{if } \ell \geq \rk(\ccG)
        \end{cases}
    \]
    This means that the collection of Fitting ideals of $\ccF$ and $\ccE$ agree, so $\blHL_{\ccF}X \simeq \blHL_{\ccE} X$.
\end{proof}

\begin{proposition}\label{prop:blowup_direct_sum}
    Let $X$ be a Noetherian integral scheme and $\ccF$ a coherent $\ccO_X$-module. Then for every positive integer $n$
    \[
        \blHL_\ccF X = \blHL_{\ccF^{\oplus n}} X.
    \]
\end{proposition}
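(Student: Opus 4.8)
The plan is to identify $\blHL_\ccF X$ with $\blHL_{\ccF^{\oplus n}}X$ by showing that the two satisfy the same universal property, via \Cref{cor:univ_property_HL_principal_Fitting_ideals}. That proposition says that for a morphism $f\colon Y\to X$ of Noetherian integral schemes there is a (necessarily unique) factorization of $f$ through $\blHL_\ccF X$ precisely when $f^*\ccF$ has generic rank $r=\rk(\ccF)$ and all Fitting ideals of $f^*\ccF$ are locally principal, i.e.\ when $f^*\ccF$ is diagonal in the sense of \Cref{defi:diag_sheaf}; and likewise $f$ factors through $\blHL_{\ccF^{\oplus n}}X$ precisely when $f^*(\ccF^{\oplus n})=(f^*\ccF)^{\oplus n}$ has generic rank $nr$ and is diagonal. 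Since the generic rank of $(f^*\ccF)^{\oplus n}$ is $n$ times that of $f^*\ccF$, the two rank conditions are equivalent, so the proposition reduces to the purely sheaf-theoretic claim: \emph{a coherent sheaf $\ccH$ on a Noetherian integral scheme is diagonal if and only if $\ccH^{\oplus n}$ is.}

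To prove this claim I would first record the local shape of a diagonal sheaf. Diagonality being local, we may assume the base is $\Spec R$ with $R$ Noetherian local. Combining \Cref{prop:diagonalizable_definitions_equivalent} with \Cref{def:diagonalizable} and \Cref{remark:diagonal_sheaf_diagonalizing_presentation}, a coherent sheaf on $\Spec R$ is diagonal if and only if it is isomorphic to $R^{\oplus a}\oplus\bigoplus_j R/(g_j)$ for nonzero non-units $g_j\in R$ whose principal ideals $(g_j)$ form a chain under inclusion (with repetitions allowed): the cokernel of a diagonalizable morphism has exactly this form, and conversely such a module is the cokernel of the block-diagonal morphism obtained by grouping the equal $g_j$. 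The implication ``$\ccH$ diagonal $\Rightarrow\ccH^{\oplus n}$ diagonal'' is then immediate, since $\big(R^{\oplus a}\oplus\bigoplus_j R/(g_j)\big)^{\oplus n}\cong R^{\oplus na}\oplus\bigoplus_j(R/(g_j))^{\oplus n}$ is of the same shape.

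For the converse I would argue by uniqueness of decompositions. Suppose $\ccH^{\oplus n}$ is diagonal, so locally $\ccH^{\oplus n}\cong R^{\oplus b}\oplus\bigoplus_k R/(h_k)$ with $\{(h_k)\}$ a chain. Each summand here has local endomorphism ring: $\Hom_R(R,R)=R$ is local, and $\Hom_R(R/(h_k),R/(h_k))=R/(h_k)$ is local because $R$ is local and $h_k$ is a non-unit. Since $\ccH$ is a direct summand of $\ccH^{\oplus n}$, the Krull--Remak--Schmidt--Azumaya theorem shows that $\ccH$ is isomorphic to a direct sum of a subcollection of these summands, hence $\ccH\cong R^{\oplus b'}\oplus\bigoplus_{k\in S}R/(h_k)$ for some $S$; a subfamily of a chain of principal ideals is again a chain, so $\ccH$ is diagonal. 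This proves the claim, and then the proposition follows by feeding $Y=\blHL_\ccF X$ with its projection, and $Y=\blHL_{\ccF^{\oplus n}}X$ with its projection, into the two universal properties (both spaces are Noetherian and integral, being blow-ups of $X$), obtaining mutually inverse $X$-morphisms whose round-trip composites are identities by the uniqueness clauses.

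I expect the converse direction of the claim to be the only real obstacle. It is tempting to shortcut it via \Cref{thm: Lipman}, since $\ccH^{\oplus n}$ diagonal forces $(\ccH^\tf)^{\oplus n}=(\ccH^{\oplus n})^\tf$ to be locally free --- hence $\ccH^\tf$, a summand of it, locally free --- and forces $\ccH$ to have tor-dimension $\le 1$. But this is insufficient: those properties only control the two Fitting ideals $F_{r-1}(\ccH)$ and $F_r(\ccH)$, not the higher ones. For instance, over $R=\bbC[x,y]_{(x,y)}$ the sheaf associated to $R/(x^2)\oplus R/(y^2)$ has locally free torsion-free part and tor-dimension $1$ but $F_1=(x^2,y^2)$ is not principal, so it is not diagonal. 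It is precisely the rigidity of the indecomposable decomposition, coming from the fact that the building blocks $R$ and $R/(h)$ have local endomorphism rings, that makes diagonality descend from $\ccH^{\oplus n}$ to $\ccH$.
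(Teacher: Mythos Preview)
Your argument is correct and takes a genuinely different route from the paper's proof. The paper does \emph{not} argue via the equivalence ``$\ccH$ diagonal $\Leftrightarrow$ $\ccH^{\oplus n}$ diagonal''. Instead it handles the two directions asymmetrically: the morphism $\blHL_\ccF X\to\blHL_{\ccF^{\oplus n}}X$ is obtained, as you do, from the easy implication; but for the reverse morphism the paper abandons the universal property and works directly with the defining ideals. Recalling that $\blHL_\ccF X=\Bl_{\prod_\ell F_\ell(\ccF)}X$, it invokes a criterion of Moody for dominating one blow-up by another, and then uses a determinantal-ideal inequality $\Delta_i\Delta_j\subseteq\Delta_{i+1}\Delta_{j-1}$ (valid over $\bbQ$-algebras) to prove that each Fitting ideal $F_\ell(\ccF^{\oplus n})$ is an explicit product of Fitting ideals of $\ccF$, with every $F_k(\ccF)$ occurring.

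Your approach via Krull--Remak--Schmidt--Azumaya is cleaner and more self-contained: it avoids both the external divisibility criterion and the determinantal lemma, and in particular it does not need the $\bbQ$-algebra hypothesis that the paper's cited lemma carries. On the other hand, the paper's computation yields as a by-product an explicit formula expressing $F_\ell(\ccF^{\oplus n})$ as a product of Fitting ideals of $\ccF$, which is of independent interest and is illustrated for $n=2$ in the example following the proof.
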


\begin{proof}
    There is a natural morphism $\blHL_\ccF X  \to \blHL_{\ccF^{\oplus n}} X$ over $X$. To see this, let $p\colon \blHL_\ccF X \to X$ be the natural projection. Then $p^\ast \ccF$ is diagonal and it follows from \Cref{def:diagonalizable} that $p^\ast (\ccF^{\oplus n}) = (p^\ast\ccF)^{\oplus n}$ is diagonalizable too. Then apply \Cref{thm:universal_property_diagonalization} to get the desired morphism.

    Conversely, we show that there is a natural morphism $\blHL_{\ccF^{\oplus n}} X\to \blHL_\ccF X$ over $X$, which is enough to conclude the proof by the universal properties of both blow-ups. Remember that 
    \[
        \blHL_\ccF X = \Bl_{\prod_\ell F_\ell(\ccF)} X
    \]
    where the product is over all non-trivial Fitting ideals of $\ccF$, and similarly $\blHL_{\ccF^{\oplus n}} X$ is the blowup of $X$ along $\prod_\ell F_\ell(\ccF^{\oplus n})$. By \cite{Moody}, it suffices to show that $\prod_\ell F_\ell(\ccF)$ divides a power of $\prod_\ell F_\ell(\ccF^{\oplus n})$ as fractional ideals. Actually, we show that every Fitting ideal $F_\ell(\ccF^{\oplus n})$ is a product of certain Fitting ideals $F_k(\ccF)$, with each $k$ appearing at least once as $\ell$ varies, and this is clearly enough. 
    
    By \cite[Lemma 10.10]{Determinantal_rings}, if $A$ is any $\bbQ$-algebra, if $M = (a_{i,j})$ is any matrix with coefficients in $A$ and if $\Delta_i$ denotes the ideal generated by all minors of $M$ of size $i\times i$, then 
    \[
        \Delta_i \Delta_j \subseteq \Delta_{i+1}\Delta_{j-1}
    \]
    whenever $i\leq j-2$. From this, we can conclude that if $\ell = ds+r$ with $r\in \{0,\ldots, s-1\}$
    \begin{equation}\label{eq:sums_products_determinantal_ideals}
        \sum_{j_1 + \ldots + j_s =\ell} \prod_i \Delta_{j_i} = \Delta_{d}^{s-r}\Delta_{d+1}^{r}.
    \end{equation}
    To conclude, remember that locally $\ccF$ is the cokernel of a morphism $\varphi\colon E^{-1}\to E^{0}$, that $F_i(\ccF)$ is the ideal $\Delta_{r_2-i}(\varphi)$ of minors in $\varphi$ of size $r_2-i$, where $r_2 = \rk (E^{0})$, and the expression for Fitting ideals of direct sums \Cref{eq: Fitting ideals direct sum}.
\end{proof}

\begin{example}
    Take $n=2$ in \Cref{prop:blowup_direct_sum}. Then \Cref{eq:sums_products_determinantal_ideals} is equivalent to
    \[
        F_\ell(\ccF\oplus \ccF) = \sum_{k+k'=\ell} F_k(\ccF) F_{k'}(\ccF) = \begin{cases}
            F_{r_2-k}^2 & \text{ if } \ell = 2k\\
            F_{r_2-k}F_{r_2-k-1} & \text{ if } \ell = 2k +1
        \end{cases}
    \]
    where $r_2$ the maximal rank of $\ccF$ as in \Cref{constr: diagonalization_blowup}.
\end{example}

\begin{proposition}\label{prop:morphism-HL-Rossi}
    Let $X$ be a Noetherian integral scheme and $\ccF$ a coherent sheaf on $X$. Then there is a natural morphism $\blHL_\ccF X \to \Bl_\ccF X$.
\end{proposition}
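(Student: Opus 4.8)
The plan is to construct the morphism $\blHL_\ccF X \to \Bl_\ccF X$ by invoking the universal property of the Rossi blow-up, \Cref{cor:univ_prop}. Let $p\colon \blHL_\ccF X \to X$ denote the natural projection. By \Cref{cor:univ_property_HL_principal_Fitting_ideals}, the sheaf $p^*\ccF$ has the same generic rank $r$ as $\ccF$, and every Fitting ideal $F_i(p^*\ccF)$ is locally principal. In particular $p^*\ccF$ is diagonal in the sense of \Cref{defi:diag_sheaf}, so by \Cref{rmk: diagonal implies tf lc and tor dim} (which rests on \Cref{thm: Lipman}) its torsion-free quotient $(p^*\ccF)^\tf$ is locally free, necessarily of rank $r$. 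Since $\blHL_\ccF X$ is integral (it is a composition of blow-ups of the integral scheme $X$ along nonzero ideal sheaves) and $p$ is birational, the pair $(\blHL_\ccF X, p)$ satisfies hypothesis (2) of \Cref{cor:univ_prop} with $f = p$. Hence there is a unique morphism $f'\colon \blHL_\ccF X \to \Bl_\ccF X$ with $p_{\Bl}\circ f' = p$, where $p_{\Bl}\colon \Bl_\ccF X \to X$ is the Rossi projection. This $f'$ is the desired natural morphism.

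The only point requiring a small amount of care is the reduction from "all Fitting ideals locally principal" to "$(p^*\ccF)^\tf$ locally free of rank $r$", since \Cref{cor:univ_prop}(2) is phrased in terms of the latter. This is exactly the content of \Cref{rmk: diagonal implies tf lc and tor dim}: a diagonal sheaf has tor-dimension at most one and locally free torsion-free part; the rank is $r$ because passing to the torsion-free quotient does not change the generic rank, and the generic rank of $p^*\ccF$ is $r$ by \Cref{cor:univ_property_HL_principal_Fitting_ideals}(1). Everything else — integrality and birationality of $p$ — is immediate from \Cref{constr: diagonalization_blowup}, since each blow-up center $F_i(\cdots)$ appearing there is a nonzero ideal sheaf on an integral Noetherian scheme, so each $p_i$ is birational and the source stays integral.

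I do not expect a genuine obstacle here: the statement is a formal consequence of the two universal properties already established. The main thing to get right is simply citing the correct results in the correct order — \Cref{cor:univ_property_HL_principal_Fitting_ideals} to see that $\blHL_\ccF X$ lands in the class of schemes admitting a map to $\Bl_\ccF X$, and \Cref{cor:univ_prop} to produce and characterize that map. One could alternatively give a direct argument: locally $\ccF$ has a presentation and $\Bl_\ccF X$ is, by \Cref{prop: equivalence Rossi-Villamayor} and \Cref{rmk: Rossi blow-up with norm}, the blow-up of the top Fitting ideal $F_r(M_1)$, while $\blHL_\ccF X$ blows up the product $\prod_i F_i(\ccF)$; since $F_r(\ccF)$ divides (up to the passage to $M_1$, i.e. projective dimension $\le 1$) this product, \cite{Moody} or \cite[Lemma 080A]{stacks-project} yields a dominating morphism. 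But the universal-property argument is cleaner and is the one I would write.
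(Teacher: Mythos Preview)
Your proposal is correct and follows essentially the same approach as the paper: invoke the universal property of the Rossi blow-up (the paper cites \Cref{theorem:Villamayor_blowup}, you cite the scheme-level \Cref{cor:univ_prop}), and verify the hypothesis that $(p^*\ccF)^\tf$ is locally free of rank $r$ via Lipman's result \Cref{thm: Lipman} (you route this through \Cref{rmk: diagonal implies tf lc and tor dim}, the paper reduces to the local case and cites Lipman directly). Your write-up is more explicit about the intermediate steps, but the substance is identical.
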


\begin{proof}
    Let $\pi \colon \blHL_\ccF X \to X$ be the natural projection and let $r = \rk (\ccF)$. 

    By \Cref{thm:univ_flatification}, it suffices to check that $(\pi^* \ccF)^\tf$ is locally free of rank $r$. This can be checked locally. If $X=\Spec (R)$ for a local ring $R$, the result follows from \Cref{thm: Lipman}.
\end{proof}

\subsection{The filtration of a diagonal sheaf}\label{subsec: filtration diagonal sheaf}

Given a diagonal sheaf $\ccF$,  we construct a filtration $\ccF_\bullet$ such that $\ccF_i/\ccF_{i-1}$ is locally free on a Cartier divisor $D_i$ (\Cref{lemma:filtration_rkzero}). This filtration will be used in \Cref{subsec: decomposing torsion} to describe the irreducible components of the abelian cone of a diagonal sheaf (see \Cref{theorem:cone_as_a_union}).

\begin{lemma}\label{lemma:first_step_filtration}
Let $\ccF$ be a diagonal coherent sheaf of generic rank $r$ on a Noetherian integral scheme $X$. Then there is a short exact sequence
\[
    0\to \ccK\to \ccF\to \ccF^{\tf}\to 0
\]
with $\ccF^{\tf}$ locally free of rank $r$ on $X$ and $\ccK$ a diagonal coherent sheaf of generic rank 0.
\end{lemma}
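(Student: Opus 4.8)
The plan is to produce the short exact sequence as the canonical torsion sequence $0\to \ccK \to \ccF \to \ccF^{\tf}\to 0$, where $\ccK = \tor(\ccF)$, and then verify that the two outer terms have the claimed properties. The local freeness of $\ccF^{\tf}$ of rank $r$ is immediate from \Cref{rmk: diagonal implies tf lc and tor dim}: since $\ccF$ is diagonal, \Cref{thm: Lipman} applies and gives that $\ccF^{\tf}$ is locally free, and its rank equals the generic rank $r$ of $\ccF$ by definition of the generic rank (the torsion subsheaf $\ccK$ vanishes on a dense open where $\ccF$ is locally free of rank $r$). So the content of the lemma is entirely in the two remaining claims about $\ccK$: that it has generic rank $0$, and that it is diagonal.

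That $\ccK$ has generic rank $0$ is clear: over the generic point $\eta$ of $X$ the sheaf $\ccF$ is locally free of rank $r$, hence torsion-free, so $\ccK_\eta = 0$. The real work is to show $\ccK = \tor(\ccF)$ is again diagonal, i.e. that all its Fitting ideals are locally principal. I would argue locally, so assume $X = \Spec R$ with $R$ a Noetherian domain and $\ccF = \widetilde M$. Since $\ccF$ is diagonal, after possibly shrinking we may — by \Cref{prop:diagonalizable_definitions_equivalent} and its proof, or directly by the diagonalizability of the presentation — assume $M$ has a presentation whose matrix is in the diagonal form of \Cref{def:diagonalizable}: $\ccO_X^{\oplus p} = G_0 \oplus \bigoplus_{i=1}^{\ell} G_i$, $\ccO_X^{\oplus q} = H_0\oplus \bigoplus_{i=1}^{\ell} H_i$, with $\varphi|_{G_i} = f_i I_i$ for nonzero $f_i\in R$, $(f_{i+1})\subsetneq (f_i)$, and $\varphi|_{G_0}=0$. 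Here $\rk(E^0)=q$, the generic rank of $M$ is $r = q - \ell$ (one "diagonal block" is killed by each $f_i$, plus $H_0$ survives freely), and $M \simeq H_0 \oplus \bigoplus_{i=1}^{\ell} \ccO_X/(f_i)$.

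From this explicit description the torsion submodule is visibly $\tor(M) = \bigoplus_{i=1}^{\ell} \ccO_X/(f_i)$ (using that $R$ is a domain and the $f_i$ are nonzero), and $\ccF^{\tf} = \widetilde{H_0}$ is free of rank $r$ — re-confirming the first part. For the Fitting ideals of $\ccK = \bigoplus_i \ccO_X/(f_i)$, I would use the direct-sum formula $F_\ell(\ccA \oplus \ccB) = \sum_{k+k'=\ell} F_k(\ccA)F_{k'}(\ccB)$ (\cite[Lemma 07ZA]{stacks-project}), together with the elementary computation $F_0(\ccO_X/(f_i)) = (f_i)$ and $F_j(\ccO_X/(f_i)) = \ccO_X$ for $j\geq 1$. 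Iterating, $F_\ell(\ccK)$ is a sum of products of the $(f_i)$'s; but the chain $(f_1)\supseteq (f_2)\supseteq\cdots$ forces each such sum to collapse to a single product of the form $(f_{i_1}\cdots f_{i_m})$, namely the one using the "smallest" available factors, hence principal. (Concretely, $F_\ell(\ccK) = \prod_{i=1}^{\ell-\,?}\ (f_i)$ with the indices chosen to make the product minimal; the nesting of the ideals is exactly what makes the sum equal its largest term.) Thus every $F_\ell(\ccK)$ is locally principal, so $\ccK$ is diagonal, completing the proof.

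The main obstacle — really the only non-formal point — is the collapsing of the sum-of-products $\sum_{k_1+\cdots+k_\ell = \ell}\prod F_{k_i}(\ccO_X/(f_i))$ to a single principal ideal. This is entirely analogous to the determinantal-ideal manipulation in the proof of \Cref{prop:blowup_direct_sum} (and ultimately to \cite[Lemma 10.10]{Determinantal_rings}), and follows cleanly once one observes that the relevant ideals are totally ordered by inclusion, so a sum of products is the product of the $\subseteq$-largest choices in each slot. Everything else is bookkeeping with the explicit diagonal form and the standard behaviour of Fitting ideals under localization and direct sums.
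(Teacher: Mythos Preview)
Your argument is correct and follows the paper's outline: set $\ccK=\tor(\ccF)$, cite \Cref{rmk: diagonal implies tf lc and tor dim} for local freeness of $\ccF^{\tf}$, then show $\ccK$ is diagonal. The difference lies only in that last step. The paper invokes the proof of \Cref{prop:functoriality_sheaves_injective_diagonal_version}, which (applied to the locally split sequence with $\ccG=\ccF^{\tf}$ locally free) gives the shift identity $F_m(\ccK)=F_{m+r}(\ccF)$ directly from the direct-sum formula, so the Fitting ideals of $\ccK$ are automatically locally principal because those of $\ccF$ are. You instead pass to the explicit local form $\ccK\cong\bigoplus_i\ccO_X/(f_i)$, compute $F_m(\ccK)$ as a sum of products of the $(f_i)$, and then collapse that sum to a single principal ideal using the chain $(f_1)\supseteq(f_2)\supseteq\cdots$. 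Both routes are valid; the paper's is shorter because it never writes the Fitting ideals of $\ccK$ explicitly, while yours makes them concrete at the cost of the extra collapsing step (which you rightly flag as the only non-formal point). Your route can in fact be shortened further: once $\ccK$ is exhibited as the cokernel of the diagonal submatrix $\bigoplus_i G_i\to\bigoplus_i H_i$, diagonality of $\ccK$ follows immediately from \Cref{prop:diagonalizable_definitions_equivalent} without any Fitting-ideal bookkeeping. One minor slip: \Cref{def:diagonalizable} allows the blocks $H_i$ to have rank greater than one, so strictly $M\simeq H_0\oplus\bigoplus_i(\ccO_X/(f_i))^{\rk H_i}$ and $r=\rk H_0$ rather than $q-\ell$; this is harmless for the argument.
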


\begin{proof}
    Let $\ccK = \tor(\ccF)$. Then $\ccF^\tf$ is locally free by \Cref{rmk: diagonal implies tf lc and tor dim} and $\ccK$ is diagonal by the proof of \Cref{prop:functoriality_sheaves_injective_diagonal_version}.
\end{proof}

\begin{construction}[c.f. {\cite[Tag 0ESU]{stacks-project}}]\label{lemma:filtration_rkzero}

Let $\ccF$ be a diagonal coherent sheaf of generic rank zero and $\mrk(\ccF)=n$ on an integral scheme $X$. In the following, we construct an increasing  filtration $\ccF_\bullet$:
\[
0=\ccF_0\subset \ccF_1\subset \dots  \ccF_{n-1}\subset \ccF_n =\ccF
\]
and effective Cartier divisors $D_i$ such that for each $i$, the sheaf
\[
\ccF_i/\ccF_{i-1}
\]
is locally free of rank $i$ on the closed locally principal subscheme defined by $D_i$.
 
Our formulation differs from the one in the reference, so we present the construction of the filtration in our context. 
We can work locally and assume that $\mathcal{F}$ has a presentation which is diagonalizable in the sense of Definition \ref{def:diagonalizable} that is
$\varphi: \ccO_X^{\oplus n} \to \ccO_X^{\oplus n}$ where $\varphi$ is the diagonal matrix 
\[
\varphi=\Diag\left(\overbrace{f_1,\ldots ,f_1}^{n_1}, \overbrace{f_2,\ldots, f_2}^{n_2},\ldots,\overbrace{f_k, \ldots ,f_k}^{n_k}\right)
\]
with $n_1+\dots+n_k=n$ and non-zero $f_i$'s satisfying $(f_{i+1})\subsetneqq (f_{i})$. Note that locally $\ccF$ may not attain its maximal rank $n$, but we can always choose $f_1$ to be a unit to obtain a presentation of the correct rank.

Since $\ccF$ is diagonal, it has tor dimension at most 1 by \Cref{rmk: diagonal implies tf lc and tor dim}, therefore it admits a presentation by a \textit{square} matrix $\varphi$.

Since we are working over a domain, $(f_{i+1})\subseteq (f_{i})$ is equivalent to $f_i|f_{i+1}$. We can define effective Cartier divisors $D_1,\ldots,D_n$ by taking ratios of successive entries of $\varphi$:
\begin{align*}
&D_{n}=F_{n-1}=(f_1)\\
&D_i=\left(\frac{\varphi_{n-i+1,n-i+1}}{\varphi_{n-i,n-i}}\right).
\end{align*}
In other words, $D_i$ is the ideal generated by the ratio of the entries in position $n-i+1$ and $n-i$ in $\varphi$. Note that, while the generators of the ideals are only well-defined up to a unit, the ideals themselves are well-defined and do not depend on the chosen presentation of $\varphi$. In fact, they can be expressed as differences of the Fitting ideals of $\ccF$, which are independent of the chosen presentation. 

The divisors $D_i$ give closed locally principal subschemes of $X$, which are defined by $(f_{k+1}/f_{k})$ if $i=n-\sum_{j=1}^{k}n_k$ and are empty otherwise.

We define the increasing filtration of $\ccF_\bullet$ as follows. We set $\ccF_n:=\ccF$, and define $\ccF_{n-1}$ as the cokernel of the morphism
$\varphi':=\varphi/f_1$. That is,
\begin{equation}\label{eqn:construction_filtered}
    \begin{tikzcd}
    0\ar[r] & 0\ar[r] & (\ccO_X/(f_1))^{\oplus n} \ar[r,"\sim"]& \ccF_n/\ccF_{n-1}\ar[r] & 0\\
     0\ar[r] & \ccO_X^{\oplus n}\ar[r,"\varphi"] \arrow[u,twoheadrightarrow] & \ccO_X^{\oplus n} \ar[r]\arrow[u,twoheadrightarrow]& \ccF_n\ar[r] \arrow[u,twoheadrightarrow]& 0\\
    0 \ar[r]& \ccO_X^{\oplus n}\ar[r,"\varphi'"]\arrow[hookrightarrow]{u}{\id} & \ccO_X^{\oplus n}\arrow[hookrightarrow]{u}{f_1\cdot\id} \ar[r]& \ccF_{n-1}\arrow[hookrightarrow]{u}\ar[r] & 0.
    \end{tikzcd}
    \end{equation}

As $\ccO_{D_1}=\ccO_X/(f_1)$, the graded piece $\ccF_n/\ccF_{n-1}$ is locally free of rank $n$ on $D_n$. 

 Now, $\varphi'$ can be given by the diagonal matrix 
\[
\varphi'=\Diag\left(\overbrace{1,\dots,1}^{n_1},\overbrace{f_2/f_1,\dots,f_2/f_1}^{n_2},\dots,\overbrace{f_k/f_1,\dots,f_k/f_1}^{n_k}\right).
\]
We can pass to $\varphi'':\ccO_X^{\oplus n-1}\to\ccO_X^{\oplus n-1}$ by removing the first entry. Clearly, $\ccF_{n-1}=\Coker{\varphi ''}$. Then we can iterate the construction in \eqref{eqn:construction_filtered}, factoring our multiplication by the first entry $\varphi''_1$ of $\varphi''$
\[
    \begin{tikzcd}
    0\ar[r] & 0\ar[r] & (\ccO_X/(\varphi''_1))^{\oplus n-1} \ar[r,"\sim"]& \ccF_{n-1}/\ccF_{n-2}\ar[r] & 0\\
     0\ar[r] & \ccO_X^{\oplus n-1}\ar[r,"\varphi''"] \arrow[u,twoheadrightarrow] & \ccO_X^{\oplus n-1} \ar[r]\arrow[u,twoheadrightarrow]& \ccF_{n-1}\ar[r] \arrow[u,twoheadrightarrow]& 0\\
    0 \ar[r]& \ccO_X^{\oplus n-1}\ar[r,"\varphi'''"]\arrow[hookrightarrow]{u}{\id} & \ccO_X^{\oplus n-1}\arrow[hookrightarrow]{u}{\varphi''_1\cdot\id} \ar[r]& \ccF_{n-2}\arrow[hookrightarrow]{u}\ar[r] & 0.
    \end{tikzcd}
\]
This defines the next subsheaf $\ccF_{n-2}$ in the filtration and the new morphism $\varphi'''$. If $n_1>1$, $(\varphi''_1)=(1)$, so we will have $\ccF_{n-2}=\ccF_{n-1}$ and $D_{n-1}$ defining the empty subscheme.
Note that the sub-schemes defined by $D_{n-1},\dots,D_{n-n_1+1}$ are all empty, and the filtration is constant until $\ccF_{n-n_1-1}$, which is the cokernel of 
\[
\ccO_X^{\oplus n-n_1}\xlongrightarrow{\psi} \ccO_X^{\oplus n-n_1}
\]
with \[\psi=\Diag\left(\overbrace{1,\dots,1}^{n_2},\overbrace{f_3/f_2,\dots,f_3/f_2}^{n_3},\dots,\overbrace{f_k/f_2,\dots,f_k/f_2}^{n_k}\right)
\]
and 
$D_{n-n_1}=(f_1/f_2)$.
Iterating this construction clearly provides a filtration and a collection of effective divisors which satisfy the claims in the lemma.

The divisors $D_i$ are defined globally in terms of Fitting ideals, and do not depend on the local expression of the matrix. 
In fact, unpacking the argument above, we can check that

\begin{equation}\label{equations_of_fi}
    \begin{pmatrix}
        F_0\\\vdots\\F_{n-1}
    \end{pmatrix}
    =
    \begin{pmatrix}
    1&2&\cdots&n-1&n\\
    &1&2&\cdots&n-1\\
        &&\ddots&\ddots&\vdots\\
    &&&&\\
    &&&1&2 \\
    &&&&1
    \end{pmatrix}
    \begin{pmatrix}
    D_1\\\vdots\\D_n
    \end{pmatrix}.
\end{equation}
    Then, 
    \begin{equation}\label{equations_of_di}
    \begin{pmatrix}
        D_1\\\vdots\\D_{n}
    \end{pmatrix}
    =
    \begin{pmatrix}
    1 & -2 & 1 &  & \\

    &\ddots &\ddots& \ddots&\\
     & & 1 & -2 & 1\\
     &  & & 1&-2 \\
     &   &  & & 1
    \end{pmatrix}
    \begin{pmatrix}
    F_0\\\vdots\\F_{n-1}
    \end{pmatrix}.\qedhere
    \end{equation}

\end{construction}

With the notations of \Cref{lemma:filtration_rkzero}, the associated graded sheaf to this filtration is 
\[
    \ccE=\bigoplus_i \ccE_i,
\]
where $\ccE_i=\ccF_i/\ccF_{i-1}$.

We present an example to illustrate \Cref{lemma:filtration_rkzero}.

\begin{example}\label{example:filtration,diag,module}
    Take $R=\bbC[x,y,z]$, $X=\Spec R$. Let $\ccF=\widetilde{M}$ be the diagonal sheaf defined by
    \[
    0\to R^{\oplus 4}\xlongrightarrow{\varphi=\begin{pmatrix}
        x &0 &0& 0\\
        0 & x & 0& 0\\
        0 & 0 & xy& 0\\
        0 & 0 & 0 & xyz
    \end{pmatrix}}R^{\oplus 4}\to M\to 0.
    \]
   The divisors from the statement of \Cref{lemma:filtration_rkzero} are given by the ideals
   \begin{equation*}
    D_4= (x),
    D_3=(1),
    D_2 = (y),
    D_1=(z).
    \end{equation*}
    As a sanity check for \Cref{equations_of_fi}, we see that indeed
    \begin{align*}
        F_0 &= D_1 + 2D_2 + 3D_3 + 4D_4 = (x^4y^2z),\\
        F_1 &= D_2 + 2D_3 + 3D_4 = (x^3y),\\
        F_2 &= D_3 + 2D_4 = (x^2y),\\
        F_3 &= D_4 = (x).
    \end{align*}

    Now, all the elements of $\varphi$ are divisible by $D_4$, which is the ideal generated by the first entry.
    We set $\ccF_4=\ccF$. To obtain the next step in the filtration, $\ccF_3$, we consider the decomposition $\varphi=x\cdot\varphi'$ below
    \[
    \begin{tikzcd}
    0\ar[r] & R^{\oplus 4}\ar[r,"\varphi"] & R^{\oplus 4} \ar[r]& M\ar[r] & 0\\
    0 \ar[r]& R^{\oplus 4}\ar[r,"\varphi'"]\ar[u,"\id"] & R^{\oplus 4}\ar[u,"x\cdot\id"] \ar[r]& M_3\ar[u]\ar[r] & 0\\
    \end{tikzcd}
    \]
    we set $\ccF_3=\widetilde{M_3}$, the module defined by 
    \[
    \varphi'=\begin{pmatrix}
    1 & 0 & 0& 0\\
    0 & 1 & 0 & 0\\
    0 & 0 & y & 0\\
    0 & 0 & 0 & yz
        \end{pmatrix}
    \]
    or equivalently as the cokernel of 
    \[
    R^{\oplus 3}\xrightarrow{\varphi'=\begin{pmatrix}1 & 0 & 0 \\ 0& y & 0 \\
    0 & 0 & yz \end{pmatrix}}R^{\oplus 3}.
    \]
    Similarly, $\ccF_2=\widetilde{M_2}$ will be defined by 
        \[
    0\to R^{\oplus 2}\xrightarrow{\begin{pmatrix} y & 0 \\
    0 & yz \end{pmatrix}}R^{\oplus 2}\to M_2\to 0
    \]
    and $\ccF_1=\widetilde{M_1}$ by 
    \[
    0\to R\xrightarrow{(z)} R\to M_1\to 0.
    \]
    Finally, $M_0=0$.
    In conclusion, we obtain the filtration
    \begin{align*}
    & M=M_4=(R/(x))^{\oplus 2}\oplus R/(xy)\oplus R/(xyz)\xhookleftarrow{\begin{pmatrix}0 & 0 \\0 & 0\\ x& 0 \\0 & x\end{pmatrix}}M_3=R/(y)\oplus R/(yz)\cong\\
    &\cong M_2=R/(y)\oplus R/(yz)\xhookleftarrow{\begin{pmatrix}0 & y\end{pmatrix}}M_1=R/(z)\leftarrow 0=M_0.
    \end{align*}
    The graded pieces are 
    \begin{align*}
    &\ccE_4=\ccF_4/\ccF_3=\widetilde{\left( R/(x)\right)^{\oplus 4}}\\
    &\ccE_3=\ccF_3/\ccF_2=0\\
    &\ccE_2=\ccF_2/\ccF_1=\widetilde{\left( R/(y)\right)^{\oplus 2}}\\
    &\ccE_1=\widetilde{R/(z)}    
    \end{align*}
    and each of the sheaves $\ccE_i$ is locally free of rank $i$ on the subscheme defined by $D_i$. Note that, for $i=3$, such subscheme is empty.
\end{example}

\begin{theorem}\label{thm:cone of diagonalizable sheaf}
Let $\ccF$ be a diagonal coherent sheaf of generic rank $r$ and maximal rank $r_2$ on a Noetherian integral scheme $X$. Then we have a filtration
\[
\ccF\supset\ccK=\ccK_{r_2-r}\supset\ccK_{r_2-r-1}\supset\dots\supset \ccK_0=0
\]
such that
\[
\ccF^{\tf}=\ccF/\ccK
\]
is locally free of rank $r$ and 
\[
\ccE_i=\ccK_{i}/\ccK_{i-1}
\]
is locally free of rank $i$ on the effective Cartier divisor  $D_i$.

\end{theorem}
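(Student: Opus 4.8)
The plan is to deduce the statement by splicing together the short exact sequence produced by \Cref{lemma:first_step_filtration} with the filtration produced by \Cref{lemma:filtration_rkzero}. First I would apply \Cref{lemma:first_step_filtration} to $\ccF$: since $\ccF$ is diagonal of generic rank $r$, this yields a short exact sequence $0\to\ccK\to\ccF\to\ccF^{\tf}\to 0$ in which $\ccF^{\tf}$ is locally free of rank $r$ on $X$ and $\ccK=\tor(\ccF)$ is a diagonal coherent sheaf of generic rank $0$. This already provides the top quotient $\ccF/\ccK=\ccF^{\tf}$ of the desired filtration.

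Next I would pin down the maximal rank of $\ccK$, which controls the length of the filtration. Because $\ccF^{\tf}$ is locally free, the sequence above remains left exact after tensoring with the residue field $\kappa(p)$ at any point $p\in X$, so $\rk(\ccF|_p)=\rk(\ccK|_p)+r$; taking the maximum over $p$ gives $\mrk(\ccK)=\mrk(\ccF)-r=r_2-r$. (Alternatively, the local splitting $\ccF\simeq\ccF^{\tf}\oplus\ccK$ furnished by \Cref{thm: Lipman}, together with the direct-sum formula for Fitting ideals and the chain $F_0(\ccK)\subseteq F_1(\ccK)\subseteq\dots$, gives $F_i(\ccK)=F_{i+r}(\ccF)$ for all $i$, whence $F_{r_2-r}(\ccK)=\ccO_X$ while $F_{r_2-r-1}(\ccK)\neq\ccO_X$.) Now I would run \Cref{lemma:filtration_rkzero} with input the diagonal rank-$0$ sheaf $\ccK$, whose maximal rank is $n:=r_2-r$. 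This outputs an increasing filtration $0=\ccK_0\subset\ccK_1\subset\dots\subset\ccK_{n}=\ccK$ and effective Cartier divisors $D_1,\dots,D_n$ — expressible intrinsically through the Fitting ideals of $\ccK$, hence of $\ccF$, via \eqref{equations_of_di} — such that $\ccE_i=\ccK_i/\ccK_{i-1}$ is locally free of rank $i$ on the locally principal subscheme cut out by $D_i$. Relabelling $\ccK=\ccK_{r_2-r}$ and placing this filtration below the surjection $\ccF\to\ccF^{\tf}$ gives precisely the filtration in the statement, with $\ccF/\ccK=\ccF^{\tf}$ locally free of rank $r$.

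The one point that genuinely needs care — and which I expect to be the main (indeed essentially only) obstacle — is that the filtration of \Cref{lemma:filtration_rkzero} is produced chart by chart, using local diagonalizing presentations of $\ccK$, so one must check that the subsheaves $\ccK_i\subseteq\ccK$ glue to global subsheaves. I would handle this by observing that each descending step is canonical: unwinding diagram \eqref{eqn:construction_filtered}, passing from $\ccK_i$ to $\ccK_{i-1}$ amounts to taking the image of the natural map $\ccJ_i\otimes\ccK_i\to\ccK_i$, where $\ccJ_i$ is the globally defined invertible ideal sheaf (a ratio of consecutive Fitting ideals of $\ccK$) cutting out $D_i$; in particular, on a chart where $\ccJ_i=(f)$ this image is simply $f\cdot\ccK_i$, which visibly does not depend on the chosen presentation. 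Since the $\ccJ_i$ and the $D_i$ are intrinsic, the locally constructed $\ccK_i$ agree on overlaps and descend. Everything else is bookkeeping with ranks and Fitting ideals, using \Cref{rmk: diagonal implies tf lc and tor dim} to know that $\ccK$ has tor-dimension at most $1$ and hence admits the square diagonalizable presentations that \Cref{lemma:filtration_rkzero} requires.
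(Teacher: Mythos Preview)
Your proposal is correct and follows exactly the same approach as the paper, which states the proof in one line as ``Immediate by \Cref{lemma:first_step_filtration} and \Cref{lemma:filtration_rkzero}.'' Your additional care in computing $\mrk(\ccK)=r_2-r$ and in justifying that the locally constructed $\ccK_i$ glue globally (via the intrinsic description of each step as the image of $\ccJ_i\otimes\ccK_i\to\ccK_i$ for the invertible ideal $\ccJ_i$ defining $D_i$) fills in details that the paper leaves implicit in the phrase ``immediate''.
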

\begin{proof}
Immediate by \Cref{lemma:first_step_filtration} and \Cref{lemma:filtration_rkzero}.
\end{proof}

\subsection{Remarks on minimality}\label{subsec: remarks on minimality}

We saw in \Cref{rmk:first_Fitting_not_minimal} that blowing up the first non-zero Fitting ideal of $\ccF$ is, in general, not the minimal way to make $\ccF^\tf$ locally free. Similarly, blowing up all the Fitting ideals of $\ccF$ is not the minimal way to turn $(\ccF\mid_{D_i})^\tf$ into locally free sheaves for all $i$. This is illustrated in the following examples.

\begin{example}\label{example:Hu-Li_Rossi_different}
    Take $X=\Spec(R)$ and $\ccF = \widetilde{M}$ for $M=R/I$ with $I\subset R$ a non-principal ideal. The only non-trivial Fitting ideal of $\ccF$ is $F_0(\ccF) = I$. Note that $M^\tf = 0$ is locally free and $M\mid_{V(I)}$ is locally free of rank 1. This means that $\ccF$ already has the desired property on $X$. However, blowing up all the Fitting ideals of $\ccF$ results in $\Bl_I (X)$, which is isomorphic to $X$ if and only if $I$ is invertible. 
    
    This example also shows that given a coherent sheaf $\ccF$ on an integral scheme $X$, the blow up $\Bl_{\ccF}(X)$ from \Cref{definition:Rossi_blowup} and $\bld$ are different in general. Indeed, on the one hand, $\Bl_{\ccF}(X) = \Bl_{M}(X) = X$ because $M^\tf = 0$ is locally free. On the other hand, the only non-trivial Fitting ideal of $M$ is $F_1(M) = I$, so $\bld = \Bl_I X$. Therefore both blow ups agree if and only if $I$ is invertible.
\end{example}

\begin{example}\label{ex: min1}
    Let $P$ be the origin in $\bbA^2_{x,y}$ and consider the embedding $i\colon \bbA^2_{x,y}\to \bbA^3_{x,y,z} \colon (x,y)\mapsto (x,y,0)$. The image of $i$ is the plane $\Pi = \{z=0\}$. Let $I=(x,y)$ be the ideal of $P$ in $\bbA^2$ and consider the module $M = i_*I$ in $\bbA^3$. 

    Note that $M$ has generic rank 0, $M^\tf = 0$ and that $M\mid_{\Pi}$ is the ideal $I$, which is torsion-free. This means that $M^\tf = 0$ is already locally free, but $(M\mid_{\Pi})^\tf = M\mid_{\Pi} = I$ is not locally free over $\Pi$. 
    
    To compute $\blHL_{M} \bbA^3$, we start with the following resolution of $M$
    \begin{equation}\label{eq:resolution_example_P_Pi}   
        \begin{tikzcd}
            R^3\arrow{r}{\Gamma} & R^2 \arrow{r} & M\arrow{r} & 0
        \end{tikzcd}    
    \end{equation}
    where $R=\bbC[x,y,z]$ and
    \[
        \Gamma = \begin{pmatrix}
                y & z & 0\\
                -x & 0 & z
            \end{pmatrix}.
    \]

    The Fitting ideals of $M$ are
    \begin{itemize}
        \item $F_0(M) = z(x,y,z)$,
        \item $F_1(M) = (x,y,z)$,
        \item $F_n(M)=R$, for all $n\geq 3$
    \end{itemize}
    This reflects the fact that $M$ has rank 0 on $\bbA^3\setminus \Pi$, rank 1 on $\Pi\setminus P$ and rank 2 on $P$, as per \Cref{prop:Fitting_ideals_rank_connection}. 
 Then
    \[
        \blHL_{M} \bbA^3 = \Bl_{F_0(M)\cdot F_1(M)} \bbA^3 = \Bl_{z(x,y,z)^2} \bbA^3 = \Bl_{(x,y,z)} \bbA^3 = \Bl_P \bbA^3
    \]
    is simply the blowup of the origin in $\bbA^3$. Note that $\blHL_{M} \bbA^3$ is distinct from $\Bl_M \bbA^3=\bbA^3$ in this example as the latter does not flatten $(M\mid_{\Pi})^\tf $.
\end{example}

\begin{remark}
    If the sheaf $\ccF$ has projective dimension $\leq 1$, then the Rossi construction is equal to the blow-up of the first non-zero Fitting ideal. In this case, blowing up all of the proper non-zero ideals as in the Hu--Li construction gives a minimal resolution with the property that $(\ccF\mid_{D_i})^\tf$ is locally free for all of the $D_i$'s defined in terms of Fitting ideals by \eqref{equations_of_di}.
    For an ideal having projective dimension 1 is equivalent to being principal.
\end{remark}

\begin{remark}[Extension of sheaves]
Let $X$ be a scheme, let $Y$ be a closed subscheme an let $\ccF_Y$ be a coherent sheaf of rank $r$ on $Y$. In order to find a minimal blow up of this torsion sheaf, one may try to extend $\ccF_Y$ to $X$ as a sheaf which is not a torsion sheaf and perform a repeated Rossi construction. One can find an open cover of $X$ and blow-ups of the charts such that  on the blow-up the torsion free part of the pull-back of $\ccF$ is locally free on the support. However, the blown up charts may not glue to a global construction. Below we explain that it is always possible to find \emph{local} blow-ups.

Let $X$ be an affine scheme and let $Y$ be a closed subscheme. Let $\ccF_Y$ be a coherent sheaf of rank $r$ on $Y$ and assume that we have an exact sequence
\[\ccO_{U\cap Y}^{\oplus n-r}\stackrel{\widehat{M}}{\longrightarrow} \ccO_{U\cap Y}^{\oplus n}\to\ccF_{U\cap Y}\to 0,\]
where $\widehat{M}\in M_{n,r-n}(\Gamma(O_Y))$. 
We may assume that $U=\Spec R$ and $Y\cap U=\Spec R/I$, where $R$ is a ring and $I$ is an ideal. Let $\widehat{M}=(\hat{f}_{ij})$, with $\hat{f}_{ij}\in R/I$. We now choose $f_{ij}\in R$ a lift of $\hat{f}_{ij}$ and we denote by $M$ the matrix $(f_{ij})$. Then we have a morphism $\ccO_U^{\oplus n-r}\stackrel{M}{\longrightarrow} \ccO_U^{\oplus n}$ and an exact sequence 
\[\ccO_U^{\oplus n-r}\stackrel{M}{\longrightarrow} \ccO_U^{\oplus n}\to \ccF_U\to 0,\]
where $\ccF_U$ denotes the cokernel of the map induced by $M$. Then, we have that $\ccF_U|_{Y}=\ccF_Y$ and the resolution above induces a morphism
 \[
 U\dashrightarrow U\times \Gr(r,n).
 \]
 Since there is no canonical choice for the lift $M$, the above morphisms do not glue in general.
\end{remark}

\section{Desingularization and diagonalization on stacks}\label{sec:desing_stacks}

In this section, we show that the constructions introduced so far in \Cref{sec:desing_sheaf} and \Cref{sec:diagonalization} make sense for algebraic stacks, since they are both local and they commute with flat base-change (see respectively \Cref{prop:map_between_blowups} and \Cref{prop:map_between_blowups_diagonal}). Thus we define the desingularization and the diagonalization of a coherent sheaf on a Noetherian integral Artin stack and we establish properties of both constructions.

{We will require our integral stacks to admit an integral presentation. As pointed out to us by David Rydh, it is unclear whether an integral stack always admits a presentation by integral schemes or algebraic spaces. However, all of the stacks we apply this construction to in this work admit such a presentation. We will only introduce repeated blow-ups of smooth algebraic stacks, which have an integral presentation by construction.}

\subsection{Construction of \texorpdfstring{$\Bl_{\ffF}\ffP$}{Lg} and \texorpdfstring{$\blHL_{\ffF}\ffP$}{Lg}}\label{subsec:constr_bl_stacks}

So far, we have only constructed $\Bl_{\ccF}X$ and $\blHL_{\ccF}X$ for an affine (Noetherian, integral) scheme X. In this section we generalize these constructions over a stack $\ffP$ {which we assume admits a presentation by affine integral groupoid schemes, that is, it can be presented by a groupoid} $[U_1\rightrightarrows U_0]$ with $U_0, U_1$ affine integral schemes. In particular, this holds if $\ffP$ is a normal Noetherian integral algebraic stack with affine stabilizers. 

It is possible to obtain similar constructions for more general $\ffP$ (locally Noetherian, no restriction on the stabilizers), but it requires a two-step process of first generalizing the construction to algebraic spaces and then to stacks.

Denote by $\ffP$ a Noetherian, integral Artin stack with affine stabilizers which admits a presentation by integral schemes. 
Consider a smooth presentation of $\ffP$, i.e. a groupoid in affine schemes $(U_0,U_1,s,t,m)$ whose associated quotient stack $[U_1 \rightrightarrows U_0]$ is $\ffP$. Here $[U_1 \rightrightarrows U_0]$ denotes the stackyfication of a category fibered in groupoids $[U_1 \rightrightarrows U_0]^{\pre}$.
Recall that $U_0,U_1$ are affine schemes $m:U_1 \tensor[_s]{\times}{_t} U_1 \to U_1$ is the composition of 
arrows, $s,t:U_1 \to U_0$ are respectively source and target morphism and they are smooth morphisms. They satisfy some compatibility conditions that we 
will not use explicitly here (See \cite[\S (4.3)]{Laumon-stacks-2000} or \cite[\href{https://stacks.math.columbia.edu/tag/0441}{Definition 0441}]{stacks-project}.

The reader can think of $\ffP$ being the Picard stack $\Pic_{g,n}$. Recall, that a $S$-point of $\Pic_{g,n}$ is  a couple $(C,L)$ where $C$ is a nodal curve of genus $g$ with $n$ distinct smooth marked points  and $L$ is a line bundle over it. It is well known that $\Pic_{g,n}$ is a smooth Noetherian Artin stack over $\Spec(\bbC)$ of locally finite type.

Let $\ffF$ be a coherent sheaf on $\ffP$, i.e. we have a coherent sheaf $\ccF_0$ on $U_0$ and also a coherent sheaf $\ccF_1$ on $U_1$ with two fixed isomorphisms 
\begin{align}\label{fix:iso}
    s^*\ccF_0 \simeq \ccF_1 \simeq t^*\ccF_0
\end{align} that satisfy the cocycle condition on $U_1 \tensor[_s]{\times}{_t} U_1$. We refer to the article of Olsson \cite[Proposition 6.12]{Olsson-sheaves-artin-2007} for the equivalent definitions of coherent sheaves on an Artin stack.

We now proceed to use the smooth presentation of $\ffP$ to define a stack $\Bl_{\ffF}\ffP$ desingularizing the coherent sheaf $\ffF$. All of the following discussion holds formally identical when we consider the procedure that diagonalises $\ffF$ instead. The stack we obtain with the second procedure is denoted $\blHL_{\ffF}\ffP$.

Later, we prove that the blow-up stacks obtained in both cases are algebraic and come equipped with a representable (by a scheme), proper and birational morphism to $\ffP$.

With the theory developed in \S \ref{sec:desing_sheaf}, we can construct $\Bl_{\ccF_1} U_1$ and $\Bl_{\ccF_0}U_0$. Note that to apply the results in that section we require $U_0$, $U_1$ to be affine integral Noetherian schemes. However, running these arguments once with $U_0$, $U_1$ affine schemes shows that the blow-up construction glues for non-affine schemes and more generally algebraic spaces.

Since the morphisms $s,t:U_1\to U_0$ are smooth (hence flat), we apply flat base change for blowup of sheaves (see Proposition \ref{prop:map_between_blowups}) to $s,t$ and we get $\ts, \tta: \Bl_{\ffF_1}U_1 \to \Bl_{\ffF_0}U_0$. Using the fix isomorphisms \eqref{fix:iso}, we obtain the following Cartesian diagrams
\begin{equation}\label{diag:cart,blow}
\begin{tikzcd}
\Bl_{\ccF_1}U_1    \ar[r,"\ts"] \ar[d,"q"] \arrow[dr, phantom,"\ulcorner", very near start]  & \Bl_{\ccF_0} U_0 \ar[d,"p"]& \ar[d] \Bl_{\ccF_1}U_1    \ar[r,"\tta"] \ar[d,"q"] \arrow[dr, phantom,"\ulcorner", very near start]  & \Bl_{\ccF_0} U_0 \ar[d,"p"]\\
U_1\ar[r,"s"] & U_0 & U_1\ar[r,"t"] & U_0.
\end{tikzcd}
\end{equation}

In addition, using Cartesian diagrams on a cube, we construct a map
\[
\tm:\Bl_{\ccF_1}U_1 \tensor[_\ts]{\times}{_\tta} \Bl_{\ccF_1}U_1 \to \Bl_{\ccF_1}U_1.
\]
More precisely, we have
\begin{align*}
\Bl_{\ccF_1}U_1\tensor[_\ts]{\times}{_\tta} \Bl_{\ccF_1}U_1 &\simeq \left(\Bl_{\ccF_0}U_0 \tensor[_p]{\times}{_s}U_1 \right) \tensor[_\ts]{\times}{_\tta} \left(\Bl_{\ccF_0}U_0\tensor[_p]{\times}{_t}U_1 \right)\\
& \simeq \Bl_{\ccF_0}U_0\tensor[_p]{\times}{_s}U_1 \tensor[_s]{\times}{_t}U_1 \\
&\to \Bl_{\ccF_0}U_0\tensor[_p]{\times}{_s}U_1  \mbox{ by applying } m:U_1\tensor[_s]{\times}{_t} U_1 \to U_1\\
& \simeq \Bl_{\ccF_1}U_1 \mbox{ by the Cartesian diagram \eqref{diag:cart,blow}}.
\end{align*}
We obtain a smooth groupoid in schemes
\[
(\Bl_{\ccF_0}U_0, \Bl_{\ccF_1}U_1, \ts,\tta,\tm)
\]
with a morphism of groupoids to $(U_0,U_1,s,t,m)$.
This defines a 1-morphism 
\[
p:[\Bl_{\ccF_0}U_0\rightrightarrows\Bl_{\ccF_1}U_1]^{\pre}\to[U_0\rightrightarrows U_1]^{\pre}.
\]
Let $\Bl_{\ffF}\ffP$ denote the stackyfication of $[\Bl_{\ccF_0}U_0\rightrightarrows\Bl_{\ccF_1}U_1]^{\pre}$. By universal property, the morphism discussed above lifts to a morphisms of stacks
\[
\pi:\Bl_{\ffF}\ffP\to \ffP.
\]

\begin{remark}
    We thank David Rydh for also pointing out that the same construction of desingularization of sheaves on Artin stacks can be realized by adapting the Grassmannian interpretation of Rossi. One can view $\Bl_{\ffF}\ffP$ as the closure of a morphism from $\ffU\subset\ffP$ to the relative Grassmannian of rank $r$ quotients of $\ffF$, $\Gr_{\ffP}(\ffF,r)$. A more general version of the relative Grassmannian of \Cref{gras} was constructed by Hall and Rydh, who introduce a more general Quot scheme over algebraic stacks with mild assumptions in \cite{hall2015general}. However, we prefer to include this explicit gluing argument as it is not immediately clear to us that their assumptions hold for the stacks we need in our applications.
\end{remark}

\subsection{Properties of \texorpdfstring{$\Bl_{\ffF}\ffP$}{Lg} and \texorpdfstring{$\blHL_{\ffF}\ffP$}{Lg}}

We collect some properties of $\Bl_{\ffF}\ffP$ and $\blHL_{\ffF}\ffP$, including those of \Cref{subsec: properties univ desing,subsec:properties_HL}, which naturally extend to stacks.

\begin{theorem}\label{thm:construction-blowup-stacks}
Let $[U_1\rightrightarrows U_0]\to\ffP$ be an integral Noetherian Artin stack with affine stabilizers and an integral presentation, and let $\ffF$ be a coherent sheaf on it. 
\begin{enumerate}
\item The stacks $\Bl_{\ffF}\ffP=[\Bl_{\ccF_1} U_1 \rightrightarrows \Bl_{\ccF_0}U_0]$ and  $\blHL_{\ffF}\ffP=[\blHL_{\ccF_1} U_1 \rightrightarrows \blHL_{\ccF_0}U_0]$ are integral Noetherian Artin stacks. 
\item The morphisms $\Bl_{\ffF}\ffP\to\ffP$ and $\blHL_{\ffF}\ffP\to\ffP$ are representable proper and birational.
\end{enumerate}

\end{theorem}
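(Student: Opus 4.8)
The plan is to deduce the four assertions from the groupoid presentation $\Bl_{\ffF}\ffP=[\Bl_{\ccF_1}U_1\rightrightarrows\Bl_{\ccF_0}U_0]$ built in \Cref{subsec:constr_bl_stacks}, together with the scheme-level results on the Rossi blow-up proved above; the statements for $\blHL_{\ffF}\ffP$ follow identically, with \Cref{prop:map_between_blowups_diagonal} and \Cref{constr: diagonalization_blowup} replacing \Cref{prop:map_between_blowups} and \Cref{prop:Rossi}. Write $\pi\colon\Bl_{\ffF}\ffP\to\ffP$ for the structure morphism and $r=\rk(\ffF)$. Since $s$ and $t$ are smooth, in particular flat, \Cref{prop:map_between_blowups} makes the two squares in \eqref{diag:cart,blow} Cartesian, so $\ts$ and $\tta$ are base changes of the smooth morphisms $s$ and $t$, hence smooth; thus $(\Bl_{\ccF_0}U_0,\Bl_{\ccF_1}U_1,\ts,\tta,\tm)$ is a smooth groupoid in schemes and its stackification $\Bl_{\ffF}\ffP$ is an algebraic stack. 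By \Cref{prop:Rossi} the schemes $\Bl_{\ccF_0}U_0$ and $\Bl_{\ccF_1}U_1$ are projective over the Noetherian affine schemes $U_0$ and $U_1$, hence Noetherian, so $\Bl_{\ffF}\ffP$ has a Noetherian atlas; combined with the quasi-separatedness of $\ffP$ and the separatedness of $\pi$ (which follows from the properness obtained below), this shows $\Bl_{\ffF}\ffP$ is a Noetherian algebraic stack. For $\blHL_{\ffF}\ffP$ the only change is that the projectivity of $\blHL_{\ccF_0}U_0\to U_0$ comes from its description in \Cref{constr: diagonalization_blowup} as a finite composition of blow-ups of coherent ideal sheaves.

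Next I would show that $\pi$ is representable by verifying it is faithful: for every scheme $T$ and object $x$ of $\Bl_{\ffF}\ffP$ over $T$, the induced morphism of automorphism sheaves $\underline{\mathrm{Aut}}_T(x)\to\underline{\mathrm{Aut}}_T(\pi(x))$ must be a monomorphism, and this may be checked after a smooth surjective base change on $T$. Doing so, we may take $x$ to be a $T$-point of $\Bl_{\ccF_0}U_0$ with image $y\in U_0(T)$; then $\underline{\mathrm{Aut}}_T(x)$ is the set of $T$-points of $\Bl_{\ccF_1}U_1$ with both $\ts$ and $\tta$ equal to $x$, while $\underline{\mathrm{Aut}}_T(\pi(x))=\{\gamma\colon T\to U_1\mid s\circ\gamma=t\circ\gamma=y\}$; using the identification $\Bl_{\ccF_1}U_1\cong U_1\times_{s,U_0}\Bl_{\ccF_0}U_0$ of \eqref{diag:cart,blow}, the former is the subset of the latter cut out by the condition that $\tta$ also equal $x$, so the map is injective and $\pi$ is representable. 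The same identification gives $U_0\times_{\ffP}\Bl_{\ffF}\ffP\cong\Bl_{\ccF_0}U_0$, which is a scheme and is projective over $U_0$ by \Cref{prop:Rossi}; since properness of a representable morphism may be checked after the smooth base change $U_0\to\ffP$, it follows that $\pi$ is proper (for $\blHL_{\ffF}\ffP$, use instead the projectivity of $\blHL_{\ccF_0}U_0\to U_0$). This representability step is the only place that is not completely formal: concretely it is the statement that the morphism of presentations $(\Bl_{\ccF_0}U_0,\Bl_{\ccF_1}U_1)\to(U_0,U_1)$ is cartesian on both source and target, and verifying it requires keeping track of the two Cartesian squares \eqref{diag:cart,blow} and of the cocycle condition on $U_1\tensor[_s]{\times}{_t}U_1$ — routine but somewhat fiddly, with no new idea involved.

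Finally, for birationality and integrality, let $\ffV\subseteq\ffP$ be the open substack over which $\ffF$ is locally free of rank $r$. It is non-empty, hence dense since $\ffP$ is irreducible, and its preimage $U_0^{\circ}$ under the atlas $U_0\to\ffP$ is dense in $U_0$ (preimages of dense opens under smooth morphisms are dense). By the construction of the Rossi blow-up, $\Bl_{\ccF_0}U_0$ is the closure of the graph $\Gamma_f(U_0^{\circ})$ inside a relative Grassmannian over $U_0$, and $\Gamma_f(U_0^{\circ})\cong U_0^{\circ}$ is an open subscheme of this closure (the Cartesian square in the proof of \Cref{prop:Rossi}); hence $\pi$ restricts to an isomorphism over $\ffV$, and $\pi^{-1}(\ffV)$ — whose preimage in the atlas is the dense subscheme $\Gamma_f(U_0^{\circ})$ — is dense in $\Bl_{\ffF}\ffP$. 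Therefore $\pi$ is birational. In particular $\Bl_{\ffF}\ffP$ is the closure of the irreducible dense open substack $\pi^{-1}(\ffV)\cong\ffV$, so it is irreducible, and it is reduced since it has the reduced atlas $\Bl_{\ccF_0}U_0$; thus it is integral. For $\blHL_{\ffF}\ffP$ one argues the same way, noting that $\blHL_{\ccF_0}U_0\to U_0$ is an isomorphism over the complement of the vanishing locus of the first non-zero Fitting ideal of $\ccF_0$, that this locus is nowhere dense and is contained in the locus where $\ccF_0$ fails to be locally free by \Cref{prop:Fitting_ideals_rank_connection}, and that the blow-up of a reduced Noetherian scheme along a non-zero ideal sheaf is again reduced.
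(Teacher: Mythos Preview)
Your proof is correct and follows essentially the same strategy as the paper's: both hinge on the Cartesian squares \eqref{diag:cart,blow} coming from flat base change of the Rossi (resp.\ Hu--Li) blow-up, and both ultimately need the identification $U_0\times_{\ffP}\Bl_{\ffF}\ffP\cong\Bl_{\ccF_0}U_0$ to transfer the scheme-level properties (projectivity, birationality) to the stack morphism $\pi$.

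The one organizational difference worth noting: the paper establishes representability by directly computing the fibre $U_0\times_{\ffP}\Bl_{\ffF}\ffP$ via an explicit groupoid calculation (showing the resulting groupoid is banal with stackification $\Bl_{\ccF_0}U_0$), and deduces everything else from that. You instead first invoke the faithfulness criterion to get representability abstractly, and only afterwards appeal to the fibre identification for properness. Your faithfulness argument is clean and correct, but it does not by itself yield the fibre identification --- as you rightly flag, that step still requires the ``cartesian on both source and target'' observation, which is precisely the content of the paper's groupoid computation. So the two proofs do the same work; yours front-loads a conceptual shortcut for representability while the paper's version is more direct but perhaps less transparent about why faithfulness holds.
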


\begin{proof}
Once again, we will only discuss $\Bl_{\ffF}\ffP$, as the argument for $\blHL_{\ffF}\ffP$ is identical, mutatis mutandis. 

Part (2), together with the properties of the respective constructions on schemes, implies part (1) of the theorem. To establish part (2), it suffices to compute the fiber $U_0\times_{\ffP}\Bl_{\ffF}\ffP$ and show that it is $\Bl_{\ccF_0}U_0$. Since $\Bl_{\ccF_0}U_0\to U_0$ is representable by a scheme, so will be the morphism $\Bl_{\ffF}\ffP\to\ffP$.

Now consider the 2-Cartesian diagram of categories fibered in groupoids
\begin{equation}\label{eq:diagram_groupoids}
\begin{tikzcd}
\ffX \ar[r]\ar[d]\arrow[dr, phantom,"\ulcorner", near start] & U_0 \ar[d]\\
    {[\Bl_{\ccF_1}U_1\rightrightarrows \Bl_{\ccF_0}U_0]^{\pre}}\ar[r, "p"]& {[U_1\rightrightarrows U_0]^{\pre}}
\end{tikzcd}
\end{equation}
where by abuse of notation, $U_0$ is the category fibered in sets associated to this algebraic space. One computes (see the discussion around \cite[\href{https://stacks.math.columbia.edu/tag/045G}{Tag 04Y4}]{stacks-project}) that the groupoid $\ffX$ is given by $(U_0', U'_1, s', t', m')$ where 
\begin{align*}
    U'_0 & = U_1\times_{t,U_0,p}\Bl_{\ccF_0}U_0\\
    U'_1 & = U_1\times_{t,U_0,p\cdot s}\Bl_{\ccF_1}U_1\\
    s'&:(x,y)\mapsto (x,\ts(y))\\
    t'&:(x,y)\mapsto (m(x,p(y)),\tta(y))\\(x_2,\tm(y_1,y_2)).
\end{align*}
By \eqref{diag:cart,blow}, 
\begin{align*}
U'_1&=U_1\times_{t,U_0,p\cdot s}(U_1\times_{s,U_0,p}\Bl_{\ccF_0}U_0)\\
&=(U_1\times_{t,U_0,s}U_1)\times_{t\cdot pr_2,U_0,p}\Bl_{\ccF_0}U_0\\
s'&:((x,y),z)\mapsto (x,z)\\
t'&:((x,y),z)\mapsto (y,z)
\end{align*}
From this expression, $\ffX$ is a banal groupoid whose stackyfication is equivalent to the scheme $\Bl_{\ccF_0}U_0$, as the relations $s', t'$ identify all the points of the $U_1$ factor. Then the stackyfication of \eqref{eq:diagram_groupoids} gives us a 2-Cartesian diagram:
\begin{equation}\label{eq:diagram_blowup_cover}
\begin{tikzcd}
\Bl_{\ccF_0}U_0 \ar[r,"\widetilde{\pi}"]\ar[d]\ar[d] \arrow[dr, phantom,"\ulcorner", very near start]  & U_0\ar[d]\\
\Bl_{\ffF}\ffP\ar[r,"\pi"]&\ffP    
\end{tikzcd}
\end{equation}
This discussion proves that $\pi$ is representable. Recall that a morphism of stacks is birational if there exists an isomorphism on dense open substacks on source and target (see \cite{Cavalieri-stefens-Wise-2012-polyno-tauto}). By \Cref{definition:Rossi_blowup} we deduce that $\pi$ is proper and birational.
\end{proof}

Now we prove that our construction satisfies a universal property. In particular, it will then be independent of the choice of a groupoid presentation.

\begin{theorem}[Universal property of the Rossi desingularization]\label{Rossi univ}
Let $\pi:\Bl_{\ffF}\ffP\to\ffP$ be as in \Cref{thm:construction-blowup-stacks}. Then
\begin{enumerate}
    \item The sheaf $(\pi^*\ffF)^\tf$ is locally free of the same generic rank as $\ffF$.
    \item The morphism $\pi:\Bl_{\ffF}\ffP\to\ffP$ satisfies the following universal property:
    For any morphism of stacks $p:\ffY\to\ffP$ such that $(p^*\ffF)^\tf$ is torsion-free of the same generic rank as $\ffF$, there is a unique\footnote{To be precise, there exists a morphism $p'$, unique up to a unique $2$-morphism.} morphism $p'$, which makes the following diagram 2-commutative
    \[
    \begin{tikzcd}
        \ffY\ar[r,"\exists ! p'", dashed]\ar[dr,"p" '] & \Bl_{\ffF}\ffP\ar[d,"\pi"]\\
         & \ffP
    \end{tikzcd}
    \]
    \end{enumerate}
\end{theorem}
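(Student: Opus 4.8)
The plan is to reduce both statements to the affine (schematic) case, where everything is already known, by descending along the smooth atlas $U_0\to\ffP$ and using the fact that the Rossi construction commutes with flat base change.

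\emph{Part (1).} First I would observe that local freeness of a coherent sheaf on an Artin stack can be checked after pullback to a smooth atlas. By the Cartesian diagram \eqref{eq:diagram_blowup_cover}, the pullback of $\pi^*\ffF$ along $\widetilde\pi\colon \Bl_{\ccF_0}U_0\to\Bl_\ffF\ffP$ is $p_0^*\ccF_0$, where $p_0\colon\Bl_{\ccF_0}U_0\to U_0$ is the schematic Rossi blow-up. Since $\widetilde\pi$ is a smooth (hence flat) surjection and $U_0$ is integral, the torsion subsheaf commutes with pullback, so $(\widetilde\pi^*\pi^*\ffF)^\tf\simeq (p_0^*\ccF_0)^\tf$, which is locally free of rank $r=\rk(\ffF)$ by \Cref{theorem:Villamayor_blowup} (equivalently \Cref{cor:univ_prop}). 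Local freeness and generic rank descend along the smooth surjection $\widetilde\pi$, giving (1).

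\emph{Part (2).} Given $p\colon\ffY\to\ffP$ with $(p^*\ffF)^\tf$ locally free of rank $r$, I want to produce $p'\colon\ffY\to\Bl_\ffF\ffP$ over $\ffP$. The strategy is to build $p'$ from its restriction to an atlas of $\ffY$ together with descent data. Choose a smooth presentation $[V_1\rightrightarrows V_0]$ of $\ffY$ with $V_0,V_1$ affine, compatible with $p$ in the sense that $p$ is induced by a morphism of groupoids to $[U_1\rightrightarrows U_0]$ (this can always be arranged by pulling back the atlas of $\ffP$; if $V_0\to\ffP$ does not factor through $U_0$, replace $V_0$ by $V_0\times_\ffP U_0$, which is still a smooth atlas since $U_0\to\ffP$ is smooth surjective). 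Writing $q_i\colon V_i\to U_i$ for the induced maps, the pullback $(q_0^*\ccF_0)^\tf = (p^*\ffF)^\tf|_{V_0}$ is locally free of rank $r$ on the \emph{scheme} $V_0$. Now $\Bl_{\ccF_0}U_0$ satisfies the schematic universal property (\Cref{cor:univ_prop}, noting it is stated for arbitrary Noetherian integral schemes, not just affine ones), so there is a unique $q_0'\colon V_0\to\Bl_{\ccF_0}U_0$ over $U_0$; similarly a unique $q_1'\colon V_1\to\Bl_{\ccF_1}U_1$ over $U_1$. The uniqueness clauses force $q_0',q_1'$ to be compatible with the groupoid structure maps $\ts,\tta,\tm$ (each required identity is a morphism to some $\Bl_{\ccF_i}U_i$ lifting a fixed morphism to $U_i$ along which the pullback of the sheaf is torsion-free of rank $r$, hence unique), so they assemble into a morphism of groupoids $[V_1\rightrightarrows V_0]\to[\Bl_{\ccF_1}U_1\rightrightarrows\Bl_{\ccF_0}U_0]$ over $[U_1\rightrightarrows U_0]$, which after stackification gives $p'\colon\ffY\to\Bl_\ffF\ffP$ over $\ffP$. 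Uniqueness of $p'$ up to unique $2$-isomorphism follows from the uniqueness of $q_0',q_1'$ together with descent for morphisms to the stack $\Bl_\ffF\ffP$.

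\emph{Main obstacle.} The genuinely delicate point is the descent step: verifying that the schematic universal property, applied chart by chart, yields data that actually glues to a morphism \emph{of stacks}, i.e. that the $2$-categorical cocycle conditions are automatically satisfied. This is handled purely by the uniqueness half of \Cref{cor:univ_prop}: any two morphisms $V\to\Bl_{\ccF_0}U_0$ lifting the same $V\to U_0$ (along which the pulled-back sheaf is torsion-free of the right rank) coincide, so all the required $2$-commutativities are forced and no nontrivial checking of associativity/cocycle data is needed. A secondary technical point is making sure that the atlas $V_0$ of $\ffY$ can be chosen so that $V_0\to\ffP$ factors through the chosen atlas $U_0$ (handled by the fiber-product replacement above) and that the resulting $V_0,V_1$ remain integral Noetherian schemes; integrality of $\ffY$ is part of the ambient hypotheses (via \Cref{def: desingularization on stacks}) but one should note that here $\ffY$ need not be assumed integral — however, the schematic universal property \Cref{cor:univ_prop} is already stated without an integrality hypothesis on the source $Y$, so this causes no difficulty.
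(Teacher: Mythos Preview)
Your proposal is correct and follows essentially the same approach as the paper: reduce to the schematic universal property (\Cref{cor:univ_prop}) on an atlas obtained by pulling back $U_0\to\ffP$, and use the uniqueness clause to force compatibility with the groupoid structure so that the local lifts glue. The only organizational difference is that the paper phrases the descent step as ``for any flat test scheme $T\to\ffY$, form $T_0=T\times_\ffP U_0$ and $T_1=T\times_\ffP U_1$'' rather than directly replacing a chosen atlas of $\ffY$, but this amounts to the same argument; also, be careful with the name $\widetilde\pi$, which in diagram~\eqref{eq:diagram_blowup_cover} denotes the map $\Bl_{\ccF_0}U_0\to U_0$, not the atlas map to $\Bl_\ffF\ffP$.
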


\begin{proof}
Choose a smooth presentation of $\ffP$ by $(U_0,U_1,s,t,m)$ with $U_0$ an affine Noetherian, integral scheme and construct the blow-up via this presentation as in the previous section.
For the first statement we use the fact that $\pi$ is representable and that by the proof of \Cref{thm:construction-blowup-stacks}, $q:\Bl_{\ccF_0}U_0\to \Bl_{\ffF}\ffP$ is a smooth covering by a scheme. So it suffices to prove that $q^*((\pi^*\ffF)^{\tf})$ is locally free of the correct rank. But this is just $(\overline{\pi}^*\ccF_0)^{\tf}$ with the notation of \eqref{eq:diagram_blowup_cover}, so the result follows by \Cref{thm:univ_flatification}.

For the second statement, by fppf descent it suffices to prove that for any flat morphism from an affine Noetherian integral scheme $T\to \ffY$ we can construct a morphism
\[  \begin{tikzcd}
        & & \Bl_\ffF \ffP\arrow{d}{\pi}\\
        T\arrow{r}\arrow[dashed]{rru}{a}\arrow[bend right=30]{rr}{g} & \ffY \arrow{r} & \ffP.
    \end{tikzcd}
\]
By using the groupoid scheme presentations of $\ffP$, $\Bl_{\ffF}\ffP$ we have fixed above, we pull back to the smooth covering $U_0\to\ffP$,
\begin{equation}\label{diagram_base_change_to_U_0}  \begin{tikzcd}
        &  \Bl_{\ccF_0} U_0\arrow{d}{\overline{\pi}}\\
        T_0=T\times_{\ffP}U_0\arrow{r}\arrow[dashed]{ru}{a_0}\arrow{r}{g_0}  & U_0.
    \end{tikzcd}.
\end{equation}
Since $T\to \ffY$ is flat, we have that $(g^*\ffF)^\tf$ is locally free of rank $r$, for $T_0$ we have
\[
    \begin{tikzcd}
        T_0 \arrow{r}{g_0}\arrow{d}{b} & U_0 \arrow{d}{q}\\
        T\arrow{r}{g} & \ffP.       
    \end{tikzcd}
\]
Then
\[
    g_0^*(\ccF_0^{\tf})=(g_0^* q^* \ffF)^\tf = (b^* g^* \ffF)^\tf  = b^*((g^* \ffF)^\tf)
\]
because $q$ and $g$ are flat. So $g_0^*(\ccF_0^{\tf})$ is locally free of rank $r$.  By \Cref{thm:univ_flatification}, there is a unique morphism $a_0\colon T_0 \to \Bl_{\ccF_0} U_0$ over $g_0$: 

\[
    \begin{tikzcd}
        & \Bl_{ \ccF_0} U_0 \arrow{dr}\arrow{d}\\
        T_0 \arrow{r}{g_0}\arrow{d}{b}\arrow{ur}{a_0} & U_0 \arrow{dr}{f} & \Bl_\ffF \ffP \arrow{d}\\
        T\arrow{rr}{g} & & \ffP.    
    \end{tikzcd}
\]
Now, to show that this map descends to $a:T\to \Bl_{\ffF}\ffP$ we need to give a map of groupoids $[T_1\rightrightarrows T_0]$ to $[\Bl_{\ccF_1}U_1\rightrightarrows \Bl_{\ccF_0}U_0]$, where these are the groupoid presentations induced by $[U_1\rightrightarrows U_0]$. That is, we need to specify in addition to $a_0$ constructed above, a map
\[
a_1:T\times_{\ffP}U_1=T_1\to \Bl_{\ffF}\ffP\times_{\ffP}U_1=\Bl_{\ccF_1}U_1
\]
over $g_1:T_1\to U_1$ and show that $a_0$ and $a_1$ are compatible with the source and target maps.
We can choose $U_1$ to be an affine scheme, then the same argument we applied above to lift $g_0$ to $a_0$ produces a unique lift of $g_1$ to $a_1$. The compatibility of $(a_0,a_1)$ with the source and target maps of  $[T_1\rightrightarrows T_0]$ and $[\Bl_{\ccF_1}U_1\rightrightarrows \Bl_{\ccF_0}U_0]$ then follows by uniqueness. 

\qedhere
\end{proof}

Having established the universal property, we can now talk about the blow-up $\Bl_{\ffF}\ffP$ without specifying a presentation for $\ffP$, as all choices will produce canonically isomorphic blow-ups.

\begin{proposition}\label{prop:properties-blowup-exact sequence-linebundle-stacks}
Let $\ffF$ be a coherent sheaf on $\ffP$, a Noetherian integral algebraic stack with affine stabilizers and admitting an integral presentation.
\begin{enumerate}
\item For any line bundle $\ffL$ on $\ffP$, we have $\Bl_{\ffF\otimes \ffL} \ffP=\Bl_\ffF\ffP$ and also $\blHL_{\ffF\otimes \ffL} \ffP=\blHL_\ffF\ffP$.
\item  Let $\ffE$, $\ffF, \ffG$ be coherent $\ccO_\ffP$-modules. Assume that we have an exact sequence $0\to \ffE\to \ffF\to \ffG \to 0$. 
    \begin{enumerate}
        \item If the sequence is locally split and $\ffE$ is locally free, then there are isomorphisms $\Bl_{\ffF}X \simeq \Bl_{\ffG} X$ and $\blHL_{\ffF}X \simeq \blHL_{\ffG} X$.
        \item If $\ffG$ is locally free, then there are isomorphisms $\Bl_{\ffF}X \simeq \Bl_{\ffE} X$ and $\blHL_{\ffF}X \simeq \blHL_{\ffE} X$.
    \end{enumerate}
\end{enumerate}
\end{proposition}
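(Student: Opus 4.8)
The strategy is to deduce both parts from their schematic counterparts---\Cref{prop: isomorphic-blowups-tensor-line-bundle}, Proposition~\ref{prop:tensor,line,HL,blowup}, Proposition~\ref{prop:functoriality,sheaves,injective,exact,sequence,Villamaoyr} and \Cref{prop:functoriality_sheaves_injective_diagonal_version}---using the universal properties of \Cref{Rossi univ} and \Cref{theorem:universal_property_diag_stacks}. The key point is that each comparison morphism produced below is the \emph{unique} one over $\ffP$, hence it is automatically compatible with any smooth groupoid presentation and descends; equivalently, the whole argument can be run purely at the level of stacks via the universal properties, never choosing an atlas, so it is essentially formal once the schematic statements are known.

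For part (1), let $\pi\colon \Bl_\ffF\ffP\to\ffP$ denote the Rossi blow-up. Tensoring the surjection $\pi^*\ffF\twoheadrightarrow (\pi^*\ffF)^\tf$ by the line bundle $\pi^*\ffL$ identifies $(\pi^*(\ffF\otimes\ffL))^\tf$ with $(\pi^*\ffF)^\tf\otimes\pi^*\ffL$, which is locally free of the same generic rank as $\ffF\otimes\ffL$. Thus $\pi$ has the property characterising $\Bl_{\ffF\otimes\ffL}\ffP$ in \Cref{Rossi univ}, yielding a canonical $\ffP$-morphism $\Bl_\ffF\ffP\to\Bl_{\ffF\otimes\ffL}\ffP$; running the same argument with $\ffF\otimes\ffL$ and $\ffL^{-1}$ produces a morphism in the other direction, and the two composites are the identities by the uniqueness clause of \Cref{Rossi univ}. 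For $\blHL$, write $\pi$ now for the projection $\blHL_\ffF\ffP\to\ffP$: over an open cover trivialising $\pi^*\ffL$ one has $F_i(\pi^*(\ffF\otimes\ffL))\simeq F_i(\pi^*\ffF)$, so each $F_i(\pi^*(\ffF\otimes\ffL))$ is locally principal, and one concludes exactly as above using \Cref{theorem:universal_property_diag_stacks}. These are the stack versions of \Cref{prop: isomorphic-blowups-tensor-line-bundle} and Proposition~\ref{prop:tensor,line,HL,blowup}.

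For part (2), the conditions appearing in \Cref{Rossi univ} and \Cref{theorem:universal_property_diag_stacks}---local freeness of the torsion-free part, local principality of the Fitting ideals---are local on the source, so one reduces to the situation treated in Proposition~\ref{prop:functoriality,sheaves,injective,exact,sequence,Villamaoyr} and \Cref{prop:functoriality_sheaves_injective_diagonal_version}, in which $\ffF\simeq\ffE\oplus\ffG$. Concretely: in case (a) with $\ffE$ locally free of rank $a$, for any $p\colon\ffY\to\ffP$ one has, locally on $\ffY$, $(p^*\ffF)^\tf\simeq p^*\ffE\oplus(p^*\ffG)^\tf$ and $F_\ell(p^*\ffF)=F_{\ell-a}(p^*\ffG)$; hence $(p^*\ffF)^\tf$ is locally free of rank $\rk\ffF$ (resp.\ all $F_i(p^*\ffF)$ are locally principal) precisely when the corresponding statement holds for $p^*\ffG$. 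Therefore $\Bl_\ffF\ffP$ and $\Bl_\ffG\ffP$ (resp.\ $\blHL_\ffF\ffP$ and $\blHL_\ffG\ffP$) satisfy the same universal property over $\ffP$ and are canonically isomorphic. In case (b), local freeness of $\ffG$ forces the sequence to be locally split, so the roles of $\ffE$ and $\ffG$ are interchanged and the identical argument gives $\Bl_\ffF\ffP\simeq\Bl_\ffE\ffP$ and $\blHL_\ffF\ffP\simeq\blHL_\ffE\ffP$.

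I do not expect a genuine obstacle: the mathematical content lies entirely in the schematic statements, and the passage to stacks is formal once one knows, via \Cref{Rossi univ} and \Cref{theorem:universal_property_diag_stacks}, that the comparison morphisms are unique over $\ffP$ and hence glue along the groupoid presentation of \Cref{subsec:constr_bl_stacks}. The only point requiring a little care is the bookkeeping of generic ranks in part (2): $\rk\ffF$ exceeds $\rk\ffG$ (resp.\ $\rk\ffE$) by $\rk\ffE$ (resp.\ $\rk\ffG$), and these shifted ranks must be fed into the universal properties so that the ``same generic rank'' hypothesis is met on both sides.
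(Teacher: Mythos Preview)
Your proposal is correct and follows essentially the same approach as the paper: reduce everything to the universal properties in \Cref{Rossi univ} and \Cref{theorem:universal_property_diag_stacks}, verify the local conditions by invoking the schematic results (\Cref{prop: isomorphic-blowups-tensor-line-bundle}, \Cref{prop:tensor,line,HL,blowup}, \Cref{prop:functoriality,sheaves,injective,exact,sequence,Villamaoyr}, \Cref{prop:functoriality_sheaves_injective_diagonal_version}), and conclude by uniqueness. If anything, your write-up is more explicit than the paper's, which simply asserts that ``all the other statements are local'' and defers to the scheme-theoretic propositions.
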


\begin{proof}
For the first part of 1, let $p'\colon\Bl_{\ffF\otimes \ffL} \ffP \to \ffP$ and $p\colon\Bl_\ffF\ffP\to \ffP$ be the natural projections. By the Universal Property, \Cref{Rossi univ}, to show that $\Bl_{\ffF\otimes \ffL} \ffP=\Bl_\ffF\ffP$ it suffices to show that $((p')^*\ffF)^\tf$ and $(p^*(\ffF\otimes \ffL))^\tf$ are locally free. These statements can be checked locally and they follow from \Cref{prop: isomorphic-blowups-tensor-line-bundle}. 

Similarly, all the other statements are local, so they follow from the same statements on schemes with the two different blow-ups.
For the Rossi blow-up $\Bl_{\ffF}\ffP$, the schematic statements are Propositions \ref{prop: isomorphic-blowups-tensor-line-bundle} and \ref{prop:functoriality,sheaves,injective,exact,sequence,Villamaoyr} and for the Hu--Li blow-up $\blHL_{\ffF}\ffP$, it follows from Propositions \ref{prop:tensor,line,HL,blowup} and \ref{prop:functoriality_sheaves_injective_diagonal_version}.
\end{proof}

From the beginning of \Cref{sec:diagonalization} (Definitions  \ref{def:diagonalizable} \ref{defi:diag_morphism_sheaves} and Proposition \ref{prop:diagonalizable_definitions_equivalent}),  we can define the notion of diagonal sheaves or locally diagonalizable morphism of sheaves on Artin stacks as follows.
\begin{definition}\label{defi:diag:coh,sheaf,stack}

\begin{enumerate}
    \item A coherent sheaf $\ffF$ on $\ffP$ is \textit{diagonal} if for any scheme $S$ and morphism $f:S\to \ffP$, the sheaf $f^*\ffF$ is diagonal, that is, its Fitting ideals $F_i(f^* \ffF)$ are locally principal.
\item A \textit{diagonalization} of a coherent sheaf $\ffF$ is a morphism $\pi:\widetilde{\ffP}\to \ffP$ such that $\pi^*\ffF$ is diagonal.   
\end{enumerate}
\end{definition}

\begin{remark}
 Using the presentation of $\ffP$, we could also define that $\ffF$ is diagonal if $\ccF_0$ is.
\end{remark}

\begin{theorem}[Universal property of the diagonalization]\label{theorem:universal_property_diag_stacks}
Let $\ffP$ a Noetherian, integral, normal Artin stack admitting an integral presentation. Let $\pi:\blHL_{\ffF}\ffP\to\ffP$ be as above. Then
\begin{enumerate}
\item The sheaf $\pi^*\ffF$ is diagonal of the same generic rank as $\ffF$.
\item The blow-up $\blHL_{\ffF}\ffP$ satisfies the universal property:
For any morphism of stacks $f:\ffY\to \ffP$ such that $f^*\ffF$ is diagonal of the same generic rank as $\ffF$, there is a unique morphism $f'$, which makes the following diagram 2-commutative:
    \[
    \begin{tikzcd}
        \ffY\ar[r,"\exists ! f'", dashed]\ar[dr,"f" '] & \blHL_{\ffF}\ffP\ar[d,"\pi"]\\
         & \ffP
    \end{tikzcd}
    \]
    \end{enumerate}
\end{theorem}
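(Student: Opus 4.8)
\emph{Plan.} I would mirror the proof of \Cref{Rossi univ} almost verbatim, replacing the hypothesis ``$(p^*\ffF)^\tf$ is locally free of generic rank $r$'' by ``$p^*\ffF$ is diagonal of generic rank $r$'' throughout, and replacing the schematic input \Cref{cor:univ_prop} by the schematic universal property of the Hu--Li blow-up, \Cref{cor:univ_property_HL_principal_Fitting_ideals}, together with its flat base change \Cref{prop:map_between_blowups_diagonal}. Concretely, I would fix a smooth groupoid presentation $(U_0,U_1,s,t,m)$ of $\ffP$ with $U_0,U_1$ affine, Noetherian and integral, construct $\blHL_{\ffF}\ffP=[\blHL_{\ccF_1}U_1\rightrightarrows\blHL_{\ccF_0}U_0]$ via this presentation as in \Cref{subsec:constr_bl_stacks}, and recall from the proof of \Cref{thm:construction-blowup-stacks} that $\pi$ is representable with $U_0\times_\ffP\blHL_{\ffF}\ffP\simeq\blHL_{\ccF_0}U_0$; thus $q\colon\blHL_{\ccF_0}U_0\to\blHL_{\ffF}\ffP$ is a smooth surjection and $\opi\colon\blHL_{\ccF_0}U_0\to U_0$ is the Hu--Li blow-up of $U_0$ along $\ccF_0$.

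For part (1), I would use that being diagonal in the sense of \Cref{defi:diag:coh,sheaf,stack} can be tested smooth-locally on the base: given a scheme $S$ and a morphism $S\to\blHL_{\ffF}\ffP$, base change along $q$ yields a smooth surjection $S'\to S$ whose composite to $\blHL_{\ffF}\ffP$ factors through $q$, and then $\pi^*\ffF|_{S'}\simeq(S'\to U_0)^*\ccF_0$ is diagonal because $\opi^*\ccF_0$ is diagonal by \Cref{cor:univ_property_HL_principal_Fitting_ideals} and diagonality is stable under pullback by \Cref{remark:diagonalization_base_changes}; local principality of the Fitting ideals of $\pi^*\ffF|_S$ then descends along the flat surjection $S'\to S$ since Fitting ideals commute with flat base change. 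The generic rank is preserved because $\opi^*\ccF_0$ has the same generic rank as $\ccF_0$ by \Cref{cor:univ_property_HL_principal_Fitting_ideals}, and $\ccF_0$ has the generic rank of $\ffF$ as $U_0\to\ffP$ is smooth, hence dominant.

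For part (2), I would reduce by fppf descent, exactly as in \Cref{Rossi univ}, to producing for each flat $b\colon T\to\ffY$ with $T$ affine Noetherian integral a unique morphism $a\colon T\to\blHL_{\ffF}\ffP$ over $g\coloneqq f\circ b$, compatibly in $T$. Since $b$ is flat (hence dominant), $g^*\ffF$ is the flat pullback of $f^*\ffF$, so it is diagonal of the same generic rank by \Cref{remark:diagonalization_base_changes}. Pulling back along the smooth covers $U_i\to\ffP$ gives $g_i\colon T_i\coloneqq T\times_\ffP U_i\to U_i$, and as $T_i\to T$ and $U_i\to\ffP$ are flat the sheaves $g_i^*\ccF_i$ are again diagonal of the correct generic rank; by \Cref{cor:univ_property_HL_principal_Fitting_ideals} there are unique lifts $a_i\colon T_i\to\blHL_{\ccF_i}U_i$ over $g_i$. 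By uniqueness $(a_0,a_1)$ is a morphism of groupoid presentations $[T_1\rightrightarrows T_0]\to[\blHL_{\ccF_1}U_1\rightrightarrows\blHL_{\ccF_0}U_0]$, which descends to $a\colon T\to\blHL_{\ffF}\ffP$ over $g$ using the identifications $\blHL_{\ccF_i}U_i\simeq U_i\times_\ffP\blHL_{\ffF}\ffP$ from the construction (equivalently, \Cref{prop:map_between_blowups_diagonal}). Letting $T$ run over an fppf cover of $\ffY$, these $a$ agree on overlaps by uniqueness and glue to $f'$, unique up to a unique $2$-isomorphism; and $\pi^*\ffF$ being diagonal of the same generic rank means $\pi$ itself is an admissible test object, consistent with terminality.

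\emph{Main obstacle.} Every geometric ingredient is already in place, so the content lies entirely in the descent in (2): one must verify that the schematic lifts $a_0,a_1$ are compatible with the source and target morphisms of the groupoid, which hinges on the uniqueness clause of \Cref{cor:univ_property_HL_principal_Fitting_ideals}. A secondary nuisance is that $T_i=T\times_\ffP U_i$ is smooth over the integral scheme $T$ but need not be integral itself, so \Cref{cor:univ_property_HL_principal_Fitting_ideals} must be applied on connected components (on which $g_i^*\ccF_i$ still has the required generic rank) and then reassembled, again by uniqueness. Finally one should be careful that ``diagonal'' is meant in the stacky sense of \Cref{defi:diag:coh,sheaf,stack}, so statement (1) is genuinely a claim about all schematic pullbacks rather than just the pullback to the chosen atlas.
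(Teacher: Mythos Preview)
Your proposal is correct and matches the paper's approach exactly: the paper's proof simply says the statement follows from \Cref{cor:univ_property_HL_principal_Fitting_ideals} and \Cref{prop:map_between_blowups_diagonal} by the same argument as \Cref{Rossi univ}. Your write-up actually unpacks this more carefully than the paper does, including the descent of diagonality in part (1) and the componentwise application of the schematic universal property on $T_i$.
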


\begin{proof}
The statement follows from the universal property of the Hu-Li blow-up for schemes, \Cref{thm:univ_flatificationerty_HL_principal_Fitting_ideals}, and the compatibility of the Hu-Li blow-up with flat pullback, \Cref{prop:map_between_blowups_diagonal}. The argument is the same as the proof of \Cref{Rossi univ}.
\end{proof}

\begin{remark}
    If $\ffP$ has the resolution property in the sense of \cite{Totaro_resolution}, then we have that $\pi:\Bl_{\ffF}\ffP\to\ffP$ is projective. Indeed, if $\ffP$ has the resolution property, then we have a global locally free sheaf $\ffE$ with a surjective morphism 
    \[\ffE\to\ffF\to 0.
    \]
    This allows us to define $\Bl_{\ffF}\ffP$ via the graph construction and thus the resulting stack is projective over $\ffP$. Note that projectivity is not local on the target, and thus, even though the local construction is projective, $\pi$ may not be projective. By \cite{Totaro_resolution} stacks which are not global quotient stacks do not have the resolution property. Many of the stacks that we work with are not global quotients. For more details on stacks which are not a global quotients see \cite{kresch2013flattening}.
\end{remark}

\section{Components of abelian cones}\label{sec:components_abelian_cones}

Let $\ccF$ be a diagonal sheaf on an integral Noetherian scheme, we study the irreducible components of $C(\ccF)$. We show that $C(\ccF)$ has finitely many irreducible components, which we consider with their natural reduced structure. Each irreducible component is a vector bundle supported on a closed integral subscheme. All of the cones in this section are taken over $X$, unless otherwise specified by the notation $C_{\mathrm{base}}(\mathrm{sheaf})$.

\subsection{The main component of an abelian cone}\label{subsec: main component abelian cone}

We start our study of components of cones with the main component of an abelian cone. Our study is motivated by \cite[Proposition 2.5]{Axelsson-Magnusson}, which we recall below as \Cref{prop: AM}. It states that if $\pi\colon C=\Spec(\ccA) \to X$ is a cone with $X$ integral and with $\ccA$ torsion-free outside of a closed $Z\subseteq X$, then the closure of $C\setminus \pi^{-1}(Z)$ inside $C$ is equal to $\Spec(\ccA^\tf)$.

In general, $\Spec(\ccA^\tf)$ need not be irreducible, see \Cref{ex: tf cone reducible}. However, if the cone is abelian, that is, if $\ccA = \Symm \ccF$ for a coherent sheaf $\ccF$, then $\Spec(\ccA^\tf)$ is an irreducible component that we call the main component of $C(\ccF) = \Spec \Symm \ccF$. Note that $(\Symm \ccF)^\tf$ and $\Symm (\ccF^\tf)$ need not agree in general (see \Cref{rmk: main component not equal to C(Ftf)}), but they do if $\ccF^\tf$ is locally free by \Cref{lem: symm commutes with tf if locally free}. In particular, they agree for diagonal sheaves by \Cref{rmk: diagonal implies tf lc and tor dim}.

Let $X$ be an integral Noetherian scheme,  let $\ccA$ be an $\ccO_X$-algebra with the assumptions of \Cref{def: cone} and let 
\[
    \pi\colon C=\Spec_X(\ccA)\to X
\]
be the cone associated to $\ccA$. The natural surjection $\ccA\to \ccA^\tf$ induces a closed embedding 
\[
    \Spec(\ccA^\tf) \hookrightarrow \Spec(\ccA),
\]
which we want to understand geometrically. 

\begin{notation}\label{not: closure}
    Let $X$ be a scheme and let $U\subseteq X$ be an open subscheme. We denote by $\cl_X U$ the closure of $U$ in $X$, with its reduced induced structure, and by $\scl_X U$ the schematic closure of $U$ in $X$. If $U$ is reduced, then $\scl_X U = \cl_X U$ by \cite[Lemma 056B]{stacks-project}
\end{notation}

The following result is proven in \cite{Axelsson-Magnusson} in the analytic category, but the proof holds for schemes as well.

\begin{proposition}[See Proposition 2.5 \cite{Axelsson-Magnusson}]\label{prop: AM}
    Let $U\subseteq X$ be a non-empty open such that $\ccA\mid_U$ is torsion free and let $ \pi\colon C=\Spec_X(\ccA)\to X$. Then 
    \[
        \Spec_X(\ccA^\tf) = \cl_{\Spec_X(\ccA) }(\pi^{-1}(U)).
    \]
    Furthermore, if $\pi^{-1}(U)$ is reduced, then 
    \[
        \Spec_X(\ccA^\tf) = \scl_{\Spec_X(\ccA) }(\pi^{-1}(U)).
    \]
\end{proposition}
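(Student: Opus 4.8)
The plan is to reduce to the affine case and then to one module-theoretic identity, after which everything is formal.

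\emph{Reduction.} First I would note that both assertions can be checked after restricting to the members of an open cover of $X$: topological closure commutes with restriction to an open subscheme, the formation of $\ccA^\tf$ does as well (every non-empty open of the integral scheme $X$ is integral with the same function field, and $\ccA\mid_U$ stays torsion-free after further restriction — note $U$ is dense, so it meets every member of the cover), and $\Spec_X(-)$ commutes with base change. Hence I may assume $X=\Spec R$ with $R$ a Noetherian domain and $\ccA=\widetilde A$ for a graded $R$-algebra $A=\bigoplus_{d\ge 0}A_d$ with $A_0=R$ as in \Cref{def: cone}; then $\Spec_X(\ccA)=\Spec A$ as a scheme. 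Since $R$ is a domain, $\tor_R(A)$ is a homogeneous ideal of $A$, so $\ccA^\tf=\widetilde{A/\tor_R(A)}$ is a quotient $\ccO_X$-algebra and $\Spec_X(\ccA^\tf)\hookrightarrow\Spec_X(\ccA)$ is a closed subcone. Writing $j\colon\pi^{-1}(U)\hookrightarrow\Spec_X(\ccA)$ for the open inclusion and $B=\Gamma(\pi^{-1}(U),\ccO_{\pi^{-1}(U)})$, I claim the whole statement reduces to the identity $\tor_R(A)=\ker(A\to B)$.

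\emph{The key identity.} For the inclusion $\tor_R(A)\subseteq\ker(A\to B)$ I would use that $\ccA\mid_U$ is torsion-free: if $ra=0$ with $0\ne r\in R$, then for every prime $\mathfrak p$ of $R$ lying in $U$ the localization $R\hookrightarrow R_{\mathfrak p}$ is injective, so $r\ne 0$ in the domain $R_{\mathfrak p}$, and torsion-freeness of $A_{\mathfrak p}$ over $R_{\mathfrak p}$ forces $a=0$ in $A_{\mathfrak p}$; covering $U$ by basic opens $D(f)\subseteq U$ and checking stalks, this says $a$ restricts to $0$ in each $A_f$, i.e. $a\in\ker(A\to B)$. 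For the reverse inclusion I would use that $U$ is dense: if $a$ restricts to $0$ on $\pi^{-1}(U)$, choose a finite subcover $U=\bigcup_{i=1}^m D(f_i)$ (possible as $X$ is Noetherian), so $f_i^{n_i}a=0$ for suitable $n_i$ and hence $Ja=0$ with $J=(f_1^{n_1},\dots,f_m^{n_m})$; since $V(J)=X\setminus U\subsetneq X$ and $R$ is reduced, $J\ne 0$, so any $0\ne r\in J$ exhibits $a\in\tor_R(A)$.

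\emph{Conclusion, and the main obstacle.} To finish I would observe that $\pi$ is affine, so $\pi^{-1}(U)=\bigcup_{i=1}^m\pi^{-1}(D(f_i))$ is a finite union of basic opens of $\Spec_X(\ccA)$, hence quasi-compact, and $j$ is a quasi-compact immersion; therefore its scheme-theoretic image is the closed subscheme cut out by the quasi-coherent ideal $\ker(\ccO_{\Spec_X(\ccA)}\to j_*\ccO_{\pi^{-1}(U)})$, which over the affine base is the ideal sheaf of $\ker(A\to B)=\tor_R(A)$, i.e. it equals $\Spec_X(\ccA^\tf)$. For a quasi-compact immersion the scheme-theoretic image is exactly the scheme-theoretic closure, so $\scl_{\Spec_X(\ccA)}(\pi^{-1}(U))=\Spec_X(\ccA^\tf)$, which is the second assertion; when $\pi^{-1}(U)$ is reduced this agrees with $\cl_{\Spec_X(\ccA)}(\pi^{-1}(U))$ by \Cref{not: closure}, and in all cases the two sides of the first assertion have the same underlying closed subset $\overline{|\pi^{-1}(U)|}$, which yields the first assertion. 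The step I expect to require the most care is the key identity — propagating the hypothesis ``$\ccA\mid_U$ torsion-free'' to the stalks over $U$ in the inclusion ``$\subseteq$'', and verifying $J\ne 0$ (the input of density of $U$) in ``$\supseteq$''; the passage to the scheme-theoretic statement is routine once $\pi^{-1}(U)$ is known to be quasi-compact, which is where affineness of $\pi$ is used and which matters because $\ccA$ need not be of finite type when the cone is not abelian.
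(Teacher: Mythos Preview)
The paper does not supply its own proof of this proposition; it cites \cite{Axelsson-Magnusson} and remarks that the analytic argument carries over to schemes. Your argument is correct and is the natural scheme-theoretic translation: reduce to the affine case, establish the module-theoretic identity $\ker(A\to B)=\tor_R(A)$ using density of $U$ for ``$\supseteq$'' and torsion-freeness of $\ccA\mid_U$ for ``$\subseteq$'', and then read off the scheme-theoretic closure as $\Spec(A/\tor_R(A))$ via quasi-compactness of $\pi^{-1}(U)$.

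One remark: you actually prove slightly more than the second assertion, namely $\scl_{\Spec_X(\ccA)}(\pi^{-1}(U))=\Spec_X(\ccA^\tf)$ without any reducedness hypothesis on $\pi^{-1}(U)$. Your interpretation of the first assertion as an equality of underlying closed subsets is the correct one, since $\ccA^\tf$ is not assumed reduced and $\cl$ carries the reduced structure by \Cref{not: closure}; indeed the paper only ever invokes the result through the $\scl$ formulation.
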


In general, $\Spec_X(\ccA^\tf)$ may not be irreducible, see \Cref{ex: tf cone reducible}.

\begin{example}\label{ex: tf cone reducible}
    The cone $\Spec_X(\ccA^\tf)$ may not be irreducible. For example, let $R= \bbC[x]$ and let $A= R[Y,Z]/(YZ)$ viewed as a graded $R$-algebra with $Y,Z$ in degree $1$. This is a cone over $\bbA^1 = \Spec(R)$. It is clear that $A$ has no torsion as an $R$-module but $\Spec(A)$ has two irreducible components.
\end{example}

Now we focus on abelian cones. Firstly, we show in \Cref{prop: main component} that if $\ccA= \Symm \ccF$, then $\Spec_X(\ccA^\tf)$ is an irreducible component of $\Spec_X(\ccA)$.

\begin{proposition}\label{prop: main component}
    Let $X$ be an integral Noetherian scheme and let $\ccF$ be a coherent sheaf on $X$. Then $\Spec(\Symm \ccF)^\tf$ is an irreducible component of  $C(\ccF) = \Spec \Symm \ccF$.
\end{proposition}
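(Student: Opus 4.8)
The plan is to show that $\Spec(\Symm\ccF)^\tf$ is both irreducible and a \emph{maximal} irreducible closed subset of $C(\ccF)$, hence an irreducible component. Irreducibility is immediate from \Cref{prop: AM}: choose a non-empty open $U\subseteq X$ over which $\ccF$ (hence $\Symm\ccF$) is torsion-free — for instance the complement of the support of $\tor(\ccF)$ — and note that $\pi^{-1}(U)$ is an open subset of the integral scheme $C(\ccF)\mid_U = \Spec_U(\Symm(\ccF\mid_U))$, which is irreducible because $\ccF\mid_U$ is locally free so $C(\ccF)\mid_U$ is a vector bundle over the integral scheme $U$. By \Cref{prop: AM}, $\Spec(\Symm\ccF)^\tf = \cl_{C(\ccF)}(\pi^{-1}(U))$ is the closure of an irreducible set, so it is irreducible.

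It remains to prove maximality: if $Z$ is an irreducible closed subset of $C(\ccF)$ containing $\Spec(\Symm\ccF)^\tf$, then $Z = \Spec(\Symm\ccF)^\tf$. The key point is that $\pi^{-1}(U)$ is already closed \emph{and} open in $\pi^{-1}(U) = C(\ccF)\mid_U$, and more to the point, that $\Spec(\Symm\ccF)^\tf$ contains the whole fiber $\pi^{-1}(U)$, which is dense in $C(\ccF)\mid_U$; so it suffices to see that $\pi^{-1}(U)$ is dense in $C(\ccF)$, equivalently that no irreducible component of $C(\ccF)$ is contained in $\pi^{-1}(X\setminus U) = C(\ccF)\setminus \pi^{-1}(U)$. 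Here I would use the abelian structure: the zero section $X\hookrightarrow C(\ccF)$ meets every irreducible component of $C(\ccF)$, because $C(\ccF) = \Spec\Symm\ccF$ carries a scaling $\bbG_m$-action contracting everything to the zero section, so each component is $\bbG_m$-stable and its closure contains a point of the zero section; and the zero section, being isomorphic to the integral scheme $X$, meets $\pi^{-1}(U)$. Concretely: let $Z$ be any irreducible closed subset with $\Spec(\Symm\ccF)^\tf\subseteq Z\subseteq C(\ccF)$. Applying the contracting $\bbG_m$-action (explicitly, the closure of the orbit map $\bbG_m\times Z\to C(\ccF)$, using that $Z$ is $\bbG_m$-invariant since it contains the $\bbG_m$-invariant dense subset $\Spec(\Symm\ccF)^\tf$ — or rather, replacing $Z$ by the closure of $\bbG_m\cdot Z$, which is still irreducible and still contains $\Spec(\Symm\ccF)^\tf$), we may assume $Z$ is $\bbG_m$-stable, hence $Z\cap (X\times\{0\})$ is a non-empty closed subset, which is contained in the integral scheme $X$. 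Restricting over $U$: $Z\cap \pi^{-1}(U)$ is then a non-empty open subset of $Z$, hence dense in $Z$ since $Z$ is irreducible; but $Z\cap\pi^{-1}(U)$ is a closed subset of the irreducible scheme $\pi^{-1}(U) = C(\ccF)\mid_U$ containing the dense subset $\Spec(\Symm\ccF)^\tf\cap\pi^{-1}(U) = \pi^{-1}(U)$, so $Z\cap\pi^{-1}(U) = \pi^{-1}(U)$; taking closures in $C(\ccF)$ gives $Z = \cl_{C(\ccF)}(Z\cap\pi^{-1}(U)) = \cl_{C(\ccF)}(\pi^{-1}(U)) = \Spec(\Symm\ccF)^\tf$, as desired.

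The main obstacle I anticipate is making the density/$\bbG_m$-invariance argument clean: one must be careful that $Z\cap \pi^{-1}(U)$ is non-empty — this is exactly where $\bbG_m$-invariance of $Z$ (plus the fact that the zero section of an abelian cone meets every component and lies over all of $X$) is used. An alternative, perhaps slicker, route avoids $\bbG_m$ entirely: since $\Spec(\Symm\ccF)^\tf = \cl_{C(\ccF)}(\pi^{-1}(U))$ and $\pi^{-1}(U)$ is open in $C(\ccF)$, any irreducible closed $Z\supseteq \Spec(\Symm\ccF)^\tf$ has $Z\cap\pi^{-1}(U)$ open in $Z$; if this intersection is non-empty it is dense in $Z$ and equals $\pi^{-1}(U)$ by irreducibility of $C(\ccF)\mid_U$, forcing $Z = \cl(\pi^{-1}(U))$; and $Z\cap\pi^{-1}(U)\neq\emptyset$ because $Z\supseteq\Spec(\Symm\ccF)^\tf\supseteq\pi^{-1}(U)\neq\emptyset$ already — so in fact no $\bbG_m$-action is needed and maximality is essentially formal once irreducibility and the identification with $\cl(\pi^{-1}(U))$ are in hand. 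I would present this streamlined version, using the $\bbG_m$ remark only if a subtlety forces it.
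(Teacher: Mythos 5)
Your overall strategy --- identify $\Spec(\Symm\ccF)^\tf$ with the reduced closure of $\pi^{-1}(U)$ via \Cref{prop: AM}, observe that $\pi^{-1}(U)$ is irreducible, and conclude --- is the same as the paper's. Your ``streamlined'' maximality argument is correct, and you are right that the $\bbG_m$-action is unnecessary; the paper simply takes $Z$ to be the unique irreducible component of $C(\ccF)$ containing the irreducible subset $\pi^{-1}(U)$ and notes that its closure inside $Z$ is all of $Z$, which is logically the same content as your maximality check.

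However, your choice of $U$ has a genuine flaw. You propose $U$ to be the complement of $\supp(\tor(\ccF))$, so that $\ccF\mid_U$ is torsion-free, and you then assert that this makes $\Symm\ccF\mid_U$ torsion-free and $\ccF\mid_U$ locally free. Both inferences fail. The paper's own \Cref{rmk: symm and tf do not commute in general} (with $M = I\oplus I$ for $I=(x,y)\subset\bbC[x,y]$) shows that $\Symm M$ can have torsion even when $M$ is torsion-free; and \Cref{rmk: C(tf) not irreducible in general} shows that for such an $M$ the cone $\Spec\Symm M$ is reducible. So with your $U$, neither the hypothesis of \Cref{prop: AM} (torsion-freeness of $\ccA\mid_U$) nor the irreducibility of $\pi^{-1}(U)$ is guaranteed, and the argument collapses. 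The fix is exactly what the paper does: take $U$ to be a non-empty open over which $\ccF$ is \emph{locally free}, which exists because $\ccF$ is coherent on the integral Noetherian $X$ (e.g.\ the complement of the vanishing of the first non-zero Fitting ideal, as in \Cref{ss:local_construction}). Then $\Symm(\ccF\mid_U)$ is locally free, hence torsion-free, $\pi^{-1}(U)$ is a vector bundle over the integral $U$ and thus integral, and the remainder of your argument goes through unchanged.
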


\begin{proof}
    Let $U\subseteq X$ be a non-empty open such that $\ccF$ is locally free on $U$. Then, for $\pi\colon C(\ccF) \to X$ the projection, we have that $\pi^{-1}(U)$ is a vector bundle over $U$, thus it is integral. Let $Z$ be the unique irreducible component of $C(\ccF)$ containing $\pi^{-1}(U)$. Then
    \[
        \scl_{C(\ccF)}(\pi^{-1}(U))  = \scl_{Z}(\pi^{-1}(U))  = Z,
    \]
    where the first equality  follows from \Cref{lem: closure operation} and the second one is a basic property of the Zariski topology that the closure of an irreducible open in an irreducible space is the whole space. 
\end{proof}

\begin{definition}\label{def: main component abelian cone}
    Let $X$ be an integral Noetherian scheme and let $\ccF$ be a coherent sheaf on $X$. We say that $\Spec(\Symm \ccF)^\tf$ is the \textit{main component} of the abelian cone $C(\ccF) = \Spec \Symm \ccF$. 
\end{definition}

\begin{remark}\label{rmk: main component not equal to C(Ftf)}
    With the assumptions of \Cref{def: main component abelian cone}, it is not true in general that the main component of $C(\ccF)=\Spec\Symm \ccF$ is equal to $C(\ccF^\tf) = \Spec\Symm (\ccF^\tf)$. In fact, $C(\ccF^\tf)$ need not be irreducible (see \Cref{rmk: C(tf) not irreducible in general}).
    The underlying reason for this discrepancy is that $\Symm$ and torsion-free part do not commute in general (see \Cref{rmk: symm and tf do not commute in general}). A particular case where $C(\ccF^\tf)$ is clearly irreducible is if $\ccF^\tf$ is locally free. In that case, 
    \[
        (\Symm \ccF)^{\tf} = \Symm(\ccF^{\tf}).
    \]
    by \Cref{lem: symm commutes with tf if locally free} and so
    \begin{equation}\label{eq: main component equals C(Ftf)}
        \Spec(\Symm \ccF)^\tf = \Spec\Symm (\ccF^\tf).
    \end{equation}
    In particular, the equality \eqref{eq: main component equals C(Ftf)} is true for a diagonal sheaf $\ccF$ by \Cref{rmk: diagonal implies tf lc and tor dim}.
\end{remark}

\begin{example}\label{rmk: C(tf) not irreducible in general}
  
    This is an example of a torsion free sheaf $\ccG$ on an integral Noetherian scheme $X$ such that $\Spec \Symm \ccG$ is not irreducible. Let $X=\Spec(\bbC[x,y])$ be the affine plane, let $I=(x,y)$ be the ideal of the origin $0$ and let $M=I\oplus I$. Then $\Symm(M) \simeq \bbC[x,y,A_1,A_2,B_1,B_2]/(yA_1-xA_2,yB_1-xB_2)$, so $C(M)=\Spec\Symm(M)$ has two irreducible components: one of them is $V(A_2B_1-A_1B_2,yB_1-xB_2,yA_1-xA_2)$, which is the closure of the restriction of $C(M)$ to $X\setminus 0$; and the other one is $V(x,y)$, the fibre of $C(M)$ at $0$. Both components have dimension 4.
\end{example}

\subsection{Abelian cones as a pushout of their main component}\label{subsec: abelian cones as pushout}

We restrict now our study of components of cones to the special case of an abelian cone $C(\ccF)$ with $\ccF^\tf$ locally free. We first show that $\Symm$ and torsion-free part commute in that case (\Cref{lem: symm commutes with tf if locally free}), therefore the main component is $C(\ccF^\tf)$, which is also abelian. We show that $C(\ccF)$ admits a description as a pushout with $C(\ccF^\tf)$ as one of the factors (\Cref{prop: decomposition cone}).
\\

\begin{lemma}\label{lem: symm commutes with tf if locally free}
    Let $X$ be a Noetherian scheme and let $\ccF$ be a coherent sheaf on $X$. If $\ccF^\tf$ is locally free then
    \[
        (\Symm \ccF)^{\tf} = \Symm(\ccF^{\tf}).
    \]
\end{lemma}

\begin{proof}
    We have the following commutative diagram.
    \[
    \begin{tikzcd}
         & 0\arrow{d} & 0 \arrow{d} & 0 \arrow{d} \\
        0\arrow{r} & \tor(\Symm \ccF)\arrow{d}{i'}\arrow{r}{e} & \ker(p)\arrow{d}{i}\arrow{r}{e'} & \ker(p'')\arrow{d}{i''}\arrow{r} & 0\\
        0\arrow{r} & \tor(\Symm \ccF) \arrow{d}{p'}\arrow{r}{f} & \Symm \ccF\arrow{d}{p}\arrow{r}{f'} & (\Symm \ccF)^\tf \arrow{r}\arrow{d}{p''} & 0\\
        0\arrow{r}& \tor(\Symm (\ccF^\tf)) \arrow{d}\arrow{r}{g} & \Symm(\ccF^\tf) \arrow{d}\arrow{r}{g'} & (\Symm (\ccF^\tf))^\tf \arrow{d}\arrow{r} & 0\\
        & 0 & 0 & 0
    \end{tikzcd}
    \]
    The last two rows are clearly exact. Moreover, since $\ccF^\tf$ is locally free, we have that $\tor(\Symm (\ccF^\tf)) = 0$ and $g'$ is an isomorphism. The morphism $i'$ is the identity on $\tor(\Symm\ccF)$. The surjective morphism $p$ comes from applying $\Symm$ to the surjection $\ccF\to \ccF^\tf$, because $\Symm$ preserves surjections. The morphism $p''$ is induced by $p$ using that $\tor(\Symm (\ccF^\tf))=0$. The first row is exact by the Snake Lemma. We want to show that $\ker(p'')=0$ or, equivalently, that $e$ is an isomorphism.

    It follows from the above that we have
    \[
        \begin{tikzcd}
            0\arrow{r} & \tor(\Symm \ccF) \arrow{r}{f\circ i'} & \Symm \ccF \arrow{r}{p} & \Symm(\ccF^\tf)\arrow{r} & 0,
        \end{tikzcd}
    \]
    which is exact except possibly at $\Symm \ccF$. We conclude if we show exactness there. The inclusion $\mathrm{Im}(f\circ i')\subseteq \ker(p)$ is clear because $p\circ f = g\circ p' = 0$. 
    
    To show that $\ker(p)\subseteq \mathrm{Im}(f\circ i')$, we know that
    \[
        \tor(\ccF)\otimes \Symm^{n-1}(\ccF) \to \Symm^n(\ccF)\to \Symm^n(\ccF^\tf)\to 0
    \]
    is exact for all $n\geq 1$ by \cite[Lemma 01CJ]{stacks-project}. Note that $p$ is a morphism of graded algebras, therefore 
    \[
        \ker(p) = \bigoplus_n \ker(\Symm^n(\ccF)\to \Symm^n(\ccF^\tf)).
    \]
    It suffices to show that for each $n$, the morphism $\tor(\ccF)\otimes \Symm^{n-1}(\ccF) \to \Symm^n(\ccF)$ factors through $\tor(\Symm(\ccF))$.
    Locally, $X=\Spec(R)$ and $\ccF=\widetilde{M}$ for some $R$-module $M$. Given $\lambda = \sum_j m_1^j \otimes \ldots \otimes m_n^j\in \tor(M)\otimes \Symm^{n-1}(M)$, we can choose for each $j$ a non-zero divisor $r_j\in R$ such that $r_j m_1^j = 0$. Then $r=r_1\cdots r_j$ is a non-zero divisor and $r\lambda = 0$, so $\lambda\in \tor(\Symm(M))$.
\end{proof}

\begin{remark}\label{rmk: symm and tf do not commute in general}
    Note that \Cref{lem: symm commutes with tf if locally free} does not holds in general if we do not assume that $\ccF^\tf$ is locally free. For example, let $\ccF=\ccI$ be the ideal sheaf of a closed point $P$ on $X$. Then $(\Symm \ccI)^{\tf} = \Symm \ccI = \bigoplus_{n\geq 0} \ccI^n$ if and only if $P$ is regular. Another example is $R=\bbC[x,y]$ and $M=I\oplus I$ for $I=(x,y)$. Indeed, $M$ is torsion-free but $\Symm M$ has torsion because $x(x\otimes y - y\otimes x) = x\otimes (xy) - (xy) \otimes x = y (x\otimes x - x\otimes x) = 0$.
\end{remark}

\begin{lemma}\label{lem: support tor(M) kills tor(M)}
    Let $R$ be a ring, $I$ be an ideal in $R$ and $M$ be an $R$-module. 
    If $M^\tf$ is locally free and $I\cdot \tor(M) = 0$ then $I\cdot \tor(\Symm M) = 0$.
\end{lemma}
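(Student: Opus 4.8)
The plan is to use \Cref{lem: symm commutes with tf if locally free} to identify $\tor(\Symm M)$ with the kernel of $\Symm M\to\Symm(M^\tf)$, then observe that this kernel is the ideal of $\Symm M$ generated by $\tor(M)$, which is annihilated by $I$ as soon as $\tor(M)$ is.

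First I would apply \Cref{lem: symm commutes with tf if locally free}: since $M^\tf$ is locally free, $(\Symm M)^\tf = \Symm(M^\tf)$, and (as is shown in the proof of that lemma) the kernel of the natural surjection $p\colon \Symm M\to \Symm(M^\tf)$, obtained by applying $\Symm$ to $M\to M^\tf$, is exactly $\tor(\Symm M)$. Thus $\tor(\Symm M)=\ker(p)$. This is the one place where local freeness of $M^\tf$, rather than mere torsion-freeness, is genuinely used: if $M^\tf$ were only torsion-free, then $\Symm(M^\tf)$ could acquire torsion, as in \Cref{rmk: symm and tf do not commute in general}, and this identification would fail.

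Next I would make $\ker(p)$ explicit. Since $M^\tf=M/\tor(M)$, we have $\Symm(M^\tf)=\Symm(M)/J$, where $J\subseteq\Symm M$ is the ideal generated by the image of $\tor(M)$ under $\tor(M)\hookrightarrow M=\Symm^1 M$; in each degree $n$ this is the exact sequence $\tor(M)\otimes_R\Symm^{n-1}(M)\to\Symm^n(M)\to\Symm^n(M^\tf)\to 0$ of \cite[Lemma 01CJ]{stacks-project}, already invoked in the proof of \Cref{lem: symm commutes with tf if locally free}. Hence $\tor(\Symm M)=\ker(p)=J$.

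Finally I would check directly that $I\cdot J=0$. An arbitrary element of $J$ is a finite sum $\sum_k f_k t_k$ with $t_k\in\tor(M)$ and $f_k\in\Symm M$, and for $a\in I$ one has $a\sum_k f_k t_k=\sum_k f_k(a t_k)=0$ because $I\cdot\tor(M)=0$ by hypothesis. Therefore $I\cdot\tor(\Symm M)=I\cdot J=0$. (Alternatively, one can argue graded piece by graded piece: $\tor(\Symm M)=\bigoplus_n\tor(\Symm^n M)$, each summand is contained in $\ker(\Symm^n M\to\Symm^n M^\tf)=\mathrm{Im}\bigl(\tor(M)\otimes_R\Symm^{n-1}M\to\Symm^n M\bigr)$ since $\Symm^n(M^\tf)$ is locally free hence torsion-free, and $I$ kills $\tor(M)\otimes_R\Symm^{n-1}M$.) I do not expect a serious obstacle here; the only subtlety worth stressing is that the local freeness hypothesis must be invoked precisely at the application of \Cref{lem: symm commutes with tf if locally free}.
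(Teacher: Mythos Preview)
Your proposal is correct and follows essentially the same approach as the paper: both identify $\tor(\Symm M)$ with the kernel of $\Symm M\to\Symm(M^{\tf})$ via \Cref{lem: symm commutes with tf if locally free}, then use \cite[Lemma 01CJ]{stacks-project} to see that this kernel is, in each degree, the image of $\tor(M)\otimes\Symm^{n-1}M$, which is annihilated by $I$. Your ideal-theoretic packaging (the kernel is the ideal generated by $\tor(M)$) is a clean way to phrase it, and your parenthetical ``alternative'' argument is exactly the paper's proof.
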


\begin{proof} Note that $\tor(\Symm M) = \bigoplus_{n\geq 0} \tor(\Symm^n M)$. In the proof of \Cref{lem: symm commutes with tf if locally free} we show that $\Symm^n(M^\tf) \simeq (\Symm^n M)^\tf$. The following commutative diagram is exact by \cite[Lemma 01CJ]{stacks-project}.
    \[
        \begin{tikzcd}
        & \tor(M)\otimes \Symm^{n-1}(M) \arrow{r}\arrow{d} &  \Symm^n M \arrow{r} \arrow{d} & \Symm^n(M^\tf) \arrow{r}\arrow{d} & 0\\
        0 \arrow{r} & \tor(\Symm^n M) \arrow{r} & \Symm^n M \arrow{r} & \Symm^n(M)^\tf \arrow{r} & 0
        \end{tikzcd}
    \]
    The first row is exact by \cite[Lemma 01CJ]{stacks-project}, and the second row is also exact. By the Snake Lemma, $\tor(M)\otimes \Symm^{n-1}(M)$ surjects onto $\tor(\Symm^n M)$ and the claim follows.
\end{proof}

\begin{lemma}\label{lem: decomposition cone algebraic version}
    Let $R$ be a commutative ring, $A$ be an $R$-algebra and $I$ be an ideal of $R$. If $I\cdot \tor(A) = 0$ and $A^\tf$ is locally free, then the following square is Cartesian in the category of $R$-algebras
    \[
		\begin{tikzcd}
			A \arrow{r}\arrow{d} & A^\tf\arrow{d}\\
			A\otimes R/I \arrow{r} & A^\tf\otimes R/I.
		\end{tikzcd}
	\]
\end{lemma}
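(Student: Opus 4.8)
The plan is to show that the canonical comparison morphism of $R$-algebras
\[
\phi\colon A \longrightarrow P := A^\tf \times_{A^\tf\otimes_R R/I} (A\otimes_R R/I),
\]
supplied by the universal property of the fibre product, is an isomorphism. Since the fibre product in $R$-algebras has underlying $R$-module the fibre product of the underlying $R$-modules, and $\phi$ is already a morphism of $R$-algebras, it suffices to prove that $\phi$ is bijective. I would keep in play throughout the short exact sequence $0\to\tor(A)\to A\xrightarrow{\rho}A^\tf\to 0$, noting at the outset that $A^\tf$, being locally free over $R$ (possibly of infinite rank), is flat over $R$, as flatness can be checked locally.

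For injectivity, I would first observe directly from the description of $P$ that $\ker\phi=\tor(A)\cap IA$, so the task is to show this intersection vanishes. Tensoring the short exact sequence above with $R/I$ and using flatness of $A^\tf$ to retain left-exactness gives a short exact sequence
\[
0\to\tor(A)\otimes_R R/I\to A\otimes_R R/I\to A^\tf\otimes_R R/I\to 0.
\]
The hypothesis $I\cdot\tor(A)=0$ identifies $\tor(A)\otimes_R R/I$ with $\tor(A)$, and under this identification the first arrow is $t\mapsto t+IA$; its injectivity is precisely the statement $\tor(A)\cap IA=0$, so $\ker\phi=0$.

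For surjectivity, I would take $(x,\bar y)\in P$, with $x\in A^\tf$ and $\bar y\in A/IA$ mapping to the same element of $A^\tf/IA^\tf$. Using surjectivity of $\rho$, choose $a\in A$ with $\rho(a)=x$, and choose $y\in A$ lifting $\bar y$. Then $\rho(a-y)\in IA^\tf=\rho(IA)$, hence $a-y\in\rho^{-1}(IA^\tf)=\tor(A)+IA$; writing $a-y=t+c$ with $t\in\tor(A)$ and $c\in IA$, the element $a-t$ satisfies $\rho(a-t)=x$ and $a-t\equiv\bar y\pmod{IA}$, so $\phi(a-t)=(x,\bar y)$.

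The argument has no serious obstacle. The one delicate point is the vanishing $\tor(A)\cap IA=0$ in the injectivity step, which genuinely uses both hypotheses: flatness of $A^\tf$ to keep the sequence left-exact after $\otimes_R R/I$, and $I\cdot\tor(A)=0$ to recognise $\tor(A)\otimes_R R/I$ as $\tor(A)$ itself, and neither can be dropped. The reduction to checking bijectivity and the surjectivity chase are routine, as is the observation that locally free (even of infinite rank) implies flat.
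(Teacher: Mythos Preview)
Your proof is correct and follows essentially the same approach as the paper's: both use flatness of $A^{\tf}$ to obtain $\mathrm{Tor}_1^R(R/I,A^{\tf})=0$ and combine this with $I\cdot\tor(A)=0$ to control the kernel, then finish with an element chase. The paper packages the argument slightly differently, using the Snake Lemma to establish that the induced map $IA\to IA^{\tf}$ is an isomorphism before the final diagram chase, whereas you identify $\ker\phi=\tor(A)\cap IA$ and argue directly; these are equivalent formulations of the same computation.
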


\begin{proof}
 
    We have the following commutative diagram
    \[
    \begin{tikzcd}
        & 0\arrow{d}& 0\arrow{d} & 0\arrow{d}\\
        & 0\arrow{d}\arrow{r}& IA\arrow{d}{i}\arrow{r}{e'} & IA^\tf\arrow{d}{i''}\\
        0\arrow{r} & \tor(A) \arrow{d}{p'}\arrow{r}{f} & A\arrow{d}{p}\arrow{r}{f'} & A^\tf \arrow{r}\arrow{d}{p''} & 0\\
        & \tor(A)\otimes R/I \arrow{d}\arrow{r}{g} & A\otimes R/I \arrow{d}\arrow{r}{g'} & A^\tf \otimes R/I \arrow{d}\arrow{r} & 0\\
        & 0 & 0 & 0
    \end{tikzcd}
    \]
    The three columns are exact because $N\otimes R/I\simeq N/IN$ for any $R$-module $N$ and because $I\cdot \tor(A) = 0$.
   
    Observe that $g$ is injective. This is equivalent to $\mathrm{Tor}_1(R/I,A^\tf) = 0$, which holds because $A^\tf$ is locally free.
    By the Snake Lemma, the natural morphism $e'\colon IA \to IA^\tf$ induced by $f'$ is an isomorphism.

    In order to prove the lemma, one can show that the square in question is a Cartesian square of R-modules and then check that it is also a Cartesian diagram of $R$-algebras. Both can be achieved by routine diagram chasing using the fact that $e'$ is an isomorphism. 
\end{proof}

\begin{proposition}\label{prop: decomposition cone}
    Let $X$ be a Noetherian scheme, $\ccF$ a coherent sheaf on $X$  and let $\pi \colon C(\ccF)=\Spec(\Symm \ccF)\to X$ be the corresponding abelian cone. Let $i\colon Z\hookrightarrow X$ be a closed subscheme in $X$ with ideal sheaf $\ccI_Z$ such that $\ccI_Z \subseteq \Ann(\tor(\ccF))$. If $\ccF^\tf$ is locally free, then the following is a push-out of schemes
    \[
        \Spec\Symm \ccF = \Spec(\Symm \ccF^\tf) \bigsqcup_{\Spec i_\ast(\Symm(\ccF^\tf)\mid_Z)} \Spec i_\ast(\Symm(\ccF)\mid_Z)
    \]
    If, moreover, $X$ is integral, then $\Spec(\Symm \ccF^\tf)$ is an irreducible component of the abelian cone $\Spec \Symm \ccF$.  
\end{proposition}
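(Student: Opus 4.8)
The plan is to reduce to the affine local situation and read everything off from \Cref{lem: decomposition cone algebraic version}. Since a push-out of schemes along closed immersions is constructed by gluing affine push-outs (Ferrand's gluing along a closed subscheme), it suffices to produce the claimed push-out square affine-locally on $X$. So assume $X=\Spec R$ with $R$ Noetherian, $\ccF=\widetilde M$ for a finitely generated $R$-module $M$, and $Z=\Spec(R/I)$ with $I=\Gamma(X,\ccI_Z)\subseteq\Ann(\tor M)$. First I would check the hypotheses of \Cref{lem: decomposition cone algebraic version} with $A=\Symm M$: since $\ccF^\tf$ is locally free, \Cref{lem: symm commutes with tf if locally free} gives $(\Symm M)^\tf=\Symm(M^\tf)$, which is again locally free; and since $I\cdot\tor(M)=0$ with $M^\tf$ locally free, \Cref{lem: support tor(M) kills tor(M)} gives $I\cdot\tor(\Symm M)=0$. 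Hence \Cref{lem: decomposition cone algebraic version} applies and the square
\[
\begin{tikzcd}
\Symm M \ar[r]\ar[d] & \Symm(M^\tf)\ar[d]\\
\Symm M \otimes_R R/I \ar[r] & \Symm(M^\tf)\otimes_R R/I
\end{tikzcd}
\]
is Cartesian in $R$-algebras, i.e.\ $\Symm M \xrightarrow{\ \sim\ } \Symm(M^\tf)\times_{\Symm(M^\tf)\otimes R/I}\bigl(\Symm M\otimes R/I\bigr)$.

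Next I would translate this into geometry. All four maps in the square are surjective: the top one because $\Symm$ preserves the surjection $M\twoheadrightarrow M^\tf$, the two vertical ones because reduction mod $I$ is surjective, and the bottom one by combining these. Applying $\Spec$ therefore turns the square into a commutative square of schemes in which the vertical arrows, and also the arrow $\Spec(\Symm(M^\tf)\otimes R/I)\to\Spec(\Symm M\otimes R/I)$, are closed immersions. Using the standard fact that a push-out of schemes along closed immersions is, affine-locally, the spectrum of the fibre product of the corresponding rings, the Cartesian square above exhibits $\Spec\Symm M$ as $\Spec\Symm(M^\tf)\sqcup_{\Spec(\Symm(M^\tf)\otimes R/I)}\Spec(\Symm M\otimes R/I)$. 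Since $\Symm$ commutes with the base change $R\to R/I$ and $i$ is a closed immersion, $\Symm(M^\tf)\otimes R/I$ and $\Symm M\otimes R/I$ are the sections of $i_\ast(\Symm(\ccF^\tf)\mid_Z)$ and $i_\ast(\Symm(\ccF)\mid_Z)$; these affine-local push-outs glue to the asserted global push-out, again because push-outs along closed immersions are built by gluing affine pieces.

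Finally, suppose $X$ is integral. By \Cref{prop: main component}, $\Spec(\Symm\ccF)^\tf$ is an irreducible component of $C(\ccF)=\Spec\Symm\ccF$; and since $\ccF^\tf$ is locally free it coincides with $\Spec\Symm(\ccF^\tf)$ by the identity \eqref{eq: main component equals C(Ftf)} (equivalently \Cref{lem: symm commutes with tf if locally free}). Hence $\Spec\Symm(\ccF^\tf)$ is an irreducible component of $\Spec\Symm\ccF$, as claimed.

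The step I expect to be the main obstacle is the passage from the Cartesian square of (sheaves of) algebras to a genuine push-out of schemes: push-outs of schemes are not local on the target in general, so one must invoke the existence theorem for push-outs along closed immersions together with its local description via fibre products of rings, rather than naively localising the pushout. All of the purely algebraic content is already isolated in \Cref{lem: decomposition cone algebraic version,lem: support tor(M) kills tor(M),lem: symm commutes with tf if locally free}, so beyond this point the argument is bookkeeping.
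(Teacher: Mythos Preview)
Your proposal is correct and follows essentially the same approach as the paper: reduce to the affine case, verify the hypotheses of \Cref{lem: decomposition cone algebraic version} via \Cref{lem: support tor(M) kills tor(M)} and \Cref{lem: symm commutes with tf if locally free}, and deduce the irreducibility claim from \Cref{prop: main component}. You are more explicit than the paper about the passage from the affine Cartesian square to a global push-out of schemes (invoking Ferrand-type gluing along closed immersions), a point the paper leaves implicit.
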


\begin{proof}
    Locally, $X = \Spec R$ is affine,  $\ccF = \widetilde{M}$ for some finitely presented module $M$ over $R$ such that $M^\tf$ is locally free and $\ccI_Z = I$ is an ideal with $I \subseteq \Ann(\tor(M))$. Let $A = \Symm M$. Then $I\cdot \tor(A) = 0$ by \Cref{lem: support tor(M) kills tor(M)}, and 
    \Cref{lem: symm commutes with tf if locally free} ensures that $A^\tf = \Symm (M^\tf)$ is locally free. The result follows from \Cref{lem: decomposition cone algebraic version}.

    The claim about $\Spec(\Symm \ccF^\tf)$ being irreducible follows from \Cref{prop: main component} and \Cref{lem: symm commutes with tf if locally free}.
\end{proof}

\begin{remark}
Remember that the support $\supp(\ccF)$ of a coherent sheaf $\ccF$ can be defined set-theoretically by locally looking at the prime ideals where the stalk of $\ccF$ is non-zero. A scheme structure on $\supp(\ccF)$ is given by the sheaf $\Ann(\ccF)$. Therefore, the condition $\ccI_Z\subseteq \Ann(\tor(\ccF))$ in \Cref{prop: decomposition cone} implies that the closed $Z$ must contain $\supp(\tor(\ccF))$. 

Another natural scheme structure in $\supp(\ccF)$ is given by $F_0(\ccF)$, the 0-th Fitting ideal of $\ccF$. There is an inclusion $F_0(\ccF)\subseteq \Ann(\ccF)$ by \cite[Lemma 07ZA]{stacks-project}, thus in \Cref{prop: decomposition cone} we can also take the particular case where $\ccI_Z = F_0(\tor(\ccF))$.
\end{remark}

\subsection{A decomposition of the abelian cone of a diagonal sheaf}\label{subsec: decomposing torsion}

We continue our study of components of cones by further specializing to the abelian cone of a diagonal sheaf $\ccF$. The pushout description of $C(\ccF)$ in \Cref{prop: decomposition cone} is improved in \Cref{theorem:cone_as_a_union}: $C(\ccF)$ is topologically a union of vector bundles.\\

Let $\ccF$ be a diagonal sheaf on an integral Noetherian scheme $X$. Remember that $\ccF^\tf$ is locally free by \Cref{prop:morphism-HL-Rossi}.

First we reduce from rank $r$ to rank $0$. By \Cref{prop: decomposition cone} and \Cref{lem: symm commutes with tf if locally free}, we have a decomposition of $C(\ccF)$ as a pushout
\[
C(\ccF)=C(\ccF^{\tf})\bigsqcup_{C(\iota_*\ccF^{\tf}|_{{\mathrm{Supp}(\tor(\ccF))}})} C(i_*\ccF|_{\mathrm{Supp}(\tor(\ccF))})
\]
Here all cones are taken over $X$ and $C(\ccF^\tf)$ is an irreducible component by \Cref{prop: main component}. Replacing $\ccF$ by $i_*\ccF|_{\mathrm{Supp}(\tor(\ccF))}$, we may assyme that $\ccF$ has rank 0.

Let $\ccF$ be a rank 0 diagonal sheaf. Recall that, by \Cref{lemma:filtration_rkzero}, $\ccF$ has a filtration with quotients supported on some effective Cartier divisors $D_i$ for $i=1,\dots,n$. 

Consider the finite collection of closed integral subschemes $\{Z_{i}^j\}_j$, which are the irreducible components of $D_i$ taken with reduced structure. These are in the support of $\ccF$ and we will see in \Cref{lemma:extra_components} that $(\ccF|_{Z^j_{i}})^\tf$ is locally free. Note that these collections are not necessarily disjoint for different $i$'s. We denote the inclusion of $Z^j_{i}$ in $X$ simply by $\iota$, without keeping track of the indices when it is not necessary.

\begin{theorem}\label{theorem:cone_as_a_union}
    Let $\ccF$ be a diagonal sheaf of rank 0 on an integral Noetherian scheme $X$. The cone of $\ccF$ is topologically a union of finitely many irreducible components
    \[ C(\ccF)= \bigcup_{i,j} C\left((\ccF\mid_{Z^j_{i}})^\tf\right)\cup X
    \]
    where each $C\left((\ccF\mid_{Z^j_{i}})^\tf\right)$ a vector bundle supported on the integral subscheme $Z^j_{i}$.
\end{theorem}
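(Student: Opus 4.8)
The plan is to reduce the statement to an explicit computation in local diagonal coordinates and then to match the pieces that appear against the subschemes $Z^j_i$. Throughout we use the local structure theory of \Cref{lemma:filtration_rkzero} together with the fact (this is \Cref{lemma:extra_components}) that $(\ccF\mid_{Z^j_i})^\tf$ is locally free.

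We would begin by checking that each $C\bigl((\ccF\mid_{Z^j_i})^\tf\bigr)$ is an irreducible closed subscheme of $C(\ccF)$ and a vector bundle over $Z^j_i$. Writing $\iota\colon Z^j_i\hookrightarrow X$, the sheaf $\iota^*\ccF=\ccF\mid_{Z^j_i}$ is again diagonal on the integral Noetherian scheme $Z^j_i$, since Fitting ideals commute with pullback and the pullback of a locally principal ideal is locally principal; hence $(\ccF\mid_{Z^j_i})^\tf$ is locally free by \Cref{rmk: diagonal implies tf lc and tor dim}, so $C\bigl((\ccF\mid_{Z^j_i})^\tf\bigr)=\Spec\Symm\bigl((\ccF\mid_{Z^j_i})^\tf\bigr)$ is a vector bundle over $Z^j_i$ by \Cref{lem: cone nonsing iff  vb}, in particular integral. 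Because $\Symm$ commutes with pullback, $C(\ccF\mid_{Z^j_i})=C(\ccF)\times_X Z^j_i$, so $C\bigl((\ccF\mid_{Z^j_i})^\tf\bigr)\hookrightarrow C(\ccF\mid_{Z^j_i})\hookrightarrow C(\ccF)$ is a composite of closed immersions. This already gives the easy inclusion $X\cup\bigcup_{i,j}C\bigl((\ccF\mid_{Z^j_i})^\tf\bigr)\subseteq C(\ccF)$.

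For the reverse inclusion we would work on affine opens $U=\Spec R$ over which $\ccF$ has the square diagonal presentation of \Cref{lemma:filtration_rkzero}, say $\ccF\mid_U\simeq\bigoplus_{m=1}^n R/(g_m)$ with $0\neq g_1\mid\dots\mid g_n$ and, by \eqref{equations_of_fi}, $(g_m)$ equal to the product of local equations of $D_n,D_{n-1},\dots,D_{n-m+1}$; then $C(\ccF)\mid_U=V(g_1t_1,\dots,g_nt_n)\subseteq\bbA^n_U$, where $t_1,\dots,t_n$ are the coordinates coming from the presentation. A point of $\bbA^n_U$ lies in this locus precisely when it is on the zero section or, letting $m_0$ be the least index with $t_{m_0}\neq 0$, the function $g_{m_0}$ vanishes at it (using $g_{m_0}\mid g_m$ for $m\ge m_0$); hence set-theoretically $C(\ccF)\mid_U=U\cup\bigcup_{m_0=1}^n\bigl(V(g_{m_0})\times\bbA^{n-m_0+1}_{t_{m_0},\dots,t_n}\bigr)$. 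Now $V(g_{m_0})=\bigcup_{i\ge n-m_0+1}D_i$ set-theoretically, so each $V(g_{m_0})\times\bbA^{n-m_0+1}$ is the union of the sets $(Z^j_i\cap U)\times\bbA^{n-m_0+1}$ with $i\ge n-m_0+1$; carrying out the same torsion-free-part computation over $Z^j_i$ shows each of these is contained in $C\bigl((\ccF\mid_{Z^j_i})^\tf\bigr)\mid_U$. Therefore $C(\ccF)\mid_U\subseteq U\cup\bigcup_{i,j}C\bigl((\ccF\mid_{Z^j_i})^\tf\bigr)\mid_U$, and gluing over $U$ gives $C(\ccF)=X\cup\bigcup_{i,j}C\bigl((\ccF\mid_{Z^j_i})^\tf\bigr)$.

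Finally, since $X$ is Noetherian each effective Cartier divisor $D_i$ has finitely many irreducible components and $i$ runs over $\{1,\dots,\mrk(\ccF)\}$, so $C(\ccF)$ is realised as a finite union of the irreducible closed sets $X$ and $C\bigl((\ccF\mid_{Z^j_i})^\tf\bigr)$; a finite union of irreducible closed sets has only finitely many irreducible components, each equal to one of the listed sets. We expect the main obstacle to be the bookkeeping in the third paragraph: reconciling the combinatorial pieces coming from the local Smith form with the intrinsically defined divisors $D_i$, in particular keeping track of components $Z^j_i$ of $D_i$ that lie inside some $D_{i'}$ with $i'>i$, so that the generic rank of $\ccF\mid_{Z^j_i}$ (hence the rank of the vector bundle $C\bigl((\ccF\mid_{Z^j_i})^\tf\bigr)$) exceeds $i$, and checking that the local descriptions patch with the globally defined cones.
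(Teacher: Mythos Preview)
Your proof is correct and takes essentially the same approach as the paper: both rely on the local diagonal presentation from \Cref{lemma:filtration_rkzero} together with \Cref{lemma:extra_components}, and both verify the set-theoretic equality by a local computation. The only difference is organizational---the paper argues fiberwise (for a closed point $v\in C(\ccF)$ over $x\in X$, choose the \emph{maximal} $i$ with $x\in Z^j_i$ and observe that $(\ccF|_{Z^j_i})^\tf|_x=\ccF|_x$ since $\supp(\tor(\ccF|_{Z^j_i}))\subset\bigcup_{k>i,\ell}Z^\ell_k$), while you decompose $C(\ccF)|_U$ explicitly as $U\cup\bigcup_{m_0}V(g_{m_0})\times\bbA^{n-m_0+1}$ via the \emph{minimal} nonzero coordinate; these are dual bookkeeping devices for the same computation.
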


\begin{lemma}\label{lemma:extra_components}
    With the previous notations and assumptions, the cone    \[C_{Z^j_{i}}\left((\ccF\mid_{Z^j_{i}})^\tf\right) \to Z^j_{i}\] 
    is a vector bundle of rank $r^j_{i}$, where  \[
    r^j_{i}=\max_k\{Z^j_{i}\subset D_k\}.
    \]
\end{lemma}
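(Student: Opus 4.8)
The plan is to deduce the lemma from two facts already in hand: the torsion-free part of a diagonal sheaf is locally free (\Cref{rmk: diagonal implies tf lc and tor dim}, via Lipman's \Cref{thm: Lipman}), and the local rank of a coherent sheaf is detected by its Fitting ideals. Write $\iota\colon Z^j_{i}\hookrightarrow X$ for the inclusion. Since $\ccF$ is diagonal, $\iota^{*}\ccF$ is again diagonal by \Cref{remark:diagonalization_base_changes}, so $(\ccF\mid_{Z^j_{i}})^{\tf}=(\iota^{*}\ccF)^{\tf}$ is locally free on $Z^j_{i}$; as $Z^j_{i}$ is integral, it has constant rank, equal to the generic rank of $\iota^{*}\ccF$. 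By \Cref{lem: cone nonsing iff  vb}, $C_{Z^j_{i}}\!\big((\ccF\mid_{Z^j_{i}})^{\tf}\big)\to Z^j_{i}$ is then a vector bundle of that rank, so it only remains to identify the generic rank of $\iota^{*}\ccF$ with $r^j_{i}=\max_{k}\{Z^j_{i}\subseteq D_{k}\}$.

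For this, let $\eta$ be the generic point of $Z^j_{i}$, so that the generic rank of $\iota^{*}\ccF$ equals $\rk\!\big(\ccF\otimes_{\ccO_X}k(\eta)\big)$. Recall (see the discussion of Fitting ideals in \Cref{subsec:Fitting_ideals} and \Cref{prop:Fitting_ideals_rank_connection}) that, since the Fitting ideals $F_{-1}(\ccF)\subseteq F_{0}(\ccF)\subseteq\cdots$ form an increasing chain, the rank of $\ccF$ at a point $\mathfrak p$ is the least integer $m\ge 0$ with $\mathfrak p\notin V(F_{m}(\ccF))$. On an affine chart over which $\ccF$ admits a diagonal presentation as in \Cref{def:diagonalizable} --- such charts cover $X$ because $\ccF$ is diagonal --- identity \eqref{equations_of_fi} shows that $F_{m}(\ccF)$ equals, up to units, the product $\prod_{k>m}D_{k}^{\,k-m}$, hence $V(F_{m}(\ccF))=\bigcup_{k>m}D_{k}$ as closed subsets of the chart; gluing, this identity of closed subsets holds on all of $X$. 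Therefore $\eta\notin V(F_{m}(\ccF))$ if and only if $Z^j_{i}\not\subseteq D_{k}$ for every $k>m$, and the least such $m$ is exactly $\max_{k}\{Z^j_{i}\subseteq D_{k}\}=r^j_{i}$. This set is non-empty because $Z^j_{i}$ is an irreducible component of $D_{i}$, so $r^j_{i}\ge i\ge 1$; this computes the generic rank and proves the lemma.

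I do not expect a genuine obstacle here: the statement is essentially bookkeeping once diagonality is known to be preserved under pullback. The one point calling for a little care is deducing a \emph{global} consequence from the locally derived identity \eqref{equations_of_fi}; it is cleanest to retain only its set-theoretic shadow $V(F_{m}(\ccF))=\bigcup_{k>m}D_{k}$, which is a local statement on $X$ and follows from the local diagonal form, where $F_{m}(\ccF)$ is generated by products of the ratios of consecutive diagonal entries that define the divisors $D_{k}$ with $k>m$.
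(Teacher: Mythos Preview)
Your proof is correct and follows essentially the same route as the paper's: both rest on the local diagonal presentation and on identifying the rank of $(\ccF\mid_{Z^j_i})^{\tf}$ by determining which diagonal entries vanish on $Z^j_i$. The paper reads this off by explicitly restricting the diagonal matrix to $Z^j_i$ and splitting off the free summand, whereas you package the same computation through Fitting ideals and identity \eqref{equations_of_fi}; the arguments are equivalent.
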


\begin{proof}
    Since $\tor(\ccF|_U)=\tor(\ccF)|_U$ for $U\subset X$ open, it is enough to prove it locally. We assume that $\ccF$ is the cokernel of a diagonal matrix 
    \[\Diag(f_1,\ldots,f_1, f_2,\ldots, f_2,\ldots, f_s),
    \]
    where $f_k$ divides $f_{k+1}$.
    Observe that, if $f_k\mid_{Z^j_{i}}=0$, then $f_\ell\mid_{Z^j_{i}}$ also vanishes for all $\ell>k$. Take $r^j_{i}$ as in the statement of the theorem: $Z^j_{i}$ is a component of $D_{r^j_{i}}$ and the latter divides the last ${r^j_{i}}$ entries.

    Then the matrix presentation of $\ccF$ on $Z^j_{i}$ looks like $\Diag(f_1\mid_{Z^j_{i}},\ldots,f_t\mid_{Z^j_{i}},0,\ldots,0)$ where $f_t\mid_{Z^j_{i}}\neq 0$. Since $Z^j_{i}$ is not, by assumption, a component of $Z(f_t)$ we see that the cokernel of $\Diag(f_1\mid_{Z^j_{i}},\ldots,f_t\mid_{Z^j_{i}})$ is a torsion sheaf and the torsion-free part of $\ccF\mid_{Z^j_{i}}$ is locally free of rank $r^j_{i}$.
\end{proof}
\begin{proof}[Proof of \Cref{theorem:cone_as_a_union}]
To check the claim set-theoretically, it suffices to argue that any closed point of $C(\ccF)$ is contained in at least one of the cones. Let $v\in C(\ccF)$, the projection to $X$ is $x\in X$. Then $v$ is specified by some section $x\to \ccF|_x$. If $x\notin \bigcup_{i,j}Z^j_{i}$, $\ccF|_x=0$, so we are done. Otherwise, we need to argue that $\ccF|_{x}\cong ((\ccF|_{Z^j_{i}})^{\tf})|_x$ for some $i,j$.

Let $i$ be such that $x\in Z^j_{i}$ for some $j$ but $x\notin Z^\ell_{k}$ for all $k>i$ and all $\ell$. Then $x\in D_i$ but $x\notin D_k$ for any $k>i$.

By the construction of the $D_i$'s, we know that $\supp(\tor(\ccF|_{Z^j_{i}}))\subset \bigcup_{k>i,\ell}Z^\ell_{k}$. Then $(\ccF_{Z^j_{i}}^{\tf})|_x=\ccF|_x$, and we are done.

The morphism $\bigcup_{i,j}C((\ccF|_{Z_i^j})^{\tf})\to C(\ccF)$ of topological spaces, given by the universal property of push-outs, is continuous and closed for the Zariski topology. Since we have just checked that it is also bijective, it is a homeomorphism.
\end{proof}

\begin{example}\label{point in plane} 
    The following example of blowing up the origin in $\bbA^2$ is simple, but it captures much of the essence of the decomposition in \Cref{prop: decomposition cone} (see \eqref{eq:component,Bl,orign,1} and \eqref{eq:component,Bl,orign,2}). We present it with full detail.

    Let $R=k[x,y]$ and let $I=(x,y)$ be the ideal of the origin. The sheaf $\ccF = \widetilde{I}$ on $\bbA^2 = \Spec(R)$ is torsion-free but not locally free. Since $\ccF$ is an ideal sheaf, $\Bl_{\ccF} \bbA^2 = \Bl_0 \bbA^2$ is just the usual blow up of $\bbA^2$ along the origin. Let $p\colon \Bl_0\bbA^2 \to \bbA^2$ be the natural projection. Then $p^*\ccF$ is not torsion-free, but $(p^*\ccF)^{\tf}$ is a line bundle. 
    To see that, we start with the following resolution of $\ccF$. 
    \[
        \begin{tikzcd}[ampersand replacement=\&]
            0\arrow{r} \& R\arrow{r}{
            \begin{pmatrix}
                -y \\
                x
            \end{pmatrix}
        } \& R\oplus R \arrow{r}{
            \begin{pmatrix}
                x & y
            \end{pmatrix}
            } \& I\arrow{r} \& 0.
        \end{tikzcd}
    \]
    Pulling back along $p$, we obtain a presentation of $p^*\ccF$:
    \[
        \begin{tikzcd}[ampersand replacement=\&]
             \ccO_{\Bl_0\bbA^2}\arrow{r}{
            \begin{pmatrix}
                -ey' \\
                ex'
            \end{pmatrix}
            } \& \ccO_{\Bl_0\bbA^2}\oplus \ccO_{\Bl_0\bbA^2} \arrow{r}{
            \begin{pmatrix}
                ex' & ey'
            \end{pmatrix}
            } \& p^*\ccF\arrow{r} \& 0.
        \end{tikzcd}
    \]
    Here $e$ is a local coordinate for the exceptional divisor $E\subseteq \Bl_0 \bbA^2$ and $x'$ and $y'$ correspond to the strict transforms of $x$ and $y$. This induces a commutative diagram
    \[
        \begin{tikzcd}[ampersand replacement=\&]
            \& \& 0\arrow{d} \arrow{r} \& \Coker(e)\arrow{d}{id} \\
            0\arrow{r} \& \ccO_{\Bl_0\bbA^2} \arrow{r}{\cdot e}\arrow{d}{\begin{pmatrix}
                -ey' \\
                ex'
            \end{pmatrix}} \& \ccO_{\Bl_0\bbA^2}(E)\arrow{r}\arrow{d}{\begin{pmatrix}
                y' \\
                -x'
            \end{pmatrix}} \& \Coker(e)\arrow{r}\arrow{d} \& 0\\
            0\arrow{r} \& \ccO_{\Bl_0\bbA^2}\oplus \ccO_{\Bl_0\bbA^2} \arrow{r}{\id}\arrow{d}{\begin{pmatrix}
                    x'e & y'e
            \end{pmatrix}} \& \ccO_{\Bl_0\bbA^2}\oplus \ccO_{\Bl_0\bbA^2}\arrow{r}\arrow{d} \& 0\arrow{r}\arrow{d} \& 0\\
            \& p^* \ccF \arrow{r} \& \Coker(y',-x')^t\arrow{r} \& 0
        \end{tikzcd}
    \]
    Applying the Snake Lemma and using that $\Coker(e) \simeq \ccO_E(E)$ and that $\Coker(y',-x')$ is the ideal sheaf generated by $x'$ and $y'$, we get a short exact sequence
    \[
        0\to \ccO_E(E) \to p^*\ccF\to (x',y')\to 0.
    \]
    It follows that $\tor(p^*\ccF) \simeq \ccO_E(E)$ and $(p^*\ccF)^\tf \simeq (x',y')$. In particular, $p^\ast\ccF$ is not torsion-free.

    We can also describe the geometry of the abelian cones $\Spec \Symm \ccF$ and $\Spec \Symm (p^\ast \ccF)$. We have 
    \[
        \pi_\ccF\colon \Spec \Symm \ccF = \Spec (R[X,Y]/(xY-yX)) \to \bbA^2,
    \]
    which is irreducible and singular. 

    Next we describe $\Spec \Symm (p^\ast \ccF)$. Let $S = k[x,y,x',y']/(xy'-yx')$ where the variables $x',y'$ have degree 1 and $x,y$ have degree 0. Then
    \[
        \Bl_0 \bbA^2 = \Proj(k[x,y,x',y']/(xy'-yx')).
    \]
     A local equation for $E$ is given by $e=x/x'$ or $e=y/y'$, depending on the chosen chart. Then 
     \[
        \pi_{p^*\ccF}\colon \Spec \Symm (p^\ast \ccF) = \Spec S[X,Y]/(e(x'Y-y'X)) \to \Bl_0 \bbA^2
    \]
    is reducible. It has two components:
    \begin{align}
    \label{eq:component,Bl,orign,1}    \pi_{p^*\ccF,{\rm main}}\colon C_{\rm main} &= V(x'Y-y'X) \to \Bl_0 \bbA^2,\\
    \label{eq:component,Bl,orign,2}    \pi_{p^*\ccF,{\rm tor}}\colon C_{\rm tor} &= V(e) \to \Bl_0 \bbA^2.
    \end{align}
    The main component $C_{\rm main}$ equals $\Spec\Symm (p^\ast\ccF)^\tf$ and it is a vector bundle of rank 1. Meanwhile, $C_{\rm tor}$ corresponds to $\tor(\ccF)$, it is supported over $E$ and it is a vector bundle of rank 2 over its support.
\end{example}

\section{Application to stable maps}\label{section:application-stable-maps}

In this section we apply the results in \Cref{sec:desing_stacks} to construct reduced Gromov--Witten invariants. 

Given $X$ a smooth subvariety in a projective space $\bbP^r$, there is an embedding of the moduli space of stable maps to $X$ in the moduli space of stable maps to $\bbP^r$. The mo\-du\-li space of genus \emph{zero} stable maps to a projective space $\bbP^r$ is a smooth irreducible DM stack. If $X$ is a hypersurface of degree $k$ (or more generally a complete intersection) in $\bbP^r$, there is a locally free sheaf $\ccE_k$ on the moduli space of stable maps to $\bbP^r$, such that  the moduli space of maps to $X$ is cut out by the zero locus of a section of this sheaf. These statements are not true in higher genus. In general, the moduli space of stable maps to $\bbP^r$ has several irreducible components of different dimensions. We still have a natural sheaf $\ccE_k$ equipped with a section, but $\ccE_k$ is not locally free: its rank is different on different irreducible components. 

There are several ways to use \Cref{sec:desing_stacks} to fix the above problem (see \Cref{rem: various blowups}). In this section we are concerned with finding and comparing various blow-ups the Picard stack along certain sheaves, which fix the above problem. More precisely, we consider $\wPic\to\Pic$, such that $\tGw:=\Gw\times_{\Pic}\wPic$ desingularizes $\ccE_k$. 

Under the assumption $d>2g-2$ (see Assumption \ref{assum:d,big,2g,minus2}), we define $\tmGwx$ via the following Cartesian diagram
\[
\begin{tikzcd}
\tmGwx \ar[r] \ar[d]\arrow[dr, phantom,"\ulcorner", very near start]&\tmGw
\ar[d] \\
\Gwx\ar[r]&\Gw,
\end{tikzcd}
\]
where $\tmGw$ is the main component of the cone $\tGw$ (see \Cref{main comp blow up}). We then define reduced invariants (see Definition \ref{def:reduced_GW_invariants}) via an obstruction theory on $\tmGw$ relative to $\wPic_k$ (see Theorem \ref{th-blow-up-maps}). 

We also recall maps with fields \cite{Chang-Li-maps-with-fields} and then we construct a blow-up of it which makes the resulting stack as simple as possible. The resulting stack gives an alternative definition of reduced invariants, which is not intrinsic; the relation between these two invariants is similar in spirit to a Quantum Lefschetz theorem. The definition we give is more intrinsic, but working with maps with fields instead of maps is more suited to approaching \Cref{conj one} and \Cref{conj gv}. See \cite{Chang-Li-hyperplane-property, Lee-Oh-reduced-complete-intersections-2,Lee-Oh-reduced-complete-intersections} for the proof of \Cref{conj gv} in genus one and two.

\subsection{Stable maps as open in an abelian cone}\label{subsec:stable_maps}

We recall how the moduli space of stable maps to projective space can be seen as an open substack of an abelian cone, following \cite{Chang-Li-maps-with-fields}. This observation motivates our study of components of cones in \Cref{sec:components_abelian_cones}, as components of the ambient abelian cone are related to components of stable maps.\\

Let $\ffM_{g,n}$ denote the stack of genus $g$ pre-stable curves with $n$ marked points, that is, $\ffM_{g,n}$ parametrizes connected projective at-worst-nodal curves of arithmetic genus $g$ with $n$ distinct smooth marked points. Let $\uC_{g,n}$ denote universal curve over $\ffM_{g,n}$.
Let $\Pic_{g,n,d}$  denote the Artin stack which parameterises genus $g$ pre-stable curves, with $n$ marked points, together with a line bundle of degree $d$. Let $\Pic_{g,n,d}^{\rm st}$ denote the open subset of $\Pic_{g,n,d}$ consisting of $(C, p_1\ldots p_n, L)$ which satisfy the stability condition 
\begin{equation}\label{eq: stability condition}    L^{\otimes3}\otimes\omega_C\left(\sum_{i=1}^n p_i\right) \mbox{ is ample.}
\end{equation}
 Notice that $\ffM_{g,n}$ and $\Pic_{g,n,d}$ are not separated, but they are smooth (see \cite[Lemma 0E6W]{stacks-project} and \cite[Proposition 2.11]{C-FKM}) and irreducible. The stack $\Pic_{g,n,d}$ is locally Noetherian and the stack $\Pic_{g,n,d}^{\rm st}$ is Noetherian.

 \begin{notation}\label{dropindex}
 From now on, we fix $g,n,d$ and the stability condition and we drop all the indices.
 \end{notation}
 
We define $\ffC$ the universal curve over $\Pic$ by the Cartesian diagram \eqref{eq: univ curve Pic}. Notice that we also have a universal line bundle $\ffL$ over $\ffC$. 
\begin{equation}\label{eq: univ curve Pic}
\begin{tikzcd}
\ffL \arrow[d]&\\
\ffC \arrow[d, "\pi"] \arrow[r] \arrow[dr, phantom,"\ulcorner", very near start] 
& \uC \arrow[d] \\
\Pic \arrow[r] & \ffM\text{.}
\end{tikzcd}
\end{equation}

We form the cone of sections of $\ffL$ as in Chang-Li (\cite{Chang-Li-hyperplane-property}, 
Section 2)
\begin{equation}\label{eq:CL,cone,defi}
S(\pi_* \ffL)
:=\Spec\Symm (R^1 \pi_*(\ffL^* \otimes \omega_{\ffC/\Pic}))\to \Pic.
\end{equation}
In the following we collect a list of remarks on the cone of sections defined above.

\begin{enumerate}
    \item In \cite[Proposition 2.2]{Chang-Li-maps-with-fields}, the authors show that $S(\pi_*\ffL)$ is the moduli stack parameterizing $(C,L,s)$ with $(C,L)\in \Pic$ and ${s}\in H^0(C,L)$. Be aware that our $S(\pi_*\ffL)$ is denoted by $C(\pi_*\ffL)$ in \cite{Chang-Li-maps-with-fields}.

    \item This situation is similar to the discussion in \Cref{def ab cones} about the total space of a locally free sheaf. If $\ffE$ is a locally free sheaf over $\Pic$, then sections of $\ffE$ correspond to sections of the vector bundle $\Tot(\ffE) = \Spec \Symm (\ffE^*)$ over $\Pic$, but the same is not true if $\ffE$ is not locally free.

    \item In our set-up, the sheaf $R^0\pi_*\ffL$ is not locally free. However, since we work with the universal family of curves $\ffC\to \Pic$, sections of the sheaf $R^0\pi_*\ffL$ correspond to sections of the abelian cone of its \emph{Serre dual} $R^1\pi_*(\ffL^* \otimes \omega_{\ffC/\Pic})$. 
    This is proven in  \cite[Proposition 2.2]{Chang-Li-maps-with-fields}.

    \item Note that $R^0\pi_\ast \ffL$ does not commute with base change but $R^1\pi_\ast \ffL$ does by cohomology and base change.
\end{enumerate}

For the rest of the section, let
\begin{align}\label{eq:F_sheaf}
\ffF:=R^1\pi_*(\ffL^*\otimes \omega_{\ffC/\Pic}).
\end{align}
Note that since $\pi$ is proper, we have that $\ffF$ is a coherent sheaf on $\Pic$. As defined in \Cref{def ab cones}, we consider the stack $\Spec \Symm \ffF$, which is an abelian cone stack over $\Pic$.

Let $\Gw$ be the moduli space of genus $g$, degree $d$ stable maps, with $n$ marked points. 

\begin{proposition}(\cite[Proposition 2.7]{Chang-Li-maps-with-fields}, \cite[Theorem 3.2.1]{C-FK})\label{prop: open embedding stable maps}
    The moduli space $\Gw$ is an open substack, cut out by the basepoint-free condition, of the stack
    \begin{align}\label{eq:cone,moduli}
         S(\pi_*\ffL^{\oplus r+1})=\Spec\Symm(\oplus_{i=0}^r \ffF)\to \Pic.
    \end{align}
\end{proposition}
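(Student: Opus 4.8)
The plan is to identify $\Gw$ with the open substack of $S(\pi_*\ffL^{\oplus r+1})$ on which the universal sections generate the universal line bundle, using the universal property of $\bbP^r$. The one input I take for granted is the functorial description of the basic cone $S(\pi_*\ffL)$ recalled above, namely \cite[Proposition 2.2]{Chang-Li-maps-with-fields}: for a $\Pic$-scheme $T$, with $\ffC_T\to T$ and $\ffL_T$ the pullbacks of the universal curve and line bundle, the set $S(\pi_*\ffL)(T)$ is naturally identified with $H^0(\ffC_T,\ffL_T)$. This is the only step where base change must be handled with care, since $R^0\pi_*\ffL$ is not compatible with it; the cone is built from the Serre-dual sheaf $\ffF = R^1\pi_*(\ffL^\vee\otimes\omega_{\ffC/\Pic})$ precisely so that $S(\pi_*\ffL)(T)=\Hom_{\ccO_T}(\ffF_T,\ccO_T)=\ffF_T^\vee$ computes sections fibrewise, via relative Serre duality together with cohomology-and-base-change in top degree.

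Granting this, I would first deduce the functor of points of $S(\pi_*\ffL^{\oplus r+1})=\Spec\Symm(\oplus_{i=0}^r\ffF)$. Since $\Hom_{\ccO_T\text{-alg}}(\Symm(\ffF_T^{\oplus r+1}),\ccO_T)=\Hom_{\ccO_T}(\ffF_T^{\oplus r+1},\ccO_T)=(\ffF_T^\vee)^{\oplus r+1}$, a $T$-point of $S(\pi_*\ffL^{\oplus r+1})$ is the datum of $(\ffC_T\to T,\ffL_T)\in\Pic(T)$ together with an $(r+1)$-tuple $(s_0,\dots,s_r)$ of sections of $\ffL_T$. Next I would carve out the open substack $\mathcal{U}$: the common vanishing locus $V(s_0,\dots,s_r)$ of the universal sections on the universal curve over $S(\pi_*\ffL^{\oplus r+1})$ is closed, its image under the (proper) projection of the universal curve is closed, and $\mathcal{U}$ is the open complement of that image.

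It then remains to construct mutually inverse morphisms between $\mathcal{U}$ and $\Gw$ over $\Pic$. Over $\mathcal{U}$ the universal sections generate $\ffL$, so by the universal property of $\bbP^r$ they define a morphism $f\colon\ffC_{\mathcal{U}}\to\bbP^r$ with $f^*\ccO(1)\simeq\ffL_{\mathcal{U}}$ and $f^*x_i=s_i$, a family of genus $g$, degree $d$, $n$-pointed prestable maps. The stability condition \eqref{eq: stability condition} built into the definition of $\Pic$ (see \Cref{dropindex}), that $\ffL^{\otimes3}\otimes\omega_{\ffC/\Pic}(\sum p_i)$ is relatively ample, is exactly the stable-map condition for $f$ because $f^*\ccO(3)\simeq\ffL^{\otimes3}$; hence $f$ is a family of stable maps and defines a morphism $\mathcal{U}\to\Gw$. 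Conversely a family of stable maps $f\colon\ffC_T\to\bbP^r$ yields a $T$-point of $\mathcal{U}$ by setting $\ffL_T=f^*\ccO(1)$, $s_i=f^*x_i$: the $s_i$ generate $\ffL_T$ since $f$ is a morphism, and the ampleness condition in the definition of stable maps places $(\ffC_T,\ffL_T)$ in $\Pic(T)$. By the universal property of $\bbP^r$ these two assignments are inverse to one another, giving an isomorphism $\Gw\xrightarrow{\sim}\mathcal{U}$ over $\Pic$, which also recovers \cite[Theorem 3.2.1]{C-FK}.

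The main obstacle, as indicated, is the base-change subtlety underlying the functor of points of $S(\pi_*\ffL)$, which I import wholesale from \cite{Chang-Li-maps-with-fields}; once that is in hand, the argument is formal — the universal property of projective space, the coincidence of the two stability conditions, and properness of $\pi$ for the openness of the basepoint-free locus.
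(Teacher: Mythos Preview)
Your proposal is correct and follows exactly the approach of the cited references; the paper itself gives no proof and simply defers to \cite[Proposition 2.7]{Chang-Li-maps-with-fields} and \cite[Theorem 3.2.1]{C-FK}. Your identification of the one genuine technical input---the functor of points of $S(\pi_*\ffL)$ via Serre duality and cohomology-and-base-change in top degree---is precisely what those references supply, and the remainder (universal property of $\bbP^r$, matching of stability conditions, openness of the basepoint-free locus via properness of $\pi$) is, as you say, formal.
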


As before, a point of this cone over $(C,L)\in \Pic$ is $(C,L,\underline{s})$ with $\underline{s}\in H^0(C,L)^{\oplus r+1}$. Note that 
\[
S(\pi_*\ffL^{\oplus r+1})= \overbrace{S(\pi_*\ffL)\times_\Pic  \cdots \times_\Pic S(\pi_*\ffL)}^{r+1 \rm\ times} \to \Pic.
\]

We define $\ccL, \ccC$ by the following Cartesian diagram
\[
\begin{tikzcd}
\ccL \arrow[d]\arrow[r]\arrow[dr, phantom,"\ulcorner", very near start]&\ffL\arrow[d]\\
\ccC \arrow[d, "\opi"] \arrow[r] \arrow[dr, phantom,"\ulcorner", very near start] 
& \ffC \arrow[d, "\pi"] \\
S(\pi_*\ffL^{\oplus r+1}) \arrow[r,"\mu"] & \Pic\text{.}
\end{tikzcd}
\]
By \cite{Chang-Li-maps-with-fields}, the complex $$\oplus_{i=0}^rR^{\bullet}\opi_*\ccL$$ is a dual obstruction theory for the natural projection $$\mu:S(\pi_*\ffL^{\oplus r+1})=\Spec\Symm (\oplus_{i=0}^r\ffF)\to\Pic.$$

This perfect obstruction theory induces a virtual fundamental class 
\[[\Gw]^\vir:=\mu^![\Pic]\in A_*(\Gw).
\]

\begin{proposition}\label{dual-cone} 
We use \Cref{dropindex}. For $\ffF$ defined in \cref{eq:F_sheaf}, we have an isomorphism of sheaves $$
\ffF^{*}:=\left(R^1\pi_*\ffL^{*}\otimes\omega_{\ffC/\Pic}\right)^*\simeq \pi_*\ffL$$ over $\Pic$.
\end{proposition}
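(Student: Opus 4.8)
The plan is to deduce the isomorphism from the moduli-theoretic description of the abelian cone $S(\pi_*\ffL)=\Spec\Symm\ffF$ together with the identification of sections of an abelian cone in \Cref{lem: sections and functionals}. Recall that (for a scheme, and after passing to a smooth atlas also for the stack $\Pic$) the sheaf of sections $\Sect(C(\ffF))$ of the projection $C(\ffF)=\Spec\Symm\ffF\to\Pic$ is canonically isomorphic to $\ffF^\vee$ by \Cref{lem: sections and functionals}. Since $C(\ffF)=\Spec\Symm\ffF=S(\pi_*\ffL)$, it therefore suffices to produce a canonical isomorphism of $\ccO_\Pic$-modules $\Sect(S(\pi_*\ffL))\simeq \pi_*\ffL$.

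To do this I would use the moduli interpretation of $S(\pi_*\ffL)$ established in \cite[Proposition 2.2]{Chang-Li-maps-with-fields}: the cone $S(\pi_*\ffL)$ represents the functor sending a $\Pic$-scheme $T$, with associated family $(\ffC_T,\ffL_T)\to T$, to the set $H^0(\ffC_T,\ffL_T)$. Given an open substack $U\subseteq\Pic$, a section over $U$ of the projection $S(\pi_*\ffL)\to\Pic$ is a morphism $U\to S(\pi_*\ffL)$ over $\Pic$, which by this description is precisely an element of $H^0(\ffC_U,\ffL_U)=\Gamma(U,(\pi_*\ffL)|_U)$, where $\ffC_U=\ffC\times_\Pic U$ and where we used flat base change along the open immersion $U\hookrightarrow\Pic$ to identify $(\pi|_{\ffC_U})_*\ffL_U$ with $(\pi_*\ffL)|_U$. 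These identifications are compatible with restriction along inclusions of opens, hence assemble into the desired isomorphism of sheaves $\Sect(S(\pi_*\ffL))\simeq\pi_*\ffL$. Combined with the previous paragraph, this gives $\ffF^\vee\simeq\pi_*\ffL$ on $\Pic$.

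Conceptually, this is nothing but relative Grothendieck--Serre duality for the flat family of at-worst-nodal (hence Gorenstein) curves $\pi\colon\ffC\to\Pic$, with relative dualizing sheaf $\omega_{\ffC/\Pic}$: applying duality to the line bundle $\ffL^\vee\otimes\omega_{\ffC/\Pic}$ yields $R\pi_*\ffL[1]\simeq R\mathcal{H}om_{\Pic}\big(R\pi_*(\ffL^\vee\otimes\omega_{\ffC/\Pic}),\ccO_\Pic\big)$, and taking $\mathcal{H}^{-1}$ of both sides recovers $\pi_*\ffL\simeq\ffF^\vee$ directly: since $\pi$ has relative dimension $1$ both complexes are concentrated in degrees $[0,1]$, so on the left $\mathcal{H}^{-1}$ is $R^0\pi_*\ffL=\pi_*\ffL$, and on the right the local-$\mathrm{Ext}$ spectral sequence has only the term $\mathcal{H}om(\ffF,\ccO_\Pic)$ in total degree $-1$ with no incoming or outgoing differentials. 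One could run the proof this way instead; this is in fact what underlies \cite[Proposition 2.2]{Chang-Li-maps-with-fields}.

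The only point that needs a little care is that $\Pic$ is a non-separated, locally Noetherian Artin stack, so the cited lemma on sections of cones and the duality statement should be invoked on a smooth atlas of $\Pic$ and the resulting isomorphism descended; the moduli-functor argument in the second paragraph has the advantage of sidestepping this, as it works directly with $\Pic$. One should also keep in mind that neither $\pi_*\ffL$ nor $\ffF^\vee$ commutes with arbitrary base change, so the statement is an isomorphism of sheaves on $\Pic$ rather than a fiberwise one — but the isomorphism produced is canonical, so globalizing it presents no difficulty.
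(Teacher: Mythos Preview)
Your proof is correct and takes a genuinely different route from the paper. The paper argues via Grothendieck duality at the level of complexes, obtaining $(R^\bullet\pi_*(\ffL^\vee\otimes\omega_{\ffC/\Pic}))^\vee\simeq R^\bullet\pi_*\ffL[1]$, and then identifies $h^{-1}$ on each side; crucially, to compute $h^{-1}$ of the dual complex it introduces an explicit two-term locally free resolution built from an ample section $A$ and reads off the kernel directly. Your first argument, by contrast, identifies $\ffF^\vee$ with the sheaf of sections of $C(\ffF)=S(\pi_*\ffL)$ via \Cref{lem: sections and functionals}, and then invokes the moduli interpretation of $S(\pi_*\ffL)$ to identify that sheaf of sections with $\pi_*\ffL$. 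This is shorter and more conceptual. Your alternative spectral-sequence argument is also fine and is essentially the ``abstract'' version of what the paper does by hand.

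What the paper's approach buys, however, is the explicit resolution \eqref{resF} and the identification of the complexes in \Cref{dual resol} as duals of the resolution in \Cref{rem:resol l}; these are used later (see \Cref{rem:resolution} and the local computations in \Cref{sec:comparisons}). So while your argument suffices for the statement as written, the paper's longer route produces by-products that are needed elsewhere.
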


\begin{proof}
Using Grothendieck duality we have
    \begin{align}\label{verdier}
 \notag  \left(R^{\bullet}\pi_*\ffL^{*}\otimes\omega_{\ffC/\Pic}\right)^*&= R\ccH om_{\Pic}(R^{\bullet}\pi_*\ffL^{*}\otimes\omega_{\ffC/\Pic},\ccO_\Pic)\\
 \notag &= R^{\bullet}\pi_*R\ccH om_{\Pic}(\ffL^{*}\otimes \omega_{\ffC/\Pic},\omega_{\ffC/\Pic}[1])\\
  \notag  & = R^{\bullet}\pi_*R\ccH om_{\ffC}(\ccO_{\ffC},\ffL\otimes \omega_{\ffC/\Pic}^*\otimes\omega_{\ffC/\Pic}[1])\\
    \notag   & = R^{\bullet}\pi_*R\ccH om_{\ffC}(\ccO_{\ffC},\ffL[1])\\
      &= R^{\bullet}\pi_* \ffL[1].
    \end{align}
   
On the one hand, we have that
\begin{equation}\label{lhs}h^{-1}(R^{\bullet}\pi_* \ffL[1])=\pi_*\ffL.
\end{equation}

In the following we look at an explicit resolution of $\left(R^{\bullet}\pi_*\ffL^{*}\otimes\omega_{\ffC/\Pic}\right)^*$ and compute its $h^{-1}$. This is similar to the discussion in \cite{ciocan2020quasimap}, Section 3.2. By the stability condition on $\Pic$ (see \Cref{eq: stability condition}), the universal curve over $\Pic$ is projective. This ensures that we have an ample section on $\ffC$ and we take $A$ sufficiently large so that $R^0\pi_*\ffL^{*}(-A)\otimes\omega_{\ffC/\Pic}=0$. 

We now consider the exact sequence of sheaves on the universal curve over $\Pic$
\[ 0\to \ffL^{*}(-A)\otimes\omega_{\ffC/\Pic}\to \ffL^{*}\otimes\omega_{\ffC/\Pic} \to \ffL^{*}\otimes\omega_{\ffC/\Pic}|_A\to 0.
\]
Pushing forward the above to $\Pic$, we get a long exact sequence
\begin{align} \label{dual resolution}
\notag 0\to R^0\pi_*\ffL^{*}\otimes\omega_{\ffC/\Pic} \to &R^0\pi_*\ffL^{*}\otimes\omega_{\ffC/\Pic}|_A\to\\ 
 \to &R^1\pi_*\ffL^{*}(-A)\otimes\omega_{\ffC/\Pic}\to R^1\pi_*\ffL^{*}\otimes\omega_{\ffC/\Pic} \to 0.
\end{align}
This gives 
\begin{equation}\label{dual resol}
    R^{\bullet}\pi_*\ffL^{*}\otimes\omega_{\ffC/\Pic}\simeq [R^0\pi_*\ffL^{*}\otimes\omega_{\ffC/\Pic}|_A\to R^1\pi_*\ffL^{*}(-A)\otimes\omega_{\ffC/\Pic}],
\end{equation}
with the complex on the right, being a complex of vector bundles supported in $[0,1]$. This gives
\[(R^{\bullet}\pi_*\ffL^{*}\otimes\omega_{\ffC/\Pic})^{*}\simeq [ (R^1\pi_*\ffL^{*}(-A)\otimes\omega_{\ffC/\Pic})^{*}\to (R^0\pi_*\ffL^{*}\otimes\omega_{\ffC/\Pic}|_A)^{*}],
\]
with the complex on the right being supported in $[-1,0]$.

Applying the functor $\mathcal{H}om(-, \ccO)$ to \eqref{dual resolution} and using that it is left-exact, we get
\begin{equation}\label{resF}
0\to \left(R^1\pi_*\ffL^{*}\otimes\omega_{\ffC/\Pic}\right)^*\to  (R^1\pi_*\ffL^{*}(-A)\otimes\omega_{\ffC/\Pic})^{*}\to (R^0\pi_*\ffL^{*}\otimes\omega_{\ffC/\Pic}|_A)^{*}
\end{equation}
This together with \cref{dual resol} shows that  
\begin{equation}\label{rhs}
h^{-1}\left(\left(R^{\bullet}\pi_*\ffL^{*}\otimes\omega_{\ffC/\Pic}\right)^*\right)=\left(R^1\pi_*\ffL^{*}\otimes\omega_{\ffC/\Pic}\right)^*.
\end{equation}

    \Cref{verdier,lhs,rhs} imply that 
    \[
    \left(R^1\pi_*\ffL^{*}\otimes\omega_{\ffC/\Pic}\right)^*=R^0\pi_* \ffL. \qedhere
    \]
\end{proof}

\begin{remark}\label{rem:resol l}

    As in the proof of \Cref{dual-cone}, we have an explicit resolution of $\ffF$ (see \cite{ciocan2020quasimap}, Section 3.2). Let $A$ be a sufficiently high power of a very ample section of the morphism $\ffC\to\Pic$ such that $R^1\pi_*\ffL(A)=0$. The short exact sequence
\begin{equation}\label{a twist}
0\to \ffL\to \ffL(A)\to \ffL(A)|_A\to 0.
\end{equation}
induces a long exact sequence
\begin{equation}\label{resL}
0\to R^0\pi_*\ffL\to  R^0\pi_*\ffL(A)\to  R^0\pi_*\ffL(A)|_A\to R^1\pi_*\ffL\to 0,
\end{equation}
which shows that $[R^0\pi_*\ffL(A)\to  R^0\pi_*\ffL(A)|_A]$ is quasi-isomorphic to $R^{\bullet}\pi_*\ffL$.

With our choice of $A$ we have that $R^0\pi_*\ffL(A)$ and $R^0\pi_*\ffL(A)|_A$ are locally free sheaves over $\Pic$. 
Sequence \eqref{resL} together with the fact that $R^0\pi_*\ffL(A)$ is a locally free sheaf over $\Pic$ implies that $R^0\pi_*\ffL$ is a torsion-free sheaf on $\Pic$.
\end{remark}

\begin{remark}\label{rem:resolution}
  The proof of \Cref{dual-cone} shows that we have a resolution of $\ffF:=R^1\pi_*\ffL^{*}\otimes\omega_{\ffC/\Pic}$ to the left given by \eqref{dual resolution}. The isomorphism in \Cref{verdier} shows that the complex
 \[[R^0\pi_*\ffL^{*}\otimes\omega_{\ffC/\Pic}|_A\to R^1\pi_*\ffL^{*}(-A)\otimes\omega_{\ffC/\Pic}]\]
 is dual to
  \[[R^0\pi_*\ffL(A)\to  R^0\pi_*\ffL(A)|_A].\]
  We will use this duality in \Cref{sec:comparisons}. Hu and Li work with the resolution to the right we have in \eqref{resL}. In the previous sections we used the resolution to the left \eqref{dual resolution}. Since dual morphisms have the same Fitting ideals, both morphisms give the same Hu--Li blow-up.
\end{remark}

\subsection{The main component of stable maps to $\bbP^r$}\label{subsec: main component stable maps}

In the following we look at stable maps with a lower bound on the degree (see Assumption \ref{assum:d,big,2g,minus2}). In this situation the moduli space of stable maps has a \emph{main (irreducible) component}. We discuss this main component and its relation to the main component of abelian cones.\\

We fix the following assumption from now on.

\begin{assumption}\label{assum:d,big,2g,minus2}
In the following we fix $d>2g-2$. For $C$ a smooth genus $g$ curve, $L$ a line bundle of degree $d$, and $d>2g-2$, we have that $H^1(C,L)=0$. 
This shows that for $d>2g-2$ the locus $\Gwo$ of stable maps with smooth domain is smooth. Note that, under the degree assumption, $S(\pi_*\ffL^{\oplus r+1})$ has a smooth surjective morphism  to the substack of $\Pic_{g,n,d}$ where the underlying curve is smooth, which is connected. The fibers are open in $\bbP(H^0(C,L)^{\oplus r+1})$ so they are connected. This implies that $\Gwo$ is either empty or connected. Thus $\Gwo$ is irreducible, so its closure, if non-empty, is an irreducible component of $\Gw$.

\end{assumption}

\begin{definition}[Main Component]\label{def:main moduli stable maps} 
Consider the Zariski closure in $\Gw$ of the locus $\Gwo$ where the curve is smooth.  We call this component the \textit{main component} and we denote it by $\mGw$. 
\end{definition}  

We introduced the main component of an abelian cone in \Cref{def: main component abelian cone}. In our next result, \Cref{prop:main_component_Pr_open_in_tors_free}, we show that the main component of $\Gw$ is contained in the main component of $\Spec\Symm (\oplus_{i=0}^r\ffF)$. By the proof of \cref{prop:main_component_Pr_open_in_tors_free}, on $\mGw$ the universal curve is generically smooth and $\pi_* \ccL$ is generically a vector bundle.

\begin{proposition}\label{prop:main_component_Pr_open_in_tors_free}
    We have that $\mGw$ is an open substack of
    $\Spec(\Symm\oplus_{i=0}^r\ffF)^{\tf}$.
\end{proposition}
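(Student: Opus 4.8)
The strategy is to compare $\mGw$ with the main component of the abelian cone $\Spec(\Symm\oplus_{i=0}^r\ffF)$ by working over the open locus $U\subseteq\Pic$ of smooth curves. First I would recall from \Cref{prop: open embedding stable maps} that $\Gw$ is the open substack of $S(\pi_*\ffL^{\oplus r+1})=\Spec(\Symm\oplus_{i=0}^r\ffF)$ cut out by the basepoint-free condition. So it suffices to show that $\mGw$, as a closed substack of $\Gw$, is precisely the intersection of $\Gw$ with the main component $\Spec(\Symm\oplus_{i=0}^r\ffF)^\tf$ of the ambient cone; equivalently, that the main component of the cone, intersected with the (open) basepoint-free locus, is exactly $\mGw$.

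The key geometric input is \Cref{assum:d,big,2g,minus2}: for $d>2g-2$ and $C$ smooth of genus $g$ with $L$ of degree $d$, one has $H^1(C,L)=0$, hence $R^1\pi_*\ffL=0$ over the open substack $\Pic^{\sm}\subseteq \Pic$ of smooth curves. By \Cref{rem:resol l} (the long exact sequence \eqref{resL}), $R^0\pi_*\ffL$ is therefore locally free over $\Pic^{\sm}$, so $\ffF=R^1\pi_*(\ffL^\vee\otimes\omega_{\ffC/\Pic})$, which is Serre-dual to $R^0\pi_*\ffL$ by \Cref{dual-cone}, is also locally free of rank $d+1-g$ over $\Pic^{\sm}$. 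Consequently $\oplus_{i=0}^r\ffF$ is locally free over $\Pic^{\sm}$, and over that locus $\Spec(\Symm\oplus_{i=0}^r\ffF)$ is a vector bundle — in particular it is integral there (its restriction to any integral open of $\Pic^{\sm}$ is a vector bundle over an integral base). Now I would invoke \Cref{prop: AM} / \Cref{prop: main component}: since $\Pic$ is integral and $\oplus_{i=0}^r\ffF$ is locally free (hence torsion-free) on the non-empty open $\Pic^{\sm}$, the main component $\Spec(\Symm\oplus_{i=0}^r\ffF)^\tf$ is the closure inside $\Spec(\Symm\oplus_{i=0}^r\ffF)$ of the preimage of $\Pic^{\sm}$.

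Next I would identify that preimage, restricted to the basepoint-free locus, with the locus of stable maps with smooth domain. A point of $S(\pi_*\ffL^{\oplus r+1})$ over $(C,L)$ with $C$ smooth is a tuple $\underline{s}\in H^0(C,L)^{\oplus r+1}$; the basepoint-free ones are exactly the stable maps $C\to\bbP^r$ from a smooth curve (with the stability condition on $\Pic$ matching Kontsevich stability, as in \cite{Chang-Li-maps-with-fields, C-FK}). So the intersection of $\mu^{-1}(\Pic^{\sm})$ with the open substack $\Gw\subseteq S(\pi_*\ffL^{\oplus r+1})$ is the locus of stable maps with smooth domain, which is dense in $\mGw$ by definition (\Cref{rem:d_condition}) and which is open inside $\Spec(\Symm\oplus_{i=0}^r\ffF)^\tf$ because $\Gw$ and $\mu^{-1}(\Pic^{\sm})$ are both open in the ambient cone. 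Taking closures inside $\Spec(\Symm\oplus_{i=0}^r\ffF)$: on one side we get $\mGw$ (closure of the smooth-domain locus inside $\Gw$, which coincides with its closure in the cone since $\Gw$ is open and $\mGw$ is a component of $\Gw$, cf.\ \Cref{assum:d,big,2g,minus2}), and on the other side we get $\Spec(\Symm\oplus_{i=0}^r\ffF)^\tf$ by \Cref{prop: AM}. Hence $\mGw$ is the intersection of $\Gw$, an open substack of the cone, with $\Spec(\Symm\oplus_{i=0}^r\ffF)^\tf$, which exhibits $\mGw$ as an open substack of $\Spec(\Symm\oplus_{i=0}^r\ffF)^\tf$, as claimed.

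\textbf{Main obstacle.} The delicate point is the bookkeeping of closures and reduced structures: one must check that the schematic/reduced closure of the smooth-domain locus taken inside $\Gw$ agrees with the one taken inside the full abelian cone, and that $\Spec(\Symm\oplus_{i=0}^r\ffF)^\tf$ meets the open basepoint-free locus exactly in $\mGw$ rather than in some larger closed subset. This is where \Cref{prop: AM} (the Axelsson--Magnusson description of $\Spec(\ccA^\tf)$ as a closure of the torsion-free locus, valid since the relevant open is reduced — indeed a vector bundle) does the real work, combined with the elementary fact that for an open immersion $j\colon \Gw\hookrightarrow \mathcal{C}$ and a subset dense in an irreducible component of $\Gw$, its closure in $\mathcal{C}$ meets $\Gw$ in its closure in $\Gw$. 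Everything else is a direct consequence of $H^1(C,L)=0$ in the range $d>2g-2$ together with cohomology and base change.
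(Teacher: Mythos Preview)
Your proposal is correct and follows essentially the same route as the paper: establish local freeness of $\ffF$ on $\Pic^{\sm}$ via Assumption~\ref{assum:d,big,2g,minus2} and Serre duality, use \Cref{prop: AM} to identify $\Spec(\Symm\oplus_{i=0}^r\ffF)^{\tf}$ with the closure of the preimage of $\Pic^{\sm}$, and then check that intersecting with the open $\Gw$ yields $\mGw$. The ``main obstacle'' you single out---that closure inside $\Gw$ agrees with closure inside the cone intersected back with $\Gw$---is exactly what the paper isolates as \Cref{lem: closure operation}, so you have located the only non-formal step.
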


\begin{proof}
    Let $\Gwo$ and $\Pic^{\sm}$ denote the open substacks of $\Gw$ and $\Pic$ where the curve is smooth. The first step is to show that the sheaf $\ffF=R^1\pi_*(\ffL^*\otimes \omega_{\ffC/\Pic})$ is locally free over $\Pic^\sm$.

    Let $\pi^\sm\colon \ffC^\sm\to \Pic^\sm$ denote the universal curve and $\ffL^\sm$ the universal line bundle on $\ffC^\sm$. Assumption \ref{assum:d,big,2g,minus2} and cohomology and base change ensure that $R^1\pi^\sm_\ast \ffL^\sm= 0$. 

    Therefore, $R^0\pi^\sm_\ast \ffL^\sm$ has constant rank, so it locally free. Using Serre Duality and local freeness of $\omega_{\pi^\sm}$, we see that 
    \[
        R^1\pi^\sm_*(\ffL^{\sm *} \otimes \omega_{\pi^\sm}) \simeq
        (R^0\pi^\sm_*((\ffL^{\sm *} \otimes \omega_{\pi^\sm})^*\otimes\omega_{\pi^\sm}))^* 
        \simeq (R^0\pi^\sm_*\ffL^\sm)^*
    \]
    is locally free. 
 
    In particular, the $\ccO_{\Pic}$-algebra $(\Symm\oplus_{i=0}^r\ffF)^{\tf}$ is locally free over $\Pic^\sm$.

    To simplify the notation, let $C = \Spec \Symm\oplus_{i=0}^r\ffF$ and $C^\tf = \Spec(\Symm\oplus_{i=0}^r\ffF)^{\tf}$. By \Cref{prop: AM} and using that $\Pic^\sm$ is generically reduced, we have that
    \begin{equation}\label{eq: aux closure smooth locus}
        \scl_C(\pi^{-1}(\Pic^\sm)) = C^\tf,
    \end{equation}
    where $\pi^{-1}(\Pic^\sm)$ denotes the fibre product $\Pic^\sm\times_\Pic C$, which is open in $C$.

    We conclude by the following chain of equalities
    \begin{align*}
        \mGw &= \scl_{\Gw}(\pi^{-1}(\Pic^\sm)\times_{C}\Gw) \\ &= \scl_C(\pi^{-1}(\Pic^\sm))\times_C \Gw \\ &= C^\tf \times_C \Gw.
    \end{align*}
    The first equality is the definition of $\mGw$, the second one is \Cref{lem: closure operation} (applied to $Y=C$ and $W, V$ reduced open subschemes of  $\Gw$ and $\pi^{-1} (\Pic^\sm)$ respectively), and the last one is \Cref{eq: aux closure smooth locus}.
\end{proof}

\begin{lemma}\label{lem: closure operation}
    Let $Y$ be a scheme,  let $V, W$ be reduced open subschemes of $Y$. Then
    \[
        \scl_{W} (V\times_Y W) = \scl_Y(V)\times_Y W
    \]
\end{lemma}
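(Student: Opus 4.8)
\textbf{Proof plan for \Cref{lem: closure operation}.}

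The plan is to reduce the statement to a purely topological fact about closures plus the compatibility of schematic closure with flat base change, using that $W\hookrightarrow Y$ is an open immersion (hence flat) and that everything in sight is reduced. First I would recall that for a quasi-compact quasi-separated immersion $j\colon V\hookrightarrow Y$, the schematic closure $\scl_Y(V)$ is the smallest closed subscheme of $Y$ through which $j$ factors, and that it commutes with flat base change: if $f\colon Y'\to Y$ is flat then $\scl_{Y'}(V\times_Y Y') = \scl_Y(V)\times_Y Y'$. A convenient reference for this is \cite[Tag 01R8]{stacks-project} (the schematic image / scheme-theoretic closure is compatible with flat base change). Applying this with $f$ the open immersion $W\hookrightarrow Y$ gives
\[
    \scl_W(V\times_Y W) = \scl_Y(V)\times_Y W,
\]
which is exactly the claim. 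So the real content is just citing the right base-change statement; the reducedness hypotheses on $V$ and $W$ are there so that, by \cite[Lemma 056B]{stacks-project} as recalled in \Cref{not: closure}, the schematic closures $\scl$ agree with the topological closures $\cl$, matching how the lemma is invoked in the proof of \Cref{prop:main_component_Pr_open_in_tors_free}.

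Alternatively, if one prefers a hands-on argument, I would work locally: write an affine open $U=\Spec R$ of $Y$, so that $V\cap U = \Spec R \setminus Z(I)$ for an ideal (or rather the complement of a closed set) and the schematic closure of $V\cap U$ in $U$ is cut out by the kernel $J$ of the restriction map $R\to \Gamma(V\cap U,\ccO)$; then for a basic open $W\cap U = \Spec R_g$, flatness of $R\to R_g$ gives that the kernel of $R_g \to \Gamma(V\cap U\cap W, \ccO)$ is exactly $J\cdot R_g = J\otimes_R R_g$, since localization is exact. Gluing these local computations over an affine cover of $Y$ refined by $W$ yields the global identity. Either way, the proof is short.

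The only point that needs a little care — and the place I would expect a referee to look — is checking that the two natural scheme structures on the set-theoretic closure genuinely coincide, i.e.\ that no embedded or nilpotent phenomena appear when restricting to $W$; this is handled precisely by flatness of the open immersion $W\hookrightarrow Y$, which ensures $\mathrm{Tor}_1$ vanishing so that taking kernels commutes with the base change. Given that, the statement follows formally, and I would present it as a one-line deduction from \cite[Tag 01R8]{stacks-project} together with \Cref{not: closure}.
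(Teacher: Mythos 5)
Your proof is correct in the Noetherian setting the paper actually works in, but it takes a genuinely different route from the paper's, and the difference matters for the hypotheses. You deduce the statement from compatibility of scheme-theoretic image with flat base change (citing the flat base-change result for the scheme-theoretic image). That result requires the morphism $V\hookrightarrow Y$ to be \emph{quasi-compact}, which the lemma as stated does not assume (though it holds whenever $Y$ is Noetherian, as it is everywhere the lemma is invoked). The paper instead leans on the reducedness hypothesis right away: since $V$, $W$, and hence $V\times_Y W$ are reduced, \cite[Lemma 056B]{stacks-project} identifies the schematic closure with the topological closure endowed with the reduced induced structure, so the whole statement collapses to the elementary topological fact that for $W\subseteq Y$ open one has $\cl_W(V\cap W)=\cl_Y(V)\cap W$. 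That route uses no quasi-compactness at all, which is precisely why the paper can state the lemma with so few hypotheses. Your version buys a slightly more general statement (no reducedness needed) at the price of a quasi-compactness hypothesis; the paper's version buys freedom from quasi-compactness at the price of requiring reducedness. If you keep your argument, you should explicitly add the quasi-compactness (or Noetherian) assumption, or note that $V\hookrightarrow Y$ is quasi-compact in every application. Your observation about where the reducedness is really used — namely to match schematic and topological closure in later applications — is accurate and is exactly what the paper's proof exploits as its first step rather than as an afterthought.
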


\begin{proof}
    Since $V$ and $W$ are reduced, so is $V\times_Y W$. This means that the schematic closure is just the topological closure with the reduced induced structure by \cite[Lemma 056B]{stacks-project}. Therefore the question is purely topological, and it is straightforward using that $\scl_Y W = \cl_Y W$ is the intersection of all the closed subsets $C$ of $Y$ that contain $W$.
\end{proof}

\subsection{Blow-ups of the moduli space of stable maps}\label{subsec: blowups stable maps Pr}

In this section we consider a desingularization $p\colon \wPic\to \Pic$ of $\ffF$ and the base change $\tGw$ of $\Gw$. By compatibility of abelian cones with pullback, $\tGw$ is an open substack of $C(\oplus_{i=0}^r p^*\ffF)$. We define the main component $\tmGw$ of $\tGw$ to be the closure of the smooth locus (see \Cref{main comp blow up}). This definition ensures that $\tmGw$ is open in the main component of the ambient abelian cone $C(\oplus_{i=0}^r p^*\ffF)$ (\Cref{prop:main_component_Pr_open_in_tors_free}), thus it is irreducible. In general, $\tmGw$ does not agree with the pullback of $\mGw$, which might be reducible (see \Cref{rmk: main component is not pull back}). 
Finally, we induce a virtual fundamental class on $\tGw$.

 We define 
 \begin{equation}\label{eq:blowup,pic,moduli,stabe,projective,space}
    p\colon \wPic \to  \Pic
 \end{equation}

 to be any desingularization (as in \Cref{def: desingularization on stacks}) of the sheaf $\ffF$ (defined in \Cref{eq:F_sheaf}). By  \cref{Rossi univ}, we have a proper birational map 
\begin{equation}
\wPic\to \Bl_{\ffF}\Pic
\end{equation}
where $\Bl_{\ffF}\Pic\rightarrow\Pic$ is the Rossi blow-up. We are mainly interested in $\wPic$ being the Rossi or the Hu-Li blow-up (see \Cref{sec:desing_stacks}).

We define
\begin{equation}\label{base change maps}
\begin{tikzcd}
\tGw \arrow[d, "\widetilde{\mu}"] \arrow[r,"\overline{p}"] \arrow[dr, phantom,"\ulcorner", very near start] 
& \ \arrow[d, "\mu"] \Gw \\
\wPic \arrow[r,"p"]  & \Pic\text{.}
\end{tikzcd}
\end{equation}
Note that $\tGw$ is proper since $p$ and $\Gw$ are proper.
Let $p:\wPic\to \Pic$ be the natural projection. Consider the Cartesian diagram
\[
\xymatrix{\widetilde\ffL\ar[r]\ar[d]&\ffL\ar[d]\\
\widetilde\ffC\ar[r]^{q}\ar[d]_{\widetilde{\pi}}&\ffC\ar[d]^{\pi}\\
\wPic\ar[r]^{p}&\Pic}
\]
where $\ffC$ is the universal curve over $\Pic$ and $\ffL$ the universal line bundle. Recall that $$\ffF=R^1\pi_*(\ffL^{*}\otimes\omega_{\pi}).$$
\begin{lemma}\label{open in tilde} In notation as before, we have an open embedding
\[
\tGw\hookrightarrow  \Spec\Symm\left(\oplus_{i=0}^r p^*\ffF\right)\cong\Spec\Symm\left(\oplus_{i=0}^r R^1\widetilde{\pi}_*(\widetilde{\ffL}^{*}\otimes\omega_{\widetilde{\pi}})\right).
\]
\end{lemma}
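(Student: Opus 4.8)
The plan is to combine two facts already available in the excerpt: the open embedding $\Gw \hookrightarrow \Spec\Symm(\oplus_{i=0}^r \ffF)$ from \Cref{prop: open embedding stable maps}, and the compatibility of the formation of abelian cones with arbitrary base change. Since $\tGw$ is defined as the fiber product $\Gw \times_\Pic \wPic$ via the Cartesian diagram \eqref{base change maps}, base-changing the open embedding along $p\colon \wPic \to \Pic$ immediately produces an open embedding $\tGw \hookrightarrow \Spec\Symm(\oplus_{i=0}^r \ffF) \times_\Pic \wPic$, because open immersions are stable under base change. So the first step is just to invoke \Cref{prop: open embedding stable maps} and pull it back.

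The second step is to identify the base change $\Spec\Symm(\oplus_{i=0}^r \ffF) \times_\Pic \wPic$ with $\Spec\Symm(\oplus_{i=0}^r p^*\ffF)$ over $\wPic$. This is the statement that relative spectra and symmetric algebras commute with pullback: for a morphism $p\colon \wPic \to \Pic$ and a quasi-coherent sheaf $\ccG$ on $\Pic$, one has $\Spec_\Pic(\Symm \ccG)\times_\Pic \wPic \cong \Spec_{\wPic}(p^*\Symm \ccG) \cong \Spec_{\wPic}(\Symm p^*\ccG)$, the last isomorphism because $\Symm$ commutes with pullback (it is a left adjoint / computed fiberwise). Applying this with $\ccG = \oplus_{i=0}^r \ffF$ and using $p^*(\oplus_{i=0}^r \ffF) = \oplus_{i=0}^r p^*\ffF$ gives the cone $\Spec\Symm(\oplus_{i=0}^r p^*\ffF)$ over $\wPic$, as desired. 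These are completely standard facts about (abelian) cones, essentially \Cref{def: abelian cone} together with the base-change compatibility mentioned there.

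The third step is the identification $p^*\ffF \cong R^1\widetilde{\pi}_*(\widetilde{\ffL}^\vee \otimes \omega_{\widetilde{\pi}})$. Here $\ffF = R^1\pi_*(\ffL^\vee \otimes \omega_\pi)$ by \eqref{eq:F_sheaf}, and $\widetilde{\ffC} = \ffC\times_\Pic \wPic$ with $\widetilde{\ffL} = q^*\ffL$ and $\omega_{\widetilde{\pi}} = q^*\omega_\pi$ (relative dualizing sheaves are compatible with base change for the smooth, or flat nodal, family $\pi$). Cohomology and base change in degree $1$ applies: since the fibers of $\pi$ are curves, $R^i\pi_* = 0$ for $i\geq 2$, so $R^1\pi_*$ is right-exact and commutes with arbitrary base change (this is exactly remark (4) after \eqref{eq:CL,cone,defi}: $R^1\pi_*\ffL$ — and likewise $R^1\pi_*(\ffL^\vee\otimes\omega_\pi)$ — commutes with base change, even though $R^0$ does not). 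Hence $p^*\ffF = p^*R^1\pi_*(\ffL^\vee\otimes\omega_\pi) \cong R^1\widetilde{\pi}_*q^*(\ffL^\vee\otimes\omega_\pi) = R^1\widetilde{\pi}_*(\widetilde{\ffL}^\vee\otimes\omega_{\widetilde{\pi}})$.

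I do not expect any genuine obstacle here; the statement is essentially bookkeeping, chaining together "open immersions base-change", "cones base-change", and "$R^1\pi_*$ base-changes in the curve case". The only point requiring a little care is making sure the base-change isomorphism for $R^1$ of the twisted sheaf is applied correctly — i.e. that $\ffL^\vee\otimes\omega_\pi$ is flat over $\Pic$ (it is, being a line bundle on the flat family $\ffC$) and that $\pi$ has relative dimension $1$ so that the higher cohomology vanishes — but this is already implicitly used throughout \Cref{subsec:stable_maps}. Thus the proof is: pull back the open embedding of \Cref{prop: open embedding stable maps} along $p$, use compatibility of $\Spec\Symm$ with base change, and apply cohomology-and-base-change for $R^1$ to rewrite $p^*\ffF$.

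\begin{proof}
By \Cref{prop: open embedding stable maps}, $\Gw$ is an open substack of $\Spec\Symm(\oplus_{i=0}^r\ffF)$ over $\Pic$. Base-changing this open immersion along $p\colon\wPic\to\Pic$ and using that open immersions are stable under base change, together with the defining Cartesian square \eqref{base change maps}, we obtain an open immersion
\[
\tGw = \Gw\times_\Pic\wPic \hookrightarrow \Spec\Symm\left(\oplus_{i=0}^r\ffF\right)\times_\Pic\wPic.
\]
Since the formation of relative spectra and of symmetric algebras commutes with base change (see \Cref{def: abelian cone} and the compatibility of abelian cones with pullback), and since $p^*\left(\oplus_{i=0}^r\ffF\right) = \oplus_{i=0}^r p^*\ffF$, we have
\[
\Spec\Symm\left(\oplus_{i=0}^r\ffF\right)\times_\Pic\wPic \cong \Spec\Symm\left(\oplus_{i=0}^r p^*\ffF\right).
\]
Finally, with notation as in the statement, $\widetilde{\ffC} = \ffC\times_\Pic\wPic$, $\widetilde{\ffL} = q^*\ffL$ and $\omega_{\widetilde{\pi}} = q^*\omega_\pi$. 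As the fibres of $\pi$ are curves, $R^1\pi_*$ is right exact and commutes with arbitrary base change; applying this to the flat sheaf $\ffL^\vee\otimes\omega_\pi$ gives
\[
p^*\ffF = p^*R^1\pi_*(\ffL^\vee\otimes\omega_\pi) \cong R^1\widetilde{\pi}_*\,q^*(\ffL^\vee\otimes\omega_\pi) = R^1\widetilde{\pi}_*(\widetilde{\ffL}^\vee\otimes\omega_{\widetilde{\pi}}).
\]
Combining the three displays yields the desired open embedding
\[
\tGw \hookrightarrow \Spec\Symm\left(\oplus_{i=0}^r p^*\ffF\right) \cong \Spec\Symm\left(\oplus_{i=0}^r R^1\widetilde{\pi}_*(\widetilde{\ffL}^\vee\otimes\omega_{\widetilde{\pi}})\right). \qedhere
\]
\end{proof}
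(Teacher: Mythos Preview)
Your proof is correct and follows essentially the same approach as the paper: invoke \Cref{prop: open embedding stable maps}, base-change the open embedding along $p$, and use cohomology-and-base-change to identify $p^*\ffF$ with $R^1\widetilde{\pi}_*(\widetilde{\ffL}^\vee\otimes\omega_{\widetilde{\pi}})$. The paper's proof is a two-line sketch of exactly these steps; yours simply spells out the base-change compatibilities in more detail.
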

\begin{proof}
 By \Cref{prop: open embedding stable maps}, we have an open embedding 
 \[\Gw\subset\Spec_{\Pic}\Symm \oplus_{i=0}^r\ffF.\] Thus, $\tGw\subset\Spec_{\wPic}\Symm p^*\ffF^{\oplus r+1}$ and $p^*\ffF\cong R^1\widetilde{\pi}_*(\widetilde{\ffL}^{*}\otimes\omega_{\widetilde{\pi}})$.
\end{proof}
\begin{definition}\label{main comp blow up} 
Consider the closure in $\tGw$ of the locus where the curve is smooth of genus $g$.  We call this component the \textit{main component} and we denote it by $\tmGw$. 
\end{definition} 
\begin{proposition}\label{proper main} The following hold:
\begin{enumerate}

    \item We have an open embedding \[\tmGw\subset  \Spec\left(\Symm\oplus_{i=0}^rp^*\ffF\right)^{\tf} \simeq \Spec\Symm\left(\oplus_{i=0}^r\left(p^*\ffF\right)^{\tf}\right).\]
\item $\tmGw$ is proper and smooth over $\wPic$.
\end{enumerate}
    
\end{proposition}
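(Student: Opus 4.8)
The plan is to run the argument of Proposition \ref{prop:main_component_Pr_open_in_tors_free} over $\wPic$ instead of $\Pic$, with $p^*\ffF$ in place of $\ffF$, and then deduce (2) from (1). Write $C = \Spec\Symm(\bigoplus_{i=0}^r p^*\ffF)$ with projection $\pi\colon C\to\wPic$; by Lemma \ref{open in tilde} the stack $\tGw$ is an open substack of $C$. Since $p$ is a desingularization, $(p^*\ffF)^\tf$ is locally free, hence so is $\bigl(\bigoplus_{i=0}^r p^*\ffF\bigr)^\tf = \bigoplus_{i=0}^r (p^*\ffF)^\tf$, and Lemma \ref{lem: symm commutes with tf if locally free} gives
$$\bigl(\Symm\textstyle\bigoplus_{i=0}^r p^*\ffF\bigr)^\tf = \Symm\bigl(\textstyle\bigoplus_{i=0}^r (p^*\ffF)^\tf\bigr).$$
Thus $C^\tf := \Spec\bigl(\Symm\bigoplus_{i=0}^r p^*\ffF\bigr)^\tf \simeq \Spec\Symm\bigl(\bigoplus_{i=0}^r (p^*\ffF)^\tf\bigr)$, which by Lemma \ref{lem: cone nonsing iff  vb} is a vector bundle over $\wPic$, in particular integral; and the surjection $\bigoplus p^*\ffF\to(\bigoplus p^*\ffF)^\tf$ realizes $C^\tf$ as a closed substack of $C$. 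It remains to identify $\tmGw$ with an open substack of $C^\tf$.

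Write $\Pic^\sm\subseteq\Pic$ for the locus parametrizing curves that are smooth of genus $g$, and set $\wPic^\sm := p^{-1}(\Pic^\sm)$; this is a dense open substack of $\wPic$ because $p$ is birational, and it is reduced because $\wPic$ is integral. In the proof of Proposition \ref{prop:main_component_Pr_open_in_tors_free} it is shown, using Assumption \ref{assum:d,big,2g,minus2} and cohomology-and-base-change, that $\ffF$ is locally free over $\Pic^\sm$; hence $p^*\ffF$ is locally free over $\wPic^\sm$, so $\Symm(\bigoplus p^*\ffF)$ restricts to a polynomial algebra there, and $\pi^{-1}(\wPic^\sm)$ is an integral open substack of $C$ contained in $C^\tf$. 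By Proposition \ref{prop: AM}, $C^\tf = \cl_C(\pi^{-1}(\wPic^\sm))$ with the reduced structure. Now the locus of $\tGw$ where the curve is smooth of genus $g$ is exactly $\tGw\cap\pi^{-1}(\wPic^\sm) = \Gwo\times_{\Pic^\sm}\wPic^\sm$, which is a dense open of the integral stack $\pi^{-1}(\wPic^\sm)$ and is itself reduced, being the total space of a morphism to the reduced stack $\wPic^\sm$ that is smooth (a base change of the smooth morphism $\Gwo\to\Pic^\sm$, which is open in the vector bundle $C|_{\Pic^\sm}$). Therefore its closure in $\tGw$, which is $\tmGw$ by Definition \ref{main comp blow up}, equals $\cl_C(\pi^{-1}(\wPic^\sm))\cap\tGw = C^\tf\cap\tGw$, an open substack of $C^\tf$. (Equivalently, one may invoke Lemma \ref{lem: closure operation} verbatim as in the proof of Proposition \ref{prop:main_component_Pr_open_in_tors_free}.) This proves (1) and shows in passing that $\tmGw$ is irreducible.

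For (2): by (1), $\tmGw$ is an open substack of $C^\tf$, which is a vector bundle over $\wPic$ by Lemma \ref{lem: cone nonsing iff  vb}, hence smooth over $\wPic$; smoothness over $\wPic$ passes to open substacks, so $\tmGw$ is smooth over $\wPic$. For properness, $\tmGw$ is a closed substack of $\tGw$ (it is defined as a closure), and $\tGw = \Gw\times_\Pic\wPic$ is proper, being the base change along the proper morphism $p$ of the proper stack $\Gw$, as already noted; hence $\tmGw$ is proper.

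There is no conceptual obstacle here: once one knows that $(p^*\ffF)^\tf$ is locally free — which is built into the definition of a desingularization — the statement is a formal consequence of Proposition \ref{prop:main_component_Pr_open_in_tors_free} and its proof. The only points demanding attention are the reducedness bookkeeping needed to replace schematic closures by topological ones and to apply Proposition \ref{prop: AM} and Lemma \ref{lem: closure operation}: namely that $\wPic^\sm$ is reduced (from integrality of $\wPic$) and that $\Gwo\times_{\Pic^\sm}\wPic^\sm$ is reduced (from smoothness of $\Gwo\to\Pic^\sm$ together with reducedness of $\wPic^\sm$).
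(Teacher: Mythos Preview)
Your proof is correct and follows essentially the same approach as the paper: rerun the argument of Proposition~\ref{prop:main_component_Pr_open_in_tors_free} over $\wPic$ with $p^*\ffF$ in place of $\ffF$, use local freeness of $(p^*\ffF)^\tf$ together with Lemma~\ref{lem: symm commutes with tf if locally free} to identify the torsion-free cone with a vector bundle, and read off smoothness and properness. Your version is somewhat more explicit than the paper's about the reducedness bookkeeping (for $\wPic^\sm$ and for the smooth locus of $\tGw$) needed to apply Proposition~\ref{prop: AM} and Lemma~\ref{lem: closure operation}, but the strategy is identical.
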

\begin{proof}
    By \Cref{open in tilde} we get $\tmGw\hookrightarrow \Spec\left(\Symm\oplus_{i=0}^r p^*\ffF\right)^\tf$ is an open embedding. By cohomology and base change we have $p^*\ffF\simeq R^1\widetilde{\pi}_*(\widetilde{\ffL}^{*}\otimes\omega_{\widetilde{\pi}})$. The argument in  \Cref{prop:main_component_Pr_open_in_tors_free} applies to $R^1\widetilde{\pi}_*(\widetilde{\ffL}^{*}\otimes\omega_{\widetilde{\pi}})$ and we obtain an open embedding
    \[
    \tmGw\hookrightarrow  \Spec \left(\Symm\oplus_{i=0}^r R^1\widetilde{\pi}_*(\widetilde{\ffL}^{*}\otimes\omega_{\widetilde{\pi}})\right)^\tf.
    \]

    By construction $(p^*\ffF)^{\tf}$ is locally-free and, since the torsion-free part commutes with direct sums, the same is true for $(p^*\ffF^{\oplus r+1})^{\tf}$. This shows that $\Symm\oplus_{i=0}^r(p_k^*\ffF)^{\tf}$ is locally free therefore we have an isomorphism
    \[
        \Spec\left(\Symm\oplus_{i=0}^rp^*\ffF\right)^{\tf} \simeq \Spec\Symm\left(\oplus_{i=0}^r\left(p^*\ffF\right)^{\tf}\right)
    \]
    by \Cref{lem: symm commutes with tf if locally free}.
    
    The first part implies that $\tmGw$ is smooth over $\wPic$ since it is open in a vector bundle. The properness of $\tmGw$ follows from the properness of $\tGw$.
\end{proof}

\begin{remark}\label{rmk: main component is not pull back}
   We consider the following Cartesian diagram:
\begin{equation}
\begin{tikzcd}
\widetilde{\ccM}^{\circ}(\bbP) \arrow[d]\arrow[r] \arrow[dr, phantom,"\ulcorner", very near start] 
& \ \arrow[d, "\mu"] \mGw \\
\wPic \arrow[r,"p"]  & \Pic\text{.}
\end{tikzcd}
\end{equation}
In general, $\tmGw\hookrightarrow\widetilde{\ccM}^{\circ}(\bbP)$ is not an isomorphism and thus the diagram below is only commutative
\begin{equation}\label{diag:Bl,GW}
\begin{tikzcd}
\tmGw \arrow[d, "\widetilde{\mu}"] \arrow[r,"\overline{p}"] 
& \ \arrow[d, "\mu"] \mGw \\
\wPic \arrow[r,"p"]  & \Pic\text{.}
\end{tikzcd}
\end{equation}

This observation is a reflection of the fact that torsion-free part does not commute with pullback, see \Cref{rmk: symm and tf do not commute in general}. By pullback, \Cref{prop:main_component_Pr_open_in_tors_free} induces an open embedding
\[
    \widetilde{\ccM}^{\circ}(\bbP)\subset  \Spec p^*\left(\Symm(\oplus_{i=0}^r\ffF)\right)^{\tf},
\]

which in general need not factor through $\Spec \left(\Symm(\oplus_{i=0}^rp^*\ffF)\right)^{\tf} \not\simeq \Spec p^*\left(\Symm(\oplus_{i=0}^r\ffF)\right)^{\tf}$. Meanwhile, as in the proof of \Cref{prop:main_component_Pr_open_in_tors_free}, we have that
\[
    \tmGw = \Spec \left(\Symm(\oplus_{i=0}^rp^*\ffF)\right)^{\tf} \cap \tGw.
\]

However, $\widetilde{\ccM}^{\circ}(\bbP)$ and $\tmGw$ do give the same invariants. To see this, note that we have commuting morphisms
\[
\begin{tikzcd}
\tmGw\ar[r]\ar[rd,"\overline{p}"']&\widetilde{\ccM}^{\circ}(\bbP)\ar[d, "\overline{r}"]\\
&\mGw
\end{tikzcd}
\]
and 
\begin{equation}\label{two defs}
\overline{p}_*[\tmGw]=\overline{r}_*[\widetilde{\ccM}^{\circ}(\bbP)]=[\mGw].
\end{equation}
\end{remark}

Let $\wpi:\widetilde{\ccC}\to \tGw$ be the universal curve and let $\widehat{q}:\widetilde{\ccC}\to \ccC$ the morphism induced by $q$. The morphism $\widetilde{\mu}$ has a dual perfect obstruction theory given by a morphism
\[
\phi_{\widetilde{\mu}}: \bbT_{\widetilde{\mu}}\to\oplus_{i=0}^r R^{\bullet}\wpi_*\widehat{q}^*\ccL.
\]

This perfect obstruction theory induces a virtual fundamental class

\[[\tGw]^\vir:=\widetilde{\mu}^![\wPic]\in A_*(\tGw),
\]
where $\widetilde{\mu}^!$ is defined as in \cite{manolache2011virtual}.

\begin{remark}
    While $\Pic$ is smooth, $\wPic$ does not need to be smooth. This is not a problem, all we need for a well-defined virtual fundamental class is that $\wPic$ has pure dimension. This is true, since $\Pic$ has pure dimension and $p$ is birational. 
\end{remark}
\begin{proposition}\label{push-forward-p} We have the following equality
\[   (\overline{p})_*[\tGw]^\vir=[\Gw]^{\vir}
    \]
\end{proposition}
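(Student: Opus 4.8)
The plan is to deduce the identity from the compatibility of virtual pullback with proper pushforward, as established in \cite{manolache2011virtual}. Recall that the square \eqref{base change maps} is Cartesian by construction, that $[\Gw]^\vir=\mu^![\Pic]$ is defined using the perfect obstruction theory with dual $\oplus_{i=0}^r R^\bullet\opi_*\ccL$, and that $[\tGw]^\vir=\widetilde{\mu}^![\wPic]$ is defined using the perfect obstruction theory with dual $\oplus_{i=0}^r R^\bullet\wpi_*\widehat{q}^*\ccL$. Everything will then follow from the operator identity $\overline{p}_*\circ\widetilde{\mu}^!=\mu^!\circ p_*$, once the two obstruction theories are shown to be compatible under base change along $p$, and once we know $p_*[\wPic]=[\Pic]$.

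First I would record that $p_*[\wPic]=[\Pic]$. This is immediate since $p$ is proper and birational (\Cref{thm:construction-blowup-stacks}, or directly \Cref{def: desingularization on stacks}), $\wPic$ is integral, and $\wPic$ and $\Pic$ have the same pure dimension because $p$ is birational; hence $p$ has degree $1$ over its image and pushes the fundamental cycle to the fundamental cycle. (The fact that $\wPic$ is pure-dimensional, needed already to make $[\tGw]^\vir$ well defined, is noted in the remark preceding this proposition.)

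Second --- and this is the step I expect to be the main obstacle --- I would verify that the perfect obstruction theory of $\widetilde{\mu}$ is the pullback along $\overline{p}$ of the perfect obstruction theory of $\mu$, as a morphism to the (relative) cotangent complex, not merely as a complex. The universal curve $\widetilde{\ccC}$ over $\tGw$ is the fiber product $\ccC\times_{\Gw}\tGw$ and $\widehat{q}^*\ccL$ is the pullback of $\ccL$; since $\ccL$ is flat over the cone $S(\pi_*\ffL^{\oplus r+1})$, derived base change (which holds for the arbitrary, not necessarily flat, morphism $\overline{p}$) gives a canonical isomorphism $R^\bullet\wpi_*\widehat{q}^*\ccL\simeq L\overline{p}^* R^\bullet\opi_*\ccL$, and the right-hand side is again perfect of amplitude $[0,1]$. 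The compatibility of the obstruction-theory morphisms to the cotangent complexes under this isomorphism is built into the Chang--Li construction of the obstruction theory of a cone over $\Pic$ (and is the reason that the abelian cone, its universal curve, and its obstruction theory are all formed by base change from $\Pic$); one checks this by unwinding the construction of the obstruction theory from the Koszul resolution \eqref{resL} of $R^\bullet\pi_*\ffL$ and the naturality of $L\overline{p}^*$ applied to $\opi^* \to L_{\ccC/S(\pi_*\ffL^{\oplus r+1})}$.

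Finally, with the obstruction theories compatible and $p$ proper, the pushforward--pullback compatibility of \cite{manolache2011virtual} applied to the Cartesian square \eqref{base change maps} yields $\overline{p}_*\circ\widetilde{\mu}^!=\mu^!\circ p_*$ on Chow groups. Evaluating at the fundamental class of $\wPic$ gives
\[
\overline{p}_*[\tGw]^\vir=\overline{p}_*\big(\widetilde{\mu}^![\wPic]\big)=\mu^!\big(p_*[\wPic]\big)=\mu^![\Pic]=[\Gw]^\vir,
\]
as desired. A minor technical point to address along the way is that the virtual pullback formalism of \cite{manolache2011virtual} applies to the affine (hence representable) morphism $\mu$ to the Artin stack $\Pic$ --- this is already used in exactly this form by Chang--Li, and $\mu$ factors through the Noetherian $\Pic^{\rm st}$.
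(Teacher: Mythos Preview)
Your proposal is correct and follows essentially the same route as the paper: establish $p_*[\wPic]=[\Pic]$ from properness and birationality, identify the obstruction theory on $\tGw$ with the pullback of that on $\Gw$ via cohomology and base change, and then invoke a pushforward compatibility result. The only cosmetic difference is that the paper cites Costello's pushforward theorem (via \cite{herr2022costello}) rather than the virtual pullback compatibility of \cite{manolache2011virtual}; your more detailed discussion of why the obstruction theories match is a welcome elaboration of what the paper dispatches in one line.
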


\begin{proof}
 Note that by cohomology and base-change we have that $R^{\bullet}\widetilde\pi_*\overline{p}^*\ccL=\overline{p}^*R^{\bullet}\overline{\pi}_*\ccL$. As $p$ is birational and proper, we have $p_*[\widetilde{\Pic}_k]=[\Pic]$. We now apply Costello's Pushforward theorem (\cite{herr2022costello}) to \Cref{base change maps} and we get
 \[
 \overline{p}_*[\tGw]^\vir=[\Gw]^\vir. \qedhere
 \]
\end{proof}

\subsection{Definition of reduced GW invariants of hypersurfaces in all genera}\label{subsec: def reduced GW}
In this section we define reduced Gromov--Witten invariants for hypersurfaces in projective spaces under Assumption \ref{assum:d,big,2g,minus2}. This is less straight-forward than for projective spaces, since we have no understanding of the geometry of moduli spaces of maps to hypersurfaces.

Let $X$ be a smooth hypersurface on $\bbP^r$ defined by the vanishing of a regular section of $s$ of $\ccO(k)$. 
We have that $\Gwx$ is cut out in $\Gw$ by the vanishing of the section $\pi_*s$ of $\pi_*\ccL^{\otimes k}$ on $\Gw$. If $\pi_*\ccL^{\otimes k}$ is a vector bundle, we can use this to define the virtual fundamental class of $\Gwx$ by virtual pullback.
This generally fails for $g\geq 1$, so we will use the blow-ups we developed to ensure that the restriction of $\pi_*\ccL^{\otimes k}$ to the main component $\mGw$ is locally free.

\begin{construction}\label{def: reduced}
    Let $X\subset\bbP^r$ be a smooth hypersurface of degree $k$.
    Let $p_k\colon\wPic_k \to \Pic$ be any desingularization of $\ffF=R^1\pi_*(\ffL^{*}\otimes\omega_{\pi})$ and $R^0\pi_*\ffL^{\otimes k}$.
 
   We define the main component $\mGwx$ of $\Gwx$ as follows
    \begin{equation}
    \begin{tikzcd}
    \mGwx \arrow[d] \arrow[r] \arrow[dr, phantom,"\ulcorner", very near start] 
    & \ \arrow[d] \mGw \\
    \Gwx\arrow[r]  & \Gw\text{.}
    \end{tikzcd}
    \end{equation} 
    Notice that $\mGwx$ may not be irreducible, but we will still refer to it as the main component. The main component $\mGwx$ does not have a perfect obstruction theory, in the following we fix this problem.
    
    We define 
    \begin{equation}\label{main x}
    \begin{tikzcd}
    \tmGwx \arrow[d] \arrow[r] \arrow[dr, phantom,"\ulcorner", very near start] 
    & \ \arrow[d] \tmGw \\
    \tGwx\arrow[r]\arrow[d]  \arrow[dr, phantom,"\ulcorner", very near start]  & \tGw\arrow[d]\\
    \Gwx\arrow[r,"i"]  & \Gw,  \end{tikzcd}
    \end{equation}
    where $\tmGw$ and $\tGw$ are defined in \Cref{subsec: blowups stable maps Pr}. Since they are defined over any desingularization of $\ffF$, we can in particular replace $p:\wPic\to\Pic$ of \eqref{eq:blowup,pic,moduli,stabe,projective,space} by $p_k:\wPic_k\to \Pic$, which also desingularizes $R^0\pi_*\ffL^{\otimes k}$ for $k$ the degree of the hypersurface. Then \Cref{proper main} implies $\tmGwx$ is proper. 
    
       To sum up, we have the following diagram. Note that some of the squares are not Cartesian (See \Cref{rmk: main component is not pull back}).
       
\begin{equation}\label{eq:ci,tilde,moduli,cube}
\begin{tikzcd}
\tmGwx\arrow[rrr] \ar[rd] \ar[ddd, "\tilde{i}"]\arrow[ddr, phantom,"\ulcorner", near start ]&&& \mGwx\ar[dl] \ar[ddd] \arrow[ldd, phantom,"\urcorner", near start ]\\
&\tGwx \arrow[dr, phantom,"\ulcorner", very near start]\ar[r] \ar[d] &\Gwx \arrow[d,"i"]&\\
&\tGw\ar[r]\ar[dd] \arrow[ddr, phantom,"\ulcorner", very near start] &\Gw\ar[dd]&\\
\tmGw \arrow[crossing over]{rrr} \ar[ru]&&&\mGw \ar[lu]\\
&\wPic_k \arrow[r,"p_k"]  & \Pic\text{.}&
\end{tikzcd}
\end{equation}

We need the following result, whose proof we delay for reasons of exposition until after \Cref{desing dual}.   

     \begin{proposition}\label{cohbc}
     Let $\tilde{i}:\tmGwx\to \tmGw$ as in \eqref{eq:ci,tilde,moduli,cube} and let $\wpi:\widetilde{\ccC}\to \tGw$ be the universal curve. Then, $\wpi_*\widetilde{\ccL}^{\otimes k}$ is locally free on $\tmGw$.
     \end{proposition}
  
    We define
    \begin{equation}\label{eq:defi,red,class,X}
    [\tmGwx]^{\vir}=\tilde{i}^![\tmGw]
    \end{equation}
where $\tilde{i}:\tmGwx\to \tmGw$, as in \Cref{cohbc}.
    Note that for any $1\leq j\leq n$ we have morphisms 
    \[\tGw\to\Gw\stackrel{ev_j}{\longrightarrow}\bbP^r
    \]
    and 
    \[\tGwx\to\Gwx\stackrel{ev_j}{\longrightarrow}X.
    \]
    By abuse of notation we denote both of these compositions by $ev_j$.
\end{construction}

Notice that the definition of the reduced virtual fundamental class in \eqref{eq:defi,red,class,X} does a priori depend on the choice of a desingularization of $\Pic$. The following proposition shows that integration against this class does not depend on the desingularization. The proof of \Cref{invariance-red} is delayed, and we first prove \Cref{factoring morphisms}.

\begin{proposition}\label{invariance-red} Under Assumption \ref{assum:d,big,2g,minus2}, let $p':\wPic'\to\Pic$ and $p'':\wPic''\to\Pic$ be birational proper maps such that 

$((p')^*\ffF)^\tf$, $((p'')^*\ffF)^\tf$, $((p')^*(R^0\pi_*\ffL^{\otimes k}))^\tf$ and $((p'')^*(R^0\pi_*\ffL^{\otimes k}))^\tf$ are locally free. Consider $\tmGwx'$ and $\tmGwx''$ defined analogously to $\tmGwx$ above. Then we have 
\[
\int_{[\tmGwx']^{\vir}}\prod ev^*\gamma_i =\int_{[\tmGwx'']^{\vir}}\prod ev^*\gamma_i \]
\end{proposition}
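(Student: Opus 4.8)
The plan is to reduce to the case where one desingularization dominates the other, and then deduce the equality of invariants from the compatibility of virtual pullback with proper pushforward.

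\emph{Step 1: reduction to a common model.} Using \Cref{factoring morphisms} — or, concretely, by letting $\wPic'''$ be the irreducible component of $\wPic'\times_{\Pic}\wPic''$ dominating $\Pic$, equipped with its reduced structure — I would first produce an integral stack $\wPic'''$ together with proper birational morphisms $a'\colon\wPic'''\to\wPic'$ and $a''\colon\wPic'''\to\wPic''$ over $\Pic$. Applying \Cref{pull back desing} twice, once to $\ffF$ and once to $R^0\pi_*\ffL^{\otimes k}$, shows that $q=p'\circ a'=p''\circ a''\colon\wPic'''\to\Pic$ is again a desingularization of \emph{both} sheaves, hence a legitimate choice in \Cref{def: reduced}; it therefore produces spaces $\tGw'''$, $\tmGw'''$, $\tmGwx'''$ and a reduced class $[\tmGwx''']^\vir$. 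By symmetry it then suffices to prove $\int_{[\tmGwx''']^\vir}\prod ev^*\gamma_i=\int_{[\tmGwx']^\vir}\prod ev^*\gamma_i$; in other words, we may assume from now on that there is a proper birational morphism $a\colon\wPic_2\to\wPic_1$ over $\Pic$ relating the two constructions, with $\wPic_1$ playing the role of $\wPic'$.

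\emph{Step 2: the comparison morphism.} Base changing $a$ along $\tGw_1\to\wPic_1$ produces $\tGw_2\to\tGw_1$. Since $a$ is birational and $\tmGw_j$ dominates $\wPic_j$ (its smooth-domain locus dominates a dense open of $\Pic$, as in the proof of \Cref{prop:main_component_Pr_open_in_tors_free}), this morphism carries the main component into the main component and restricts to a proper birational morphism $\bar a\colon\tmGw_2\to\tmGw_1$ of irreducible stacks of the same dimension; base changing once more along $i\colon\Gwx\to\Gw$ as in \Cref{def: reduced} gives a proper morphism $\bar a_X\colon\tmGwx_2\to\tmGwx_1$, and by construction $ev\circ\bar a_X=ev$. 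Because $\bar a$ is proper and birational between irreducible pure-dimensional stacks, $\bar a_*[\tmGw_2]=[\tmGw_1]$ (alternatively, one may invoke Costello's pushforward theorem \cite{herr2022costello}, exactly as in \Cref{push-forward-p}).

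\emph{Step 3: functoriality and conclusion.} The obstruction theory defining $i^!$ on each $\tmGw_j$ is the pullback of the relative obstruction theory of $i\colon\Gwx\to\Gw$ determined by $\pi_*\ffL^{\otimes k}$, whose pullback to $\tmGw_j$ is locally free by the choice of $\wPic_j$; hence $i^!$ is compatible with $\bar a$. The commutativity of virtual pullback with proper pushforward \cite{manolache2011virtual} then gives
\[
\bar a_{X*}[\tmGwx_2]^\vir=\bar a_{X*}\,i^![\tmGw_2]=i^!\,\bar a_*[\tmGw_2]=i^![\tmGw_1]=[\tmGwx_1]^\vir,
\]
and the projection formula, together with $ev\circ\bar a_X=ev$, yields $\int_{[\tmGwx_2]^\vir}\prod ev^*\gamma_i=\int_{[\tmGwx_1]^\vir}\prod ev^*\gamma_i$. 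Combined with Step~1 this proves the proposition. The step I expect to be the main obstacle is the first one: making sure that the common dominator $\wPic'''$ is genuinely a desingularization of $\ffF$ \emph{and} of $R^0\pi_*\ffL^{\otimes k}$ at once, so that \Cref{def: reduced} and the two-step virtual-pullback construction apply to it verbatim, and verifying in Step~2 that $\bar a$ really does restrict to a birational morphism between the main components rather than merely dominating some component of the possibly reducible $\tGw_1$. Once $\bar a_X$ is in place over a common model, the remaining argument is formal and parallel to the proof of \Cref{push-forward-p}.
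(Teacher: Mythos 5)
Your proposal is correct and follows essentially the same strategy as the paper's proof. Your $\wPic'''$ (the unique irreducible component of $\wPic'\times_{\Pic}\wPic''$ dominating $\Pic$) is the same stack as the paper's $\widehat{\Pic}$ (the closure of the smooth-curve locus in the fiber product), and both arguments then push forward along the two legs of this roof using the compatibility of virtual pullback with proper pushforward from \cite{manolache2011virtual} together with the birationality-properness argument for the maps between main components (the paper invokes \Cref{factoring morphisms} and \Cref{proper main}; you supply the same argument directly). The only cosmetic difference is that you phrase the final step as a ``reduce by symmetry to one dominating map'' while the paper writes out both legs simultaneously in equations \eqref{cartone}--\eqref{roof}; the content is identical.
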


\Cref{invariance-red} permits us to define the reduced Gromov--Witten invariants as they are independent of the blowing-up of $\Pic$.

\begin{definition}\label{def:reduced_GW_invariants}
    For $d>2g-2$, we call \emph{reduced} Gromov--Witten invariants of $X$, the following numbers 
    \[\int_{[\tmGwx]^{\vir}}\prod ev^*\gamma_i. \]
\end{definition}

In order to prove \Cref{invariance-red}, we first prove the following.

\begin{lemma} \label{factoring morphisms} 
    Consider a commutative diagram of Artin stacks
    \[\xymatrix{\widehat{\Pic}\ar[rr]\ar[dr]&&\wPic\ar[ld]\\
    &\Pic}\]
    and let $\widehat{\ccM}_{g,n}(\bbP^r,d)$ and $\tGw$  be the corresponding fiber products $\Gw\times_{\Pic}\widehat{\Pic}$, respectively $\Gw\times_{\Pic}\wPic$.
    \begin{enumerate}
        \item We have a diagram with Cartesian squares
        \[
    \begin{tikzcd}
    \widehat{\ccM}_{g,n}(\bbP^r,d)\ar[r]\ar[d]\arrow[dr, phantom,"\ulcorner", very near start]&\tGw\ar[r]\ar[d] \arrow[dr, phantom,"\ulcorner", very near start]&\Gw\ar[d]\\
    \widehat{\Pic}\ar[r]&\wPic\ar[r]&\Pic.
    \end{tikzcd}
    \]
    
    \item  Suppose that $\widehat{\Pic}$ and $\wPic$ are desingularizations of $\ffF$. Let $\widehat{\ccM}^{\circ}_{g,n}(\bbP,d)$ be the main component of $\widehat{\ccM}_{g,n}(\bbP^r,d)$ in the sense of \Cref{main comp blow up}. Under Assumption \ref{assum:d,big,2g,minus2}, the we have a commutative diagram 

    \[
    \begin{tikzcd}
    \widehat{\ccM}^{\circ}_{g,n}(\bbP,d)\ar[r]\ar[d]&\tmGw\ar[r]\ar[d]&\mGw\ar[d]\\
    \widehat{\Pic}\ar[r]&\wPic\ar[r]&\Pic.
    \end{tikzcd}
    \]
    \end{enumerate}
\end{lemma}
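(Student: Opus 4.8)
\textbf{Proof proposal for Lemma \ref{factoring morphisms}.}

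The plan is to reduce everything to the universal property of fiber products together with the definition of the main component as a closure. For part (1), I would simply observe that both squares are Cartesian by construction: the left square is obtained by base-changing $\Gw\to\Pic$ along $\widehat{\Pic}\to\wPic$, more precisely, $\widehat{\ccM}_{g,n}(\bbP^r,d) = \Gw\times_\Pic\widehat{\Pic} = (\Gw\times_\Pic\wPic)\times_{\wPic}\widehat{\Pic} = \tGw\times_{\wPic}\widehat{\Pic}$, where the middle equality uses the transitivity of fiber products and the hypothesis that $\widehat{\Pic}\to\Pic$ factors through $\wPic$. The right square is Cartesian by the very definition of $\tGw$ in \eqref{base change maps}. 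The horizontal composite is then Cartesian by the pasting lemma for Cartesian squares. This is essentially a formality once the factorization $\widehat{\Pic}\to\wPic\to\Pic$ is given.

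For part (2), the content is to check that the maps on main components exist and make the diagram commute. By part (1), the morphism $\widehat{\ccM}_{g,n}(\bbP^r,d)\to\tGw$ is the base change of the birational proper morphism $\widehat{\Pic}\to\wPic$ (which is birational and proper because both are desingularizations of $\ffF$ over $\Pic$, hence both are birational and proper over $\Pic$, and then $\widehat{\Pic}\to\wPic$ is birational and proper by standard cancellation properties). I would then argue that this morphism sends the main component $\widehat{\ccM}^{\circ}_{g,n}(\bbP,d)$ into $\tmGw$: both are defined as the closure of the locus where the curve is smooth of genus $g$ (see \Cref{main comp blow up}), and the locus of smooth-domain maps in $\widehat{\ccM}_{g,n}(\bbP^r,d)$ maps into the locus of smooth-domain maps in $\tGw$ because the morphism $\widehat{\Pic}\to\wPic$ is an isomorphism over $\Pic^\sm$ (being birational between integral stacks, it restricts to an isomorphism over a dense open, and both desingularizations agree with $\Pic$ generically; more carefully, over $\Pic^\sm$ the sheaf $\ffF$ is already locally free by the argument in \Cref{prop:main_component_Pr_open_in_tors_free}, so by the universal property \Cref{Rossi univ} both $\widehat{\Pic}$ and $\wPic$ restrict to $\Pic^\sm$ over $\Pic^\sm$). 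Since the image of a closure is contained in the closure of the image (as the morphism is continuous and proper, hence closed), we get $\widehat{\ccM}^{\circ}_{g,n}(\bbP,d)\to\tmGw$. The same reasoning gives $\tmGw\to\mGw$, and commutativity of the resulting diagram follows from commutativity of the diagram in part (1) together with the fact that all the vertical maps are restrictions of the structural maps to $\Pic$, $\wPic$, $\widehat{\Pic}$.

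The main obstacle I anticipate is the bookkeeping around the fact that main components are \emph{not} compatible with base change (as emphasized in \Cref{rmk: main component is not pull back}): $\widehat{\ccM}^{\circ}_{g,n}(\bbP,d)$ is the closure of the smooth locus in $\widehat{\ccM}_{g,n}(\bbP^r,d)$, not the pullback of $\tmGw$ or $\mGw$. So I must be careful to phrase everything in terms of closures of the smooth-domain loci and the fact that closed morphisms send closures into closures, rather than invoking any Cartesianness for the main-component squares. The key technical point to verify cleanly is that $\widehat{\Pic}\to\wPic$ restricts to an isomorphism over $\Pic^\sm$; this is where I would invoke that $\ffF$ is locally free over $\Pic^\sm$ (shown in the proof of \Cref{prop:main_component_Pr_open_in_tors_free}) so that both desingularizations are, by their universal property \Cref{Rossi univ}, trivial over that open locus. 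Everything else is diagram chasing.
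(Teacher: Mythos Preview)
Your proposal is essentially correct and follows the same strategy as the paper: part (1) is the pasting lemma for Cartesian squares, and part (2) is the observation that closed maps send closures into closures, applied to the smooth-domain locus.

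One correction is needed. Your invocation of \Cref{Rossi univ} to conclude that $\widehat{\Pic}$ and $\wPic$ are ``trivial over $\Pic^\sm$'' is not justified: Theorem~\ref{Rossi univ} is the universal property of the \emph{Rossi} blow-up $\Bl_\ffF\Pic$, not of an arbitrary desingularization in the sense of Definition~\ref{def: desingularization on stacks}. A general desingularization is only required to be proper and birational with $(p^*\ffF)^\tf$ locally free; nothing prevents it from modifying $\Pic$ further over $\Pic^\sm$. Fortunately this does not matter for your argument. All you actually need is that the smooth-domain locus of $\widehat{\ccM}_{g,n}(\bbP^r,d)$ maps into the smooth-domain locus of $\tGw$, and this is automatic: both moduli spaces sit over $\Pic$ (via the factorization $\widehat{\Pic}\to\wPic\to\Pic$), and the smooth-domain locus is by definition the preimage of the locus in $\Pic$ where the curve is smooth. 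So the morphism from part (1), being over $\Pic$, preserves that preimage; then continuity (or properness, for schematic closures) gives $\widehat{\ccM}^{\circ}_{g,n}(\bbP^r,d)\to\tmGw$ as you say.

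The paper's own proof runs the same idea but routes it through the auxiliary pulled-back main components $\widehat{\ccM}^{\circ}(\bbP^r)$ and $\widetilde{\ccM}^{\circ}(\bbP^r)$ of Remark~\ref{rmk: main component is not pull back}, noting that $\tmGw\hookrightarrow\widetilde{\ccM}^{\circ}(\bbP^r)$ and then using properness of $\widehat{\ccM}^{\circ}(\bbP^r)\to\widetilde{\ccM}^{\circ}(\bbP^r)$ to argue that the closed substack $\widehat{\ccM}^{\circ}_{g,n}(\bbP^r,d)$ lands in $\tmGw$. Your direct argument via ``image of closure $\subseteq$ closure of image'' is equivalent and slightly cleaner.
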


\begin{proof}
    The first statement follows from the fact that the square on the right and the big square are Cartesian. This shows that the square on the left is Cartesian.
    
    For the second statement, we apply point 1 and we consider the following extended diagram in which all squares are Cartesian 
    \[
    \begin{tikzcd}
    \widehat{\ccM}^{\circ}(\bbP^r)\ar[r]\ar[d]\arrow[dr, phantom,"\ulcorner", very near start]&\widetilde{\ccM}^{\circ}(\bbP^r)\ar[r]\ar[d]\arrow[dr, phantom,"\ulcorner", very near start]&\mGw\ar[d]\\
    \widehat{\ccM}_{g,n}(\bbP^r,d)\ar[r]\ar[d]\arrow[dr, phantom,"\ulcorner", very near start]&\tGw\ar[r]\ar[d] \arrow[dr, phantom,"\ulcorner", very near start]&\Gw\ar[d]\\
    \widehat{\Pic}\ar[r]&\wPic\ar[r]&\Pic.
    \end{tikzcd}
    \]
    By the definition of the main component of the moduli space of stable maps in \Cref{def:main moduli stable maps}, we have solid maps in the diagram
    \[
    \begin{tikzcd}
    \widehat{\ccM}^{\circ}_{g,n}(\bbP^r,d)\ar[r, dashed]\ar[d]\ar[rr, bend left]&\tmGw\ar[r]\ar[d]&\mGw\ar[d]\\
    \widehat{\ccM}^{\circ}(\bbP^r)\ar[r]&\widetilde{\ccM}^{\circ}(\bbP^r)\ar[r]&\mGw.
    \end{tikzcd}
    \]
    The dashed arrow is the identity on maps with smooth domain. Since the map $\widehat{\ccM}^{\circ}(\bbP^r)\to\widetilde{\ccM}^{\circ}(\bbP^r)$ is proper it maps closed substacks to closed substacks, and thus the identity map extends to a map \[\widehat{\ccM}_{g,n}(\bbP^r,d)\to\tmGw.\qedhere\]
\end{proof}

\begin{proof}[Proof of Proposition \ref{invariance-red}]
Let $\widehat{\Pic}$ denote the closure inside the fiber product $\wPic'\times_{\Pic}\wPic''$ of locus of smooth curves. We then have a commutative diagram
\[
\begin{tikzcd}
\widehat{\Pic}\ar[r]\ar[d]&\wPic'\ar[d,"p'"]\\
\wPic''\ar[r,"{p''}"]&\Pic.
\end{tikzcd}
\]
We define $\widehat{\ccM}_{g,n}(X,d)$ by the following Cartesian diagram
\[
\xymatrix
{\widehat{\ccM}_{g,n}(X,d)\ar[r]\ar[d]&\Gwx\ar[d]\\
\widehat{\Pic}\ar[r]&\Pic
}
\]
and similarly we define
\[
\begin{tikzcd}
\tGwx'\arrow[r]\ar[d]\arrow[dr, phantom,"\ulcorner", very near start]&\Gwx\ar[d]\\
\wPic'\arrow[r,"p'"]&\Pic
\end{tikzcd}
\begin{tikzcd}
\tGwx''\ar[r]\ar[d]\arrow[dr, phantom,"\ulcorner", very near start]&\Gwx\ar[d]\\
\wPic''\ar[r,"p''"]&\Pic
\end{tikzcd}
\]
Using the notation in \eqref{main x} and \Cref{factoring morphisms}, part 2, we obtain a commutative diagram

\begin{equation}
\begin{tikzcd}
    &\widehat{\ccM}_{g,n}^{\circ}(X,d)\ar[rd, "\widehat{p}''"]\ar[ld, "\widehat{p}'"']\\
\tmGwx'\ar[rd]&&\tmGwx''\ar[ld]\\
&\mGwx
\end{tikzcd}
\end{equation}

By \Cref{proper main} we have that $\widehat{p}'$ and $\widehat{p}''$ are proper. In the following we show that they are virtually birational.

Recall from \Cref{def: reduced} that we have diagrams with Cartesian squares on the left 
\begin{equation}\label{cartone}
\begin{tikzcd}
\widehat{\ccM}^{\circ}_{g,n}(X,d)\arrow[r]\ar[d,"\widehat{p}'"]\arrow[dr, phantom,"\ulcorner", very near start]&\widehat{\ccM}^{\circ}_{g,n}(\bbP^r,d)\arrow[r,"\widehat{\mu}'"]\arrow[d,"r'"]&\widehat{\Pic}\ar[d]\\
\tmGwx'\ar[r,"i'"]&\tmGw'\arrow[r,"\mu'"]&\wPic'
\end{tikzcd}
\end{equation}
and 
\begin{equation}\label{carttwo}
    \begin{tikzcd}
\widehat{\ccM}^{\circ}_{g,n}(X,d)\arrow[r]\ar[d,"\widehat{p}''"]\arrow[dr, phantom,"\ulcorner", very near start]&\widehat{\ccM}^{\circ}_{g,n}(\bbP^r,d)\arrow[r,"\widehat{\mu}'"]\arrow[d,"r''"]&\widehat{\Pic}\ar[d]\\
\tmGwx''\ar[r,"i''"]&\tmGw''\ar[r,"\mu''"]&\wPic''
\end{tikzcd}
\end{equation}

which give

 \begin{equation}\label{i pprime}
 [\widehat{\ccM}_{g,n}^{\circ}(X,d)]^{\vir}=(i')^![\widehat{\ccM}_{g,n}^{\circ}(\bbP^r,d)]\quad \text{and}\quad
[\widehat{\ccM}_{g,n}^{\circ}(X,d)]^{\vir}=(i'')^![\widehat{\ccM}_{g,n}^{\circ}(\bbP^r,d)]. 
\end{equation}
Since $\widehat{\ccM}_{g,n}^{\circ}(X,d)$ and $\tmGw'$ are irreducible and have an isomorphic open subset, we have that $r'$ and $r''$ are birational. By \Cref{proper main} they are also proper and thus we have 
\begin{align}\label{push f pP}
r'_*[\widehat{\ccM}_{g,n}^{\circ}(\bbP^r,d)]=[\tmGw']\quad  \text{ and}\quad
r''_*[\widehat{\ccM}_{g,n}^{\circ}(\bbP^r,d)]=[\tmGw''].
\end{align}
Using \eqref{i pprime}, \eqref{push f pP} and  commutativity of pullbacks with push-forwards in \cref{cartone} and \cref{carttwo}, we get
\begin{align}\label{roof}
\begin{split}
    \widehat{p}'_*[\widehat{\ccM}_{g,n}^{\circ}(X,d)]^{\vir}&=[\tmGwx']^{\vir}\quad \text{\ and}\\
\widehat{p}''_*[\widehat{\ccM}_{g,n}^{\circ}(X,d)]^{\vir}&=[\tmGwx'']^{\vir}.
\end{split}
\end{align}
Intersecting both equations above with $\prod ev^*\gamma_i$ we get the conclusion.
\end{proof}

 In the following we discuss several ways of blowing up $\Pic$ to desingularize $R^0\pi_*\ffL^{\otimes k}$.

 \begin{lemma}\label{lem: factors through tf}
     Given $\ccF$, $\ccG$ sheaves on an integral scheme $X$, with $\ccG$ torsion free and $f:\ccF\to\ccG$ a morphism, we have that $f$ factors through 
     \[\ccF\to \ccF^\tf\to \ccG.\]
 \end{lemma}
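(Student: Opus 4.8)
The plan is to show that the composite $\tor(\ccF)\hookrightarrow \ccF \xrightarrow{f} \ccG$ vanishes; once this is established, $f$ factors through the cokernel $\ccF^\tf = \ccF/\tor(\ccF)$ by the universal property of cokernels, and the resulting factorization $\ccF^\tf\to \ccG$ is unique because $\ccF\to \ccF^\tf$ is an epimorphism of $\ccO_X$-modules.

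Since the statement is local, I would first reduce to the affine case: assume $X=\Spec R$ with $R$ an integral domain, $\ccF=\widetilde{M}$, $\ccG=\widetilde{N}$, and $f$ induced by an $R$-module homomorphism $M\to N$. Here $\tor(M)$ is the submodule of elements annihilated by some nonzero element of $R$, which is well defined and compatible with localization because $R$ is a domain (this is the same observation used in the proof of \Cref{thm: Lipman}), and $M^\tf = M/\tor(M)$. Now take $m\in\tor(M)$ and choose $0\neq r\in R$ with $rm=0$. Then $r\cdot f(m) = f(rm)=0$ in $N$, and since $N$ is torsion free and $r\neq 0$ we conclude $f(m)=0$. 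Hence $f(\tor(M))=0$, so $f$ descends to $\bar f\colon M^\tf\to N$ with $f=\bar f\circ q$, where $q\colon M\to M^\tf$ is the quotient map. These local factorizations are forced, hence unique, so they glue to a global morphism $\ccF^\tf\to \ccG$ through which $f$ factors.

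There is essentially no obstacle here; the only mild point is to confirm that $\tor(\ccF)$ is a coherent subsheaf defined étale-locally (equivalently Zariski-locally, since $X$ is a scheme) by the torsion submodule, and that its formation commutes with the restriction maps — both of which follow from integrality of $X$. Alternatively, one can bypass gluing altogether by running the displayed argument verbatim on sections over each open $U\subseteq X$: for $s\in \tor(\ccF)(U)\subseteq \ccF(U)$ one has $f(s)\in\tor(\ccG(U))=0$, so $f$ kills $\tor(\ccF)$ on sections and therefore factors through $\ccF^\tf$, uniquely.
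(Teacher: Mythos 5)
Your proof is correct and follows the same route as the paper: show that the composite $\tor(\ccF)\to\ccF\to\ccG$ vanishes (using integrality of $X$ and torsion-freeness of $\ccG$) and then invoke the universal property of the quotient. The paper states this in two sentences; you simply spell out the local verification that torsion elements map to zero.
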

 \begin{proof}
     Since $X$ is integral and $\ccG$ is torsion free, we have that the composition ${\rm Tor}(\ccF)\to\ccF\to \ccG$ is zero. The claim now follows from the universal property of quotients.
 \end{proof}
 
 \begin{proposition}\label{desing dual}
   In notation of \Cref{subsec: blowups stable maps Pr}, we have the following:
        \begin{enumerate}
        \item An isomorphism
         \[\psi:(p^*R^0\pi_*\ffL)^\tf\to R^0\widetilde{\pi}_*\widetilde{\ffL}.\]
        \item $\Bl_{R^1\pi_*\ffL^{\otimes -k}\otimes \omega_{\pi}}\Pic$ is a desingularization of $R^0\pi_*\ffL^{\otimes k}$, for any integer $k>0$.
        \end{enumerate}

 \end{proposition}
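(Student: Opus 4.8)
My plan is to reduce part (1) to the identification $R^0\widetilde\pi_*\widetilde\ffL\simeq(p^*\ffF)^\vee$ and then to exploit local freeness of $(p^*\ffF)^\tf$. First I would observe that, since the top direct image of a family of curves commutes with arbitrary base change, there is a canonical isomorphism $p^*\ffF\simeq R^1\widetilde\pi_*(\widetilde\ffL^\vee\otimes\omega_{\widetilde\pi})$. As $\widetilde\ffC\to\wPic$ is the base change of $\ffC\to\Pic$, it carries a relatively ample line bundle, so the proof of \Cref{dual-cone} goes through verbatim over $\wPic$ and yields $R^0\widetilde\pi_*\widetilde\ffL\simeq\bigl(R^1\widetilde\pi_*(\widetilde\ffL^\vee\otimes\omega_{\widetilde\pi})\bigr)^\vee\simeq(p^*\ffF)^\vee$. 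Because $\Hom(-,\ccO)$ annihilates torsion over the integral stack $\wPic$, one has $(p^*\ffF)^\vee=((p^*\ffF)^\tf)^\vee$, which is locally free since $p$ is a desingularization of $\ffF$. On $\Pic$, \Cref{dual-cone} gives $R^0\pi_*\ffL\simeq\ffF^\vee$, and the natural "pull back a sheaf of homomorphisms" morphism $p^*\ffF^\vee\to(p^*\ffF)^\vee$ is an isomorphism over the dense open where $\ffF$ is locally free (there $p$ is an isomorphism and pullback commutes with $\vee$ of a locally free sheaf). As the target is torsion-free, \Cref{lem: factors through tf} lets this morphism factor through a map $\psi\colon(p^*R^0\pi_*\ffL)^\tf\to(p^*\ffF)^\vee=R^0\widetilde\pi_*\widetilde\ffL$, which is generically an isomorphism, hence injective.

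The crux is the surjectivity of $\psi$, which I would verify on a smooth atlas. Under \Cref{assum:d,big,2g,minus2} the bundle $\ffL^\vee\otimes\omega_\pi$ has negative fibre degree over the generic point of $\Pic$, so $R^0\pi_*(\ffL^\vee\otimes\omega_\pi)$ is a torsion sheaf; being also torsion-free (it embeds into a locally free twist, cf.\ \Cref{rem:resol l}), it vanishes. Hence the two-term complex of vector bundles computing $R^\bullet\pi_*(\ffL^\vee\otimes\omega_\pi)$ has injective differential, so $\ffF$ has projective dimension $\leq1$ on $\Pic$; pulling back this length-one resolution preserves exactness (injective differential with locally free source), so $p^*\ffF$ still has projective dimension $\leq1$. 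Then \Cref{thm: Lipman} forces a local splitting $p^*\ffF\simeq(p^*\ffF)^\tf\oplus\tor(p^*\ffF)$, and feeding this splitting into the pulled-back presentation of $\ffF$ and dualizing shows that $p^*\ffF^\vee\to(p^*\ffF)^\vee$ already surjects onto $((p^*\ffF)^\tf)^\vee=(p^*\ffF)^\vee$, so $\psi$ is surjective and therefore an isomorphism. I expect this bookkeeping of the torsion of $p^*\ffF$ through the duality to be the main obstacle.

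For (2), the plan is to deduce it from part (1) applied with $\ffL$ replaced by $\ffL^{\otimes k}$, which is legitimate since $kd\geq d>2g-2$. By \Cref{dual-cone}, $R^0\pi_*\ffL^{\otimes k}\simeq\ffF_k^\vee$ with $\ffF_k:=R^1\pi_*((\ffL^{\otimes k})^\vee\otimes\omega_\pi)$, and as above $\ffF_k$ has projective dimension $\leq1$. The two-term complex representing $R^\bullet\pi_*\ffL^{\otimes k}$ and the one representing $\ffF_k$ have transpose differentials, so $R^1\pi_*\ffL^{\otimes k}$ and $\ffF_k$ share all their Fitting ideals, in particular their first non-vanishing one (the ideal of maximal minors of that differential). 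Since $\ffF_k$ has projective dimension $\leq1$, \Cref{rmk:first_Fitting_not_minimal} identifies $\Bl_{\ffF_k}\Pic$ with the blow-up of that first non-trivial Fitting ideal, so $\Bl_{R^1\pi_*\ffL^{\otimes k}}\Pic=\Bl_{\ffF_k}\Pic$. Applying part (1) to $\ffL^{\otimes k}$ and this blow-up then shows that the torsion-free part of the pullback of $R^0\pi_*\ffL^{\otimes k}$ is locally free; and since the structure morphism of $\Bl_{\ffF_k}\Pic$ is proper and birational with integral source (see \Cref{Rossi univ}), it is a desingularization of $R^0\pi_*\ffL^{\otimes k}$ in the sense of \Cref{def: desingularization on stacks}.
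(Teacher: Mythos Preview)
Your route to part (1) is genuinely different from the paper's: you reduce everything to the canonical comparison $p^*\ffF^\vee\to(p^*\ffF)^\vee$, whereas the paper never dualises $\ffF$ but instead compares $(p^*R^0\pi_*\ffL)^\tf$ and $R^0\widetilde\pi_*\widetilde\ffL$ as subsheaves of the locally free sheaf $p^*R^0\pi_*\ffL(A)\simeq R^0\widetilde\pi_*\widetilde\ffL(A)$ and checks their images there coincide.

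Your surjectivity step, however, has a real gap. The Lipman splitting $p^*\ffF\simeq(p^*\ffF)^\tf\oplus\tor(p^*\ffF)$ does \emph{not} force $p^*\ffF^\vee\to(p^*\ffF)^\vee$ to be surjective. Take $\ffF=I=(x,y)$ on $\bbA^2$ and $p$ the blow-up of the origin. Then $\ffF$ has projective dimension $1$, $(p^*\ffF)^\tf$ is a line bundle, and in the chart $y=xy'$ one finds $p^*I\simeq\ccO\oplus\ccO/(x)$, so all your hypotheses hold. Yet $I^\vee\simeq\ccO$, so $p^*I^\vee\simeq\ccO$, while $(p^*I)^\vee\simeq\ccO$ as well, and the canonical map between them is multiplication by $x$ (the local equation of the exceptional divisor), hence not surjective. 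Thus the abstract implication you invoke is false, and surjectivity of $\psi$ must use something specific to the base-change picture for $R^\bullet\pi_*\ffL$ --- as the paper does via the $A$-twist resolution --- rather than pure duality bookkeeping.

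There is also a problem in part (2). Under \Cref{assum:d,big,2g,minus2} the sheaf $R^1\pi_*\ffL^{\otimes k}$ has generic rank $0$, and its presentation $(E^1)^\vee\to(E^0)^\vee$ has nonzero kernel $R^0\pi_*\ffL^{\otimes k}$, so it does \emph{not} have projective dimension $\le 1$. In particular, the Rossi blow-up of a rank-zero sheaf is the identity, so your claimed equality $\Bl_{R^1\pi_*\ffL^{\otimes k}}\Pic=\Bl_{\ffF_k}\Pic$ via matching first Fitting ideals cannot be right: sharing determinantal ideals is not enough when one of the two sheaves falls outside the projective-dimension-$\le 1$ regime where \Cref{rmk:first_Fitting_not_minimal} applies. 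The paper instead argues directly on $\Bl_{R^1\pi_*\ffL^{\otimes k}}\Pic$, using that $(p^*R^1\pi_*\ffL^{\otimes k})^\tf$ is locally free there, dualising, and then invoking part (1).
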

 
 \begin{proof}
    1. As in \Cref{rem:resol l}, let $A$ a section of $\ffC\to\Pic$ such that $R^1\pi_*(\ffL(A))=0$. By abuse of notation we denote by $A$ the pull back of $A$ to $\widetilde{\ffC}$. We have exact sequences on $\wPic$ which fit into a commutative diagram 
         
         \begin{tikzcd}
              & p^*R^0\pi_*\ffL\ar[r]\ar[d,dashed]& p^*R^0\pi_*(\ffL(A))\ar[r]\ar[d]& p^* R^0\pi_*(\ffL(A)|_A)\ar[r]\ar[d]&  p^*R^1\pi_*\ffL\ar[r]\ar[d]& 0\\
           0\ar[r]& R^0\widetilde{\pi}_*\widetilde{\ffL}\ar[r] &R^0\widetilde{\pi}_*(\widetilde{\ffL}(A))\ar[r]& R^0\widetilde{\pi}_*(\widetilde{\ffL}(A)|_A)\ar[r]&   R^1\widetilde{\pi}_*\widetilde{\ffL}\ar[r]& 0,
          \end{tikzcd}
        where the vertical arrows are obtained by cohomology and base change and the solid arrows are isomorphisms.

       By \Cref{lem: factors through tf}, we have a morphism
        \[ \psi:(p^*R^0\pi_*\ffL)^\tf\to R^0\widetilde{\pi}_*\widetilde{\ffL}\]
        which sits in a commutative diagram
        \begin{equation}\label{eq:factorisation}
        \xymatrix{ p^*R^0\pi_*\ffL\ar[r]& (p^*R^0\pi_*\ffL)^\tf\ar[r]\ar[d]& p^*R^0\pi_*(\ffL(A))\ar[d]\\
        0\ar[r]& R^0\widetilde{\pi}_*\widetilde{\ffL}\ar[r] &R^0\widetilde{\pi}_*(\widetilde{\ffL}(A)).
        }
        \end{equation}
        
        Since the Image of the map $(p^*R^0\pi_*\ffL)^\tf\to p^*R^0\pi_*(\ffL(A))$ is equal to the kernel of $p^*R^0\pi_*(\ffL(A))\to p^* R^0\pi_*(\ffL(A)|_A)$, and the Image of $ R^0\widetilde{\pi}_*\widetilde{\ffL}\to R^0\widetilde{\pi}_*\widetilde{\ffL}(A)$, is equal to the kernel of $R^0\widetilde{\pi}_*(\widetilde{\ffL}(A))\to R^0\widetilde{\pi}_*(\widetilde{\ffL}(A)|_A)$, the isomorphisms in the diagram show that
        \begin{equation}\label{eq:same im}
            {\rm Im}\left((p^*R^0\pi_*\ffL)^\tf\to p^*R^0\pi_*(\ffL(A))\right)\simeq {\rm Im}\left(R^0\widetilde{\pi}_*\widetilde{\ffL}\to R^0\widetilde{\pi}_*(\widetilde{\ffL}(A))\right).
     \end{equation}

         Since $p^*R^0\pi_*\ffL\to p^*R^0\pi_*(\ffL(A))$ and $p^*R^0\pi_*\ffL\to R^0\widetilde{\pi}_*\widetilde{\ffL}$ are generically injective we have that $(p^*R^0\pi_*\ffL)^\tf\to p^*R^0\pi_*(\ffL(A))$ and $(p^*R^0\pi_*\ffL)^\tf\to R^0\widetilde{\pi}_*\widetilde{\ffL}$ are injective. This together with \eqref{eq:same im}  shows that $\psi$ is an isomorphism.
    \\

      2. Without loss of generality we assume that $k=1$. Let $p: \Bl_{R^1\pi_*\ffL^*\otimes\omega_{\widetilde{\pi}}}\Pic\to\Pic$ denote the projection. By cohomology and base change we have $p^*(R^1\pi_*\ffL^*\otimes\omega_{\widetilde{\pi}})\simeq R^1\widetilde{\pi}_*\widetilde{\ffL}^*\otimes\omega_{\widetilde{\pi}}$. With this, we have that $(R^1\widetilde{\pi}_*\widetilde{\ffL}^*\otimes\omega_{\widetilde{\pi}})^{\tf}$ is locally free.
   
    Since  \[\left((R^1\widetilde{\pi}_*\widetilde{\ffL})^\tf\right)^*\simeq (R^1\widetilde{\pi}_*\widetilde{\ffL})^*
     \]
     and $(R^1\widetilde{\pi}_*\widetilde{\ffL}^*\otimes\omega_{\widetilde{\pi}})^{\tf}$ is locally free, we get that $(R^1\widetilde{\pi}_*\widetilde{\ffL})^{*}$ is locally free. 

      By \Cref{dual-cone} we have that $R^0\widetilde{\pi}_*\widetilde{\ffL}\simeq (R^1\widetilde{\pi}_*\widetilde{\ffL}^*\otimes\omega_{\widetilde{\pi}})^*$. This together with the above shows that $R^0\widetilde{\pi}_*\widetilde{\ffL}$ is locally free. The claim now follows from the first part of the proposition. 
 \end{proof}

\begin{proof}[Proof of \Cref{cohbc}] By construction we have that  $(p_k^*\pi_*\ffL^{\otimes k})^\tf$ is locally free on $\wPic$ and by \Cref{desing dual} we have
     \[(p_k^*\pi_*\ffL^{\otimes k})^\tf\simeq  \widetilde{\pi}_*\widetilde{\ffL}^{\otimes k}.\]
     This gives that $\widetilde{\pi}_*\widetilde{\ffL}^{\otimes k}$ is locally free on $\wPic$. Since $\mu: \tmGw\to\wPic$ is smooth, cohomology commutes with base change and thus
     \[
     \mu^*\widetilde{\pi}_*\widetilde{\ffL}^{\otimes k}\simeq \wpi_*\mu^*\widetilde{\ffL}^{\otimes k}.\]
    Since $\mu^*\widetilde{\ffL}\simeq \ccL$ this shows that $\wpi_*\widetilde{\ccL}^{\otimes k}$ is locally free on $\tmGw$.
  \end{proof}  
 
 \begin{proposition}\label{prop: no k}
        In notation 
        of \Cref{subsec: blowups stable maps Pr}, we have that $\Bl_{\ffF}\Pic$ is a desingularization of $R^0\pi_*\ffL^{\otimes k}$.
        \end{proposition}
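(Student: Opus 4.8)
The plan is to verify directly the three defining conditions of \Cref{def: desingularization on stacks} for the morphism $p\colon \wPic:=\Bl_{\ffF}\Pic\to\Pic$ against each of the sheaves $R^0\pi_*\ffL^{\otimes k}$ and $R^1\pi_*\ffL^{\otimes k}$. Since $\Bl_\ffF\Pic$ is already a desingularization of $\ffF$ by \Cref{thm:construction-blowup-stacks} and \Cref{Rossi univ}, the stack $\wPic$ is integral and $p$ is representable, proper and birational; thus conditions (1) and (2) hold automatically, and in each case only condition (3) — local freeness of the torsion-free part of the pullback — has to be checked.

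The sheaf $R^1\pi_*\ffL^{\otimes k}$ is the easy case: under \Cref{assum:d,big,2g,minus2} one has $kd>2g-2$ for every $k\geq1$, so $H^1(C,L^{\otimes k})=0$ for the general (smooth) $(C,L)$; hence $R^1\pi_*\ffL^{\otimes k}$ is a torsion sheaf on the integral stack $\Pic$, its pullback along the dominant morphism $p$ is again torsion, and therefore $(p^*R^1\pi_*\ffL^{\otimes k})^\tf=0$ is locally free.

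The content of the statement concerns $R^0\pi_*\ffL^{\otimes k}$, and here I would first reduce, by the same reasoning as in \Cref{dual-cone} and \Cref{desing dual}(1), to a statement about a Serre-dual sheaf. Put $\ffF_k:=R^1\pi_*\big((\ffL^{\otimes k})^\vee\otimes\omega_{\pi}\big)$, so that $\ffF=\ffF_1$. Since $kd>2g-2$, the Grothendieck-duality computation of \Cref{dual-cone} goes through verbatim for $\ffL^{\otimes k}$; combined with cohomology and base change for the top pushforward, which gives $p^*\ffF_k\simeq R^1\widetilde{\pi}_*\big((\widetilde{\ffL}^{\otimes k})^\vee\otimes\omega_{\widetilde{\pi}}\big)$, and with the diagram chase of \Cref{desing dual}(1), it yields isomorphisms
\[
(p^*R^0\pi_*\ffL^{\otimes k})^{\tf}\;\simeq\;R^0\widetilde{\pi}_*\widetilde{\ffL}^{\otimes k}\;\simeq\;(p^*\ffF_k)^\vee\;=\;\big((p^*\ffF_k)^{\tf}\big)^\vee .
\]
It is therefore enough to show that $p\colon\Bl_\ffF\Pic\to\Pic$ is \emph{also} a desingularization of $\ffF_k$, i.e.\ that $(p^*\ffF_k)^{\tf}$ is locally free; by the universal property \Cref{Rossi univ} this is equivalent to $p$ factoring through $\Bl_{\ffF_k}\Pic$, after which \Cref{pull back desing} finishes the argument. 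For $k=1$ this is automatic, since $(p^*\ffF)^{\tf}$ is locally free by construction of $\Bl_\ffF\Pic$, so the real issue is the case $k\geq 2$.

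The step I expect to be the main obstacle is precisely this $k$-independence of the blow-up. Each $\ffF_k$ is presented by an injective morphism of vector bundles — the analogue of \eqref{dual resolution} for $\ffL^{\otimes k}$ — hence has projective dimension at most $1$, so by \Cref{thm: Lipman} and \Cref{rmk:first_Fitting_not_minimal} its Rossi blow-up is the blow-up of its first non-zero Fitting ideal, equivalently (\Cref{rmk: Rossi blow-up with norm}) of the fractional ideal $\norm{\ffF_k}$. The factorization $\Bl_\ffF\Pic\to\Bl_{\ffF_k}\Pic$ then amounts, by \cite{Moody} (used as in the proof of \Cref{prop:blowup_direct_sum}), to showing that $\norm{\ffF_k}$ divides a power of $\norm{\ffF}$ as fractional ideals on $\Pic$; geometrically this says that the non-locally-free locus $\big\{(C,L):h^1(C,L^{\otimes k})>0\big\}$ of $\ffF_k$, with its Fitting-ideal scheme structure, is dominated by the corresponding locus for $\ffF$. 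I would prove this by a stratum-by-stratum analysis over $\Pic$: on the boundary stratum of a fixed dual graph, $h^1(C,L^{\otimes k})>0$ forces a subcurve $Y$ with $k\deg(L|_Y)\le 2g_Y-2$, and one must convert such an estimate into one of the shape $\deg(L|_{Y'})\le 2g_{Y'}-2$ for a subcurve $Y'$ witnessing $h^1(C,L)>0$, exploiting that the stability constraint \eqref{eq: stability condition} bounds below the degree of $L$ on each rational component. Everything outside this comparison of the Fitting ideals of the $\ffF_k$'s is formal and follows from the results already established.
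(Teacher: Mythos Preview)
Your treatment of $R^1\pi_*\ffL^{\otimes k}$ is correct and in fact cleaner than the paper's: under \Cref{assum:d,big,2g,minus2} this sheaf vanishes on the dense open locus of smooth curves, hence is torsion on the integral stack $\Pic$, and its pullback along the birational $p$ has trivial torsion-free part. The paper instead deduces the $R^1$ statement from the $R^0$ one at the very end, via the resolution \eqref{resL}.

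The gap is in the $R^0$ case, at exactly the step you flag as the main obstacle. Your reduction to local freeness of $(p^*\ffF_k)^{\tf}$ matches the paper's $k=1$ argument, but your proposed proof of $k$-independence---comparing $\norm{\ffF_k}$ and $\norm{\ffF}$ stratum by stratum---remains only a sketch, and the sketch has two real problems. First, the characterization ``$h^1(C,L^{\otimes k})>0$ forces a subcurve $Y$ with $k\deg(L|_Y)\le 2g_Y-2$'' is not correct as stated for nodal curves: the restriction $\omega_C|_Y$ differs from $\omega_Y$ by a twist at the attaching nodes, so the degree inequality must be adjusted, and the stability condition \eqref{eq: stability condition} does not bound $\deg(L)$ below by zero on every component. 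Second, and more seriously, even a correct set-theoretic inclusion of the non-locally-free loci would not give the fractional-ideal divisibility $\norm{\ffF_k}\mid\norm{\ffF}^N$ that Moody's criterion requires; you would need to control the full scheme structures (the Fitting ideals themselves, not just their radicals), and nothing in your outline addresses this.

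The paper's route is entirely different and bypasses any Fitting-ideal computation. Locally on $\Pic$ one chooses an effective divisor $B$ on the universal curve with $\ffL^{\otimes k}\simeq\ffL(B)$; the short exact sequence for $\ffL\hookrightarrow\ffL(B)$, together with the ambient resolution by $\ffL(A)$, produces on the open $U=\{R^1\pi_*\ffL=0\}$ (nonempty by \Cref{assum:d,big,2g,minus2}) precisely the split diagram of \Cref{cor: one implies k}, yielding a morphism $\Bl_{R^0\pi_*\ffL}\Pic\to\Bl_{R^0\pi_*\ffL^{\otimes k}}\Pic$ directly. Combined with the $k=1$ step (which gives $\Bl_\ffF\Pic\to\Bl_{R^0\pi_*\ffL}\Pic$) and \Cref{pull back desing}, this finishes. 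The idea you are missing is this local trivialization $\ffL^{\otimes k}\simeq\ffL(B)$ and the appeal to \Cref{cor: one implies k}.
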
 
  \begin{proof}
  For $k=1$, the statement holds by the proposition above.

In the following we show that $\Bl_{\ffF}\Pic$ is a desingularization of $R^0\pi_*\ffL^{\otimes k}$, for any $k>0$.

Locally we choose $B$ a section such that $\ffL^{\otimes k}\simeq \ffL(B)$. Taking $A$ such that $R^1\pi_*\ffL(A)=0$, we have a diagram
  \[\xymatrix{
 0\ar[r]&R^0\pi_*\ffL\ar[r]^{\cdot A}\ar[d]&R^0\pi_*\ffL(A)\ar[d]^{\cdot B}\\
 0\ar[r]&R^0\pi_*\ffL(B)\ar[r]&R^0\pi_*\ffL(A+B).
 }
 \]
Let $U$ be the subset of $\Pic$, where $R^1\pi_*\ffL=0$. Since we work under the assumption \ref{assum:d,big,2g,minus2}, $U$ is a non-empty open subset. Then on $U$ we have the following exact sequences
\begin{align}\label{split}\notag 0\to R^0\pi_*\ffL\to &R^0\pi_*\ffL(B)\to R^0\pi_*\ffL(B)|_B\to 0\\
0\to R^0\pi_*\ffL(A)\to &R^0\pi_*\ffL(A+B)\to R^0\pi_*\ffL(A+B)|_B\to 0.
\end{align}
By possibly shrinking $U$, the section $A$ may be chosen to avoid $B$. With this, we have that multiplication with $A$ induces an isomorphism
\[R^0\pi_*\ffL(B)|_B\simeq R^0\pi_*\ffL(A+B)|_B.\]
By possibly shrinking $U$, we may assume that $R^0\pi_*\ffL(A)$ and $R^0\pi_*\ffL(A+B)$ are trivial, and that the sequences in \eqref{split} are split. The claim now follows from \Cref{cor: one implies k}.

    \end{proof}
 In genus one, following \cite{Vakil-Zinger-desingu-main-compo-,Hu-Li-GEnus-one-local-VZ-Math-ann-2010}, one can define reduced Gromov-Witten invariants of degree-$k$ hypersurfaces on $\Bl_{\ffF}\Pic$. Below we give a direct proof of this fact. The proof below does not generalise to higher genus.

\begin{proposition}\label{lem: g1 k1 desingularizes k}

    Let $g=1$. Then for every  $k\geq 1$, the sheaf $\widetilde{\pi}_{*}\ev^*\ccO(k)$ is locally free on the main component $\tGwmone$ of $\tGwone$. 

    \end{proposition}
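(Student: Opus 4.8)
The plan is to reduce local freeness of $\widetilde{\pi}_*\ev^*\ccO(k)$ to a cohomology-vanishing statement and then to exploit the geometry of genus-one stable maps. First I would note that $\ev^*\ccO(k)=\widetilde{\ccL}^{\otimes k}$ has constant relative degree $kd$, so the Euler characteristic $\chi(C_x,\ev^*\ccO(k)|_{C_x})=kd+1-g=kd$ is constant over $\tGwmone$. Since $\tGwmone$ is integral, Grauert's theorem together with cohomology and base change then reduces the claim to showing that $x\mapsto h^1(C_x,\ev^*\ccO(k)|_{C_x})$ is constant on $\tGwmone$, and I would in fact aim to prove it vanishes identically. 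Over the dense open substack where the domain is smooth this is \Cref{assum:d,big,2g,minus2}, since a line bundle of degree $kd>2g-2=0$ on a smooth genus-one curve has no higher cohomology; the point is therefore to control $H^1$ along the boundary.

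The second step localizes $H^1$ to the genus-one core. Because $\ev^*\ccO(k)$ is pulled back from $\ccO_{\bbP^r}(k)$ it is globally generated, hence of non-negative degree on every irreducible component of every domain curve. Decomposing a genus-one domain as $C=C_0\cup\bigcup_j T_j$, with $C_0$ the connected arithmetic-genus-one core and the $T_j$ trees of rational curves, the partial-normalization exact sequence together with global generation gives $h^1(C,\ev^*\ccO(k)|_C)=\sum_{C'}h^1(C',\ev^*\ccO(k)|_{C'})$; rational components contribute $0$, and the core contributes $h^1(C_0,\ev^*\ccO(k)|_{C_0})$, which is $0$ unless $\ev$ is constant on $C_0$ and equals $1$ in that case, since then $\ev^*\ccO(k)|_{C_0}\cong\ccO_{C_0}$. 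Hence the proposition is equivalent to the assertion that no point of $\tGwmone$ corresponds to a stable map whose universal map contracts the genus-one core.

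The heart of the matter is this last assertion, and here I would argue with the cone description of the moduli space. By \Cref{prop: open embedding stable maps} and the identification $\ffF^\vee\simeq\pi_*\ffL$ of \Cref{dual-cone}, a map contracting the core lies over a point $(C,L)$ of $\Pic$ with $L|_{C_0}\cong\ccO_{C_0}$, and at such a point $h^0(C,L)=d+1$, so $(C,L)$ lies on the locus where $\ffF=R^1\pi_*(\ffL^\vee\otimes\omega_\pi)$ jumps in rank. By \Cref{proper main}, $\tGwmone$ is open inside the vector bundle $\Spec\Symm(\oplus_{i=0}^r(p^*\ffF)^\tf)$ over $\wPic$, and its fibre over a point $x$ of $\wPic$ above such a jumping point is the annihilator of $\tor(p^*\ffF)_x$ inside $(\oplus_i(p^*\ffF)_x)^\vee\cong H^0(C,L)^{\oplus r+1}$. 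The plan is then to identify this torsion --- using that in genus one the core is a single arithmetic-genus-one subcurve and that the Fitting jump of $\ffF$ has length exactly one --- with the line spanned by the section of $L$ constant along $C_0$, and to deduce that the resulting codimension-$(r+1)$ subspace of $H^0(C,L)^{\oplus r+1}$ contains no base-point-free $(r+1)$-tuple; since $\tGwmone$ is precisely the base-point-free locus inside this torsion-free sub-cone, its fibre over $x$ is empty, so no core-contracting maps occur. Combined with the first two steps this gives $R^1\widetilde{\pi}_*\ev^*\ccO(k)=0$ on $\tGwmone$ and the stated local freeness follows. The hard part will be making the identification of $\tor(p^*\ffF)$ precise and carrying out the base-point-freeness bookkeeping over the possibly singular blow-up $\wPic$; this is exactly where the irreducibility-in-genus of the core and the length-one Fitting jump are used, and it is the failure of these two features for $g\ge 2$ that prevents the argument from extending to higher genus.
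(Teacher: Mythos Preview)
Your reduction to showing $h^1\equiv 0$ on $\tGwmone$ is a valid sufficient condition, but the vanishing you aim for does not hold, and the identification of the torsion that you propose is where the plan breaks down. Core-contracting maps \emph{do} lie in $\tGwmone$: take any one-parameter family of stable maps with smooth domain degenerating to $f_0\colon E_0\cup\bbP^1\to\bbP^r$ constant on $E_0$; by properness of $p\colon\wPic\to\Pic$ the induced family in $\Pic$ lifts to $\wPic$, and the central fibre of the resulting family in $\tGw$ lies in the closure of the smooth locus, hence in $\tGwmone$, yet has $h^1(C_0,f_0^*\ccO(k))=1$. The flaw in your third paragraph is that the one-dimensional torsion in $(p^*\ffF)_x$, and hence the hyperplane $W_x\subset H^0(C_0,L_0)$, depends on the point $x$ in the exceptional fibre of $\wPic$ over $(C_0,L_0)$; for those $x$ arising as limits from the smooth locus, $W_x$ is \emph{not} the hyperplane of sections vanishing on the core, and $W_x^{r+1}$ visibly contains the base-point-free tuple defining $f_0$.

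The paper does not attempt to force $R^1=0$. Its key observation is that for $\widetilde{L}$ non-negative on every component of $\widetilde{C}$ (as holds over the image $Z^\circ$ of $\tGwmone$ in $\wPic$) one has $h^1(\widetilde{C},\widetilde{L})=h^1(\widetilde{C},\widetilde{L}^{\otimes k})$: both are $0$ or $1$ according to whether $\widetilde{L}$ is trivial on the core. Choosing locally a section $A$ of $\widetilde{\ffL}^{\otimes(k-1)}$, the long exact sequence for $0\to\widetilde{\ffL}\to\widetilde{\ffL}^{\otimes k}\to\widetilde{\ffL}^{\otimes k}|_A\to 0$ then truncates to
\[
0\to R^0\widetilde{\pi}_*\widetilde{\ffL}\to R^0\widetilde{\pi}_*\widetilde{\ffL}^{\otimes k}\to R^0\widetilde{\pi}_*(\widetilde{\ffL}^{\otimes k}|_A)\to 0,
\]
since the map on $R^1$'s is an isomorphism fibrewise. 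The outer terms are locally free (the first by \Cref{desing dual}, the third because $A$ is relatively finite), whence so is the middle term. This argument works precisely because it allows $h^1$ to jump, which your approach cannot accommodate.
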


\begin{proof}
    Fix $k\geq 1$. By \Cref{bc}, we need to show that $R^0\widetilde{\pi}_*(\ffL^{\otimes k})$ is locally free over the image $Z^\circ$ of $\tGwone$ in $\wPic$ via the forgetful morphism.

  In a neighbourhood of $(\widetilde{C},\widetilde{L})\in \wPic_1$ we can choose a section $A$ of $\ffL^{\otimes k-1}$. This gives an exact sequence
   \begin{equation}\label{g zero splitting}
      0\to R^0\widetilde{\pi}_*\widetilde{\ffL}\stackrel{\cdot A}{\longrightarrow} R^0\widetilde{\pi}_*(\widetilde{\ffL}^{\otimes k})\to R^0\widetilde{\pi}_*(\widetilde{\ffL}^{\otimes k}|_A)\to  R^1\widetilde{\pi}_*\widetilde{\ffL}\stackrel{\cdot A}{\rightarrow} R^1\widetilde{\pi}_*(\widetilde{\ffL}^{\otimes k})\to 0.
  \end{equation} 
  If $\tilde{L}$ is non-negative on each component of $\tilde{C}$, as is the case in $Z^\circ$, we have the equality
  \[
    h^1(\widetilde{C},\widetilde{L}) = h^1(\widetilde{C},\widetilde{L}^{\otimes k})
  \]
  by a Riemann-Roch computation. Since cohomology in degree 1 commutes with base change and the map $H^1(\widetilde{C},\widetilde{L})\to H^1(\widetilde{C},\widetilde{L}^{\otimes k})$ is an isomorphism, we have that the last arrow in sequence \eqref{g zero splitting} is an isomorphism. This shows that we have a short exact sequence
  
  \[
   0\to R^0\widetilde{\pi}_*\widetilde{\ffL}\to R^0\widetilde{\pi}_*(\widetilde{\ffL}^{\otimes k})\to R^0\widetilde{\pi}_*(\widetilde{\ffL}^{\otimes k}|_A)\to 0.
  \]
    Since $R^0\widetilde{\pi}_*(\ffL^{\otimes k}|_A)$ is locally free and by \Cref{desing dual} the sheaf $R^0\widetilde{\pi}_*\widetilde{\ffL}\simeq (R^1\widetilde{\pi}_*\widetilde{\ffL}\otimes\omega_{\widetilde{\pi}})^*$ is also locally free on $Z^\circ$, we get that $R^0\widetilde{\pi}_*(\widetilde{\ffL}^{\otimes k})$ is locally free.
\end{proof}
 
\begin{remark} \label{rem: various blowups}
Above we denoted by $\wPic$ any desingularization of $\ffF$.  We collect here various blow-ups of interest.
\begin{enumerate}
    \item $\Bl_{\ffF}\Pic$ in the sense of Rossi (see \Cref{subsec:constr_bl_stacks})
    \item $\Bld_{\ffF}\Pic$ in the sense of Hu--Li.
  
\end{enumerate}

We have 
\[\Bld_{\ffF}\Pic\to \Bl_{\ffF}\Pic
\quad \text{and}\quad 
\Bld_{R^1\pi_*\ffL^{\otimes k}}\Bld_{\ffF}\Pic\to \Bl_{R^1\pi_*\ffL^{\otimes k}}\Bl_{\ffF}\Pic.
\]
By \Cref{prop: no k} and by \cref{desing dual} we have 
\[
\Bl_{\ffF}\Pic\to \Bl_{R^1\pi_*\ffL^{\otimes k}}\Pic \to Bl_{R^0\pi_*\ffL^{\otimes k}}\Pic.
\]
\end{remark}

\subsection{Reduced invariants from stable maps with fields}\label{subsec:maps_with_fields}
Reduced invariants are conjecturally related to Gromov--Witten invariants \cite{Zinger-reduced-g1-CY}, \cite[Conjecture 1.1]{Hu-Li-diagonalization}. One of the main difficulties in proving such conjectures is that one needs to understand how to split the virtual fundamental class of a moduli space of stable maps among its irreducible components. What makes this task particularly difficult is that almost nothing is known about the geometry of this moduli space of stable maps.

In genus one and two the existing algebraic proofs \cite{Chang-Li-hyperplane-property, Lee-Li-Oh-quantum-lefschetz, Lee-Oh-reduced-complete-intersections, Lee-Oh-reduced-complete-intersections-2} use an additional well-behaved moduli space of maps with fields \cite{Chang-Li-maps-with-fields}. In view of these conjectures, we discuss blow-ups of maps with fields and we show that the ingredients for the low genus proof are also available in higher genus.

Given $X$ a hypersurface (or more generally a complete intersection) in $\bbP^r$, the associated moduli space of maps with fields has the following features:
    
\begin{enumerate}
    \item in genus one and two it has well-understood geometry, such as local equations and irreducible components (see \cite{Hu-Li-GEnus-one-local-VZ-Math-ann-2010,Hu-Li-genus-two});
    
    \item it has a virtual fundamental class and a localised virtual fundamental class (see \cite{kiem2013localizing}, Definition 3.3); the latter is needed because the space of maps with fields is \emph{not compact};
    
    \item the localised virtual fundamental class is supported on $\Gwx$ and it coincides up to a sign to the virtual fundamental class of $\Gwx$ (see \cite{kiem2013localizing}, Theorem 1.1).
\end{enumerate}
These properties allow us to work with the well-understood moduli space of stable maps with fields instead of $\Gwx$, whose geometry is unavailable. 

In the following we use the Hu--Li blow up the moduli space of maps with fields to define reduced invariants in this context. The main theorem of this section, \Cref{th-blow-up-maps}, shows that the blown-up moduli space of maps with fields has property (1) listed above, and even more, its irreducible components are smooth over their image in $\wPic$. This is a feature of the Hu--Li construction, which does not hold for the Rossi construction. Properties (2) and (3) are automatically satisfied. In future work we will also investigate the (intrinsic) normal cone of the space of maps with fields.

\subsubsection{Review of maps with fields}

We recall the construction and properties of the moduli space of maps with $p$-fields. This space was introduced in \cite{Chang-Li-maps-with-fields} to study higher genus Gromov-Witten invariants of the quintic threefold. \\

In the following we fix $k\in\bbZ$, $k>1$, and we consider the sheaf 
\[\pi_*(\ffL^{\oplus r+1} \oplus (\ffL^{\otimes -k}\otimes \omega_{\ffC/\Pic}))\] 
on $\Pic$ and its corresponding abelian cone
\[
    S\left(\pi_*(\ffL^{\oplus r+1}\oplus (\ffL^{^{\otimes -k}}\otimes \omega_{\ffC/\Pic})\right)=\Spec\Symm R^1\pi_*\left(\left(\ffL^*\otimes \omega_{\ffC/\Pic}\right)^{\oplus r+1}\oplus  \ffL^{\otimes k}\right)\stackrel{\mu^p}{\rightarrow}\Pic .
\]
Recall that, in our notation, we have already imposed on $\Pic= \Pic^{\rm st}_{g,n,d}$ the stability condition in \Cref{eq: stability condition}. Then, the \textit{moduli space of maps with $p$-fields} is defined in \cite[Section 3.1]{Chang-Li-maps-with-fields} as an open in 
the abelian cone
\[
    \Gw^p = S\left(\pi_*(\ffL^{\oplus r+1}\oplus (\ffL^{^{\otimes -k}}\otimes \omega_{\ffC/\Pic})\right)
\]
Therefore, an element of $\Gw^p$ over $(C,L,\underline{s})\in \Gw$ is given by a choice of a section $p\in H^0(C,L^{\otimes -k}\otimes\omega_C)$. 

Consider the Cartesian diagram
\begin{equation}
\begin{tikzcd}
\ccC^p \arrow[d, "\opi^p"] \arrow[r,"\nu^p"] \arrow[dr, phantom,"\ulcorner", very near start] 
& \ \arrow[d,"\opi"] \ccC \\
\Gw^p \arrow[r]  & \Gw\text{.}
\end{tikzcd}
\end{equation}
The complex
\begin{equation}\label{eq:pot_for_p_fields}
\mathbb{E}_{\Gw^p/\Pic}:=R^{\bullet}\opi^p_*(\oplus_{i=0}^r\ccL\oplus \ccL^{\otimes-k}\otimes\omega_{\opi^p})
\end{equation}
is a dual obstruction theory for the morphism $\mu^p$. 
The stack $\Gw^p$ is not proper, but the perfect obstruction theory admits a cosection $\sigma$, that is, a morphism 
\[\sigma \colon h^1(\bbE^*) \to \ccO_{\Gw^p}.
\]
Note that since $\Pic$ s smooth we have that the absolute obstruction of $\Gw^p$ is isomorphic to $h^1(\bbE^*)$ and the cosection lifts (see \cite[Proposition 3.5]{Chang-Li-maps-with-fields}) . This data gives a cosection localised virtual fundamental class 
 $[\Gw^p]^{\vir}_{\sigma}$.

For $X$ a smooth subvariety cut out by any regular section of $\ccO_{\bbP^r}(k)$, \cite[Theorem 1.1]{Chang_Li_invariants} states that

\begin{equation}\label{fields to hypersurface}
 [\Gw^p]^{\vir}_{\sigma}=(-1)^{(r+1)d+1-g} [\Gwx]^{\vir}\in A_{d^{\vir}(\Gwx)}(\Gwx)
\end{equation}
The particular case where $r=4$ and $k=5$ (therefore $X$ is a quintic threefold) was the motivation for introducing $p$-fields in the first place. In \cite[Theorem 1.1]{Chang-Li-maps-with-fields}, the authors proved \Cref{fields to hypersurface} at the level of invariants before it was upgraded to classes in  \cite{Chang_Li_invariants}.

\subsubsection{Blow-ups of maps with fields}\label{sec: blow up maps with fields}
    In this section we discuss a non-minimal blow-up which has good properties. 
    
    In notation as before we consider
     \begin{equation}\label{eq:def,E_k}
    \ffE_k:=R^1\pi_*\ffL^{\otimes k}.
\end{equation}

We define   
\begin{equation}\label{eq:defi,Pic_k}
\wPic_k:=\Bld_{\ffE_k}\Bld_{\ffF}\Pic.
\end{equation}

 Similarly to \Cref{base change maps}, we define $\tGw^p$ as the following Cartesian diagram:
\begin{equation}\label{bl-fields}
\begin{tikzcd}
\tGw^p \arrow[d, "\widetilde{\mu}^p"] \arrow[r, "\overline{p}_k"] \arrow[dr, phantom,"\ulcorner", very near start] 
& \ \arrow[d, "\mu^p"] \Gw^p \\
\wPic_k \arrow[r,"p_k"]  & \Pic\text{.}
\end{tikzcd}
\end{equation}
Since $\tGw^p$ is an open substack of $ \Spec\Symm p_k^*( \ffE_k\oplus\ffF^{\oplus r+1})$, we have a perfect obstruction theory for $\widetilde{\mu}^p$ exactly analogous to that in \eqref{eq:pot_for_p_fields}.

Indeed, let $\wpi:\widetilde{C}^p\to\tGw^p$ be the base-change of the universal curve of $\Gw^p$, and $\widetilde{\ccL}$ be pullback of the line bundle $\ccL$ on $\ccC^p$. The perfect obstruction theory on $\tGw^p$ is given by 
\[
\bbE_{\tGw^p/\wPic_k}=\overline{p}_k^*\mathbb{E}_{\Gw^p/\Pic}=R^{\bullet}\wpi_*(\widetilde{\ccL}^{\oplus r+1}\oplus\widetilde{\ccL}^{\otimes -k}\otimes\omega_{\wpi})
\]
The cosection $\sigma$ induces
\[
\widetilde{\sigma}=\overline{p}_k^*\sigma: h^1(\bbE_{\tGw^p/\wPic_k})\to \ccO_{\tGw^p}.
\]
One then obtains a localized virtual fundamental class as in \cite{Chang-Li-maps-with-fields}, with the additional feature that our $\wPic$ may be singular. The argument there can be adapted to our case with minimal changes. We sketch the argument below, while pointing out the differences.

We do not have an absolute obstruction theory, but only one relative to $\wPic$, so we do not have a cosection of the absolute obstruction theory. The (absolute) cosection is only needed to prove that the intrinsic normal cone is contained in the kernel of the cosection. Instead of lifting the cosection as in \cite{Chang-Li-maps-with-fields}, we work relatively to $\wPic$ and we show that normal cone of $\widetilde{\mu}^p$ is contained in the kernel cone $E(\widetilde{\sigma})$. 

For $\Gw^p$, the fact that the normal cone of $\mu^p$ is contained in $E(\sigma)$ is guaranteed by \cite[Proposition 3.5]{Chang-Li-maps-with-fields}.  Using the diagram in \Cref{bl-fields} and \cite[Remark 3.5]{manolache2011virtual} we have that \[\ffC_{\widetilde{\mu}^p}\hookrightarrow \overline{p}_k^*\ffC_{\mu^p}.
\]
As in \cite[Corollary 4.5]{kiem2013localizing}, it is enough to show that $\ffC_{\widetilde{\mu}^p}\hookrightarrow \overline{p}_k^* E(\sigma)$ on the subset where $\widetilde{\sigma}$ is surjective. With this, we have $E(\widetilde{\sigma})=\overline{p}_k^* E(\sigma)$. This shows that \[\ffC_{\widetilde{\mu}^p}\hookrightarrow E(\widetilde{\sigma}),
\]
as required. The rest of the construction follows as in the classical case. Intersecting the normal cone of $\widetilde{\mu}^p$ with the zero section of $E(\widetilde{\sigma})$, we obtain localised virtual fundamental class $[\tGw^p]_{\widetilde{\sigma}}^{\vir}$ supported on $\Gwx\times_{\Pic}\wPic_k$. See \cite[Section~3]{Chang-Li-maps-with-fields} for details.

Since $\Gw\hookrightarrow \Gw^p$ by zero section, we have $\tmGw\subset \tGw^p$. Under the assumption in \ref{assum:d,big,2g,minus2},
we have that $R^1\pi_*\ccL^{\otimes k}=0$. We define the main component of $\tGw^p$ to be $\tmGw\times_{\tGw}\tGw^p$.
Under assumption \ref{assum:d,big,2g,minus2}, the fibers of the cone \[\tmGw\times_{\tGw}\tGw^p\to \tmGw\] are trivial and $\tmGw$ is indeed a component of $\tGw^p$; in fact, it agrees with the main component of $\tGw^p$ as an abelian cone. Even though geometrically the main components of $\tGw$ and $\tGw^p$ agree, the second one carries a non-trivial virtual structure in the form of an obstruction sheaf which is the restriction of $R^1\wpi_*\widetilde{\ccL}^{\otimes - k}\otimes\omega_{\wpi}$ as well as a cosection. 

\begin{theorem}\label{th-blow-up-maps} 
Denote by $(\tGw^{p,\lambda})_{\lambda \in \Lambda}$ the irreducible components of $\tGw^p$ and $(\tGw^{\theta})_{\theta \in \Theta}$ the irreducible components of $\tGw$. 

  Let
   \begin{align*}\wpi^{p,\lambda}:\widetilde{\ccC}^\lambda&\to \tGw^{p,\lambda}\\
   \tpi^\theta:\widetilde{\ccC}^{\theta}&\to \tGw^{\theta}
   \end{align*} the pull-backs of the universal curve on $\Gw^p$.
   The following statements hold.
    \begin{enumerate}
        \item The morphism $\bar{p}_k$ of \eqref{bl-fields} is proper. 
        \item The irreducible components $\tGw^{p,\lambda}$ and $\tGw^{\theta}$ are smooth over their image in $\wPic_k$. In particular, $\tmGw$ is smooth over $\widetilde{\Pic}_k$.
        \item The sheaf $\wpi^{p,\lambda}_*ev^*\ccO(k)$ is a locally free sheaf on $\tGw^{p,\lambda}$, the sheaf $\tpi^{\theta}_*ev^*\ccO(k)$ is a locally free sheaf on $\tGw^{\theta}$. In particular, $\tpi^{\circ}_*ev^*\ccO(k)$ is a locally free sheaf on $\tmGw$.
    \end{enumerate}
\end{theorem}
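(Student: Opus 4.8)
The plan is to treat the three assertions in turn, using throughout that, by construction of $\wPic_k=\Bld_{\ffE_k}\Bld_{\ffF}\Pic$ and \Cref{theorem:universal_property_diag_stacks}, both $p_k^*\ffF$ and $p_k^*\ffE_k$ are \emph{diagonal} sheaves on $\wPic_k$: the first because $\Bld_\ffF\Pic$ diagonalizes $\ffF$ and diagonality is preserved by the further pullback along $\Bld_{\ffE_k}\Bld_\ffF\Pic\to\Bld_\ffF\Pic$, the second because $\Bld_{\ffE_k}$ is the Hu--Li blow-up along the pullback of $\ffE_k$. Assertion (1) is then immediate: $p_k\colon\wPic_k\to\Pic$ is a composition of two Hu--Li blow-ups, hence representable, proper and birational by \Cref{thm:construction-blowup-stacks}, and $\bar p_k$ is its base change in \eqref{bl-fields}, hence representable and proper.

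For (2) I would first note that $p_k^*\ffF^{\oplus r+1}$ is again diagonal, since a presentation of $\ffF^{\oplus r+1}$ is obtained from a diagonalizable presentation of $\ffF$ by repeating each diagonal entry, and the entries still form a divisibility chain (compare the proof of \Cref{prop:blowup_direct_sum}). By \Cref{thm:cone of diagonalizable sheaf} and \Cref{theorem:cone_as_a_union} — applied on a smooth atlas of $\wPic_k$ and descended — the cones $C(p_k^*\ffF^{\oplus r+1})$ and $C(p_k^*\ffE_k)$ are topologically finite unions of vector bundles, each over an integral closed substack of $\wPic_k$. Since $\tGw$ is open in $C(p_k^*\ffF^{\oplus r+1})$ by \Cref{open in tilde}, every component $\tGw^\theta$ is open in one such vector bundle, hence smooth over its image $Z^\theta\subseteq\wPic_k$, which is integral. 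For the field space, $\tGw^p$ is open in $C(p_k^*(\ffF^{\oplus r+1}\oplus\ffE_k))=C(p_k^*\ffF^{\oplus r+1})\times_{\wPic_k}C(p_k^*\ffE_k)$; a fibre product over $\wPic_k$ of a vector bundle over an integral $Z_a$ with one over an integral $Y_b$ is a vector bundle over $Z_a\times_{\wPic_k}Y_b$, whose irreducible components are vector bundles over the integral components of $(Z_a\times_{\wPic_k}Y_b)_{\red}$. Hence each $\tGw^{p,\lambda}$ is open in a vector bundle over its image $Z^\lambda$, an integral closed substack, and is therefore smooth over $Z^\lambda$. That $\tmGw$ is smooth over $\wPic_k$ is \Cref{proper main}(2).

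For (3), the universal stable map pulls $\ccO(1)$ back to the universal line bundle, so on $\widetilde{\ccC}^\lambda$ one has $ev^*\ccO(k)=\widetilde{\ccL}^{\otimes k}$ with $\widetilde{\ccL}$ the pullback of $\ffL$, and $\wpi^{p,\lambda}_*ev^*\ccO(k)=R^0\wpi^{p,\lambda}_*\widetilde{\ccL}^{\otimes k}$. Choosing $A$ a sufficiently high power of a relatively very ample divisor on $\ffC\to\Pic$ with $R^1\pi_*(\ffL^{\otimes k}(A))=0$, the computation of \Cref{rem:resol l} gives a two-term complex of locally free sheaves $[\,R^0\pi_*(\ffL^{\otimes k}(A))\to R^0\pi_*(\ffL^{\otimes k}(A)|_A)\,]$ on $\Pic$ representing $R\pi_*\ffL^{\otimes k}$, with cokernel $\ffE_k$. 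Pulling this back along $\tGw^{p,\lambda}\to\Pic$ (which factors through $\wPic_k$) and using base change for the Cartesian square over the flat morphism $\pi\colon\ffC\to\Pic$, one obtains a two-term complex of locally free sheaves on $\tGw^{p,\lambda}$ representing $R\wpi^{p,\lambda}_*\widetilde{\ccL}^{\otimes k}$; its cokernel $R^1\wpi^{p,\lambda}_*\widetilde{\ccL}^{\otimes k}$ is, by base change for top cohomology, the pullback along $\tGw^{p,\lambda}\to\wPic_k$ of the diagonal sheaf $p_k^*\ffE_k$, hence diagonal. By \Cref{prop:diagonalizable_definitions_equivalent} the differential of this complex is locally diagonalizable, and since $\tGw^{p,\lambda}$ is integral, \Cref{ker-diag} (applied on a smooth atlas and descended) shows its kernel $R^0\wpi^{p,\lambda}_*\widetilde{\ccL}^{\otimes k}=\wpi^{p,\lambda}_*ev^*\ccO(k)$ is locally free. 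The identical argument with $\tGw^{\theta}$ in place of $\tGw^{p,\lambda}$ proves local freeness of $\tpi^{\theta}_*ev^*\ccO(k)$, and the statement for $\tmGw$ is the special case of the main component.

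The main obstacle is conceptual rather than computational, and it lies in (3): one must realize that it is \emph{diagonality} of $R^1\pi_*\ffL^{\otimes k}$ — precisely what the second, Hu--Li, factor $\Bld_{\ffE_k}$ in the definition of $\wPic_k$ is designed to produce, and which the Rossi blow-up alone does \emph{not} give — that forces the natural presentation of $\wpi^{p,\lambda}_*ev^*\ccO(k)$ to be locally diagonalizable, so that \Cref{ker-diag} can be invoked. The remaining work is bookkeeping: tracking the images $Z^\lambda$, $Z^\theta$ and the vector-bundle structure over them through the fibre product of stratifications in (2), and transporting the scheme-level results \Cref{ker-diag}, \Cref{theorem:cone_as_a_union} and \Cref{thm:cone of diagonalizable sheaf} to the Artin stack $\wPic_k$ by the standard smooth-atlas argument.
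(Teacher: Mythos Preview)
Your proposal is correct and follows essentially the same strategy as the paper's proof: properness of $\bar p_k$ from properness of $p_k$; smoothness of each component over its image via \Cref{theorem:cone_as_a_union} applied to the diagonal sheaves on $\wPic_k$; and local freeness of the pushforward via the two-term resolution of $R\pi_*\ffL^{\otimes k}$, diagonality of its cokernel, and \Cref{ker-diag}. The only notable variation is in (2): the paper applies \Cref{theorem:cone_as_a_union} directly to the direct sum $p_k^*(\ffF^{\oplus r+1}\oplus\ffE_k)$, whereas you decompose the ambient cone as a fibre product $C(p_k^*\ffF^{\oplus r+1})\times_{\wPic_k}C(p_k^*\ffE_k)$ and then stratify each factor separately. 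Your route has the advantage of avoiding the (implicit, and not entirely obvious) claim that the direct sum of two diagonal sheaves is again diagonal; the paper's route is shorter but leans on this unstated step.
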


\begin{proof}
1. We have that $\overline{p}_k$ is proper, as $p_k$ is proper. 

2. Consider the following diagram
\begin{equation}
\begin{tikzcd}
\Spec\Symm p_k^*(\ffF^{\oplus r+1}\oplus\ffE_k) \arrow[d] \arrow[r] \arrow[dr, phantom,"\ulcorner", very near start] 
& \ \arrow[d] \Spec\Symm\ffF^{\oplus r+1}\oplus\ffE_k \\
\wPic_k \arrow[r,"p_k"]  & \Pic\text{.}
\end{tikzcd}
\end{equation}
We have that $\tGw^p$ is an open substack of $ \Spec\Symm p_k^*( \ffE_k\oplus\ffF^{\oplus r+1})$ and by \Cref{theorem:cone_as_a_union} we have that the irreducible components of the stacks \[\Spec\Symm\ \ffF^{\oplus r+1}\oplus\ffE_k \text{\ \ \ and \ \ \ } \Spec\Symm\ffF^{\oplus r+1}
\]
are smooth over their image in $\wPic_k$. 

This shows that $\tGw^{p,\lambda}$ and $\tGw^{\theta}$ are smooth over their image in $\wPic_k$. In particular
\[
\tmGw=\Spec\Symm ( p_k^*\ffF^{\oplus r+1})^\tf \mbox { is smooth over } \wPic_k.
\]

3. Let $\mu^\lambda: \Gw^{p,\lambda}\to\Pic$ be the restriction of $\mu^p$. Let $Z^\lambda$ be the image of $\mu^\lambda$. Let $\pi^\lambda:\ffC^\lambda\to Z^\lambda$ be the restriction of $\pi$. Let $\widetilde{Z}^\lambda$ be the fiber product
\begin{equation}
\begin{tikzcd}
\widetilde{Z}^\lambda \arrow[d] \arrow[r,"p_k^\lambda"] \arrow[dr, phantom,"\ulcorner", very near start] 
& \ \arrow[d] Z^\lambda \\
\wPic_k \arrow[r,"p_k"]  & \Pic\text{.}
\end{tikzcd}
\end{equation}
Let $\tpi^\lambda:\widetilde{\ffC}^\lambda\to Z^\lambda$ be the restriction of 
$\tpi$ and let $q_k^\lambda:\widetilde{\ffC}^\lambda\to \ffC^\lambda$ be the restriction of $q_k$. By commutativity of proper push-forwards with base-change we have that 
\[(p_k^\lambda)^*R^{\bullet}\pi^\lambda_*\ffL\simeq R^{\bullet}\tpi^\lambda_*(q_k^\lambda)^*\ffL\]
Again, cohomology and base-change in the Cartesian diagram
\begin{equation}
\begin{tikzcd}
\widetilde{\ccC}^\lambda \arrow[d, "\wpi^\lambda"] \arrow[r,"\nu^\lambda"] \arrow[dr, phantom,"\ulcorner", very near start] 
& \ \arrow[d,"\tpi^\lambda"] \widetilde{\ffC}^\lambda \\
\tGw^\lambda \arrow[r,"\mu^\lambda"]  & \widetilde{Z}^\lambda\text{.}
\end{tikzcd}
\end{equation}
gives
\begin{equation}\label{bc}
    R^\bullet\wpi^{\lambda}_*ev^*\ccO(k)=(\mu^\lambda)^*R^\bullet\tpi^{\lambda}_*\ffL^{\otimes k}.
\end{equation}
We have a short exact sequence 
\[0\to R^0\tpi^{\lambda}_*\ffL^k\to E^0\stackrel{\phi}{\rightarrow} E^1\to R^1\tpi^{\lambda}_*\ffL^{\otimes k} \to 0.\]
By construction we have that $\phi$ is locally diagonal. By \Cref{bc} we have that 
\[ R^\bullet\wpi^{\lambda}_*ev^*\ccO(k)\simeq [(\mu^\lambda)^*E^0\stackrel{(\mu^\lambda)^*\phi}{\longrightarrow}(\mu^\lambda)^*E^1].\]
Since $(\mu^\lambda)^*\phi$ is locally diagonal, \Cref{ker-diag} implies that $R^0\wpi^{\lambda}_*ev^*\ccO(k)$ is locally free.

A similar argument shows that $\tpi^{\theta}_*ev^*\ccO(k)$ is a locally free sheaf on $\tGw^{\theta}$ and in particular $\tpi^\circ_*ev^*\ccO(k)$ is a locally free sheaf on $\tmGw$.
\end{proof}

\begin{proposition} \label{invariance loc inv}The localised invariants do not depend on the blow-up of $\Pic$, more precisely,
    \[(\overline{p}_k)_*[\tGw^p]_{\tilde{\sigma}}^{\vir}=[\Gw^p]_{\sigma}^{\vir}.\]
\end{proposition}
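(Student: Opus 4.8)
The plan is to mimic the proof of \Cref{push-forward-p}, replacing virtual pullback by its cosection-localised counterpart. Recall that by \eqref{bl-fields} we have a Cartesian square with $\overline{p}_k\colon\tGw^p\to\Gw^p$, $p_k\colon\wPic_k\to\Pic$, $\widetilde{\mu}^p\colon\tGw^p\to\wPic_k$ and $\mu^p\colon\Gw^p\to\Pic$, in which $p_k$ is proper and birational, the relative perfect obstruction theory of $\widetilde{\mu}^p$ is the $\overline{p}_k$-pullback of \eqref{eq:pot_for_p_fields}, and $\widetilde{\sigma}=\overline{p}_k^*\sigma$. As explained in \Cref{sec: blow up maps with fields}, these pullbacks are compatible with the localisation construction: $E(\widetilde{\sigma})=\overline{p}_k^*E(\sigma)$ and $\ffC_{\widetilde{\mu}^p}\hookrightarrow\overline{p}_k^*\ffC_{\mu^p}$, with $\ffC_{\widetilde{\mu}^p}$ landing inside $E(\widetilde{\sigma})$. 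Accordingly, I would first record that both localised classes are instances of a cosection-localised virtual pullback in the sense of \cite{manolache2011virtual} combined with \cite{kiem2013localizing}: writing $(\mu^p)^!_\sigma$ and $(\widetilde{\mu}^p)^!_{\widetilde{\sigma}}$ for the localised virtual pullbacks attached to these data, one has $[\Gw^p]^{\vir}_\sigma=(\mu^p)^!_\sigma[\Pic]$ and $[\tGw^p]^{\vir}_{\widetilde{\sigma}}=(\widetilde{\mu}^p)^!_{\widetilde{\sigma}}[\wPic_k]$.

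The key step is a cosection-localised refinement of Costello's pushforward theorem: for the Cartesian square \eqref{bl-fields}, since $p_k$ (hence $\overline{p}_k$) is proper and the obstruction theory and cosection upstairs are pulled back from downstairs, I claim that
\[
(\overline{p}_k)_*\big((\widetilde{\mu}^p)^!_{\widetilde{\sigma}}\alpha\big)=(\mu^p)^!_\sigma\big((p_k)_*\alpha\big)\qquad\text{for all }\alpha\in A_*(\wPic_k).
\]
To prove this I would combine the functoriality of ordinary refined Gysin maps under proper pushforward — Costello's pushforward theorem in the form of \cite{herr2022costello}, exactly as used in \Cref{push-forward-p} — with the compatibility of cosection-localised Gysin maps with proper pushforward established in \cite{kiem2013localizing}; the identifications $E(\widetilde{\sigma})=\overline{p}_k^*E(\sigma)$ and $\ffC_{\widetilde{\mu}^p}\hookrightarrow\overline{p}_k^*\ffC_{\mu^p}$ recalled above are precisely what lets one match the two constructions term by term, and nothing forces $\wPic_k$ to be smooth, since pure-dimensionality suffices for the refined operations involved.

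Granting the displayed identity, I would finish by taking $\alpha=[\wPic_k]$. Since $p_k$ is proper and birational and $\Pic$, $\wPic_k$ are integral of the same (pure) dimension, $(p_k)_*[\wPic_k]=[\Pic]$, so
\[
(\overline{p}_k)_*[\tGw^p]^{\vir}_{\widetilde{\sigma}}=(\overline{p}_k)_*(\widetilde{\mu}^p)^!_{\widetilde{\sigma}}[\wPic_k]=(\mu^p)^!_\sigma[\Pic]=[\Gw^p]^{\vir}_\sigma,
\]
as claimed. The hardest point is the cosection-localised pushforward identity itself: $\mu^p$ and $\widetilde{\mu}^p$ are projections from opens in abelian cone \emph{stacks}, the base $\wPic_k$ is a possibly singular Artin stack, the source $\Gw^p$ is non-proper, and the pushforward takes place along the proper morphism $\overline{p}_k$ while the classes are supported on $\Gwx\times_\Pic\wPic_k\to\Gwx$. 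Making the comparison rigorous requires either adapting the cone-and-cosection constructions of \cite{kiem2013localizing} to this relative stacky setting, or reformulating the localised virtual pullback as a bivariant class so that Costello/Herr functoriality applies verbatim; once this is in place the remainder is the formal computation above, and the resulting equality is understood in $A_*(\Gwx)$ via the identification \eqref{fields to hypersurface}.
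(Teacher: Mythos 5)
Your approach differs from the paper's in a substantive way, and it contains a gap you yourself flag but do not close. You propose a cosection-localised analogue of Costello's pushforward theorem,
\[
(\overline{p}_k)_*\bigl((\widetilde{\mu}^p)^!_{\widetilde{\sigma}}\alpha\bigr)=(\mu^p)^!_\sigma\bigl((p_k)_*\alpha\bigr),
\]
and then set $\alpha=[\wPic_k]$. The difficulty is that this identity is not available in the references you invoke: \cite{herr2022costello} treats ordinary obstruction theories without cosections, and \cite{kiem2013localizing} constructs the localised Gysin map and proves compatibilities with base change and deformation to the normal cone, but not a Costello-type pushforward identity along a proper degree-one bottom row with cosection data pulled back, over a possibly singular Artin base. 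Supplying this would be a nontrivial new theorem, not a routine combination of the two cited results, so as written your proof has a genuine hole at its crux.

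The paper sidesteps the issue entirely. Instead of pushing forward the cosection-localised class directly, it first establishes the blown-up analogue of the Chang--Li comparison, namely $[\tGw^p]_{\widetilde{\sigma}}^{\vir}=(-1)^{(r+1)d+1-g}[\tGwx]^{\vir}$ (see \eqref{tilde filds to hypersurface}), where $\tGwx=\Gwx\times_{\Pic}\wPic_k$ carries the \emph{ordinary} (non-localised) perfect obstruction theory pulled back from $\Gwx$. This converts the problem into one about ordinary virtual classes, where the ordinary Costello pushforward applies exactly as in \Cref{push-forward-p}, giving $(pr_1)_*[\tGwx]^{\vir}=[\Gwx]^{\vir}$. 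Combining with the original \eqref{fields to hypersurface} then yields the proposition. The price paid is having to adapt the Chang--Li argument to the blown-up setting, which the paper argues is straightforward because all the stacks, obstruction theories, and cosections are pulled back along $p_k$; the payoff is never needing to formulate, let alone prove, a cosection-localised Costello theorem. If you wish to pursue your route, the displayed identity is precisely what you would need to prove first, and that is more than the cited literature hands you.
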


\begin{proof}
    The arguments used in \cite{Chang-Li-maps-with-fields} and its various generalizations to establish \Cref{fields to hypersurface} for the cosection localized virtual fundamental class on the right hand-side can be adapted in a straight-forward manner to establish an analogous result for the class on the left-hand side. That is,
        \begin{equation}\label{tilde filds to hypersurface}
        [\tGw^p]_{\widetilde{\sigma}}^{\vir}=(- 1)^{(r+1)d+1-g}[\tGwx]^{\vir}.
    \end{equation}
    The only issue with adapting the classical proof comes from the fact that the cosection $\widetilde{\sigma}$ does not lift to the absolute obstruction theory. The discussion a the beginning of \Cref{sec: blow up maps with fields} shows how to get around this requirement, since in our set-up all the stacks, obstruction theories and cosections are induced by base-changing the usual ones over the morphism $p_k:\wPic_k\to\Pic$.
    Let
    \[
    \tGwx=\Gwx\times_{\Pic}\wPic_k
    \]
    with $pr_1:\tGwx\to \Gwx$. Now $\tGwx$ can be given a perfect obstruction theory by pulling back that of $\Gwx$, and similarly to \Cref{push-forward-p} we have
    \begin{equation}\label{eq:pushforward_tgwx}
    (pr_1)_*[\tGwx]^{\vir}=[\Gwx]^{\vir}.
    \end{equation}
    Then the result follows from \Cref{eq:pushforward_tgwx}, \Cref{fields to hypersurface} and \Cref{tilde filds to hypersurface}.
\end{proof}

\subsubsection{Reduced invariants from maps with fields}
Let $\ffC_{\tGw^p/\widetilde{\Pic}_k}$ denote the relative intrinsic normal cone of the morphism $\widetilde{\mu}^p:\tGw^p\to\wPic_k$ \cite[Section 7]{Behrend-Fantechi}. We have $\ffC_{\tGw^p/\widetilde{\Pic}}=\cup_i \ffC_i$, where $\ffC_i$ are its (finitely many) irreducible components. By the discussion above, the main component of $\tGw^p$ is $\tmGw$, which is smooth over $\wPic_k$. So there is only one component of the intrinsic normal cone which is supported over an open of $\tmGw$, which we denote by $\ffC_0$.
The cosection-localized virtual fundamental class of $\tGw^p$ is defined as
\[
[\tGw^p]_{\widetilde{\sigma}}^{\vir}=0_{\widetilde{\sigma}}^{!}[\ffC_{\tGw^p/\widetilde{\Pic}_k}]
\]
where $0_{\widetilde{\sigma}}^{!}$ denotes the cosection-localized virtual Gysin pullback.
Then, 
\begin{equation}\label{pfield_decomp}
    \tGw^p]_{\widetilde{\sigma}}^{\vir}=\sum_i [\tGw^p]_i^{\vir}
\end{equation}for $[\tGw^p]_i^{\vir}$ the class corresponding to the component $\ffC_i$. In particular, $[\tGw^p]_0^{\vir}$ is supported over $\tmGwx$.

\begin{proposition}\label{red inv fields}
   Denote by $[\tGw^p]_0^{\vir}$ the cosection-localized virtual fundamental class corresponding to the cone $\ffC_0$.
   We have  
   \[[\tmGwx]^{\vir}=(-1)^{kd-g+1}[\tGw^p]_0^{\vir}\in A_*(\tmGwx).\]
   
\end{proposition}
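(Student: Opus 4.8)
The plan is to compute $[\tGw^p]_0^{\vir}=0_{\widetilde{\sigma}}^{!}[\ffC_0]$ directly and recognise the answer as $(-1)^{kd-g+1}[\tmGwx]^{\vir}$. The whole computation takes place over the main component $\tmGw$, which by \Cref{th-blow-up-maps}(2) is smooth over $\wPic_k$ and, by Riemann--Roch together with \Cref{assum:d,big,2g,minus2}, has the expected dimension relative to $\wPic_k$. Over the dense open substack of $\tGw^p$ lying above smooth curves the $p$-field is forced to vanish, because $\deg(L^{\otimes -k}\otimes\omega_C)=-kd+2g-2<0$; hence near the generic point of $\tmGw$ one has $\tGw^p=\tmGw$, and the first step is to use this, together with the description of the components of $\tGw^p$ and of $\ffC_{\tGw^p/\wPic_k}$ coming from \Cref{th-blow-up-maps} and \Cref{theorem:cone_as_a_union}, to identify $\ffC_0$ with the zero section of the relative obstruction bundle stack $h^1/h^0$ of $\bbE_{\tGw^p/\wPic_k}$ restricted to $\tmGw$.

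Next I would identify the obstruction sheaf and the cosection over $\tmGw$. Restricted to $\tmGw$, the obstruction sheaf $h^1(\bbE_{\tGw^p/\wPic_k})$ splits (up to the quotient by $h^0$, which does not affect the class) into a ``field'' summand and a ``map'' summand coming from $\widetilde{\ccL}^{\oplus r+1}$. By relative Serre duality and \Cref{th-blow-up-maps}(3), the field summand is the locally free sheaf $\bigl(\tpi^{\circ}_*\ev^*\ccO(k)\bigr)^\vee$, of rank $kd-g+1$ by Riemann--Roch. The map summand involves $R^1\widetilde{\pi}_*\widetilde{\ccL}$, which is torsion on the image of $\tmGw$ in $\wPic_k$ by \Cref{prop: no k} together with the smoothness and expected-dimensionality of $\tmGw$ over $\wPic_k$, so it does not contribute to the virtual class. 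Finally, the restriction of $\widetilde{\sigma}$ to $\tmGw$ kills the map summand -- every ``derivative of the defining equation'' term carries a factor of the field, which is $0$ on $\tmGw$ -- and on the field summand it is cup product against the tautological section $\varsigma\in H^0\bigl(\tmGw,\tpi^{\circ}_*\ev^*\ccO(k)\bigr)$ obtained by pushing forward $\ev^*w$, whose zero locus is exactly $\tmGwx$.

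Then the computation is formal: by multiplicativity of localized Euler classes and Kiem--Li's sign rule relating the localized Euler class of a bundle $B$ carrying a cosection in $H^0(B^\vee)$ to the refined top Chern class of $B^\vee$ cut out by that cosection, with sign $(-1)^{\rk B}$, one obtains
\[
[\tGw^p]_0^{\vir}=0_{\widetilde{\sigma}}^{!}[\ffC_0]=(-1)^{kd-g+1}\,e\bigl(\tpi^{\circ}_*\ev^*\ccO(k);\,\varsigma\bigr)\cap[\tmGw].
\]
Since $\tpi^{\circ}_*\ev^*\ccO(k)$ is locally free on $\tmGw$ and $\tmGwx\hookrightarrow\tmGw$ is precisely the zero locus of $\varsigma$, the right-hand Euler class with section is the refined virtual pullback $i^![\tmGw]$, which by \eqref{eq:defi,red,class,X} equals $[\tmGwx]^{\vir}$. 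This yields the proposition; as a consistency check, summing the analogous identities over all components of $\ffC_{\tGw^p/\wPic_k}$ and over all components of $\tGw$ should recover \eqref{tilde filds to hypersurface}.

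The step I expect to be the main obstacle is the first one: pinning down the component $\ffC_0$ and proving rigorously that the cosection-localized virtual class decomposes as $\sum_i 0_{\widetilde{\sigma}}^{!}[\ffC_i]$ with the $i=0$ term equal to the main-component contribution, in a way compatible with the relative set-up over the possibly singular $\wPic_k$ of \Cref{sec: blow up maps with fields}. This requires combining the structure of the irreducible components of the abelian cone $\tGw^p$ over $\wPic_k$ (\Cref{theorem:cone_as_a_union}, \Cref{th-blow-up-maps}) with the excess-intersection formalism of Kiem--Li \cite{kiem2013localizing}, checking in particular that each $\ffC_i$ is supported over a component of $\tGw^p$ smooth over its image and that the localized Gysin map respects this stratification. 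A secondary, more bookkeeping issue is tracking the sign through the Serre-duality identification and Kiem--Li's conventions; comparison with \eqref{tilde filds to hypersurface} serves to fix it.
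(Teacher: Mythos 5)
Your proposal is correct and takes essentially the same route as the paper, which follows Corollary~4.4 and Lemma~4.3 of Chang--Li \cite{Chang-Li-hyperplane-property}: split the obstruction complex $\bbE_{\tGw^p/\wPic_k}$ as $\bbE_1\oplus\bbE_2$ (maps vs.\ fields), identify $\ffC_0$ with $\ccE_1=h^1/h^0(\bbE_1)$ by taking the closure of $\ffC_{\ccU/\wPic_k}$ over the unobstructed open $\ccU$, deduce $0^!_{\widetilde\sigma}[\ffC_0]=0^![0_{\ccE_2}]=e\bigl(R^1\wpi^{p,\circ}_*(\ccL^{\otimes-k}\otimes\omega)\bigr)\cdot[\tmGw]$ using Chang--Li's lemma and the local freeness from \Cref{th-blow-up-maps}(3), and pick up the sign $(-1)^{1-g+kd}$ via Serre duality $R^1\wpi^{p,\circ}_*(\ccL^{\otimes -k}\otimes\omega)\simeq\bigl(R^0\wpi^{p,\circ}_*\ccL^{\otimes k}\bigr)^\vee$. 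One small imprecision in your write-up: $\ffC_0$ is not ``the zero section of $h^1/h^0(\bbE)$'' but rather $\ccE_1\oplus 0\subset\ccE_1\oplus\ccE_2$; the consequence $0^!_{\widetilde\sigma}[\ffC_0]=0^![0_{\ccE_2}]$ is what you want, and your Kiem--Li sign rule is exactly the Serre-duality sign used in the paper.
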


\begin{proof}
    This follows the lines of proof of Corollary 4.4 in \cite{Chang-Li-hyperplane-property}. Let 
    \[
\mathbb{E}_1^{\bullet}:=R^{\bullet}\wpi^p_*(\oplus_{i=0}^r\ccL),\hspace{0.5cm}
\mathbb{E}_2^{\bullet}:=R^{\bullet}\wpi^p_*( \ccL^{\otimes-k}\otimes\omega_{\wpi^p})
\]
and let $\ccE_i=h^1/h^0(\mathbb{E}_i^{\bullet})$ and $\ccE=h^1/h^0(\mathbb{E}^{\bullet})$. We have that $\ccE_i$ is a vector bundle stack on $\tGw^p$ and $\ccE\simeq \ccE_1\oplus \ccE_2$. Let $\ccU$ be the open subset of the main component of $\tGw^p$, with consists of maps with fields with irreducible source. On $\ccU$ we have $R^1\wpi_*f^*\ccO(k)=0$, and thus $\ccU$ is also an open subset of $\tmGw$. Using that $\ccU$ is smooth and unobstructed, we see that $\ffC_{\ccU/\wPic}$ is isomorphic to the vector bundle stack $\ccE_1|_\ccU$. Since the embedding $\ffC_{\ccU/\wPic}\hookrightarrow h^1/h^0(\ccE|_\ccU)$ is
\[(\ccE_1\oplus 0)|_\ccU\hookrightarrow (\ccE_1\oplus \ccE_2)|_\ccU
\]
and $\ffC_{\tGw^p/\wPic}\hookrightarrow\ccE$ is a closed embedding, we get that $\ffC_0\simeq \ccE_1$. By the definition of the localised cosection virtual fundamental class, we get 
\[[\tGw^p]^{\vir}_0=0_{\widetilde{\sigma}}^{!}[\ffC_0]=0^![0_{\ccE_2}],\]
where $0_{\ccE_2}$ is the zero section of $\ccE_2|_{\tmGw}$. By Lemma 4.3 in \cite{Chang-Li-hyperplane-property} with the complex $R^\bullet\wpi^{p,\circ}_* \ccL^{\otimes k}$ and \Cref{th-blow-up-maps}, part 3 we get
\[[\tGw^p]^{\vir}_0=c_{\rm top}(R^1\wpi^{p,\circ}_*( \ccL^{\otimes-k}\otimes\omega_{\wpi^{p,\circ}}))\cdot [\tmGw].
\]
By Serre duality we have that 
\[c_{1-g+kd}(R^1\wpi^{p,\circ}_*( \ccL^{\otimes-k}\otimes\omega_{\wpi^p}))=(-1)^{1-g+kd}c_{1-g+kd}(R^0\wpi^p_* \ccL^{\otimes k})
\]
and thus \[[\tGw^p]^{\vir}_0=(-1)^{1-g+kd}c_{1-g+kd}(R^0\wpi^p_* \ccL^{\otimes k})\cdot [\tmGw].
\]
This proves the claim.
\end{proof}

\begin{conjecture}Let $X$ be a threefold which is a complete intersection in projective space. Then 
\[\deg[\widetilde{\ccM}_{g,n}(\bbP^r,d)^p]_i^{\vir}=c_i\deg[\tGwi]^{\vir},\]
for some $c_i\in\bbQ$ and $g_i<g$.
\end{conjecture}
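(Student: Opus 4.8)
The plan is to decompose $[\tGw^p]_{\widetilde{\sigma}}^{\vir}=\sum_i[\tGw^p]_i^{\vir}$ along the irreducible components $\ffC_i$ of the relative intrinsic normal cone $\ffC_{\tGw^p/\wPic_k}$, and to compute each summand. First I would index the components: since the $p$-fields only contribute a vector-bundle factor, the components $\tGw^{p,\lambda}$ of $\tGw^p$ are in bijection with those $\tGw^{\lambda}$ of $\tGw$, each of which is smooth over its image in $\wPic_k$ by \Cref{th-blow-up-maps}(2). I would attach to each $\lambda$ a \emph{degeneration type}: a genus-$g_\lambda$ ``core'' carrying the full degree $d$, glued along nodes to a collection of contracted subcurves whose total genus is $g-g_\lambda$; the point is that the Hu--Li blow-up $\wPic_k$ separates, over the locus where $R^0\pi_*\ffL$ and $R^0\pi_*\ffL^{\otimes k}$ fail to be locally free, exactly the branches recorded by this data. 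The main component $\tmGw$ has $g_\lambda=g$ and its contribution is already computed in \Cref{red inv fields} as $(-1)^{kd-g+1}[\tGw^p]_0^{\vir}=[\tmGwx]^{\vir}=[\widetilde{\ccM}^\circ_{g,n}(X,d)]^{\vir}$; so from now on I focus on a component with $g_\lambda<g$, and the inequality itself I would obtain from Assumption~\ref{assum:d,big,2g,minus2} together with a Riemann--Roch computation on nodal curves (the failure of $R^0\pi_*\ffL$ to be locally free forces a subcurve of strictly smaller genus to absorb the arithmetic genus).

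\textbf{Contribution of a component as a tautological class.} On each $\tGw^{p,\lambda}$ the relative obstruction theory $R^{\bullet}\wpi^{p,\lambda}_*(\ccL^{\oplus r+1}\oplus\ccL^{\otimes -k}\otimes\omega_{\wpi^{p,\lambda}})$ splits into a ``maps'' summand and a ``fields'' summand, and by \Cref{th-blow-up-maps}(3) combined with Serre duality the fields summand $R^1\wpi^{p,\lambda}_*(\ccL^{\otimes -k}\otimes\omega_{\wpi^{p,\lambda}})$ is locally free; the cosection $\widetilde{\sigma}$ restricts to a surjection onto it and vanishes on the maps summand. Running the degeneration-to-the-normal-cone argument of the proof of \Cref{red inv fields} on the component (using \Cref{theorem:cone_as_a_union} to see that $\ffC_i$ is a sub-vector-bundle-stack of the maps part on which the fields coordinates vanish, and \cite[Lemma 4.3]{Chang-Li-hyperplane-property}), I expect to obtain
\[
[\tGw^p]_i^{\vir}=(-1)^{1-g+kd}\,c_{\mathrm{top}}\!\left(R^0\wpi^{p,\lambda}_*\ccL^{\otimes k}\right)\cap[\tGw^{p,\lambda}],
\]
where $[\tGw^{p,\lambda}]$ carries the virtual structure coming from smoothness over $\wPic_k$ and from $R^1\wpi^{p,\lambda}_*\ccL^{\oplus r+1}$. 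Thus each boundary contribution is the integral of an explicit tautological class over a component that is smooth over $\wPic_k$.

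\textbf{Splitting off the contracted subcurves.} The core of the argument is to identify $\tGw^{p,\lambda}$, up to a proper birational modification (which does not change degrees of tautological classes), with a fibre product built from the genus-$g_\lambda$ reduced space $\widetilde{\ccM}^\circ_{g_\lambda,n'}(\bbP^r,d)$ and moduli of contracted curves $\overline{\ccM}_{h_j,\bullet}$. Concretely, the gluing and forgetful morphisms associated to the degeneration type induce a diagram expressing $\tGw^{p,\lambda}$ over a blow-up of $\Pic_{g_\lambda}$ glued with $\overline{\ccM}_{0,\bullet}$-factors; because the Rossi and Hu--Li constructions are local and compatible with flat pullback (\Cref{prop:map_between_blowups_diagonal}, \Cref{thm:construction-blowup-stacks}), the blow-up $\wPic_k$ restricted to the relevant stratum factors through the corresponding blow-up of the Picard stack on the degree-$d$ genus-$g_\lambda$ piece and trivial factors on the contracted pieces (where $\ffL$ is trivial). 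The tautological class of the previous paragraph then splits along this product: the degree-$d$ factor integrates to $\deg[\tGwi]^{\vir}$ (here $g_i=g_\lambda<g$), while the contracted factors integrate to a Hodge integral over $\prod_j\overline{\ccM}_{h_j,\bullet}$ — a universal rational number $c_i\in\bbQ$ independent of $d$. The complete intersection threefold hypothesis is used precisely here: it makes the virtual dimensions of $[\tGw^p]_i^{\vir}$ and $[\tGwi]^{\vir}$ agree, so that the remaining factor is a number. Summing over components recovers the conjectural formula.

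\textbf{The main obstacle.} The serious difficulty is the identification in the third paragraph: establishing a \emph{splitting formula} for the blow-up $\wPic_k$ along the boundary strata of $\Pic$, i.e. showing that each non-main component $\tGw^{p,\lambda}$ is birational to a product involving $\widetilde{\ccM}^\circ_{g_\lambda,n'}(\bbP^r,d)$ and moduli of contracted curves. This is exactly the step where the genus one and two treatments (Vakil--Zinger, Hu--Li, Lee--Li--Oh) invoke explicit local equations for $\Gw$; replacing these with the intrinsic features of the Rossi/Hu--Li constructions — locality, flat base change, the universal properties of \Cref{Rossi univ} and \Cref{theorem:universal_property_diag_stacks} — and checking that the resulting modifications are proper and birational (so degrees of tautological classes are preserved) requires a careful analysis of how the Fitting ideals of $\ffF$ and $\ffE_k$ restrict to boundary strata of $\Pic$. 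I would expect this to be the technical heart of a future paper.
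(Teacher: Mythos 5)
The statement you were asked to prove is labeled a \emph{conjecture} in the paper, and the paper gives no proof of it: the Remark immediately following it cites the genus-one and genus-two proofs in other works and says nothing more, while the ``Further work'' paragraph of the introduction explicitly defers the general case to a future paper (``We hope to be able to prove \Cref{conj gv} without having explicit equations of $\Gw$... We will address this problem in future work''). So there is no paper proof to compare your attempt against, and any purported proof must be assessed on its own.

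Your sketch is a sensible outline of the strategy the authors themselves gesture at (decompose the cosection-localized class along components of the relative normal cone, compute each piece as an Euler class via the machinery of \Cref{red inv fields}, and then split off contracted subcurves), and you are right that \Cref{th-blow-up-maps}, \Cref{theorem:cone_as_a_union} and \cite[Lemma 4.3]{Chang-Li-hyperplane-property} provide the local ingredients for step two. But you yourself flag the genuine gap, and it is indeed the crux: there is nothing in the paper that establishes the splitting formula of your third paragraph, i.e.\ that each non-main component $\tGw^{p,\lambda}$ is (up to proper birational modification) a fibre product of a lower-genus reduced space $\widetilde{\ccM}^{\circ}_{g_\lambda,n'}(\bbP^r,d)$ with moduli of contracted curves, and that the tautological integrand factors accordingly into $\deg[\tGwi]^{\vir}$ times a universal Hodge integral. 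The universal properties of $\Bl_\ffF\Pic$ and $\blHL_\ffF\Pic$ say how the blow-up interacts with flat base change, but the boundary strata of $\Pic$ are not flat over the lower-genus Picard stacks, so one cannot immediately deduce the required compatibility; this is precisely the analogue of the local-equation analysis in the genus one and two treatments that the paper aims to avoid and has not yet replaced. A smaller point: you assert that the irreducible components of $\tGw^p$ are in bijection with those of $\tGw$ because ``the $p$-fields only contribute a vector-bundle factor''; but $\ffE_k$ is an additional (generally non-locally-free) summand in $\Symm(\ffF^{\oplus r+1}\oplus\ffE_k)$, and \Cref{theorem:cone_as_a_union} shows the components of the abelian cone depend on the joint Fitting stratification of the whole sheaf, so this bijection requires an argument (e.g.\ that every Fitting divisor of $\ffE_k$ already appears among those of $\ffF$, which is plausible under the blow-up $\wPic_k = \blHL_{\ffE_k}\blHL_\ffF\Pic$ but not automatic). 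In short: the sketch is directionally right and honest about its gap, but it does not constitute a proof, and the paper does not contain one either.
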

\begin{remark}
    The conjecture has been proved for genus one \cite{Zinger-reduced-g1-CY,Zinger-reduced-g1-GW,Zinger-standard-vs-reduced}, \cite{Chang-Li-hyperplane-property}, \cite{Lee-Oh-reduced-complete-intersections,Lee-Oh-reduced-complete-intersections-2} and genus two \cite{Lee-Li-Oh-quantum-lefschetz}. 

    In genus $g=1$ and $X$ a Calabi--Yau threefold, the conjecture translates into
    \[\deg[\overline{\ccM}_{1,n}(X,d)]=\deg[\widetilde{\ccM}^\circ_{1,n}(X,d)]^\vir+\frac{1}{12}\deg[\overline{\ccM}_{0,n}(X,d)]^\vir.\]
\end{remark}

\section{Generalizations}\label{sec:generalization}

The constructions above also work for quasi-maps. We first summarise some of the results in \cite{C-FK,C-FKM} and then extend the definition of reduced invariants to quasi-maps and to more general targets.

\subsection{Stable quasi-maps to GIT quotients of vector spaces}\label{recalling quasimaps}

Let $C$ be a scheme. 
 Let $V$ be a vector space and $G$ a reductive algebraic group acting on $V$. For $P$ a principal $G$-bundle on $C$ we define the associated $V$-bundle a
 \[
 P\times_GV:= (P\times V)/G.
 \]
 This is a bundle with fiber $V$ over $C$.
 The quotient is taken by considering the action $g\cdot(p,v)=(p\cdot g^{-1},g\cdot v)$.

A morphism from $C$ to the stack quotient $[V/G]$ corresponds to a $G$-equivariant morphism $P\to V$ from some $G$-torsor $P$ over $C$ to $V$. That is, maps $C\to[V/G]$ correspond to sections of the vector bundle $P\times_G V\to C$.

Let $\chi(G)$ be the character group of $G$ and $\theta\in \chi(G)$ a fixed character. The character $\theta: G \to \bbC^*$ determines a one-dimensional representation $\bbC_{\theta}$ of $G$, hence a linearization of the trivial line bundle on $V$, which we denote by $L_{\theta}$. This is used to construct a GIT quotient
\[ X=V\GIT_{\theta}G
\]
 which we assume to be proper. The GIT quotient comes with a polarization \[V^s\times_G(L_\theta\mid_{V^s})=:\ccO(\theta)\to V\GIT_{\theta}G.\] 
Multiples of the chosen character define the same underlying GIT quotients with a multiple of the polarization. 

Let
\[V^s = V^s(\theta)\quad \text{and}\quad V^{ss}=V^{ss}(\theta)
\]
be the open subsets of stable (respectively, semistable) points determined by $L_{\theta}$.
We also assume:
\begin{enumerate}
\item $\emptyset\neq V^s=V^{ss}$ and
\item G acts freely on $V^s$.
\end{enumerate}

Recall that for $(C,P,u)$ a curve with a $G$-torsor $P$ and a section $u$ of the associated bundle $P\times_{G}V$, we say that $(P,u)$ has ``class'' $\beta\in \Hom_{\bbZ}(\mathrm{Pic}([V/G]),\bbZ)$ if $\beta$
is the composite map 
\[
\mathrm{Pic}([V/G])\xrightarrow{u^*}\mathrm{Pic}(C)\xrightarrow{\deg}\bbZ.
\]

\begin{definition} Let $X$ be as before, $n,g$ be positive integers and $\beta\in \Hom_{\bbZ}(\mathrm{Pic}([V/G]),\bbZ)$. An $n$-pointed, genus $g$ quasimap of class $\beta$ to $X$
consists of the data
\[(C,p_1,\ldots, p_n,P,u),\]
where
\begin{enumerate}
\item $(C, p_1,\ldots, p_n)$ is a connected, at most nodal, $n$-pointed projective curve of genus $g$,
\item $P$ is a principal $G$-bundle on $C$,
\item $u$ is a section of the induced vector bundle $P\times_G V$ on $C$, such that $(P, u)$ is of class $\beta$,
\end{enumerate}
satisfying the following generic nondegeneracy condition:
\begin{itemize}
\item there is a finite (possibly empty) set $B\subset C$ such that
for every $p \in C \backslash B$ we have $\widetilde{u}(p)\in [V^s/G]\subset [V/G]$, where $\widetilde{u}:C\to [V/G]$ is the map induced by $u$.  
\end{itemize}

The quasimap $(C, p_1,\ldots, p_n, P, u)$ is called prestable
if the set of base points $B$ is disjoint from the nodes and markings on C.
\end{definition}

In \cite[Def. 7.1.1]{C-FKM}, the authors define the length of the prestable quasimap $\q$ at the point $x\in C$, as follows:

 \begin{definition}

The length of a prestable quasimap $(C,p_1,\ldots,p_n,P,u)$ to $V\GIT_\theta G$ at $x\in C$  is
    \begin{equation}\label{eq: CFKM_definition_length}
        \ell(x)\coloneqq \min \left\{\frac{\ord_{x}(u^*s)}{m} \colon s\in H^0(V,L_{m\theta})^G, u^*s\neq 0, m>0\right\}.
    \end{equation}
   \end{definition}
 
 \begin{definition}\label{estab} Given a positive rational number $\epsilon$, a quasimap $(C, p_1,\ldots, p_k, P, u)$ is called $\epsilon$-stable if it is prestable and 
    \begin{enumerate}
    \item the line bundle $\omega_C(\sum_{i=1}^n p_i)\otimes \ccL^{\epsilon}$, where
\[
\ccL:=u^*(P\times_GL_{\theta})
\]
is ample 
\item $\epsilon\ell(x)\leq 1$, for any $x\in C$.
\end{enumerate}
\end{definition}
\begin{theorem}\cite[Theorem 5.2.1, Theorem 7.1.6]{C-FKM} The moduli space of $\epsilon$-stable quasimaps $\Qx$ is a proper DM stack with a perfect obstruction theory.
\end{theorem}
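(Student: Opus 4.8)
Since this statement is precisely \cite[Theorem 5.2.1 and Theorem 7.1.6]{C-FKM}, the plan is only to recall the structure of the argument and to indicate where the real work lies. First I would realise the moduli functor as an open substack of an algebraic stack. A prestable quasimap is the same datum as a prestable $n$-pointed curve $(C,p_1,\dots,p_n)$ together with a principal $G$-bundle $P$ and a section $u$ of $P\times_G V$ whose generic value lies in the stable locus $V^s=V^{ss}$; equivalently, a map $C\to[V/G]$ of class $\beta$. The Artin stack $\Bun_{G,g,n}$ parameterising the triples $(C,p_i,P)$ is smooth and locally of finite type, and over it the sections $u$ form the abelian cone $\Spec\Symm(\cdots)$ of the pushforward of $P\times_G V$ along the universal curve, in the same spirit as the description of $\Gw$ as an open in an abelian cone over $\Pic$ in \Cref{subsec:stable_maps}. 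The generic nondegeneracy condition is open, so prestable quasimaps form an algebraic stack; imposing the two conditions of \Cref{estab} --- ampleness of $\omega_C(\sum p_i)\otimes\ccL^\epsilon$ and $\epsilon\,\ell(x)\le 1$ for all $x$ --- cuts out the open substack $\Qx$, using that the length function $\ell(x)$ of \Cref{recalling quasimaps} is upper semicontinuous in families (\cite[\S 7.1]{C-FKM}).

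Next I would verify that $\Qx$ is Deligne--Mumford and of finite type. Boundedness: fixing $\beta$, $g$, $n$ and $\epsilon$ bounds $\deg\ccL$, and the ampleness condition then bounds the dual graph of $C$, so the family is bounded; since $G$ acts freely on $V^s$ and the $\epsilon$-stability condition rigidifies the components on which $u$ is constant, all automorphism groups are finite and unramified, which gives the Deligne--Mumford property. Properness is the valuative criterion: given an $\epsilon$-stable quasimap over a punctured trait, one extends, after a finite base change, the curve and the $G$-bundle to a prestable family over the whole trait, extends the section rationally, and then modifies the central fibre by a sequence of blow-ups and contractions of rational tails so that the limit is prestable, has base points away from the nodes and markings, and satisfies $\epsilon\,\ell(x)\le 1$; uniqueness of this limit yields separatedness. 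This is the technical core of \cite[Theorem 5.2.1]{C-FKM}, and it is the step I expect to be the main obstacle: producing the limiting quasimap is a delicate balancing of the base-point order $\ell(x)$ against the instability introduced by contracting tails, and essentially all of the combinatorial content of the proof is concentrated there.

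Finally, the perfect obstruction theory. Because $X=V\GIT_\theta G$ is a GIT quotient of a vector space by a reductive group, $[V/G]$ is a smooth Artin stack, and a quasimap is a map $f\colon C\to[V/G]$. Working relative to the smooth Artin stack $\Bun_{G,g,n}$, the complex $E^\bullet:=\bigl(R\pi_* f^*T_{[V/G]}\bigr)^\vee$ is perfect of amplitude $[-1,0]$ by cohomology and base change, since the fibres of $\pi$ are curves, and the canonical morphism $E^\bullet\to L_{\Qx/\Bun_{G,g,n}}$ is an obstruction theory by the standard deformation-theoretic argument for maps to a smooth target. Composing with the smooth morphism $\Bun_{G,g,n}\to\Spec\bbC$, which has pure dimension, produces the absolute perfect obstruction theory, and hence a virtual class $[\Qx]^{\vir}$. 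This part is \cite[Theorem 7.1.6]{C-FKM}.
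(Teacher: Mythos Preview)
The paper does not give its own proof of this theorem: it is stated with the citation \cite[Theorem 5.2.1, Theorem 7.1.6]{C-FKM} and no proof environment follows. Your proposal correctly identifies this and offers a faithful sketch of the argument in \cite{C-FKM}, so there is nothing further to compare.
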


Let $\Bun$ denote the moduli stack of $G$-bundles over prestable curves, let $\pi:\ffC\to \Bun$ be the universal curve and $\ffP$ the universal principal bundle on $\ffC$. Let $\ffL:=\ffP\times_GL_{\theta}$ and let $\Bun^{\rm st,\epsilon}$ be open locus in $\Bun$ where
\[
    \omega_{\pi}(\sum p_i)\otimes\ffL^{\epsilon}
\]
is ample. 
By \cite[Theorem 3.2.5, and Section 4.2]{C-FKM}, we have an open substack $\Bun^{\rm st,\epsilon}_{\beta}\subset \Bun$, such that the forgetful morphism $\Qx\to \Bun$ factors through $\Bun^{\rm st,\epsilon}_{\beta}$ and such that $\Bun^{\rm st,\epsilon}_{\beta}$ is of finite type.

As in \Cref{dropindex}, we fix $g,n,\beta, \epsilon$ and we omit the index from the notation.

 Let $\ccV_{\ffP}=\ffP\times_G V$ be the associated vector bundle on $\ffC$. Let 
\[
\ffF:=R^1\pi_*(\ccV_{\ffP}^{*}\otimes \omega_{\pi})\] 
on $\Bun$. By definition we have that
\[
\Qx\subset \Spec\Symm \ffF\]
is an open subset obtained by imposing the open conditions in \Cref{estab}.
\begin{remark}
    Particular examples of this construction are:
    \begin{itemize}
    \item toric varieties, with $\Bun\simeq \Pic_{g,n,d_1}\times_{\ffM_{g,n}} \cdots\times_{\ffM_{g,n}}\Pic_{g,n,d_k}$ and $k$ the rank of ${\rm Pic}(X)$;
        \item Grassmannians $\Gr(k,r)$, with $\Bun$ the moduli space of rank $k$-bundles over prestable curves;
        \item complete intersections in projective spaces;
    \end{itemize}
\end{remark}
\subsection{Reduced invariants for quasimaps to GIT quotients}\label{subsec: general}
In the following we define reduced invariants for the targets in the previous section. 

As in \Cref{subsec: def reduced GW}, we define
\begin{equation}
\wBun:=\Bl_{\ffF}\Bun\xrightarrow{p}\Bun.
\end{equation}
By construction $(p^*\ffF)^{\tf}$ is locally free. We define $\tQx$ by the Cartesian diagram
\begin{equation}
\begin{tikzcd}\label{blq}
\tQx \arrow[d, "\widetilde{\mu}"] \arrow[r,"\overline{p}"] \arrow[dr, phantom,"\ulcorner", very near start] 
& \ \arrow[d, "\mu"] \Qx \\
\wBun \arrow[r,"p"]  & \Bun\text{.}
\end{tikzcd}
\end{equation}
One can see that we have an open embedding
\[
\tQx\hookrightarrow  \Spec\Symm _{\wBun}\left( p^*\ffF\right)=p^*\Spec\Symm_{\Bun} (\ffF).
\]

Let $\overline{\pi}\colon \ccC\to \Qx$  be the universal curve over $\Qx$, $\ccP$ be the universal $G$-torsor over $\ccC$ and $\mathcal{V}_{\ccP}=\ccP\times_G V$. Similarly, define ${\ccV}_{\widetilde{\ccP}}$, using $\widetilde{\ccP}$ on $\widehat{\pi}\colon \widetilde{\ccC}\to \tQx$. By \cite{Chang-Li-maps-with-fields} the morphism $\mu$ has a perfect obstruction theory equal to $R^{\bullet}\opi_*\ccV_{\ccP}$. By \Cref{blq} the morphism $\widetilde{\mu}$ has a dual perfect obstruction theory given by 
\[
\phi_{\widetilde{\mu}}: \bbT_{\widetilde{\mu}}\to
R^{\bullet}\widehat{\pi}_*{\ccV}_{\widetilde{\ccP}}.
\]

As before, these perfect obstruction theories induce virtual fundamental classes 
\[[\Qx]^\vir:=\mu^![\Bun]\in A_*(\Qx)
\]
and 

\[[\tQx]^\vir:=\widetilde{\mu}^![\wBun]\in A_*(\tQx).
\]
The following is a generalisation of Assumption \ref{assum:d,big,2g,minus2}.
\begin{assumption}\label{assum:general}
In the following we assume $\beta\in \Hom_{\bbZ}(\mathrm{Pic}([V/G]),\bbZ)$ is such that $$H^1(C,P\times_G V)=0$$ for any $C$ smooth.  
\end{assumption}
From now on we work under the Assumption \ref{assum:general}. Let $\mQx$ be the closure the locus in $\Qx$ which consists of points $(C,p_1,\ldots p_n, P,u)$, with $C$ a smooth curve. We call this the main component of $\Qx$. We define the main component $\tmQ$ of $\tQx$ as the closure of the locus which consists of points $(C,p_1,\ldots p_n, P,u)$, with $C$ a smooth curve. Note that this makes sense even when $\wPic$ does not have a modular interpretation, since $p$ is birational. Note that we have a commutative diagram
\begin{equation}
\begin{tikzcd}
\tmQ \arrow[d, "\widetilde{\mu}^\circ"] \arrow[r,"\overline{p}"]  
& \ \arrow[d, "\mu^\circ"] \mQx \\
\wBun \arrow[r,"p"]  & \Bun\text{.}
\end{tikzcd}
\end{equation}
By \Cref{assum:general}, $\widetilde{\mu}^\circ$ is a smooth morphism, in fact it is just the projection of a geometric vector bundle. This makes $\tmQ$ a smooth stack over $\wBun$, and thus an equidimensional DM stack, so we define

\[
[\tmQ]^\vir:=[\tmQ]\in A_*(\tmQ).
\]

\begin{definition}\label{def:reduced GW general}
    Let $\gamma_i\in A^*(X)$, we call \emph{reduced} quasimap invariants of $X$, the following numbers 
    \[\int_{[\tmQ]^{\vir}}\prod ev^*\gamma_i. \]
\end{definition}

We deduce the following proposition analogous  to \Cref{invariance-red} which states that these invariants do not depend of the choice of the blow-up.

\begin{proposition}\label{prop:invariance-red,complete,intersection} 
Let $p':\wBun'\to\Bun$ and $p'':\wBun''\to\Bun$ be birational projective maps such that $(p'^*\pi_*\ffF)^\tf$ (resp. $(p''^*\pi_*\ffF)^\tf$) and for any $i\in\{1,\ldots,m\}$. Consider $\tmQ'$ and $\tmQ''$ defined analogously to $\tmQ$ above. Then we have

\[
\int_{[\tmQ']^{\vir}}\prod ev^*\gamma_i =\int_{[\tmQ'']^{\vir}}\prod ev^*\gamma_i. \]

\end{proposition}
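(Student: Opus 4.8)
The plan is to follow the same strategy as the proof of \Cref{invariance-red}, replacing the moduli of stable maps by the moduli of $\epsilon$-stable quasimaps and the Picard stack by $\Bun$. First I would form a common refinement: let $\widehat{\Bun}$ denote the closure inside the fiber product $\wBun'\times_{\Bun}\wBun''$ of the locus of smooth curves, which comes with birational projective maps to $\wBun'$ and $\wBun''$ fitting into a commutative square over $\Bun$. I would then base change $\Qx$ along $\widehat{\Bun}\to\Bun$ to obtain $\widehat{\ccQ}^\epsilon_{g,n}(X,\beta)$, together with its pullback obstruction theory and virtual class; since $\widehat{\Bun}\to\Bun$ is proper and birational, Costello's pushforward theorem (as in \Cref{push-forward-p}) gives $\widehat{p}'_*[\tQx']^\vir$-type identities. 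The main component $\widehat{\ccQ}^{\circ}_{g,n}(X,\beta)$ is defined, as always, as the closure of the locus of quasimaps with smooth source.

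The key observation, exactly as in \Cref{factoring morphisms}, is that a commutative triangle $\widehat{\Bun}\to\wBun'\to\Bun$ (with both $\wBun'$ and $\widehat{\Bun}$ desingularizations of $\ffF$ in the sense that $((p')^*\ffF)^\tf$ and its analogue on $\widehat{\Bun}$ are locally free) induces, after base change to $\Qx$, a commutative triangle of main components $\widehat{\ccQ}^{\circ}_{g,n}(X,\beta)\to \tmQ'\to \mQx$. Here one uses that $\widehat{\ccQ}^{\circ}_{g,n}(X,\beta)\to \tmQ'$ agrees with the identity on the open dense locus of quasimaps with smooth source and that $\tmQ'\to \mQx$ is proper, so the closure of the smooth locus maps to the closure of the smooth locus. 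This produces the roof
\[
\begin{tikzcd}
    &\widehat{\ccQ}^{\circ}_{g,n}(X,\beta)\ar[rd, "\widehat{p}''"]\ar[ld, "\widehat{p}'"']\\
\tmQ'\ar[rd]&&\tmQ''\ar[ld]\\
&\mQx
\end{tikzcd}
\]
with $\widehat{p}'$, $\widehat{p}''$ proper by \Cref{assum:general} (the relevant morphisms to $\wBun'$, $\wBun''$ are smooth with the quasimap stack proper over $\Bun$). Since $\widehat{\ccQ}^{\circ}_{g,n}(X,\beta)$ and $\tmQ'$ are irreducible with a common dense open, $\widehat{p}'$ is birational and proper, hence $\widehat{p}'_*[\widehat{\ccQ}^{\circ}_{g,n}(X,\beta)]=[\tmQ']$ and similarly for $\widehat{p}''$; because under \Cref{assum:general} the virtual classes are just the fundamental classes of these smooth stacks, we get $\widehat{p}'_*[\widehat{\ccQ}^{\circ}_{g,n}(X,\beta)]^\vir=[\tmQ']^\vir$ and $\widehat{p}''_*[\widehat{\ccQ}^{\circ}_{g,n}(X,\beta)]^\vir=[\tmQ'']^\vir$. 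Capping both with $\prod \ev^*\gamma_i$ and using the projection formula yields the claimed equality of integrals.

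I expect the main obstacle to be verifying that the base change to quasimaps of the commuting triangle of desingularizations really does produce a morphism $\widehat{\ccQ}^{\circ}_{g,n}(X,\beta)\to \tmQ'$ of \emph{main components}: one must check that the closure of the smooth-source locus is sent into the closure of the smooth-source locus, which is where properness of $\tmQ'\to \mQx$ (and hence of the induced map between the base changes) is used, precisely as in part (2) of \Cref{factoring morphisms}. A secondary point requiring care is that $\wBun$ and $\widehat{\Bun}$ need not be smooth, but they are equidimensional (being birational and proper over the smooth, equidimensional stack $\Bun$), so $\tmQ'$, $\tmQ''$ and $\widehat{\ccQ}^{\circ}_{g,n}(X,\beta)$ are equidimensional DM stacks and their virtual classes are well-defined; the rest of the argument is formal manipulation with the commutativity of proper pushforward and refined Gysin pullback from \cite{manolache2011virtual}.
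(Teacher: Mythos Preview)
Your proposal is correct and follows essentially the same approach the paper indicates: adapt the proof of \Cref{invariance-red} to the quasimap setting by forming a common refinement $\widehat{\Bun}$, building the roof of main components via the analogue of \Cref{factoring morphisms}, and pushing forward. You correctly identify the simplification that under \Cref{assum:general} the virtual classes of the main components are just their fundamental classes, so the Gysin pullback step $i^!$ from \Cref{invariance-red} disappears; in particular your closing remark about commutativity of proper pushforward with refined Gysin pullback is not actually needed here.
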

\begin{remark} Note that as in \Cref{rmk: main component is not pull back} there are different ways of defining the main component of the moduli space of maps (or quasimaps) and we use both definitions.

   \Cref{def:reduced GW general} and \Cref{{def:reduced_GW_invariants}} are related, but possibly different in the following sense:
   \begin{enumerate}
   
       \item Assumption \ref{assum:general} is usually stronger than \ref{assum:d,big,2g,minus2}. For example for a rational curve with normal bundle $\ccO_{\bbP^1}(-d)\oplus \ccO_{\bbP^1}(-d)$ Assumption \ref{assum:general} is not satisfied, while \ref{assum:d,big,2g,minus2} is satisfied for large $d$.
       \item The main components $\tmQ$ in \Cref{def: reduced} and in \Cref{subsec: general} are the same when $X$ is a projective space.
       \item The main components $\tmQ$ in \Cref{def: reduced} and in \Cref{subsec: general} are usually different when $X$ is a complete intersection.
   \end{enumerate}
\end{remark}

We make the following conjecture.
\begin{conjecture}
If $X$ is Fano, then the reduced quasimap and reduced Gromov--Witten invariants agree.
\end{conjecture}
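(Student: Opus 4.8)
The plan would be to read off the difference between the standard and the reduced invariants of $X$ from the component decomposition of the cosection-localized virtual class on the blown-up moduli of maps with $p$-fields, and then to show that the extra terms vanish when $X$ is Fano. By \Cref{invariance loc inv} and \eqref{tilde filds to hypersurface} one has $(\overline{p}_k)_*[\tGw^p]^{\vir}_{\widetilde{\sigma}}=[\Gw^p]^{\vir}_{\sigma}=(-1)^{(r+1)d+1-g}[\Gwx]^{\vir}$, while $[\tGw^p]^{\vir}_{\widetilde{\sigma}}=\sum_i[\tGw^p]^{\vir}_i$ splits along the irreducible components $\ffC_i$ of the relative intrinsic normal cone, the $i=0$ term being supported on the main component and equal, up to the sign in \Cref{red inv fields}, to the reduced class $[\tmGwx]^{\vir}$. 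Thus integrating the insertions $\prod \ev^*\gamma_j$, the equality of standard and reduced invariants of $X$ is \emph{equivalent} to the vanishing of $\sum_{i\ge 1}(\overline{p}_k)_*[\tGw^p]^{\vir}_i$ against those classes, and the whole problem becomes the vanishing of these boundary contributions in the Fano case.

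The second step is to analyse each boundary component $\tGw^{p,\lambda}$, $\lambda\neq 0$. By \Cref{th-blow-up-maps} such a component is smooth over its image in $\wPic_k$ and carries a locally free obstruction bundle coming from $R^1\wpi^{p,\lambda}_*(\widetilde{\ccL}^{\otimes -k}\otimes\omega_{\wpi^{p,\lambda}})$, so, exactly as in the proof of \Cref{red inv fields}, $[\tGw^p]^{\vir}_\lambda$ is a Chern-class expression capped with $[\tGw^{p,\lambda}]$. Geometrically these components parametrize maps whose domain acquires a subcurve of positive arithmetic genus that is contracted or carries only part of the class $\beta$, glued to rational tails carrying the rest --- the picture underlying Pandharipande's boundary computations in \cite{pandharipande1999hodge}. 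The Fano hypothesis would enter through the positivity of $-K_X$: it eliminates the lower-degree contributions, and it forces the expected dimension of the lower-genus strata that appear in the gluing to drop by an amount growing with $-K_X\cdot\beta$, so that (combined with the product structure of those strata and the vanishing of the relevant over-determined genus-$g'$ invariants of $X$) each $(\overline{p}_k)_*[\tGw^p]^{\vir}_\lambda$ should vanish. A parallel --- and probably cleaner --- route is to first establish \Cref{conj gv}, for which the paper has already assembled the ingredients (simple moduli of maps with fields over $\wPic$, splitting of the virtual class), and then observe that in the degree range in question the correction coefficients $C^X_{h,\beta}(g-h)$ with $h<g$ contribute nothing, so that $N^g_\beta(\gamma)=r^g_\beta(\gamma)$; along the way one would have to pin down the precise hypotheses on $\beta$ under which the bare equality holds and reconcile it with \Cref{conj gv}.

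The hard part will be the same obstruction flagged throughout the paper: $\wPic$ and $\tmGwx$ have no modular interpretation, so the boundary strata of $\tGw^p$ cannot be described directly as products of lower-genus moduli spaces. One must instead extract the product structure abstractly --- using the identification of the irreducible components of the abelian cone of a diagonal sheaf with vector bundles over their supports (\Cref{theorem:cone_as_a_union}) together with the degeneration behaviour of $\Pic$ --- and then match the resulting excess obstruction bundles with Pandharipande's Hodge-integral contributions. Controlling these obstruction bundles, and in particular showing that Fano positivity makes the corresponding Euler classes push forward to zero, is where the genuine work lies; this is the step I expect to be the main obstacle, and it is also the one that would have to be carried out before the conjecture can be stated in its sharpest form.
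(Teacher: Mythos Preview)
The paper does not prove this statement: it is explicitly presented as a conjecture (``We make the following conjecture''), immediately followed by the remark that the invariants are expected to differ when $X$ is not Fano. There is no proof in the paper to compare your proposal against.

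That said, your outline is broadly consonant with the strategy the paper sketches elsewhere for the related \Cref{conj one} and \Cref{conj gv}: split the cosection-localized virtual class of $\tGw^p$ along components via \Cref{th-blow-up-maps} and \Cref{red inv fields}, and then control the boundary pieces. You correctly identify the main obstacle --- the lack of a modular interpretation of $\wPic$ and the difficulty of expressing boundary components as products of lower-genus moduli --- which the paper itself flags in the ``Further work'' paragraph. One caution: your appeal to \Cref{conj gv} as a cleaner route is circular in spirit, since that conjecture is also open in the paper (proved only in genus one and two), and in any case \Cref{conj gv} would give $N^g_\beta(\gamma)$ as a \emph{sum} over $h\le g$ of reduced invariants with nontrivial coefficients $C^X_{h,\beta}(g-h)$, not the bare equality $N^g_\beta=r^g_\beta$; you would still need to argue separately that the $h<g$ terms vanish, which does not follow from Fano alone without further constraints on $\beta$ or the insertions. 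So your proposal is a plausible program, but it is not a proof, and the paper makes no claim to have one.
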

\begin{remark}
To see the intuition behind the conjecture above we first use that for Fano varieties GW and quasimap invariants agree \cite{ciocan2020quasimap}. Then we consider the difference between standard and reduced invariants: this is conjecturally given by maps (or quasi-maps) from curves with components $C_i$ of genus $g_i>0$ and of degree $d_i$, such that $H^1(C_i, d_i)\neq 0$. The moduli space of stable maps have more such components than the space of quasimaps, namely maps from curves with a rational curve with no marked points glued to higher genus curves. All these contributions are expected to be zero.

  Since GW and quasimap invariants are different for non-Fano varieties, we expect the corresponding reduced invariants to be different if $X$ is not Fano.  
\end{remark}

\section{Desingularizations in genus one}\label{sec:comparisons}

In genus one, reduced Gromov--Witten invariants were originally defined using the desingularization constructed in \cite{Vakil-Zinger-desingu-main-compo-}. It consists of a sequence of blow-ups determined by the geometry of the moduli space $\Gwone$. In \cite{Hu-Li-GEnus-one-local-VZ-Math-ann-2010}, local equations for the blowup are determined. We aim to compare this desingularization with the one obtained using the Rossi blow-up $\Bl_\ffF \Pic$, with $\ffF$ as in \Cref{eq:F_sheaf}. In particular, we describe $\Bl_\ffF \Pic$ locally in the spirit of \cite{Hu-Li-GEnus-one-local-VZ-Math-ann-2010}.

\subsubsection{Charts}

In genus one, the original definition of refined Gromov--Witten invariants comes from \cite{Vakil-Zinger-desingu-main-compo-}. The main is idea is to apply a sequence of blow ups to $\Gw$ in order to desingularize the main component. Strictly speaking, the sequence of blow ups takes place in the stack $\cwt$ of genus-1 prestable curves endowed with a weight. Let $\Theta_k$ denote the closure of the loci in $\cwt$ of curves with $k$ trees of rational curves attached to the core. Then one should blow up $\cwt$ along the loci $\Theta_1$, $\Theta_2$, $\Theta_3$ and so on in order to produce a stack $\blcwt$. This process induces a blowup $\tGw$ of $\Gw$ via fibre product. 

Given a stratum $\tGw_\gamma$ corresponding to a weighted graph $\gamma$, local equations of $\tGw$ and the local description of $\Theta_k$ in that stratum are described explicitly in \cite{Hu-Li-GEnus-one-local-VZ-Math-ann-2010}. The purpose of this section is to summarize such local description, give coordinates for the new approach locally and compare both.

It may be helpful to keep in mind the following diagram, described below.
\[
    \begin{tikzcd}[row sep=small]
    \ccE_\ccV\ar[rr,"\widetilde{\phi}"]   & & E_\gamma \ar[d]\\
      (F=0)\ar[r] \ar[u,hook]&   (\Phi_\gamma = 0) \ar[ur,hook] & V_\gamma\\
        \ccU\ar[r]\ar[d]\ar[u] &  \ccV\ar[d]\ar[ru,"\phi"] \arrow[dr, phantom, "\square"]   & \widetilde{\ccV}\ar[l]\ar[d]\\
        U\ar[r] & \ffD_1\ar[d] \arrow[dr, phantom, "\square"]     & \widetilde{\ffD}_1\ar[l]\ar[d]&\\
                & \cwt     & \blcwt \ar[l]&
    \end{tikzcd}
\]
 Fix a weighted graph $\gamma$ with root $o$. Let $\ver$, $\tver$ and $\vers$ denote the vertices, the terminal vertices (or leaves) and the non-rooted vertices of $\gamma$, respectively. We take the natural ordering in $\ver$ making the root $o$ the minimal element. We assume that the weight in $\gamma$ is non-negative on every vertex and that $\gamma$ is terminally weighted, meaning that the vertices with non-zero weights are exactly those in $\tver$. Let $\cwt$ is the stack of genus-1 prestable curves endowed with a weight. Remember that every element $C$ parametrized by $\cwt$ has a dual (weighted) graph $\gamma$, which can be made terminally weighted and rooted by first declaring the root to be the (contraction of) the core of $C$ and then pruning along all non-terminal positively weighted nodes. We will denote by $o$ the root of any terminally weighted rooted tree, and by $a,b,\ldots$ the remaining vertices.

In the diagram, $\blcwt$ denotes the blow up of $\cwt$ described above and $\ffD_1$ is the stack of stable pairs $(C,D)$ with $D$ an effective Cartier divisors supported in the smooth locus of $C$. Fix a point $(C,D)$ in $\ffD_1$ and a map in $\Gw$ with underlying curve $C$. Then $U$ is a small open around the fixed map in $\Gw$, $\ccV$ is a smooth chart around the point $(C,D)$ in $\ffD_1$ containing the image of $U$ and $\ccE_\ccV$ is the total space of the sheaf $\rho_\ast \ccL(\ccA)^{\oplus n}$ on $\ccV$. 

Let $V_\gamma = \prod_{v\in \vers} \bbA^1$ be an affine space that serves as model for local equations. We denote by $z_a, z_b, \ldots$ the natural coordinates in $V_\gamma$. Similarly, $E_\gamma = V_\gamma \times \prod_{v\in \tver} (\bbA^1)^r$ and the coordinates on the affine space $(\bbA^1)^r$ corresponding to $a\in\tver$ will be denoted by $w_{a,1},\ldots, w_{a,r}$. The ideal $\Phi_\gamma = (\Phi_{\gamma,1},\ldots, \Phi_{\gamma,r})$ will be described explicitly in \Cref{eq:local_equations_maps}. The smooth morphism $\phi$ comes from the natural coordinates on $\ccV$, associated to the smoothing of each of the disconnecting nodes in $C$ (which are in natural bijection with $\vers$). The map $\ccU\to (F=0)$ is an open embedding. Finally, $\widetilde{\phi}$ is induced by $\phi$ and $F = \widetilde{\phi}^* \Phi_\gamma$. It is in this sense that we can think of $\Phi_\gamma$ as the equations of $\Gw_\gamma$.

Following \cite{Hu-Li-GEnus-one-local-VZ-Math-ann-2010}, given a terminally weighted rooted graph $\gamma$, the ideal $\Phi_\gamma = (\Phi_{\gamma,1},\ldots, \Phi_{\gamma,r})$ inside $V_\gamma$ can be described as 
\begin{equation}\label{eq:local_equations_maps}
    \Phi_{\gamma,i} = \sum_{v\in \tver} z_{\left[v,o\right]} w_{v,i}\quad 1\leq i\leq r,
\end{equation}
where 
\[
    z_{\left[v,o\right]} = \prod_{o\prec a \preceq v} z_a.
\]

Note that, for fixed $i$, the variables $w_{a,i}$ only appear in the $i$-th equation $\Phi_{\gamma,i}$. Due to the symmetry of the equations and the fact that all blow ups take place in $V_\gamma$, which has coordinates $\{z_a\}_{a\in\vers}$ (but not the $w_{a,i}$), in the examples below we will not write down the index $i$ in the equations $\Phi_{\gamma,i}$ nor in the variables $w_{a,i}$. For example, in the study of the equations $\Phi_{\gamma,i}$ after blowing up, it will be clear that the index $i$ is irrelevant, in the sense that the way that $\Phi_{\gamma,i}$ changes is independent of $i$.

\subsection{Local equations of desingularizations}

The local equations of the loci that must be blown up are described, following \cite{Hu-Li-GEnus-one-local-VZ-Math-ann-2010}.

Firstly, we describe how to assign an ideal $I_\gamma$ to any semistable terminally weighted rooted tree $\gamma$. Here, \textit{semistability} of $\gamma$ means that every non-root vertex with weight zero has at least two edges. 

The \textit{trunk} of $\gamma$ is the maximal chain $o=v_0\prec v_1\prec\ldots\prec v_r$ of vertices in $\gamma$ such that each vertex $v_i$ with $1\leq i < r$ has exactly one immediate descendant 
and $v_r$ is called a \textit{branch vertex} if is it not terminal. Note that $\gamma$ is a path tree if and only if it has no branch vertex.

\begin{definition}
    Let $\gamma$ be a semistable terminally weighted rooted tree with branch vertex $v$ and let $a_1,\ldots, a_k$ be the immediate descendants of $v$. To $\gamma$ we associate the ideal
    \[
        I_\gamma = (z_{a_1}, \ldots, z_{a_k})
    \]
    in $V_\gamma$.
\end{definition}
First, we must blow up $V_\gamma$ along the ideal $I_\gamma$. To describe the remaining steps we need to introduce the following operations.

\begin{definition}
    Let $\gamma$ be a terminally weighted semistable rooted tree.
    \begin{itemize}
    \item  The \textit{pruning} of $\gamma$ along a vertex $v$ is the new tree obtained by removing all the descendants of $v$ (and the corresponding edges) and declaring the weight of $v$ to be the sum of the original weight of $v$ plus the weights of all removed vertices. 
    \item The \textit{advancing} of a vertex $v$ with immediate ascendant $\overline{v}$ in $\gamma$ is a new tree obtained by replacing every edge $(\overline{v},v')$ with $v'\neq v$ by an edge $(\overline{v},v)$ and pruning along all positively weighted non-terminal vertices as long as possible. In \Cref{subsec:examples_g1} we will denote by $\gamma_v$ the advancing of $v$ in $\gamma$ and by $\gamma_v'$ the same tree before pruning.
    \item Suppose $\gamma$ has a branch vertex $v$. A \textit{monoidal transform} of $\gamma$ is a tree obtained by advancing one of the immediate descendants of $v$. The set of monoidal transforms of $\gamma$ is $\Mon(\gamma)$.
\end{itemize}
\end{definition}

It turns out that the ideal $\Phi_\gamma$ behaves nicely under monoidal transforms. Indeed, let $\gamma$ be a semistable terminally weighted rooted tree with branch vertex $v$ and let $a=a_1, a_2,\ldots, a_k$ be the immediate descendants of $v$. Let $\gamma_a$ be the tree advancing of $a$ in $\gamma$. Let $\pi\colon \widetilde{V_\gamma}\to V_\gamma$ be the blow-up of $V_\gamma$ along the ideal $I_\gamma = (z_{a_1},\ldots, z_{a_k})$. We view $\widetilde{V_\gamma}$ embedded inside $V_\gamma \times \bbP^{k-1}$. There is a natural way to associate to each generator $z_{a_i}$ of $I_\gamma$ one chart of $\bbP^{k-1}$, and thus also of $\widetilde{V_\gamma}$. We denote such chart by $\widetilde{V}_{\gamma,a_i}$. Let $\pi_a\colon \widetilde{V}_{\gamma,a}\to V_\gamma$ be the restriction of the natural projection, where $a=a_1$. Then, by the proof of \cite[Lemma 5.14]{Hu-Li-GEnus-one-local-VZ-Math-ann-2010}, one of the following must hold
\begin{itemize}
    \item either $\gamma_a$ is a path tree, and then the zero locus of $\pi_a^*(\Phi_\gamma)$ has smooth components;
    \item or $\gamma_a$ is not a path tree and then
    \begin{equation}\label{eq: Phi_gamma_pulls_back}
        \pi_a^*(\Phi_\gamma) = \Phi_{\gamma_a}.
    \end{equation}
\end{itemize}

The whole blow-up process is summarized as follows.
Fix $\gamma$. First blow up $V_\gamma$ along $I_\gamma$. The pullback of $\Phi_\gamma$ is controlled by $\Mon(\gamma)$. If $\Mon(\gamma)$ consists only of path trees, we are done. Otherwise, for every element $\gamma'$ in $\Mon(\gamma)$ which is not a path tree, blow up the chart of $\widetilde{V_{\gamma}}$ corresponding to $\gamma'$ along $I_{\gamma'}$. Continue recursively. The process concludes by \cite[Lemma 3.12]{Hu-Li-GEnus-one-local-VZ-Math-ann-2010}.


Now we want to describe $\Bl_\ffF \Pic$ locally. Namely, we want to describe which loci inside $\Pic$ we are blowing up locally. We have an exact sequence
\[
    0 \to \rho_*\ccL \to \rho_*\ccL(\ccA) \to \rho_*\ccL(\ccA)\mid_\ccA
\]
by \Cref{resL}. The change of notation is due to the fact that \Cref{resL} was global in $\Pic$, but we now work locally. 

After a careful study of the second morphism, \cite[Theorem 4.16]{Hu-Li-GEnus-one-local-VZ-Math-ann-2010} concludes that $\rho_\ast \ccL$ is the direct sum of a trivial bundle with the kernel of the morphism 
\begin{equation}\label{eq:morphism_direct_sum_phis}
    \bigoplus_{v\in\tver} \varphi_v \colon \ccO_\ccV^{\oplus \ell} \to \ccO_\ccV, 
\end{equation}
where $\ell$ is the cardinality of $\tver$ and $\varphi_v = \prod_{o\prec v' \preceq v} \zeta_v$, with $\zeta_v$ the smoothing parameter of the disconnecting node corresponding to the vertex $v$.

By \Cref{dual-cone}, the sheaf $\ffF$ can be described locally as the dual of \Cref{eq:morphism_direct_sum_phis}. In particular, by \Cref{rem: dual} we have that $\Bl_\ffF \Pic$ agrees, locally, with the blowup along the ideal generated by the entries $(\varphi_v)_{v\in \ver^t}$. In local coordinates, this ideal can be described as follows.

\begin{definition}
    Let $\gamma$ be a semistable terminally weighted rooted tree with $\tver = \{v_1,\ldots, v_t\}$. To $\gamma$ we associate the ideal
    \[
        J_\gamma = (z_{[v_1,o]},\ldots,  z_{[v_t,o]}).
    \]
\end{definition}

Similarly to \Cref{eq: Phi_gamma_pulls_back}, in the same setup we have that 
\[
    \pi_a^*(J_\gamma) = J_{\gamma_a},
\]
independently of whether $\gamma_a$ is has a branch vertex. This follows again from the proof of \cite[Lemma 5.14]{Hu-Li-GEnus-one-local-VZ-Math-ann-2010}.

\subsubsection{Examples} \label{subsec:examples_g1}

For two concrete trees $\gamma$, we compute the equations $\Phi_\gamma$ as well as the ideals $I_\gamma$ and $J_\gamma$. We describe the blow up process of Hu and Li and show that the result indeed desingularizes the main component of $\Gw$ locally. Furthermore, we check that the ideal $J_\gamma$ becomes locally principal in Hu--Li's blow-up. 

\begin{example}\label{example: charts [ab[cd]]}
Consider the following labelled graph:

\begin{minipage}{.4\textwidth}
\begin{center}
			$\gamma$ = \scalebox{1}{\Tree [.$o$ $a$ [.$b$ $c$ $d$ ]]}
		\end{center}
\end{minipage}%
\begin{minipage}{.6\textwidth}
\begin{align*}
    \Phi_\gamma &= z_a w_a + z_b(z_c w_c + z_d w_d),\\
    I_\gamma &= (z_a,z_b),\\
    J_\gamma &= (z_a,z_bz_c, z_bz_d). 
\end{align*}
\end{minipage}\vspace{1em}

Let $\widetilde{V_\gamma}$ be the blow up along $I_\gamma$, that is the zero locus of $z_a z_b'-z_b z_a'$ inside $\bbA^4_{z_a,z_b,z_c,z_d} \times \bbP^1_{z_a',z_b'}$. The chart associated to $a$ is that where $z_a'\neq 0$. Dehomogenizing amounts to the change of variables $z_b = z_b'z_a$. By doing so, we get that 
\[
    \pi_a^*(\Phi_\gamma) = z_a(w_a + z_b'(z_cw_c + z_dw_d))
\]
and that
\[
    \pi_a^*(J_\gamma) = (z_a, z_a z_b' z_c, z_a z_b' z_d) = (z_a).
\]
This means that the zero locus of $\pi_a^*(\Phi_\gamma)$ already has smooth components, so no further blowups are needed on this chart, and that $\pi_a^*(J_\gamma)$ is principal on this chart too. 

Below are the trees $\gamma_a'$ obtained by advancing $a$ without pruning, and $\gamma_a$ obtained by advancing $a$. We know that $\pi_a^* J_\gamma = J_{\gamma_a}$, but we check it in this example.  

\begin{minipage}{.2\textwidth}
\begin{center}
			$\gamma_a'$ = \scalebox{1}{\Tree [.$o$ [.$a$  [.$b$ $c$ $d$ ]]]}
\end{center}
\end{minipage}%
\begin{minipage}{.2\textwidth}
\begin{center}
			$\gamma_a$ = \scalebox{1}{\Tree [.$o$ [.$a$  ]]}
\end{center}
\end{minipage}%
\begin{minipage}{.6\textwidth}
\begin{align*}
    J_{\gamma_a} &= (z_a). 
\end{align*}
\end{minipage}\vspace{1em}

Similarly, we now look at the chart associated to $b$, where $z_b'\neq 0$. The change of variables is now $z_a = z_bz_a'$. It follows that
\[
    \pi_b^*(\Phi_\gamma) = z_b(z_a'w_a + z_cw_c + z_dw_d)
\]
and that
\[
    \pi_b^*(J_\gamma) = (z_b z_a', z_b z_c, z_b z_d) = z_b (z'_a, z_c, z_d).
\]
This means that we still need to blow up. This time the tree $\gamma_b$ obtained by advancing $b$ in $\gamma$ (no pruning is needed) is not a path tree. We also check the identities $J_{\gamma_a} = \pi_a^* J_\gamma$ and $\pi_b^*(\Phi_\gamma) = \Phi_{\gamma_b}$.

\begin{minipage}{.4\textwidth}
\begin{center}
			\scalebox{1}{\Tree [.$o$ [.$b$  [.$a$ ] [.$c$ ] [.$d$ ]]]}
		\end{center}
\end{minipage}%
\begin{minipage}{.6\textwidth}
\begin{align*}
    \Phi_{\gamma_b} &= z_b(z_aw_a + z_c w_c + z_d w_d),\\
    I_{\gamma_b} &= (z_a, z_c, z_d),\\
    J_{\gamma_b} &= z_b(z_a, z_c, z_d). 
\end{align*}
\end{minipage}\vspace{1em}

To conclude the example, we need to blow up along the ideal $(z_a,z_c,z_d)$. We collect the result below.

Advancing $a$, or equivalently looking at the chart $z_a'\neq 0$, we have

\begin{minipage}{.2\textwidth}
\begin{center}
			$\gamma_{b,a}'$ = \scalebox{1}{\Tree [.$o$ [.$b$  [.$a$  [.$c$ ] [.$d$ ]]]]}
\end{center}
\end{minipage}%
\begin{minipage}{.2\textwidth}
\begin{center}
			$\gamma_{b,a}'$ = \scalebox{1}{\Tree [.$o$ [.$b$  [.$a$  ]]]}
\end{center}
\end{minipage}%
\begin{minipage}{.6\textwidth}
\begin{align*}
    \pi_a^*\pi_b^*(\Phi_{\gamma}) = \pi_a^*(\Phi_{\gamma_b}) &= z_az_b(w_a + z_cw_c + z_d w_d),\\
    J_{\gamma_{b,a}} &= (z_az_b).
\end{align*}
\end{minipage}\vspace{1em}

Advancing $c$, or equivalently looking at the chart $z_c'\neq 0$, we have

\begin{minipage}{.2\textwidth}
\begin{center}
			$\gamma_{b,c}'$ = \scalebox{1}{\Tree [.$o$ [.$b$  [.$c$  [.$a$ ] [.$d$ ]]]]}
\end{center}
\end{minipage}%
\begin{minipage}{.2\textwidth}
\begin{center}
			$\gamma_{b,c}'$ = \scalebox{1}{\Tree [.$o$ [.$b$  [.$c$  ]]]}
\end{center}
\end{minipage}%
\begin{minipage}{.6\textwidth}
\begin{align*}
    \pi_c^*\pi_b^*(\Phi_{\gamma}) = \pi_c^*(\Phi_{\gamma_b}) &= z_bz_c(z_aw_a + w_c + z_d w_d),\\
    J_{\gamma_{b,c}} &= (z_bz_c).
\end{align*}
\end{minipage}\vspace{1em}

And finally, advancing $d$, or equivalently looking at the chart $z_d'\neq 0$, we have

\begin{minipage}{.2\textwidth}
\begin{center}
			$\gamma_{b,d}'$ = \scalebox{1}{\Tree [.$o$ [.$b$  [.$d$  [.$a$ ] [.$c$ ]]]]}
\end{center}
\end{minipage}%
\begin{minipage}{.2\textwidth}
\begin{center}
			$\gamma_{b,d}'$ = \scalebox{1}{\Tree [.$o$ [.$b$  [.$d$  ]]]}
\end{center}
\end{minipage}%
\begin{minipage}{.6\textwidth}
\begin{align*}
    \pi_d^*\pi_b^*(\Phi_{\gamma}) = \pi_d^*(\Phi_{\gamma_b}) &= z_bz_d(z_aw_a + z_cw_c + w_d),\\
    J_{\gamma_{b,d}} &= (z_bz_d).
\end{align*}
\end{minipage}
\end{example}

\begin{remark}\label{rem: dual}
    \Cref{example: charts [ab[cd]]} shows that the Rossi blow-up process of $\Gw$ is not equal to the Vakil--Zinger blow-up. This is compatible with Remark 4.4. in \cite{niu1}. Indeed, the Rossi blowup around $\gamma$ is given by $\Bl_{J_\gamma} V_\gamma$ and the Vakil--Zinger one is the iterated blow-up $\Bl_{I_{\gamma_b}} \Bl_{I_\gamma} V_\gamma$. We know there is a natural morphism
    \[
        \Bl_{I_{\gamma_b}} \Bl_{I_\gamma} V_\gamma \to \Bl_{J_\gamma} V_\gamma 
    \]    
    over $V_\gamma$, either by \Cref{prop:morphism-HL-Rossi} or because we checked that $J_\gamma$ pulls back to a principal ideal in $\Bl_{I_{\gamma_b}} \Bl_{I_\gamma} V_\gamma$. By contradiction, if there is a morphism
    \[
        f\colon \Bl_{J_\gamma} V_\gamma  \to \Bl_{I_{\gamma_b}} \Bl_{I_\gamma} V_\gamma
    \]
    over $V_\gamma$, then we get a morphism
    \[
        \widetilde{f} \colon \Bl_{J_\gamma} V_\gamma  \to \Bl_{I_\gamma} V_\gamma
    \]
    over $V_\gamma$. By \cite{Moody}, there is a fractional ideal $K$ in $V_\gamma$ and a positive integer $\alpha$ such that 
    \[
        I_\gamma \cdot K = J_\gamma^\alpha.
    \]
    This is not true for $I_\gamma = (z_a,z_b)$ and $J_\gamma = (z_a, z_b z_c, z_b z_d)$ in $V_\gamma = \bbA^4_{z_a,z_b,z_c,z_d}$.
\end{remark}

\begin{example}\label{example: charts [a[cd]b[ef]]}
We do a similar study for the following labelled graph $\gamma$:

\begin{minipage}{.3\textwidth}
\begin{center}
			\scalebox{1}{\Tree [.$o$ [.$a$ $c$ $d$ ] [.$b$ $e$ $f$ ]]}
		\end{center}
\end{minipage}%
\begin{minipage}{.7\textwidth}
\begin{align*}
    \Phi_\gamma &= z_a (z_c w_c + z_d w_d) + z_b(z_e w_e + z_f w_f),\\
    I_\gamma &= (z_a,z_b),\\
    J_\gamma &= (z_a z_c, z_a z_d,z_b z_e, z_b z_f). 
\end{align*}
\end{minipage}\vspace{1em}

After blowing up, there are two charts, corresponding to the advacings of $a$ and $b$ respectively.

\begin{minipage}{.3\textwidth}
\begin{center}
			$\gamma_a$ = \scalebox{1}{\Tree [.$o$ [.$a$ $c$ $d$  [.$b$ $e$ $f$ ]]]}
		\end{center}
\end{minipage}%
\begin{minipage}{.7\textwidth}
\begin{align*}
    \Phi_{\gamma_a} &= z_a (z_c w_c + z_d w_d + z_b(z_e w_e + z_f w_f)),\\
    I_{\gamma_a} &= (z_b,z_c,z_d),\\
    J_{\gamma_a} &= z_a (z_c, z_d, z_b z_e, z_b z_f). 
\end{align*}
\end{minipage}\vspace{1em}

\begin{minipage}{.3\textwidth}
\begin{center}
			$\gamma_b$ = \scalebox{1}{\Tree [.$o$ [.$b$ [.$a$ $c$ $d$ ] $e$ $f$  ]]}
		\end{center}
\end{minipage}%
\begin{minipage}{.7\textwidth}
\begin{align*}
    \Phi_{\gamma_b} &= z_b (z_a(z_c w_c + z_d w_d) + z_e w_e + z_f w_f),\\
    I_{\gamma_b} &= (z_a,z_e,z_f),\\
    J_{\gamma_b} &= z_b (z_a z_c, z_a z_d, z_e, z_f). 
\end{align*}
\end{minipage}\vspace{1em}

By symmetry, it is enough to understand how to proceed in one of the charts. We choose the one corresponding to $a$. We get three new charts corresponding to the vertices $c$, $d$ and $b$.

\begin{minipage}{.2\textwidth}
\begin{center}
			$\gamma_{a,c}'$ = \scalebox{1}{\Tree [.$o$ [.$a$ [.$c$ $d$  [.$b$ $e$ $f$ ]]]]}
		\end{center}
\end{minipage}%
\begin{minipage}{.2\textwidth}
\begin{center}
			$\gamma_{a,c}$ = \scalebox{1}{\Tree [.$o$ [.$a$ [.$c$ ]]]}
		\end{center}
\end{minipage}%
\begin{minipage}{.7\textwidth}
\begin{align*}
    \pi_c^*\Phi_{\gamma_a} &= z_a z_c (w_c + z_d w_d + z_b(z_e w_e + z_f w_f)),\\
    J_{\gamma_{a,c}} &= (z_a z_c). 
\end{align*}
\end{minipage}\vspace{1em}

\begin{minipage}{.2\textwidth}
\begin{center}
			$\gamma_{a,d}'$ = \scalebox{1}{\Tree [.$o$ [.$a$ [.$d$ $c$  [.$b$ $e$ $f$ ]]]]}
		\end{center}
\end{minipage}%
\begin{minipage}{.2\textwidth}
\begin{center}
			$\gamma_{a,d}$ = \scalebox{1}{\Tree [.$o$ [.$a$ [.$d$ ]]]}
		\end{center}
\end{minipage}%
\begin{minipage}{.7\textwidth}
\begin{align*}
    \pi_d^*\Phi_{\gamma_a} &= z_a z_d (z_c w_c + w_d + z_b(z_e w_e + z_f w_f)),\\
    J_{\gamma_{a,d}} &= (z_a z_d). 
\end{align*}
\end{minipage}\vspace{1em}

\begin{minipage}{.3\textwidth}
\begin{center}
			$\gamma_{a,b}$ = \scalebox{1}{\Tree [.$o$ [.$a$ [.$b$ $c$ $d$ $e$ $f$ ]]]}
		\end{center}
\end{minipage}%
\begin{minipage}{.7\textwidth}
\begin{align*}
    \Phi_{\gamma_{a,b}} &= z_a z_b (z_c w_c + z_d w_d + z_e w_e + z_f w_f),\\
    I_{\gamma_{a,b}} &= (z_c, z_d, z_e, z_f),\\
    J_{\gamma_{a,b}} &= z_a z_b (z_c, z_d, z_e, z_f). 
\end{align*}
\end{minipage}\vspace{1em}

To conclude, we need to blow up the last chart along $I_{\gamma_{a,b}}$. This produces four new charts corresponding to $c, d, e$ and $f$. We will only write down one of them since the rest are very similar.

\begin{minipage}{.2\textwidth}
\begin{center}
			$\gamma_{a,b,c}'$ = \scalebox{1}{\Tree [.$o$ [.$a$ [.$b$ [.$c$ $d$ $e$ $f$ ]]]]}
\end{center}
\end{minipage}%
\begin{minipage}{.2\textwidth}
\begin{center}
			$\gamma_{a,b,c}$ = \scalebox{1}{\Tree [.$o$ [.$a$ [.$b$ [.$c$ ]]]]}
\end{center}
\end{minipage}%
\begin{minipage}{.7\textwidth}
\begin{align*}
    \pi_c^*\Phi_{\gamma_{a,b}} &= z_a z_b z_c (w_c + z_d w_d + z_e w_e + z_f w_f),\\
    J_{\gamma_{a,b,c}} &= (z_a z_b z_c). 
\end{align*}
\end{minipage}\vspace{1em}
\end{example}

\subsubsection{Smoothness}

In genus one, $\Gwone\times_{\Pic} \Bl_{\ffF}\Pic$ has simple normal crossings following the same argument as in \cite[Theorem 5.24]{Hu-Li-GEnus-one-local-VZ-Math-ann-2010}. It is enough to show that the zero locus of the ideal $\Phi_\gamma$ becomes a simple normal crossing in the blow-up $\widetilde{V_\gamma}$ of $V_\gamma$ along the ideal $J_\gamma$. 

Remember that $\Phi_\gamma = (\Phi_{\gamma,1},\ldots, \Phi_{\gamma,r})$ with 
\[
    \Phi_{\gamma,i} = \sum_{v\in \tver} z_{\left[v,o\right]} w_{b,i}\quad 1\leq i\leq r,
\]
where 
\[
    z_{\left[v,o\right]} = \prod_{o\prec a \preceq v} z_a,
\]
and that 
\[
    J_\gamma = (z_{[v,o]})_{v\in\tver}.
\]
For given $v'\in \tver$, the pullback of the equation $\Phi_{\gamma,i}$ on the chart corresponding to $v'$ is equal to
\[
    z_{[v',o]} \left(w_{v',i} + \sum_{v'\neq v \in \tver} z_{\left[v,o\right]} w_{b,i} \right)
\]
by \cite[Lemma 5.14]{Hu-Li-GEnus-one-local-VZ-Math-ann-2010}. This proves the claim.

\subsubsection{Maps between blowups}

By \Cref{prop:morphism-HL-Rossi} there is a morphism from Vakil--Zinger's blowup to Rossi's blowup. In genus one, we can check it locally: it is equivalent to the fact that the pullback of the ideal $J_\gamma$ to each chart $\widetilde{V_\gamma}$ of the Hu--Li blowup of $V_\gamma$ is principal. We have checked this in \Cref{example: charts [ab[cd]]} and \Cref{example: charts [a[cd]b[ef]]}, More generally, we can give a proof for every $\gamma$ as follows.

By \Cref{eq: Phi_gamma_pulls_back} if $z_a$ is any of the generators of $I_\gamma$, then $\pi_a^*(J_\gamma) = J_{\gamma_a}$ where $\gamma_a$ is the advancing of $a$ in $\gamma$. In particular, it is enough to show that all the (natural) charts of $\widetilde{V_\gamma}$ correspond to path trees, which is proven in \cite[Lemma 3.14]{Hu-Li-GEnus-one-local-VZ-Math-ann-2010}.

\bibliographystyle{alpha}
\bibliography{references}

\end{document}